\theoremstyle{plain}
\newtheorem{thm}{Theorem}[section]
\newtheorem{lem}[thm]{Lemma}
\newtheorem{prop}[thm]{Proposition}
\def\thefootnote{\ifnum\c@footnote>\z@\leavevmode\lower.5ex%
      \hbox{$^{\@arabic\c@footnote)}$}\fi}
\theoremstyle{definition}
\theoremstyle{remark}
\def\dfrac#1#2{\displaystyle \frac{#1}{#2}}
\def\det{\mbox{\rm {det}}}
\def\diag{\mbox{\rm {diag}}}
\def\dlim{\displaystyle \lim}
\def\ad{\mbox{\rm {ad}}}
\def\Iso{\mbox{\rm {Iso}}}
\def\Ker{\mbox{\rm {Ker}}}
\def\tr{\mbox{\rm {tr}}}
\def\Re{\mbox{\rm {Re}}}
\def\Im{\mbox{\rm {Im}}}
\def\ov{\overline}
\def\wti{\widetilde}
\def\ti{\tilde}
\def\dsum{\displaystyle \sum}
\def\dfrac#1#2{\displaystyle \frac{#1}{#2}}
\def\C{\mbox{\boldmath $C$}}
\def\R{\mbox{\boldmath $R$}} 
\def\s{\mbox{\boldmath $S$}}
\def\Z{\mbox{\boldmath $Z$}}
\def\0{\mbox{\boldmath {0}}}    
\def\1{\mbox{\boldmath {1}}}      
\def\2{\mbox{\boldmath {2}}}      
\def\3{\mbox{\boldmath {3}}}      
\def\4{\mbox{\boldmath {4}}}      
\def\5{\mbox{\boldmath {5}}}      
\def\6{\mbox{\boldmath {6}}}      
\def\7{\mbox{\boldmath {7}}}      
\def\8{\mbox{\boldmath {8}}}      
\def\9{\mbox{\boldmath {9}}}
\def\s{\mbox{\boldmath $s$}}
\begin{document}

\title[Realization of globally exceptional Riemannian $4$-symmetric space
%Realization of some automorphism of order 4 and fixed points subgroup
]
{Realization of globally exceptional Riemannian $4$-symmetric space
 $E_8/(E_8)^{\sigma'_{\!\!4}}$
%Realization of some automorphism of order 4 and fixed points subgroup of compact exceptional Lie group $E_8$ by its automorphism as $4$-symmetric space
}

%\dedicatory{Dedicated to Professor Ichiro Yokota on the occasion of his eighty-eighth birthday}
 
\author[\sc Toshikazu Miyashita]{\sc Toshikazu Miyashita}

%\date{February  07, 2015 at M-wing}
\subjclass[2010]{ 53C30, 53C35, 17B40.}

\keywords{4-symmetric spaces, exceptional Lie groups}
%\begin{document}
%\maketitle

\begin{abstract}
%In \cite{Jim}, 
The compact simply connected Riemannian 4-symmetric spaces were classified by J.A. Jim{\'{e}}nez. As homogeneous manifolds, these spaces are of the $G/H$, where $G$ is a connected compact simple Lie group with an automorphism $\tilde{\gamma}$ of oder 4 and  $H$ is a fixed points subgroup $G^\gamma$ of $G$. In the present article, for the exceptional compact Lie group $G=E_8$, we give the explicit form of automorphism  of order 4 induced by the $\R$-linear transformation $\sigma'_{\!\!4}$ and determine the structure of the group $(E_8)^{\sigma'_{\!\!4}}$. Thereby, we realize the globally exceptional Riemannian $4$-symmetric space $E_8/(E_8)^{\sigma'_{\!\!4}}$.

\end{abstract}

\maketitle

\section {Introduction}
It is well-known that the notion of the Riemannian $k$-symmetric space is a generalization of the Riemannian symmetric space. Its definition is as follows:

Let $G$ be a Lie group and $H$ a compact subgroup of $G$. A homogeneous space
$G/H$ with $G$-invariant Riemannian metric $g$ is called a Riemannian $k$-symmetric space if there exists an automorphism $\tilde{\gamma}$ of order $k$ on $G$ such that $({G^\gamma})_{0} \subset H \subset G^\gamma$, where $G^\gamma$ and $({G^\gamma})_{0}$ is the fixed points subgroup of $G$ and its identity component, respectively, and such that the transformation of $G/H$ induced by $\gamma$ is an isometry (\cite {kuri}).

Until now, the Riemannian $3$-symmetric spaces were classified by A. Gray \cite{gray}, and moreover the compact Riemannian $4$-symmetric spaces were classified by J.A. Jim\'{e}nez \cite{Jim} as mentioned in abstract.

Now, for the exceptional compact Lie groups $G=G_2, F_4, E_6, E_7$, we have already realized the Riemannian $4$-symmetric spaces $G/H$ by giving automorphisms $\tilde\gamma$ of order 4 explicitly and by determining the structure of the group $H=G^\gamma$ corresponding to the Lie algebras $\mathfrak{g}^\gamma$ of Tables III, IV and V in \cite{Jim}. In particular, for $G=E_8$, there exist seven types of Riemannian $4$-symmetric spaces. From now on, we call those spaces "Globally exceptional Riemannian $4$-symmetric spaces". The main purpose of this article is to give the automorphism $\tilde{\sigma'}_{\!\!4}$ of order 4 on $E_8$ explicitly and to determine the structure of the fixed points subgroup $(E_8)^{\sigma'_{\!\!4}}$ of $E_8$, where the structure of the group $(E_8)^{\sigma'_{\!\!4}}$ is as follows:
$$
 (E_8)^{\sigma'_{\!\!4}} \cong (S\!pin(6) \times S\!pin(10))/\Z_4.
$$ 
Here, the spinor groups $S\!pin(6)$ and $S\!pin(10)$ above are respectively realized as the subgroup $(F_4)_{E_1, E_2,E_3, F_1(e_k), k=0,1}$ 
and the subgroup $(E_8)^{\sigma'_{\!\!4}, \mathfrak{so}(6)}$ of  $(E_8)^{\sigma'_{\!\!4}}$, where the definitions or the details of $(F_4)_{E_1, E_2,E_3, F_1(e_k), k=0,1}$ and $(E_8)^{\sigma'_{\!\!4}, \mathfrak{so}(6)}$ are shown later. This amounts to the globally realization of one of seven types with an automorphism of order 4 on $E_8$.
 
According to \cite{kuri}, 
it is known that the involutive automorphisms of $k$-symmetric spaces $G/H$ preserving $H$ are important, because such involutive automorphisms play an important role in the classification of symmetric submanifolds on symmetric spaces. 

\noindent On the globally exceptional Riemannian $4$-symmetric space $E_8/(E_8)^{\sigma'_{\!\!4}}$, the involutive automorphism of this $4$-symmetric space preserving $(E_8)^{\sigma'_{\!\!4}}$ is given as the involutive automorphism $\ti{\sigma}$ induced by the $C$-linear transformation $\sigma$ of the complex vector space ${\mathfrak{e}_8}^C$, where $\sigma$ is defined later. Moreover, from $
(E_8)^\sigma \cong S\!s(16)(=S\!pin(16)/\Z_2)$ (\cite [Proposition 4.20.2]{miya}) and $(\sigma'_{\!\!4})^2=\sigma$, we can confirm that the space $E_8/(E_8)^{\sigma'_{\!\!4}}$ has the structure of fiber bundle as follows:
$$
   (E_8)^\sigma/(E_8)^{\sigma'_{\!\!4}} \,\to\, E_8/(E_8)^{\sigma'_{\!\!4}} \, \to \, E_8/(E_8)^\sigma,
$$
that is, 
$$
  S\!s(16)/(S\!pin(6) \times S\!pin(10))/\Z_4 \,\to \, E_8/(S\!pin(6) \times S\!pin(10))/\Z_4 \,\to \, E_8/S\!s(16).
$$
 
Again we would like to state about the group $(E_8)^{\sigma'_{\!\!4}, \mathfrak{so}(6)}$. The essential part to prove the isomorphism as a group is to show the connectedness of the group $(E_8)^{\sigma'_{\!\!4}, \mathfrak{so}(6)}$. In order to obtain this end, we need to treat the complex case as follows:
\begin{eqnarray*}
({F_4}^C)_{E_1, E_2,E_3, F_1(e_k), k=0,1} \!\!\! & \cong &\!\!\! S\!pin(6,C),
\\[1mm]
({E_7}^C)^{\sigma'_{\!\!4}}\!\!\! & \cong &\!\!\! (S\!L(2, C) \times S\!pin(6,C) \times S\!pin(6, C))/\Z_4, 
\\[1mm]
({E_7}^C)^{\sigma'_{\!\!4}, \mathfrak{so}(6,C)}\!\!\! & \cong &\!\!\! S\!L(2, C) \times S\!pin(6,C),
\\[1mm]
({E_8}^C)^{\sigma'_{\!\!4}, \mathfrak{so}(6,C)}\!\!\! & \cong &\!\!\! S\!pin(10, C), 
\\[1mm]
({E_8}^C)^{\sigma'_{\!\!4}} \!\!\! & \cong &\!\!\! (S\!pin(6,C) \times S\!pin(10,C))/\Z_4,
\end{eqnarray*}
and the connectedness of the group $({E_8}^C)^{\sigma'_{\!\!4}, \mathfrak{so}(6,C)}$,  the definitions or the details of the group on the left-hand side in each row above are also shown later.

In this article, in order to study the subgroups of $E_8$ as mentioned above, since we need to have some knowledge of the their complexification $G^C$ of $G=F_4, E_6, E_7$ or $E_8$, refer to \cite
{Imai}, \cite{realization G_2}, \cite{realization E_7} or \cite{realization E_8}, and we use the same notations as in \cite{realization G_2}, \cite{realization E_7}, \cite{realization E_8} or \cite{Yokotaichiro}.

Finally, the author would like to say that the feature of this article is to give elementary proofs by the homomorphism theorem except several proofs. 
\vspace{2mm}

%\subsubsection{The  definitions of Lie groups used in this subsection}
%%%%%%%%%%%%%%%%%%%%%%%%%%%%%%%%%%%%%%%%%%%%%%
%section 2
\section{Preliminaries}

Let $\mathfrak{J}(3,\mathfrak{C}^C )$ and $\mathfrak{J}(3, \mathfrak{C})$
 be the exceptional $C$- and $\R$-Jordan algebras, respectively. In $\mathfrak{J}(3,\mathfrak{C}^C )$, the Jordan multiplication $X \circ Y$, the 
inner product $(X,Y)$ and a cross multiplication $X \times Y$, called the Freudenthal multiplication, are defined by
$$
\begin{array}{c}
      X \circ Y = \dfrac{1}{2}(XY + YX), \quad (X,Y) = \tr(X \circ Y),
\vspace{1mm}\\
       X \times Y = \dfrac{1}{2}(2X \circ Y-\tr(X)Y - \tr(Y)X + (\tr(X)\tr(Y) 
- (X, Y))E), 
\end{array}$$
respectively, where $E$ is the $3 \times 3$ unit matrix. Moreover, we define the trilinear form $(X, Y, Z)$, the determinant $\det \,X$ by
$$
(X, Y, Z)=(X, Y \times Z),\quad \det \,X=\dfrac{1}{3}(X, X, X),
$$
respectively, and briefly denote $\mathfrak{J}(3, \mathfrak{C}^C)$ and  $\mathfrak{J}(3, \mathfrak{C})$ 
by $\mathfrak{J}^C$ and $\mathfrak{J}$, respectively. In $\mathfrak{J}$, we can also define the relational formulas above.

The connected complex Lie group ${F_4}^C$ and the connected compact Lie group $F_4$ are defined by
\begin{eqnarray*}
      {F_4}^C \!\!\!&=&\!\!\! \{\alpha \in \Iso_C(\mathfrak{J}^C) \, | \, \alpha(X \circ Y) = \alpha X \circ \alpha Y\}
\\[1mm]
                   \!\!\!&=&\!\!\! \{\alpha \in \Iso_C(\mathfrak{J}^C) \, | \, \det\, \alpha X=\det\, X, (\alpha X, \alpha Y)=(X, Y)\}
\\[1mm]
                    \!\!\!&=&\!\!\! \{\alpha \in \Iso_C(\mathfrak{J}^C) \, | \, \det\, \alpha X=\det\, X, \alpha E=E\},
\\[1mm]
       F_4 \!\!\!&=&\!\!\! \{\alpha \in \Iso_{\bm{R}}(\mathfrak{J}) \, |\, \alpha(X \circ Y) = \alpha X \circ \alpha Y\},
\end{eqnarray*}
respectively. Let $\tau$ be the complex conjugation in $\mathfrak{J}^C$. 
Then we have $F_4 =({F_4}^C)^\tau$ (see \cite[Section 2.4]{realization G_2} in detail).
Moreover, the Lie algebra ${\mathfrak{f}_4}^C$ 
of the group ${F_4}^C$ 
%and $F_4$  are 
is given by 
\begin{eqnarray*}
{\mathfrak{f}_4}^C\!\!\!&=&\!\!\!\{D +\ti{A}_1(a_1) +\ti{A}_2(a_2) +\ti{A}_3(a_3)\,|\, D \in \mathfrak{so}(8,C), a_k \in \mathfrak{C}^C, k=1,2,3\},
\end{eqnarray*}
where $\ti{A}_k(a_k)$ is the $C$-linear mapping of $\mathfrak{J}^C$ (see \cite[Theorem 2.2.2]{realization G_2} in detail).
\vspace{1mm}

We define an $\R$-linear transformation ${\sigma'}_{\!\!4}$  of $\mathfrak{J}$ by
$$
   {\sigma'}_{\!\!4} X={\sigma'}_{\!\!4} \begin{pmatrix} \xi_1 & x_3  & \ov{x}_2 \\
                                                    \ov{x}_3 & \xi_2 &  x_1 \\
                                                    x_2 & \ov{x}_1 &\xi_3     
                            \end{pmatrix} 
                         = \begin{pmatrix} \xi_1 & -x_3 e_1 & \ov{e_1 x_2} \\
                                                   -\ov{x_3 e_1} & \xi_2 & -e_1 x_1 e_1 \\
                                             e_1 x_2 & -\ov{e_1 x_1 e_1} & \xi_3                      
                             \end{pmatrix}, \,\,X \in \mathfrak{J},
$$
where the element $e_1$ is one of the basis of $\mathfrak{C}=\{e_0=1, e_1, e_2, \ldots, e_7 \}_{\bm{R}}$. 
Hereafter, a symbol $e_k$ means one of the basis of $\mathfrak{C}$ or $\mathfrak{C}^C$.
Then we have that ${\sigma'}_{\!\!4} \in S\!pin(8) \subset F_4 \subset {F_4}^C, ({\sigma'}_{\!\!4})^4=1, ({\sigma'}_{\!\!4})^2=\sigma$, where an $\R$-linear transformation $\sigma : \mathfrak{J} \to\mathfrak{J} $ is defined by 
$$
\sigma X=\sigma\begin{pmatrix} \xi_1 & x_3 & \ov{x}_2 \\
                                                \ov{x}_3 & \xi_2 & x_1 \\
                                                x_2 & \ov{x}_1 &\xi_3     
              \end{pmatrix} 
        = \begin{pmatrix} \xi_1 & -x_3 & -\ov{x}_2 \\
                                   -\ov{x}_3 & \xi_2 & x_1 \\
                                    -x_2 & \ov{x}_1 & \xi_3   
            \end{pmatrix},\,\,X \in \mathfrak{J}.
$$
(Note that the $\R$-linear transformation $\sigma$ of $\mathfrak{J}$ is naturally extended to the $C$-linear transformation of $\mathfrak{J}^C$.)
Hence ${\sigma'}_{\!\!4}$ induces the automorphisms $\tilde{\sigma'}_{\!\!4}$ of order 4 on $F_4$: $\tilde{\sigma'}_{\!\!4}(\alpha)={{\sigma'}_{\!\!4}}^{-1}\alpha{\sigma'}_{\!\!4}, \alpha \in F_4$, and using inclusion $F_4 \subset {F_4}^C$, the $\R$-linear transformation
 ${\sigma'}_{\!\!4}$ of $\mathfrak{J}$ is naturally extended to the $C$-linear transformation of $\mathfrak{J}^C$. Hence ${\sigma'}_{\!\!4}$ induces the automorphisms $\tilde{\sigma'}_{\!\!4}$ of order 4 on ${F_4}^C$: $\tilde{\sigma'}_{\!\!4}(\alpha)={{\sigma'}_{\!\!4}}^{-1}\alpha{\sigma'}_{\!\!4}, \alpha \in {F_4}^C$.
\vspace{1mm}

The simply connected complex Lie group ${E_6}^C$ is defined by
\begin{eqnarray*}
     {E_6}^C \!\!\!&=&\!\!\! \{\alpha \in \Iso_C(\mathfrak{J}^C) \, | \, \det \, \alpha X = \det \, X \}
\\
\!\!\!&=&\!\!\! \{\alpha \in \Iso_C(\mathfrak{J}^C) \, | \, (\alpha X, \alpha Y, \alpha Z)= (X,Y,Z)\}.
\end{eqnarray*}
%where $(X,Y,Z):=(X, Y \times Z)$.
Then we have naturally the inclusion ${F_4}^C \subset {E_6}^C$, and it is easy to see $({E_6}^C)_E = {F_4}^C$.
Moreover, the Lie algebra ${\mathfrak{e}_6}^C$ of the group ${E_6}^C$ is given by
$$
  {\mathfrak{e}_6}^C=\{\phi=\delta+\tilde{T}\,|\, \delta \in {\mathfrak{f}_4}^C, \, T \in (\mathfrak{J}^C)_0 \},
$$
where $(\mathfrak{J}^C)_0=\{X \in \mathfrak{J}^C \,|\,\tr(X)=0 \}$ and 
the $C$-linear mapping $\tilde{T}$ of $\mathfrak{J}^C$ is defined by $\tilde{T}X=T \circ X, X \in \mathfrak{J}^C$ (see \cite [Proposition 2.4.1, Theorem 3.2.1]{Yokotaichiro} in detail).
\vspace{1mm}

Let $\mathfrak{P}^C$ be the Freudenthal $C$-vector space 
$$
     \mathfrak{P}^C = \mathfrak{J}^C \oplus \mathfrak{J}^C \oplus C \oplus C, 
$$
in which the Freudenthal cross operation $P \times Q, P = (X, Y, \xi, \eta), Q = (Z, W, \zeta, \omega) \in \mathfrak{P}^C$, is defined as follows:
$$
      P \times Q = {\varPhi}(\phi, A, B, \nu), \quad
     \left \{ \begin{array}{l}
\vspace{1mm} 
          \phi = - \dfrac{1}{2}(X \vee W + Z \vee Y) \\
\vspace{1mm}
           A = - \dfrac{1}{4}(2Y \times W - \xi Z - \zeta X) \\
\vspace{1mm}
           B =  \dfrac{1}{4}(2X \times Z - \eta W - \omega Y) \\
\vspace{1mm}
           \nu = \dfrac{1}{8}((X, W) + (Z, Y) - 3(\xi\omega + \zeta\eta)),
\end{array} \right. 
$$
\noindent where $X \vee W \in {\mathfrak{e}_6}^C$ is defined by 
$$
    X \vee W = [\tilde{X}, \tilde{W}] + (X \circ W - \dfrac{1}{3}(X, W)E)^{\sim}, 
$$ 
\noindent  here the $C$-linear mappings $\tilde{X}, \tilde{W}$ of $\mathfrak{J}^C$ are same ones as in ${E_6}^C$.
\vspace{1mm}

The simply connected complex Lie group ${E_7}^C$ 
is defined by
\begin{eqnarray*}
    {E_7}^C \!\!\!&=&\!\!\!  \{\alpha \in \Iso_C(\mathfrak{P}^C) \, | \, \alpha(P \times Q)\alpha^{-1}=
\alpha P \times \alpha Q\}.
\end{eqnarray*}
Moreover, the Lie algebra ${\mathfrak{e}_7}^C$ 
of the group ${E_7}^C$
is given by
\begin{eqnarray*}
    {\mathfrak{e}_7}^C\!\!\!&=&\!\!\! \{\varPhi(\phi, A, B, \nu) \, | \, \phi \in {\mathfrak{e}_6}^C, A, B \in \mathfrak{J}^C, \nu \in C\}. 
\end{eqnarray*}
For $\alpha \in {E_6}^C$, the mapping $\ti{\alpha}: \mathfrak{P}^C \to \mathfrak{P}^C$ is defined by
$$
   \ti{\alpha}(X, Y, \xi, \eta)=(\alpha X, {}^t \alpha^{-1}Y, \xi, \eta),
$$
then we have $\ti{\alpha} \in {E_7}^C$, and so $\alpha$ and $\ti{\alpha}$ will be identified. The group ${E_7}^C$ contains ${E_6}^C$ as a subgroup by 
$$
 {E_6}^C=({E_7}^C)_{(0,0,1,0), (0,0,0,1)}.
$$
Hence we have the inclusion ${F_4}^C \subset {E_6}^C \subset {E_7}^C$.
Using these inclusions, the $C$-linear transformation
 ${\sigma'}_{\!\!4}$ of $\mathfrak{J}^C$ is naturally extended to the $C$-linear transformation of $\mathfrak{P}^C$:
$$
{\sigma'}_{\!\!4}(X, Y, \xi, \eta)=({\sigma'}_{\!\!4}X, {\sigma'}_{\!\!4} Y, \xi, \eta),\, (X, Y, \xi, \eta) \in \mathfrak{P}^C.
$$
Hence we see ${\sigma'}_{\!\!4} \in {E_7}^C$, and so ${\sigma'}_{\!\!4}$ induces the automorphisms $\tilde{\sigma'}_{\!\!4}$ of order 4 on ${E_7}^C$: $\tilde{\sigma'}_{\!\!4}(\alpha)={{\sigma'}_{\!\!4}}^{-1}\alpha{\sigma'}_{\!\!4}, \alpha \in {E_7}^C$.

Let ${\mathfrak{e}_8}^C$ be the $248$ dimensional $C$-vector space       
$$
    {\mathfrak{e}_8}^C = {\mathfrak{e}_7}^C \oplus \mathfrak{P}^C \oplus \mathfrak{P}^C \oplus C \oplus C \oplus C. 
$$
We define a Lie bracket $[R_1, R_2], R_1\!=\!(\varPhi_1, P_1, Q_1, r_1, s_1, t_1), R_2\!=\!(\varPhi_2, P_2, Q_2, r_2, s_2, t_2)$, by
$$
  [(\varPhi_1, P_1, Q_1, r_1, s_1, t_1), (\varPhi_2, P_2, Q_2, r_2, s_2, t_2)]=: 
  (\varPhi, P, Q, r, s, t), 
$$
$$
\left\{\begin{array}{l}
     \varPhi = {[}\varPhi_1, \varPhi_2] + P_1 \times Q_2 - P_2 \times Q_1
\vspace{1mm} \\
     P = \varPhi_1P_2 - \varPhi_2P_1 + r_1P_2 - r_2P_1 + s_1Q_2 - s_2Q_1 
\vspace{1mm} \\
     Q = \varPhi_1Q_2 - \varPhi_2Q_1 - r_1Q_2 + r_2Q_1 + t_1P_2 - t_2P_1
\vspace{1mm} \\
     r = - \dfrac{1}{8}\{P_1, Q_2\} + \dfrac{1}{8}\{P_2, Q_1\} + s_1t_2 - s_2t_1\vspace{1mm} \\
     s = \,\,\, \dfrac{1}{4}\{P_1, P_2\} + 2r_1s_2 - 2r_2s_1
\vspace{1mm} \\
     t = - \dfrac{1}{4}\{Q_1, Q_2\} - 2r_1t_2 + 2r_2t_1. 
\end{array} \right. 
$$
Then the $C$-vector space ${\mathfrak{e}_8}^C$ becomes a complex simple Lie algebra of type $E_8$.  
\vspace{2mm}

We define a $C$-linear transformation $\lambda_\omega$ of ${\mathfrak{e}_8}^C$
by
$$
       \lambda_\omega(\varPhi, P, Q, r, s, t) = (\lambda\varPhi\lambda^{-1}, \lambda Q, - \lambda P, -r, -t, -s), 
$$
where a $C$-linear transformation $\lambda$ of $\mathfrak{P}^C$ on the right-hand side is defined by
$ \lambda(X, Y, \xi, \eta)$ $ = (Y, - X, \eta, -\xi)$.
As in $\mathfrak{J}^C$, the complex conjugation in ${\mathfrak{e}_8}^C$ is denoted by $\tau$:     
$$
    \tau(\varPhi, P, Q, r, s, t) = (\tau\varPhi\tau, \tau P, \tau Q, \tau r, \tau s, \tau t). 
$$

The connected complex Lie group ${E_8}^C$ and the connected compact Lie group $E_8$ are defined by 
\begin{eqnarray*}
   {E_8}^C \!\!\!&=&\!\!\! \{\alpha \in \Iso_C({\mathfrak{e}_8}^C) \,|\, \alpha[R, R'] = [\alpha R, \alpha R']\}, 
\vspace{1mm} \\    
   E_8 \!\!\!&=&\!\!\! \{\alpha \in {E_8}^C \, | \, \tau\lambda_\omega \alpha\lambda_\omega  \tau = \alpha\} = ({E_8}^C)^{\tau\lambda_\omega }, 
\end{eqnarray*}
respectively. Moreover, the Lie algebra $\mathfrak{e}_8$ of the group $E_8$ is given by 
$$
 \mathfrak{e}_8=\{(\varPhi, P, -\tau\lambda P, r, s, -\tau s)\,|\,\varPhi \in \mathfrak{e}_7, P \in \mathfrak{J}^C, r \in i\R, s \in C  \}. 
$$

\noindent For $\alpha \in {E_7}^C$, the mapping $\tilde{\alpha} : {\mathfrak{e}_8}^C \to {\mathfrak{e}_8}^C$ is defined by
$$
     \tilde{\alpha}(\varPhi, P, Q, r, s, t) = (\alpha\varPhi\alpha^{-1}, \alpha P, \alpha Q, r, s, t),
$$
then we have $\tilde{\alpha} \in {E_8}^C$, and so $\alpha$ and $\tilde{\alpha}$ will be identified. The group ${E_8}^C$ contains ${E_7}^C$ as a subgroup by
$$       {E_7}^C  
=\{ \tilde{\alpha} \in {E_8}^C \,| \, \alpha \in {E_7}^C \}
=({E_8}^C)_{(0,0,0,1,0,0),(0,0,0,0,1,0),(0,0,0,0,0,1)}.
$$
Hence we have the inclusion ${E_7}^C \subset {E_8}^C$.
%, and  also we have the inclusion  $E_7 \subset E_8$. 
Using this inclusion, since the $C$-linear transformation
 ${\sigma'}_{\!\!4}$ of $\mathfrak{P}^C$ is naturally extended to the $C$-linear transformation of ${\mathfrak{e}_8}^C$: 
$$
{\sigma'}_{\!\!4}(\varPhi, P, Q, r,s,t)=({{\sigma'}_{\!\!4}}^{-1}\varPhi {\sigma'}_{\!\!4}, {\sigma'}_{\!\!4} P, {\sigma'}_{\!\!4} Q, r,s,t),\,(\varPhi, P, Q, r,s,t) \in{\mathfrak{e}_8}^C, 
$$ 
we have ${\sigma'}_{\!\!4} \in {E_8}^C$, and so ${\sigma'}_{\!\!4}$ induces the automorphisms $\tilde{\sigma'}_{\!\!4}$ of order 4 on ${E_8}^C$: $\tilde{\sigma'}_{\!\!4}(\alpha)={{\sigma'}_{\!\!4}}^{-1}\alpha{\sigma'}_{\!\!4}, \alpha \in {E_8}^C$, and so is $E_8$.
\vspace{1mm}

In the last of this section, we state two useful lemmas. 
\begin{lem}\label{lemma 2.1}
For Lie groups $G, G' $,  let a mapping $\varphi : G \to G'$ be a homomorphism of Lie groups. When $G'$ is connected, if $\Ker\,\varphi$ is discrete and $\dim(\mathfrak{g})=\dim(\mathfrak{g}')$, $\varphi$ is surjection.
\end{lem}
\begin{proof}
The proof is omitted (cf. \cite[Lemma 0.6 (2)]{realization G_2}).
\end{proof}
\begin{lem}[E. Cartan-Ra\v{s}evskii]\label{lemma 2.2}
Let $G$ be a simply connected Lie group with a finite order automorphism $\sigma$
 of $G$. Then $G^\sigma$ is connected.
\end{lem}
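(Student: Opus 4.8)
The plan is to split $G$ into its solvable and semisimple pieces, reduce the semisimple piece to the compact case, and there invoke \'E.~Cartan's theorem. First, since $\sigma$ has finite order it preserves some Levi subgroup $S$ of $G$: the Levi subgroups are permuted transitively by the simply connected nilradical $N$, and the nonabelian $H^{1}(\langle\sigma\rangle,N)$ is trivial (by d\'evissage along the lower central series, as $H^{1}$ of a finite group with coefficients in a real vector space vanishes), so $\sigma$ has a fixed point. Writing $R$ for the solvable, simply connected radical, we then have $G=R\rtimes S$ $\sigma$-equivariantly, hence $G^{\sigma}=R^{\sigma}\rtimes S^{\sigma}$; here $R^{\sigma}$ is connected at once, because $\exp\colon\mathfrak r\to R$ is a $\sigma$-equivariant diffeomorphism, so $R^{\sigma}$ is the image under $\exp$ of the fixed subspace of $d\sigma$ in $\mathfrak r$, a Euclidean space. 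Thus it suffices to show that $S^{\sigma}$ is connected for $S$ simply connected semisimple. Choosing, again by a fixed-point argument, a $\sigma$-invariant maximal compact subgroup $K$ of $S$, the Cartan decomposition $S=K\exp(\mathfrak p)$ is $\sigma$-equivariant, and its uniqueness identifies $S^{\sigma}$ diffeomorphically with $K^{\sigma}\times\exp(\mathfrak p^{\sigma})$, the second factor being Euclidean. So everything reduces to the case of $K$ compact, simply connected and semisimple. (For the complex groups ${F_4}^C,\dots,{E_8}^C$ of this paper, $S=G$ and $K$ is a compact real form, so only this last case is ever needed.)

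Next, for $G$ compact, simply connected and semisimple, I would average a bi-invariant Riemannian metric over $\langle\sigma\rangle$, so that $\sigma$ acts as an isometry fixing $e$. Given $g\in G^{\sigma}$, it then suffices to exhibit an $X\in\mathfrak g$ with $d\sigma(X)=X$ and $\exp X=g$, since then $t\mapsto\exp(tX)$ is a path from $e$ to $g$ lying in $G^{\sigma}$. The existence of such an $X$ is exactly the classical theorem of \'E.~Cartan on the fixed-point subgroups of finite-order automorphisms of compact simply connected Lie groups; equivalently it follows from Steinberg's theorem that the fixed-point subgroup of a semisimple automorphism of a simply connected semisimple algebraic group is connected, applied to the complexification $G^{C}$ and then intersected with the compact real form $G$ (the resulting group being a maximal compact subgroup of the connected reductive group $(G^{C})^{\sigma}$, hence connected).

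The step I expect to be the real obstacle is precisely this compact simply connected core, and it is here that simple connectedness of $G$ is used in an essential way. Even after arranging that $g$ lies in a $\sigma$-invariant maximal torus $T$ --- which one can do by a Lefschetz argument, since $G/N(T)$ has rational cohomology concentrated in degree $0$ and hence Euler characteristic $1$ --- the path $\exp(tX)$ with $X\in\mathfrak t$ and $\exp X=g$ need not remain in $G^{\sigma}$, since $\exp^{-1}(g)\cap\mathfrak t$ need not meet $\Ker(d\sigma-1)$; indeed $\pi_{0}(T^{\sigma})\cong H^{1}(\langle\sigma\rangle,\Lambda)$, with $\Lambda$ the cocharacter lattice of $T$, and this is in general nonzero. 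The point of \'E.~Cartan's theorem is that the extra room available in $G^{\sigma}$ beyond $T^{\sigma}$, together with $\pi_{1}(G)=1$, forces $g$ back into the identity component. I would therefore present the two reductions above in detail and treat this last ingredient as a cited classical fact.
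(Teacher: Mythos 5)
The paper does not actually prove this lemma---it is stated with ``proof omitted'' and a pointer to \cite[Lemma 0.7]{realization G_2}---so there is no argument of the paper's to compare yours against; I can only assess your reduction on its own terms. Its architecture (invariant Levi decomposition, then Cartan decomposition, then the compact simply connected core) is the standard one and correctly isolates where simple connectedness enters, and since every application of the lemma in this paper is to a simply connected complex semisimple group or a compact real form thereof, your parenthetical remark that only that last case is ever needed here is accurate. Note, though, that you ultimately cite the compact simply connected case (\'E.~Cartan, or Steinberg plus descent to the compact form) rather than prove it, so your write-up is no more self-contained than the paper's own one-line citation; the reductions are the only new content.

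Two of your intermediate justifications are wrong as stated, although both conclusions are true and repairable. First, $\exp\colon\mathfrak r\to R$ is \emph{not} a diffeomorphism for a general simply connected solvable group: the universal cover $\widetilde{E(2)}=\R^2\rtimes\R$ of the Euclidean motion group is the standard counterexample (its exponential map is neither injective nor surjective), so connectedness of $R^\sigma$ cannot be read off from $\exp$. It instead needs the same d\'evissage you used for $N$: run down the derived series of $R$, use the vanishing of $H^1(\langle\sigma\rangle,V)$ for a real vector space $V$ to get surjectivity of $R^\sigma\to(R/[R,R])^\sigma$, and conclude by induction that $R^\sigma$ is an extension of connected groups. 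Second, for $S$ simply connected semisimple the subgroup $K$ in the decomposition $S=K\exp(\mathfrak p)$ is simply connected but need be neither compact nor semisimple ($\widetilde{SL(2,\R)}$ has $K\cong\R$, and for $\widetilde{Sp(2n,\R)}$ one has $\mathfrak k\cong\mathfrak u(n)$), so your reduction has not yet reached ``$K$ compact, simply connected and semisimple.'' One further step closes this: $\mathfrak k=\mathfrak z(\mathfrak k)\oplus[\mathfrak k,\mathfrak k]$ is a characteristic decomposition, so $K\cong\R^k\times K_{ss}$ with $K_{ss}$ compact simply connected semisimple (Weyl's theorem), $\sigma$ preserves both factors, and $(\R^k)^\sigma$ is a linear subspace, hence connected. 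With these two repairs, and with the compact simply connected case taken as the cited classical input, the argument goes through.
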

\begin{proof}
The proof is omitted (cf. \cite[Lemma 0.7]{realization G_2}).
\end{proof}

\noindent After this, using these lemmas without permission each times, we often prove lemma, proposition or theorem.
%\vspace{2mm}
%%%%%%%%%%%%%%%%%%%%%%%%%%%%%%%%%%%%%%%%%%%%%%%
%Section 3
\section{ The group ${(F_4}^C)_{E_1, E_2, E_3, F_1(e_k), k=0, 1}$}

The aim of this section is to determine the structure of the group ${(F_4}^C)_{E_1, E_2,E_3, F_1(e_k), k=0, 1}$:
$$
({F_4}^C)_{E_1, E_2,E_3, F_1(e_k), k=0, 1}=\{ \alpha \in {F_4}^C \, |\,
\alpha E_i=E_i, i=1,2,3, 
\alpha  F_1(e_k)= F_1(e_k), k=0, 1\}. \vspace{1mm}
$$

Now, we start to make preparations. 
\vspace{1mm}

We define groups ${(F_4}^C)_{E_1, E_2,E_3}$ and $S\!pin(8,C)$ by
\begin{eqnarray*}
{(F_4}^C)_{E_1, E_2,E_3}\!\!\!&=&\!\!\! \{\alpha \in {F_4}^C\,|\,\alpha E_i=E_i,   i=1,2,3  \},
\\[1mm]
 S\!pin(8,C)\!\!\!&=&\!\!\!\{ (\alpha_1, \alpha_2, \alpha_3) \in S\!O(8,C)^{\times 3} \,|\, (\alpha_1 x)(\alpha_2 y)=\ov{\alpha_3 (\ov{xy})}, x, y \in \mathfrak{C}^C \},  
\end{eqnarray*}
respectively. Then we have the following theorem.
\begin{thm}\label{thm 3.1}
The group ${(F_4}^C)_{E_1, E_2,E_3}$ is isomorphic to $S\!pin(8,C)${\rm:} ${(F_4}^C)_{E_1, E_2,E_3} \cong $ \\ $ S\!pin(8,C)$.
\end{thm}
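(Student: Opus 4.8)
The plan is to construct an explicit isomorphism $\varphi\colon S\!pin(8,C) \to ({F_4}^C)_{E_1,E_2,E_3}$ and then apply the homomorphism theorem (Lemma \ref{lemma 2.1}) together with connectedness. For a triple $(\alpha_1,\alpha_2,\alpha_3) \in S\!pin(8,C)$, define $\varphi(\alpha_1,\alpha_2,\alpha_3)=\alpha \in \Iso_C(\mathfrak{J}^C)$ by letting $\alpha$ fix each $E_i$ and act on the off-diagonal Cayley entries $x_1,x_2,x_3$ (in positions $(2,3),(3,1),(1,2)$ respectively) by $x_1 \mapsto \alpha_1 x_1$, $x_2 \mapsto \alpha_2 x_2$, $x_3 \mapsto \alpha_3 x_3$. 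First I would check that $\alpha$ really lies in ${F_4}^C$: using the characterization ${F_4}^C=\{\alpha \mid \det\,\alpha X=\det\,X,\ \alpha E=E\}$, the condition $\alpha E=E$ is automatic since $\alpha$ fixes $E_1+E_2+E_3=E$, and $\det(\alpha X)=\det X$ reduces — after expanding $\det$ in coordinates — precisely to the triality relation $(\alpha_1 x)(\alpha_2 y)=\overline{\alpha_3(\overline{xy})}$ defining $S\!pin(8,C)$ (the norm-preservation of each $\alpha_i \in S\!O(8,C)$ handles the remaining terms). Clearly $\alpha E_i = E_i$, so $\alpha \in ({F_4}^C)_{E_1,E_2,E_3}$, and $\varphi$ is visibly a group homomorphism.

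Next I would compute $\Ker\,\varphi$. If $\varphi(\alpha_1,\alpha_2,\alpha_3)=1$ then each $\alpha_i$ is the identity on $\mathfrak{C}^C$, so $\Ker\,\varphi=\{1\}$; in particular $\varphi$ is injective. (If one instead parametrizes $S\!pin(8,C)$ as a quotient, this step becomes the identification of the relevant $\Z_2$; but with the triple-of-$S\!O(8,C)$ model the kernel is genuinely trivial.) Then I would invoke Lemma \ref{lemma 2.1}: $S\!pin(8,C)$ is connected, $\Ker\,\varphi$ is discrete, and $\dim \mathfrak{spin}(8,C)=28=\dim\,\mathfrak{so}(8,C)=\dim\,({\mathfrak{f}_4}^C)_{E_1,E_2,E_3}$, the last equality because by the structure of ${\mathfrak{f}_4}^C$ recalled in Section 2 the subalgebra annihilating $E_1,E_2,E_3$ is exactly the $\mathfrak{so}(8,C)$ summand (the $\ti{A}_k(a_k)$ terms move the $E_i$). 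Hence $\varphi$ is surjective, and being also injective it is an isomorphism.

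The main obstacle I anticipate is the dimension-matching/surjectivity input: one must know that $({F_4}^C)_{E_1,E_2,E_3}$ has Lie algebra exactly the $\mathfrak{so}(8,C)$-part of ${\mathfrak{f}_4}^C$, i.e.\ that imposing $\alpha E_i=E_i$ for $i=1,2,3$ cuts the $52$-dimensional ${\mathfrak{f}_4}^C$ down to precisely $28$ dimensions with no extra infinitesimal symmetries; this is a short but essential computation with the explicit mappings $\ti{A}_k(a_k)$ from \cite{realization G_2}. A secondary check is verifying that the action of $\alpha$ on the three Cayley coordinates, together with the prescribed action on the diagonal, is a well-defined Jordan-algebra automorphism of the correct form — this is exactly where the triality identity, rather than three independent copies of $S\!O(8,C)$, is forced, and it is the reason the fiber of $\varphi$ over $({F_4}^C)_{E_1,E_2,E_3}$ is $S\!pin(8,C)$ and not $S\!O(8,C)^{\times 3}$. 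Everything else — homomorphism property, $\alpha E=E$, and the triviality of the kernel — is routine.
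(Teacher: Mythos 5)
Your map $\varphi$ is exactly the one the paper uses (the paper then simply cites Yokota for the remaining verifications), and your checks of well-definedness via the determinant, the homomorphism property, and the triviality of the kernel in the triples-of-$S\!O(8,C)$ model are all fine. The one genuine gap is in the surjectivity step: Lemma \ref{lemma 2.1} requires the \emph{target} group to be connected, not the source, so quoting the connectedness of $S\!pin(8,C)$ does not let you conclude. What you would actually need is the connectedness of $({F_4}^C)_{E_1,E_2,E_3}$, which is true but not free: this group is the simultaneous stabilizer of three idempotents (equivalently, a common fixed-point set of more than one involutive automorphism), so Lemma \ref{lemma 2.2} does not apply directly either, and the dimension count together with $\Ker\,\varphi=\{1\}$ only shows that the image of $\varphi$ is the identity component of the target.

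The clean repair --- and the one consistent with how the paper argues surjectivity in the analogous Theorem \ref{thm 3.3} --- is direct. Given $\alpha\in({F_4}^C)_{E_1,E_2,E_3}$, the conditions $\alpha E_i=E_i$ force $\alpha$ to preserve each Peirce space $(\mathfrak{J}^C)_k=\{F_k(x)\,|\,x\in\mathfrak{C}^C\}$, so $\alpha F_k(x)=F_k(\alpha_k x)$ for some $\alpha_k\in\Iso_C(\mathfrak{C}^C)$. Applying $\alpha$ to $F_k(x)\circ F_k(y)=(x,y)(E_{k+1}+E_{k+2})$ gives $\alpha_k\in O(8,C)$, and applying it to $F_1(x)\circ F_2(y)=\frac{1}{2}F_3(\ov{xy})$ gives $(\alpha_1x)(\alpha_2y)=\ov{\alpha_3(\ov{xy})}$, whence $\alpha_k\in S\!O(8,C)$ and $(\alpha_1,\alpha_2,\alpha_3)\in S\!pin(8,C)$ with $\varphi((\alpha_1,\alpha_2,\alpha_3))=\alpha$. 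This exhibits every element of the stabilizer as lying in the image, with no appeal to connectedness of the target; your $28$-dimensional computation of $({\mathfrak{f}_4}^C)_{E_1,E_2,E_3}$ then becomes a consistency check rather than a load-bearing step.
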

\begin{proof}
We define a mapping $\varphi:S\!pin(8,C) \to {(F_4}^C)_{E_1, E_2,E_3}$ by
$$
  \varphi((\alpha_1, \alpha_2, \alpha_3))X=\begin{pmatrix} 
                           \xi_1 & \alpha_3 x_3  & \ov{\alpha_2 x_2} \\
                       \ov{\alpha_3 x_3} & \xi_2 &  \alpha_1 x_1 \\
                          \alpha_2 x_2 & \ov{\alpha_1 x_1} &\xi_3     
                                                 \end{pmatrix}, \,X \in \mathfrak{J}^C. 
$$
This homomorphism $\varphi$ induces the isomorphism between ${(F_4}^C)_{E_1, E_2,E_3}$ and $S\!pin(8,C)$ (cf.  \cite[Theorem 2.7.1]{Yokotaichiro}).
\end{proof}
As necessary, we denote any element $\alpha \in {(F_4}^C)_{E_1, E_2,E_3}$ by $(\alpha_1, \alpha_2, \alpha_3) $ $\in S\!pin(8,C)$, that is, $\alpha=(\alpha_1, \alpha_2, \alpha_3)$.
\vspace{2mm}

We define an $\R$-linear transformation $\delta_1$ of $\mathfrak{C}$ by 
$$
 \delta_1: e_0 \to e_6,\, e_1 \to e_7,\, e_i \to e_i,\,i=2,3,4,5, \,e_6 \to e_0,\,e_7 \to e_1,
$$
basiswisely. Using matrix representation, the explicit form of $\delta_1$ is as follows:
$$
\delta_1=\begin{pmatrix}              0&&&&&&1&0  \\
                                      &0&&&&&0&1 \\
                                      &&1&&&&& \\
                                      &&&1&&&& \\
                                      &&&&1&&& \\
                                      &&&&&1&& \\
                                      1&0&&&&&0& \\
                                      0&1&&&&&&0 \\
                \end{pmatrix} \in M(8, \R),
$$
where the blanks are $0$. Then we easily see that $\delta_1 \in S\!O(8)$. The $\R$-linear transformation $\delta_1$ is naturally extended to the $C$-linear transformation of \vspace{1mm} $\mathfrak{C}^C$.

We consider groups ${(F_4}^C)_{E_1, E_2, E_3, F_1(e_k), k=0, 1,2,3,4,5}$ and ${(F_4}^C)_{E_1, E_2,E_3, F_1(e_k), k=2,3,4,5,6,7}$:
\begin{eqnarray*}
({F_4}^C)_{E_1, E_2,E_3, F_1(e_k), k=0,1,2,3,4,5}\!\!\!&=&\!\!\! \biggl\{ \alpha \in {F_4}^C \,\biggm| \,
\begin{array}{l}
\alpha E_i=E_i, i=1,2,3, \\
\alpha  F_1(e_k)= F_1(e_k), k=0,1,2,3,4,5 
\end{array}\biggr\},
\\[1mm]
({F_4}^C)_{E_1, E_2,E_3, F_1(e_k), k=2,3,4,5,6,7}\!\!\!&=&\!\!\! \biggl\{ \alpha \in {F_4}^C \,\biggm| \,
\begin{array}{l}
\alpha E_i=E_i, i=1,2,3, \\
\alpha  F_1(e_k)= F_1(e_k),k=2,3,4,5,6,7 
\end{array}\biggr\}.
\end{eqnarray*}
  
Hereafter, we often denote $k\!=\!0, 1, 2,3, 4, 5$ by abbreviated form $k\!=\!0,\ldots,5$, and also often denote these groups above by abbreviated forms $({F_4}^C)_{E_{1,2,3},F_1(0,\ldots,5)},  ({F_4}^C)_{E_{1,2,3},F_1(2,\ldots,7)}$ 
as example. The other cases are similar to these.

\begin{prop}\label{prop 3.2}
The group $({F_4}^C)_{E_{1,2,3},F_1(0,\ldots,5)}$ is isomorphic to the group $({F_4}^C)_{E_{1,2,3},F_1(2,\ldots,7)}${\rm:}\\$({F_4}^C)_{E_{1,2,3},F_1(0,\ldots,5)} \cong ({F_4}^C)_{E_{1,2,3},F_1(2,\ldots,7)}$.
\end{prop}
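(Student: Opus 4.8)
The plan is to exhibit an explicit element of $({F_4}^C)_{E_1,E_2,E_3}$ which conjugates one group onto the other, using the triality description from Theorem \ref{thm 3.1}. Concretely, I would work inside $S\!pin(8,C)\cong ({F_4}^C)_{E_1,E_2,E_3}$ and look for a triple $(\beta_1,\beta_2,\beta_3)\in S\!pin(8,C)$ such that conjugation by the associated $\alpha=(\beta_1,\beta_2,\beta_3)\in ({F_4}^C)_{E_1,E_2,E_3}$ sends the six vectors $F_1(e_0),\ldots,F_1(e_5)$ that cut out the first group to the six vectors $F_1(e_2),\ldots,F_1(e_7)$ that cut out the second. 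Since $\varphi((\alpha_1,\alpha_2,\alpha_3))$ acts on the $(1,1)$-slot of $\mathfrak{J}^C$ via $\alpha_1$, and $F_1(x)$ is the Jordan matrix with $x$ in the $\alpha_1$-slot, conjugation by $\alpha$ sends $F_1(x)$ to $F_1(\beta_1 x)$ (up to adjusting by the inverse on the other slots — this needs to be checked from the multiplication formula, but is routine). So the condition becomes: find $\beta_1\in S\!O(8,C)$ with $\beta_1\{e_0,\ldots,e_5\}_C=\{e_2,\ldots,e_7\}_C$ as subspaces, together with $\beta_2,\beta_3$ completing it to a triality triple.

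The natural candidate is precisely the transformation $\delta_1$ introduced just before the proposition: one checks $\delta_1\{e_0,\ldots,e_5\}_{\bm R}=\{e_6,e_7,e_2,e_3,e_4,e_5\}_{\bm R}=\{e_2,\ldots,e_7\}_{\bm R}$, and $\delta_1\in S\!O(8)$. So I would take $\beta_1=\delta_1$ and produce $\beta_2,\beta_3$ from the triality principle — for $S\!pin(8)$, given one $\alpha_1\in S\!O(8)$ there exist (two choices of) $\alpha_2,\alpha_3$ making $(\alpha_1,\alpha_2,\alpha_3)$ satisfy the $S\!pin(8,C)$ relation $(\alpha_1 x)(\alpha_2 y)=\ov{\alpha_3(\ov{xy})}$; this is exactly how elements of $({F_4}^C)_{E_1,E_2,E_3}$ are built in \cite{Yokotaichiro}. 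Then set $\alpha:=\varphi((\delta_1,\alpha_2,\alpha_3))\in ({F_4}^C)_{E_1,E_2,E_3}\subset {F_4}^C$. By construction $\alpha E_i=E_i$, and one verifies directly that $\alpha F_1(e_k)\alpha^{-1}\in \{F_1(e_2),\ldots,F_1(e_7)\}_C$ for $k=0,\ldots,5$, hence $\alpha\,({F_4}^C)_{E_{1,2,3},F_1(0,\ldots,5)}\,\alpha^{-1}\subset ({F_4}^C)_{E_{1,2,3},F_1(2,\ldots,7)}$. Applying the symmetric argument with $\delta_1^{-1}$ (which sends $\{e_2,\ldots,e_7\}$ back to $\{e_0,\ldots,e_5\}$) gives the reverse inclusion, so $\mathrm{Ad}(\alpha)$ restricts to the desired group isomorphism.

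The step I expect to be the main obstacle is pinning down the precise action of $\varphi((\alpha_1,\alpha_2,\alpha_3))$ on the off-diagonal Jordan elements $F_k(x)$ and confirming that it is $\alpha_1$ (and not $\alpha_2$ or $\alpha_3$, or some twisted version) that governs the $F_1$-slot, together with checking that $\delta_1$ really does admit a lift to $S\!pin(8)$ compatible with the triality relation in the form stated. Both are essentially bookkeeping with the definition of $\varphi$ and the $S\!pin(8,C)$ relation, but the indexing of the three $8$-dimensional representations has to be handled carefully; a sign or an index permutation error here would break the argument. Everything else — that $\delta_1$ permutes the basis as claimed, that it lies in $S\!O(8)$, that the two inclusions combine to an isomorphism — is immediate. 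As usual we invoke Lemma \ref{lemma 2.1} and Lemma \ref{lemma 2.2} only if one prefers to phrase the surjectivity abstractly, but here the explicit conjugation makes that unnecessary.
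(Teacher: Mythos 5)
Your proposal is correct and is essentially the paper's own proof: the paper conjugates by $\delta=(\delta_1,\delta_2,\delta_3)\in S\!pin(8,C)\cong({F_4}^C)_{E_1,E_2,E_3}$, with $\delta_2,\delta_3$ supplied by the triality principle, and checks via $\delta F_1(x)=F_1(\delta_1 x)$ that conjugation carries one pointwise stabilizer onto the other, exactly as you outline. The only caveat is notational: the group elements act on $F_1(e_k)$ as Jordan-algebra vectors, so the relevant condition is $\alpha F_1(e_k)=F_1(e_k)$ rather than $\alpha F_1(e_k)\alpha^{-1}$, but this does not affect the argument.
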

\begin{proof}

We define a mapping $\varphi:({F_4}^C)_{E_{1,2,3},F_1(0,\ldots,5)} \to ({F_4}^C)_{E_{1,2,3},F_1(2,\ldots,7)}$ by 
$$
    \varphi(\alpha)=\delta^{-1}\alpha\delta, 
$$
where $\delta=(\delta_1, \delta_2, \delta_3) \in S\!pin(8,C) \cong ({F_4}^C)_{E_1,E_2,E_3}$ (Theorem \ref{thm 3.1}), here $\delta_1$ is defined in previous page, and note that for this $\delta_1$ there exist 
$\delta_2, \delta_3 \in S\!O(8, C)$ by the Principal of triality on  $S\!O(8,C)$. From $\alpha, \delta \in ({F_4}^C)_{E_1,E_2,E_3}$,
it is easy to see that $\varphi(\alpha) \in ({F_4}^C)_{E_1,E_2,E_3}$.

\noindent Moreover, we have that
\begin{eqnarray*}
\varphi(\alpha)F_1(e_6)\!\!\!&=&\!\!\!(\delta^{-1}\alpha\delta)F_1(e_6)=(\delta^{-1}\alpha)F_1(\delta_1 e_6)
\\
\!\!\!&=&\!\!\!(\delta^{-1}\alpha)F_1(e_0)=\delta^{-1}F_1(e_0)=F_1({\delta_1}^{-1}e_0)=F_1(e_6)
\end{eqnarray*}
Similarly, we have  $
%\varphi((\alpha_1, \alpha_2, \alpha_3))
\varphi(\alpha)F_1(e_7)=F_1(e_7)$, and  
it is clear that  $
%\varphi((\alpha_1, \alpha_2, \alpha_3))
\varphi(\alpha)F_1(e_k)=F_1(e_k), k=2,3,4,5$. Hence 
we have $\varphi(\alpha) \in ({F_4}^C)_{E_{1,2,3},F_1(2,\ldots,7)}$, that is, 
$\varphi$ is well-defined. From the definition of the mapping $\varphi$, it is clear that $\varphi$ is bijection. 

Therefore we have the required isomorphism 
$$
({F_4}^C)_{E_{1,2,3},F_1(0,\ldots,5)} \cong ({F_4}^C)_{E_{1,2,3},F_1(2,\ldots,7)}.
$$
\end{proof}

Let the complex unitary group $U(1, \C^C)=\{\theta \in \C^C\,|\,{\theta}\,\ov{\theta}=1  \}$. Then we have the following lemma.

\begin{thm}\label{thm 3.3}
The group $({F_4}^C)_{E_{1,2,3},F_1(2,\ldots,7)}$ is isomorphic to  $U(1, \C^C)${\rm :} 
$({F_4}^C)_{E_{1,2,3},F_1(2,\ldots,7)} \cong U(1, \C^C)$.
\end{thm}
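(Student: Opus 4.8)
The plan is to write down an explicit homomorphism $\varphi:U(1,\C^C)\to({F_4}^C)_{E_{1,2,3},F_1(2,\ldots,7)}$ and to finish with the homomorphism theorem (Lemma \ref{lemma 2.1}), in the spirit of the earlier results. The first step is a translation via Theorem \ref{thm 3.1}: identifying $\alpha\in({F_4}^C)_{E_1,E_2,E_3}$ with its triple $(\alpha_1,\alpha_2,\alpha_3)\in S\!pin(8,C)$, the explicit formula there gives $\alpha F_1(a)=F_1(\alpha_1 a)$ for $a\in\mathfrak{C}^C$; hence, for such $\alpha$, the conditions $\alpha F_1(e_k)=F_1(e_k)$ $(k=2,\ldots,7)$ are equivalent to $\alpha_1 e_k=e_k$ $(k=2,\ldots,7)$. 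Since $S\!O(8,C)$ acts on $\mathfrak{C}^C=\C^C\oplus\langle e_2,\ldots,e_7\rangle_C$ (an orthogonal, non-degenerate splitting), any such $\alpha_1$ preserves $\C^C=\langle e_0,e_1\rangle_C$ and restricts there to an element of $S\!O(\C^C)\cong S\!O(2,C)$. In other words, $({F_4}^C)_{E_{1,2,3},F_1(2,\ldots,7)}$ is exactly the preimage, under the two-fold covering $\mu:S\!pin(8,C)\to S\!O(8,C)$, $\alpha\mapsto\alpha_1$, of the connected subgroup $S\!O(\C^C)\cong S\!O(2,C)$, and $\Ker\mu\cong\Z_2$ is contained in it.

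Next I would construct $\varphi$. For $\theta\in U(1,\C^C)$ put $\alpha_1:=(x\mapsto\theta x\theta)$ on $\mathfrak{C}^C$ (well defined by alternativity): since $\theta\,\ov\theta=1$ one has $\alpha_1\in S\!O(8,C)$, and a direct computation in $\mathfrak{C}^C$ shows that $\alpha_1$ acts on $\C^C$ as multiplication by $\theta^2$ and fixes $e_2,\ldots,e_7$. By the principle of triality on $S\!O(8,C)$ (as already used in Proposition \ref{prop 3.2}) there exist $\alpha_2,\alpha_3\in S\!O(8,C)$ with $(\alpha_1,\alpha_2,\alpha_3)\in S\!pin(8,C)$, and they can be written explicitly in terms of $\theta$; let $\varphi(\theta)$ be the element of $({F_4}^C)_{E_1,E_2,E_3}$ attached to this triple by Theorem \ref{thm 3.1}. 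By the previous paragraph $\varphi(\theta)$ fixes $E_1,E_2,E_3$ and $F_1(e_k)$, $k=2,\ldots,7$, so $\varphi(\theta)\in({F_4}^C)_{E_{1,2,3},F_1(2,\ldots,7)}$. That $\varphi$ is a homomorphism reduces (through Theorem \ref{thm 3.1}) to the identity $(x\mapsto\theta\theta' x\theta\theta')=(x\mapsto\theta x\theta)\circ(x\mapsto\theta' x\theta')$ together with the corresponding identities for the triality partners, all of which hold because any associator of an element of $\mathfrak{C}^C$ with two elements of the commutative, associative subalgebra $\C^C$ vanishes.

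It then remains to compute $\Ker\varphi$ and conclude. If $\varphi(\theta)=1$, then $\alpha_1=(x\mapsto\theta x\theta)$ is the identity, so $\theta^2=1$, i.e. $\theta=\pm 1$; and $\varphi(-1)$ is the non-trivial element of $\Ker\mu$ (it is $\neq 1$ because its triality partners are not both the identity), so $\Ker\varphi=\{1\}$ and $\varphi$ is injective. For surjectivity, $U(1,\C^C)\cong C^{*}$ is connected and one-dimensional, while $({F_4}^C)_{E_{1,2,3},F_1(2,\ldots,7)}$ is also one-dimensional (it covers $S\!O(\C^C)$ with finite kernel) and connected: a full $2\pi$-rotation of the plane $\langle e_0,e_1\rangle_C$ represents the non-trivial class of $\pi_1(S\!O(8,C))=\Z_2$, so $\pi_1(S\!O(\C^C))\to\pi_1(S\!O(8,C))$ is onto and the preimage of $S\!O(\C^C)$ in $S\!pin(8,C)$ is connected. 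Lemma \ref{lemma 2.1} then yields that $\varphi$ is onto, hence an isomorphism, so $({F_4}^C)_{E_{1,2,3},F_1(2,\ldots,7)}\cong U(1,\C^C)$. Alternatively, surjectivity can be checked by hand: any $\alpha$ in the group has $\alpha_1|_{\C^C}$ equal to multiplication by $\theta_0^2$ for some $\theta_0\in U(1,\C^C)$, whence $\alpha\,\varphi(\theta_0)^{-1}\in\Ker\mu=\{1,\varphi(-1)\}$ and $\alpha\in\Img\varphi$.

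The step I expect to be the main obstacle is the triality bookkeeping: producing the partners $\alpha_2,\alpha_3$ of $\alpha_1=(x\mapsto\theta x\theta)$ in closed form, verifying $(\alpha_1 x)(\alpha_2 y)=\ov{\alpha_3(\ov{xy})}$ via the Moufang identities, and confirming that $\varphi(-1)$ really is the non-trivial element of $\Ker\mu$ — equivalently that $\Img\varphi$ already meets $\Ker\mu$, which is what makes the degree-two cover $U(1,\C^C)\to S\!O(\C^C)$, $\theta\mapsto(x\mapsto\theta^2 x)$, match the degree-two cover $({F_4}^C)_{E_{1,2,3},F_1(2,\ldots,7)}\to S\!O(\C^C)$.
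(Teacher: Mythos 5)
Your proposal is correct and follows essentially the same route as the paper: the same explicit homomorphism from $U(1,\C^C)$ (built from $x\mapsto\theta x\theta$ together with its triality partners; the paper's $\phi(\theta)$ uses $\ov{\theta}x\ov{\theta}$, $\theta x$, $x\theta$), trivial kernel, and surjectivity by decomposing an arbitrary element of the group into its $S\!pin(8,C)$-triple $(\alpha_1,\alpha_2,\alpha_3)$ and identifying $\alpha_1|_{\C^C}$ with multiplication by a square of an element of $U(1,\C^C)$. The only cosmetic differences are that the paper verifies $\phi(\theta)\in {F_4}^C$ by a direct determinant computation on $\mathfrak{J}^C$ rather than by assembling the element through Theorem \ref{thm 3.1}, and that your auxiliary $\pi_1$/connectedness argument is not needed, since surjectivity is exactly your ``by hand'' variant.
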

\begin{proof}
We define a mapping $\phi :U(1, \C^C) \to ({F_4}^C)_{E_{1,2,3},F_1(2,\ldots,7)}$ by
$$
  \phi(\theta)X=\begin{pmatrix} 
                                  \xi_1 & x_3\theta & \ov{\theta x_2} \\
                  \ov{x_3 \theta} & \xi_2 & \ov{\theta}x_1 \ov{\theta} \\
                   \theta x_2 & \theta\,\ov{x}_1\theta &\xi_3     
              \end{pmatrix}, X \in \mathfrak{J}^C. 
$$
Then $\phi$ is well-defined. Indeed, by using  the relational formula $\Re(x(yz))=\Re(y(zx))=\Re(z(xy)), x, y, z \in \mathfrak{C}^C$, we have that
\begin{eqnarray*}
\det\,(\phi(\theta)X)\!\!\!&=&\!\!\!\xi_1\xi_2\xi_3+2{\Re}((\ov{\theta}x_1 \ov{\theta})(\theta x_2)( x_3) \theta))-\xi_1(\ov{\theta}x_1 \ov{\theta})(\theta\,\ov{x}_1\theta)
\\
&&\hspace*{40mm}-\xi_2(\theta x_2)(\ov{\theta_2 x_2})-\xi_3(x_3\theta)(\ov{x_3\theta})
\\
%\end{eqnarray*}
%\begin{eqnarray*}
\!\!\!&=&\!\!\!\xi_1\xi_2\xi_3+2{\Re}((\ov{\theta}x_1 \ov{\theta})({\theta (x_2 x_3) \theta}))-\xi_1|\ov{\theta}x_1 \ov{\theta}|^2
-\xi_2|\theta x_2|^2-\xi_3|x_3\theta|^2\,
\\
%\end{eqnarray*}
%\begin{eqnarray*}
\!\!\!&=&\!\!\!\xi_1\xi_2\xi_3+2{\Re}(\theta(\ov{\theta}x_1 \ov{\theta}))(\theta (x_2 x_3))-\xi_1|x_1|^2
-\xi_2| x_2|^2-\xi_3|x_3|^2
\\
%\end{eqnarray*}
%\begin{eqnarray*}
\!\!\!&=&\!\!\!\xi_1\xi_2\xi_3+2{\Re}(x_1 \ov{\theta})(\theta (x_2 x_3))-\xi_1|x_1|^2
-\xi_2| x_2|^2-\xi_3|x_3|^2
\\
%\end{eqnarray*}
%\begin{eqnarray*}
\!\!\!&=&\!\!\!\xi_1\xi_2\xi_3+2{\Re}((x_2 x_3)((x_1 \ov{\theta})\theta )-\xi_1|x_1|^2
-\xi_2| x_2|^2-\xi_3|x_3|^2
\\
%\end{eqnarray*}
%\begin{eqnarray*}
\!\!\!&=&\!\!\!\xi_1\xi_2\xi_3+2{\Re}((x_2 x_3)(x_1 )-\xi_1|x_1|^2
-\xi_2| x_2|^2-\xi_3|x_3|^2
\\
%\end{eqnarray*}
%\begin{eqnarray*}
\!\!\!&=&\!\!\!\xi_1\xi_2\xi_3+2{\Re}(x_1 x_2 x_3 )-\xi_1 x_1\ov{x}_1
-\xi_2 x_2\ov{x}_2-\xi_ 3 x_3\ov{x}_3
\\
%\end{eqnarray*}
%\begin{eqnarray*}
\!\!\!&=&\!\!\! \det\,X ,
\end{eqnarray*}
and it is clear that $(\phi(\theta)X, \phi(\theta)Y)=(X,Y), X,Y \in \mathfrak{J}^C$ and $\phi(\theta)E_i=E_i, i=1,2,3$.
Hence we see that $\phi(\theta) \in ({F_4}^C)_{E_{1,2,3}}$. Moreover, from $e_ia=\ov{a}e_i, i=2,\ldots, 7, a \in U(1,\C^C)$, we have that $\phi(\theta)F_1(e_i)=F_1(e_i),i=2,\ldots, 7$, that is, $\phi(\theta) \in ({F_4}^C)_{E_{1,2,3},F_1(2,\ldots,7)}$. Needless to say, $\phi$ is a homomorphism. We shall show that $\phi$ is surjection. Let $\alpha \in ({F_4}^C)_{E_{1,2,3},F_1(2,\ldots,7)}$. Here, set
$$
   ({\mathfrak{J}^C})_k = \{F_k(x)\,|\, x \in \mathfrak{C}^C \}=\{X \in \mathfrak{J}^C\,|\,2E_{k+1} \circ X=2E_{k+2} \circ X=X \}, k=1,2,3,
$$
where the indices are considered as mod $3$.
Then from $\alpha E_i=E_i$, we have $\alpha X \in ({\mathfrak{J}^C})_k$ for $X \in ({\mathfrak{J}^C})_k $, and so $\alpha$ induces $C$-isomorphisms 
$$
   \alpha : ({\mathfrak{J}^C})_k \to ({\mathfrak{J}^C})_k, \quad \alpha_k:\mathfrak{C}^C \to \mathfrak{C}^C
$$
satisfying the conditions $\alpha F_k(x)=F_k(\alpha_k x), x \in \mathfrak{C}^C, k=1,2,3$. 

Applying $\alpha$ on $F_k(x) \circ F_k(y)=(x,y)(E_{k+1}+E_{k+2})$, that is, $\alpha F_k(x) \circ \alpha F_k(y)=(x,y)(E_{k+1}+E_{k+2})$, on the other hand we have
$$
      \alpha F_k(x) \circ \alpha F_k(y)=F_k(\alpha_k x) \circ F_k(\alpha_k y)=(\alpha_k x, \alpha_k y)(E_{k+1}+E_{k+2}).
$$
Hence we have $(\alpha_k x, \alpha_k y)=(x, y), x, y \in \mathfrak{C}^C$, that is, $\alpha_k \in O(8, C), k=1,2,3$.

\noindent Moreover, applying $\alpha$ on $F_1(x) \circ F_2(y)=({1}/{2}) F_3(\ov{xy})$,
we have $(\alpha_1 x)(\alpha_2 y)=\ov{\alpha_3 (\ov{xy})}$. Indeed, 
apply $\alpha$ on the left-hand side:
\begin{eqnarray*}
\alpha (F_1(x) \circ F_2(y))\!\!\!&=&\!\!\!\alpha F_1(x)\circ \alpha F_2 (y)=F_1(\alpha_1 x) \circ F_2(\alpha_2 y)
%\\
%\!\!\!&=&\!\!\!
=\dfrac{1}{2}F_3(\ov{(\alpha_1 x)(\alpha_2 y)}),
\end{eqnarray*}
on the other hand, apply $\alpha$ on the right-hand side:
 $\alpha\bigl(({1}/{2})F_3 (\ov{xy})\bigr)=({1}/{2})F_3 (\alpha_3 (\ov{xy}))$. Hence we have $F_3(\ov{(\alpha_1 x)(\alpha_2 y)})=F_3 (\alpha_3 (\ov{xy}))$, that is, $(\alpha_1 x)(\alpha_2 y)=\ov{\alpha_3 (\ov{xy})}$. 

Thus since $\alpha_1, \alpha_2, \alpha_3 \in O(8, C)$ satisfy the condition $(\alpha_1 x)(\alpha_2 y)=\ov{\alpha_3 (\ov{xy})}$, we see that $\alpha_1, \alpha_2, \alpha_3 \in S\!O(8, C)$ (see \cite[Theotem 1.14.4]{Yokotaichiro}), and moreover from $\alpha F_1 (e_i)=F_1(e_i)$ we have $\alpha_1 e_i=e_i, i=2,\ldots, 7$. 
Hence
since we can confirm that $\alpha_1$ induces $C$-isomorphism of $\C^C \subset \mathfrak{C}^C$, there exists $\theta \in U(1, \C^C)$ such that $\alpha_1 x=\ov{\theta} x \ov{\theta}, x \in \mathfrak{C}^C$. For this $\theta$, by the Principal triality we can set $\alpha_2 x=\theta x, \alpha_3 x=x\theta, x \in \mathfrak{C}^C$. The proof of surjection is completed. Finally, it is easy to obtain that $\Ker\,\phi=\{ 1\}$.

Therefore we have the required isomorphism 
$$
({F_4}^C)_{E_{1,2,3},F_1(2,\ldots,7)} \cong U(1, \C^C).
$$
\end{proof}
%\vspace{-2mm}

From Proposition \ref{prop 3.2} and Theorem \ref{thm 3.3}, we have the following proposition.
\begin{prop}\label{prop 3.4}
The group $({F_4}^C)_{E_{1,2,3},F_1(0,\ldots,5)}$ is isomorphic $U(1, \C^C)${\rm:}
$({F_4}^C)_{E_{1,2,3},F_1(0,\ldots,5)} \cong U(1, \C^C)$.

In particular, the group $({F_4}^C)_{E_{1,2,3},F_1(0,\ldots,5)}$ is connected.
\end{prop}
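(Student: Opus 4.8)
The plan is to derive Proposition~\ref{prop 3.4} as an immediate corollary of Proposition~\ref{prop 3.2} and Theorem~\ref{thm 3.3}. First I would take the isomorphism
$$
 \varphi : ({F_4}^C)_{E_{1,2,3},F_1(0,\ldots,5)} \;\longrightarrow\; ({F_4}^C)_{E_{1,2,3},F_1(2,\ldots,7)},\qquad \varphi(\alpha)=\delta^{-1}\alpha\delta
$$
furnished by Proposition~\ref{prop 3.2}, together with the isomorphism $\phi : U(1,\C^C) \to ({F_4}^C)_{E_{1,2,3},F_1(2,\ldots,7)}$ of Theorem~\ref{thm 3.3}. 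The composite $\phi^{-1}\circ\varphi$ (equivalently, in the reverse direction, $\theta \mapsto \delta\,\phi(\theta)\,\delta^{-1}$, using that $\varphi^{-1}(\beta)=\delta\beta\delta^{-1}$) is a composition of group isomorphisms, hence itself a group isomorphism, and this yields $({F_4}^C)_{E_{1,2,3},F_1(0,\ldots,5)} \cong U(1,\C^C)$.

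For the connectedness assertion it then suffices to observe that $U(1,\C^C)$ is connected. Writing a general element as $\theta = a e_0 + b e_1$ with $a,b \in C$, one has $\ov{\theta}=ae_0-be_1$ and $\theta\,\ov{\theta}=(a^2+b^2)e_0$, so the defining relation $\theta\,\ov{\theta}=1$ reads $a^2+b^2=1$. Putting $u=a+bi$ and $v=a-bi$ (with $i \in C$ the scalar imaginary unit) gives $uv=1$, so $\theta \mapsto u$ identifies $U(1,\C^C)$ with $C^{\times}=C\setminus\{0\}$, which is connected. Transporting this along the isomorphism above shows that $({F_4}^C)_{E_{1,2,3},F_1(0,\ldots,5)}$ is connected.

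I do not expect any genuine obstacle here: the isomorphism is a purely formal consequence of the two cited results, and the only supplementary point is the elementary connectedness of $U(1,\C^C)$ recorded above. The one thing to keep track of is the bookkeeping — that the conjugating element $\delta=(\delta_1,\delta_2,\delta_3)$ really carries the family of conditions $\alpha F_1(e_k)=F_1(e_k)$, $k=0,\ldots,5$, onto the family $k=2,\ldots,7$ — but this was already verified in the proof of Proposition~\ref{prop 3.2}, so nothing new is needed.
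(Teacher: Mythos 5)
Your proposal is correct and matches the paper's own derivation, which likewise obtains Proposition~\ref{prop 3.4} directly by combining Proposition~\ref{prop 3.2} with Theorem~\ref{thm 3.3}. The only addition is your explicit check that $U(1,\C^C)\cong C\setminus\{0\}$ is connected, which is accurate and harmless (the paper leaves this implicit).
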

%\vspace{1mm}

We consider a group $({F_4}^C)_{E_{1,2,3},F_1(0,\ldots,4)}$:
$$
({F_4}^C)_{E_{1,2,3},F_1(0,\ldots,4)}= \biggl\{ \alpha \in {F_4}^C \,\biggm| \,
\begin{array}{l}
\alpha E_i=E_i, i=1,2,3, \\
\alpha  F_1(e_k)= F_1(e_k), k=0,1,2,3,4 
\end{array}\biggr\},
$$
and we shall construct $S\!pin(3,C)$ in ${F_4}^C$.
\begin{lem}\label{lem 3.5}
The Lie algebra $({\mathfrak{f}_4}^C)_{E_{1,2,3},F_1(0,\ldots,4)}$ of the group $({F_4}^C)_{E_{1,2,3},F_1(0,\ldots,4)}$ is given by
\begin{eqnarray*}
 ({\mathfrak{f}_4}^C)_{E_{1,2,3},F_1(0,\ldots,4)}\!\!\!&=&\!\!\!\biggl\{\delta \in {\mathfrak{f}_4}^C\,\biggm|\, \begin{array}{l}
\delta E_i=0, i=1,2,3, \\
\delta  F_1(e_k)= 0, k=0,1,2,3,4 
\end{array}\biggr\}
\\[1mm]
\!\!\!&=&\!\!\!\{\delta=d_{56}G_{56}+d_{57}G_{57}+d_{67}G_{67}\,|\,d_{kl} \in C \}.  
\end{eqnarray*}

In particular, $\dim_C(({\mathfrak{f}_4}^C)_{E_{1,2,3},F_1(0,\ldots,4)})=3$.
\end{lem}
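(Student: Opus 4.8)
The plan is to compute the fixed-point conditions on the Lie algebra ${\mathfrak{f}_4}^C$ directly, using the explicit description ${\mathfrak{f}_4}^C = \{D + \tilde A_1(a_1) + \tilde A_2(a_2) + \tilde A_3(a_3) \mid D \in \mathfrak{so}(8,C), a_k \in \mathfrak{C}^C\}$. First I would recall (from the references, e.g.\ \cite{realization G_2}) how a general $\delta = D + \sum_k \tilde A_k(a_k)$ acts on the idempotents $E_i$ and on the elements $F_1(x)$: the $\tilde A_k(a_k)$ pieces move the diagonal part, so $\delta E_i = 0$ for all $i=1,2,3$ forces $a_1 = a_2 = a_3 = 0$, leaving $\delta = D \in \mathfrak{so}(8,C)$ acting on $\mathfrak{J}^C$ through the triality-linked triple $(D_1, D_2, D_3)$, with $\delta F_1(x) = F_1(D_1 x)$. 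Thus the condition $\delta E_i = 0$ reduces the problem to $\mathfrak{so}(8,C)$.

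Next, the condition $\delta F_1(e_k) = 0$ for $k = 0,1,2,3,4$ becomes $D_1 e_k = 0$ for $k = 0,\dots,4$. Writing $D_1 = \sum_{0 \le p < q \le 7} d_{pq} G_{pq}$ in terms of the standard basis $G_{pq}$ of $\mathfrak{so}(8,C)$ (where $G_{pq}$ is the skew transformation swapping $e_p \leftrightarrow \pm e_q$), the requirement that $D_1$ annihilate $e_0, e_1, e_2, e_3, e_4$ kills every coefficient $d_{pq}$ in which at least one index lies in $\{0,1,2,3,4\}$: indeed $G_{pq} e_p = -e_q$ (or $e_q$, up to sign conventions), so $d_{pq}$ contributes to $D_1 e_p$, and for that to vanish we need $d_{pq}=0$ whenever $p \in \{0,\dots,4\}$, and symmetrically whenever $q \in \{0,\dots,4\}$. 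The only surviving basis elements are $G_{56}, G_{57}, G_{67}$, giving $D_1 = d_{56}G_{56} + d_{57}G_{57} + d_{67}G_{67}$ with $d_{kl} \in C$ arbitrary. Conversely, any such $D_1$ does annihilate $e_0,\dots,e_4$, and one checks it is consistent with a genuine triple in $\mathfrak{so}(8,C)$ (it lifts, since it already lies in the first component). This yields the stated form, and the dimension count $\dim_C = 3$ is immediate.

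The main obstacle — really the only subtle point — is pinning down the precise action of $\tilde A_k(a_k)$ and of $D$ on the $E_i$ and $F_1(x)$, including sign and indexing conventions for $G_{pq}$, so that "$\delta E_i = 0 \Rightarrow a_k = 0$'' and "$\delta F_1(x) = F_1(D_1 x)$'' are applied correctly; all of this is standard and available in \cite[Section 2]{realization G_2} and \cite{Yokotaichiro}, so I would simply cite it rather than rederive it. After that the argument is a finite linear-algebra computation: solve $D_1 e_k = 0$, $k=0,\dots,4$, in the $28$-dimensional space $\mathfrak{so}(8,C)$ and read off the three remaining degrees of freedom. I would present the proof as: (1) reduce to $D \in \mathfrak{so}(8,C)$ via the $E_i$-conditions; (2) translate the $F_1(e_k)$-conditions into $D_1 e_k = 0$; (3) solve for the coefficients $d_{pq}$ and conclude.
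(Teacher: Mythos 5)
Your proposal is correct and is exactly the ``simple computation'' the paper itself omits: reduce to $D\in\mathfrak{so}(8,C)$ via the conditions $\delta E_i=0$ (which kill the $\tilde A_k(a_k)$ parts), translate $\delta F_1(e_k)=0$ into $D_1e_k=0$ for $k=0,\dots,4$ using the triality triple, and solve in the basis $G_{pq}$ to leave only $G_{56},G_{57},G_{67}$. No discrepancy with the paper's (unstated) argument.
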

\begin{proof}
By doing simple computation, this lemma is proved easily (As for $G_{ij}$, see \cite[Section 1.3]{Yokotaichiro}).
\end{proof}

We define a $3$-dimensional $C$-vector subspace $(V^C)^3$ of $\mathfrak{J}^C$ by
\begin{eqnarray*}
  (V^C)^3\!\!\!&=&\!\!\!\biggl\{X \in \mathfrak{J}^C \,\biggm|\,
\begin{array}{l}E_1 \circ X=0, (E_2, X)=(E_3, X)=0,\\
(F_1(e_k),X)=0,k=0,1,2,3,4 
\end{array} \biggr\}
\\[1mm]
\!\!\!&=&\!\!\!\{X =F_1(t)\,|\,t=t_5 e_5+t_6 e_6+t_7 e_7, t_k \in C \}
\end{eqnarray*}
with the norm $(X,X)=2({t_5}^2+{t_6}^2+{t_7}^2)$.
Obviously, the group $({F_4}^C)_{E_{1,2,3}, F_1(0,\ldots,4)}$ acts on $(V^C)^3$.

\begin{prop}\label{prop 3.6}
The homogeneous space $({F_4}^C)_{E_{1,2,3}, F_1(0,\ldots,4)}/U(1,\C^C)$ is homeomorphic to the complex sphere $(S^C)^2${\rm :} $({F_4}^C)_{E_{1,2,3}, F_1(0,\ldots,4)}/U(1,\C^C) \simeq (S^C)^2$.

In particular, the group $({F_4}^C)_{E_{1,2,3}, F_1(0,\ldots,4)}$ is connected.
\end{prop}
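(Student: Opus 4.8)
The plan is to exhibit $({F_4}^C)_{E_{1,2,3}, F_1(0,\ldots,4)}$ acting transitively on the complex sphere $(S^C)^2 = \{X \in (V^C)^3 \mid (X,X)=2\}$ and to identify the isotropy subgroup at a base point with $U(1,\C^C)$, so that the orbit map descends to a homeomorphism of the homogeneous space onto $(S^C)^2$; connectedness then follows because $(S^C)^2$ is connected (it is a complex affine quadric, hence connected) and $U(1,\C^C)$ is connected, and an extension of a connected group by a connected homogeneous space is connected. First I would record that $({F_4}^C)_{E_{1,2,3}, F_1(0,\ldots,4)}$ does act on $(V^C)^3$ preserving the quadratic form $(X,X) = 2(t_5^2+t_6^2+t_7^2)$ (since $F_4$-elements preserve the inner product), and in particular it maps $(S^C)^2$ to itself; take the base point $X_0 = F_1(e_5) \in (S^C)^2$.

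The transitivity step is the substantive one. For this I would use the infinitesimal description from Lemma 3.5: the Lie algebra is spanned by $G_{56}, G_{57}, G_{67}$, and one computes directly how these act on $F_1(t)$ with $t = t_5e_5+t_6e_6+t_7e_7$; they realize the standard $\mathfrak{so}(3,C)$-action on the three coordinates $(t_5,t_6,t_7)$. Exponentiating, the connected subgroup generated acts on $(V^C)^3 \cong C^3$ as (a group covering) $S\!O(3,C)$, and $S\!O(3,C)$ is transitive on the complex $2$-sphere $\{t_5^2+t_6^2+t_7^2 = 1\}$ — this is the standard fact that the complexification of a compact group transitive on a real sphere is transitive on the corresponding complex quadric, or can be checked by an explicit reduction (first move any $X$ to have $t_7$-component matched, etc.). Hence $({F_4}^C)_{E_{1,2,3}, F_1(0,\ldots,4)}$, containing this connected subgroup, already acts transitively on $(S^C)^2$.

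Next I would identify the isotropy group $\bigl(({F_4}^C)_{E_{1,2,3}, F_1(0,\ldots,4)}\bigr)_{F_1(e_5)}$. An element $\alpha$ fixing $F_1(e_5)$ in addition to $E_1,E_2,E_3$ and $F_1(e_k)$, $k=0,\ldots,4$, fixes $F_1(e_k)$ for all $k=0,\ldots,5$, so the isotropy subgroup is exactly $({F_4}^C)_{E_{1,2,3}, F_1(0,\ldots,5)}$, which by Proposition 3.4 is isomorphic to $U(1,\C^C)$. Therefore the orbit map gives a continuous bijection $({F_4}^C)_{E_{1,2,3}, F_1(0,\ldots,4)}/U(1,\C^C) \to (S^C)^2$; since source and target are locally compact (indeed the group is a Lie group and the quotient a manifold) and the map is a bijective morphism of homogeneous spaces, it is a homeomorphism. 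Finally, from the fibration $U(1,\C^C) \to ({F_4}^C)_{E_{1,2,3}, F_1(0,\ldots,4)} \to (S^C)^2$ with both base and fiber connected, the total space $({F_4}^C)_{E_{1,2,3}, F_1(0,\ldots,4)}$ is connected.

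The main obstacle is the transitivity claim: one must be careful that the $G_{ij}$ really act as the full $\mathfrak{so}(3,C)$ on the three-dimensional coordinate space, and that $S\!O(3,C)$ (not merely $S\!O(3)$) is transitive on the complex quadric $\{t_5^2+t_6^2+t_7^2=1\}$, which unlike the real sphere is noncompact; the quickest route is the explicit two-step reduction (use the $(e_5,e_7)$- and $(e_6,e_7)$-type rotations to bring an arbitrary unit vector to $e_5$), which avoids any appeal to general structure theory but requires handling the degenerate directions of the complex quadratic form with some care.
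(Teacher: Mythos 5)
Your proposal is correct and follows the same orbit--stabilizer skeleton as the paper: both identify the isotropy group at $F_1(e_5)$ with $({F_4}^C)_{E_{1,2,3},F_1(0,\ldots,5)}\cong U(1,\C^C)$ via Proposition 3.4 and deduce connectedness from the connectedness of fiber and base. The difference lies entirely in the transitivity step. You observe that $G_{56},G_{57},G_{67}$ realize the standard $\mathfrak{so}(3,C)$-action on the coordinates $(t_5,t_6,t_7)$, so the identity component surjects onto $S\!O(3,C)$ acting on $(V^C)^3\cong C^3$, and then invoke the standard fact (Witt's theorem, or the complexification argument) that $S\!O(3,C)$ is transitive on the complex quadric $\{t_5^2+t_6^2+t_7^2=1\}$. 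That is a legitimate and cleaner argument, provided you actually justify surjectivity onto $S\!O(3,C)$ (immediate, since the image is a connected subgroup with full Lie algebra and $S\!O(3,C)$ is connected) and the transitivity of $S\!O(3,C)$ on the quadric. The paper instead keeps everything self-contained and proves transitivity by an explicit reduction: three real rotations $g_{56}(s_0)$, $g_{57}(s_1)$, $g_{67}(s_2)$ chosen to successively make the $e_6$- and $e_7$-components purely imaginary and then merge them, followed by a rotation $g_{56}(is_3)$ with \emph{imaginary} parameter (a hyperbolic rotation) to eliminate the last component, and a final $g_{57}(\pi)$ to fix the sign. Your remark that a ``two-step reduction'' suffices underestimates this: because the quadric is noncompact one must normalize real and imaginary parts separately, which is exactly why the paper's explicit route takes four steps and needs the estimate $-1<r^{(1)}_6/t^{(2)}_5<1$ to solve $\tan(is_3)=ir^{(1)}_6/t^{(2)}_5$. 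Either route is sound; yours buys brevity at the cost of an appeal to general orthogonal-group theory, the paper's buys self-containedness at the cost of a longer computation.
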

\begin{proof}
We define a $2$-dimensional complex sphere $(S^C)^2$ by
\begin{eqnarray*}
  (S^C)^2\!\!\!&=&\!\!\!\{X \in (V^C)^3 \,|\, (X, X)=2 \}
\\
\!\!\!&=&\!\!\! \{X=F_1 (t) \,|\, t=t_5 e_5+t_6 e_6+t_7 e_7, {t_5}^2+{t_6}^2+{t_7}^2=1, t_k \in C \}. 
\end{eqnarray*}
Then the group $({F_4}^C)_{E_{1,2,3}, F_1(0,\ldots,4)}$ acts on $(S^C)^2$, obviously. We shall show that this action is transitive.
In order to prove this, it is sufficient to show that any element $F_1(t) \in (S^C)^2$ can be transformed to $F_1(e_5) \in (S^C)^2$. 

Now, for a given $X=F_1 (t) \in (S^C)^2$, we choose $s_0 \in \R, 0\leq s_0 \leq \pi$ such that $\tan s_0={\Re(t_6)}/{\Re(t_5)}$ (if $\Re(t_5)=0$, let $s_0=\pi/2$). 

\noindent Operate $g_{56}(s_0):=\exp (s_0 G_{56}) \in (({F_4}^C)_{E_{1,2,3}, F_1(0,\ldots,4)})_0$ on $X=F_1(t)$ (Lemma \ref{lem 3.5}), then we have that
\begin{eqnarray*}
g_{56}(s_0)X\!\!\!&=&\!\!\! g_{56}(s_0)F_1(t)
\\
\!\!\!&=&\!\!\!F_1 (((\cos s_0)t_5 + (\sin s_0)t_6)e_5 +((\cos s_0)t_6 - (\sin s_0)t_5)e_6+t_7 e_7)
\\
\!\!\!&=&\!\!\!F_1 (((\cos s_0)t_5 + (\sin s_0)t_6)e_5 +i((\cos s_0)\Im(t_6) - (\sin s_0)\Im(t_5))e_6+t_7 e_7)
\\
\!\!\!&=&\!\!\!F_1(t^{(1)}_5 e_5+ir_6e_6+t_7 e_7)=:X^{(1)},
\end{eqnarray*}
where $t^{(1)}_5:=(\cos s_0)t_5 + (\sin s_0)t_6 \in C, r_6:=(\cos s_0)\Im(t_6) - (\sin s_0)\Im(t_5) \in \R$. 

\noindent Moreover, we choose $s_1 \in \R,0\leq s_1 \leq \pi$ such that $\tan s_1={\Re(t_7)}/{\Re(t^{(1)}_5)}$ (if $\Re(t^{(1)}_5)=0$, let $s_1=$ $\pi/2$).
Operate $g_{57}(s_1):=\exp (s_ 1G_{57}) \in ({F_4}^C)_{E_{1,2,3}, F_1(0,\ldots,4)}$ on $X'$ (Lemma \ref{lem 3.5}), then we have that
\begin{eqnarray*}
g_{57}(s_1)X^{(1)}\!\!\!&=&\!\!\! g_{57}(s_1)F_1((t^{(1)}_5 e_5+ir_6e_6+t_7 e_7)
\\
\!\!\!&=&\!\!\!F_1 (((\cos s_1)t^{(1)}_5 + (\sin s_1)t_7)e_5 +ir_6e_6+((\cos s_1)t_7 - (\sin s_1)t^{(1)}_5)e_7)
\\
\!\!\!&=&\!\!\!F_1 (((\cos s_1)t^{(1)}_5 \!+ (\sin s_1)t_7)e_5 +ir_6e_6+i((\cos s_1)\Im(t_7) - (\sin s_1)\Im(t^{(1)}_5))e_7)
\\
\!\!\!&=&\!\!\!F_1(t^{(2)}_5 e_5+ir_6e_6+ir_7 e_7)=:X^{(2)},
\end{eqnarray*}
where $t^{(2)}_5:=(\cos s_1)t^{(1)}_5 + (\sin s_1)t_7 \in C, r_7:=(\cos s_1)\Im(t_7) - (\sin s_1)\Im(t^{(1)}_5) \in \R$.

\noindent Additionally, we choose $s_2 \in \R,0\leq s_2 \leq \pi$ such that $\tan s_2={r_7}/{r_6}$ (if $r_6=0$, let $s_2=$ $\pi/2$).
Operate $g_{67}(s_2):=\exp (s_2G_{67}) \in ({F_4}^C)_{E_{1,2,3}, F_1(0,\ldots,4)}$ on $X^{(2)}$ (Lemma \ref{lem 3.5}), then we have that
\begin{eqnarray*}
g_{67}(s_2)X^{(2)}\!\!\!&=&\!\!\! g_{67}(s_2)F_1(t^{(2)}_5 e_5+ir_6e_6+ir_7 e_7)
\\
\!\!\!&=&\!\!\!F_1 (t^{(2)}_5 e_5 +i((\cos s_2)r_6+(\sin s_2)r_7)e_6+i((\cos s_2)r_7 - (\sin s_2)r_6)e_7)
\\
\!\!\!&=&\!\!\!F_1(t^{(2)}_5 e_5+ir^{(1)}_6e_6)=:X^{(3)},
\end{eqnarray*}
where $r^{(1)}_6:=(\cos s_2)r_6+(\sin s_2)r_7 \in \R$.

Here, note that it follows from $X^{(3)}=F_1(t^{(2)}_5 e_5+ir^{(1)}_6e_6) \in (S^C)^2 \,((t^{(2)}_5)^2+(ir^{(1)}_6)^2=1)$, that is, $(t^{(2)}_5)^2=1+(r^{(1)}_6)^2, t^{(2)}_5 \in C, r^{(1)}_6 \in \R$ that we have $t^{(2)}_5 \in \R-\{ 0\}$ and $-1 <{r^{(1)}_6}/{t^{(2)}_5} <1$. 

\noindent So, we can choose $s_3 \in \R$ such that $\tan (is_3)={ir^{(1)}_6}/{t^{(2)}_5}( \in i\R)$.
Indeed, because of  $\tan (is_3)=-i\,(1-{2}/({e^{-2s_3}+1}))\,(-1<1-{2}/({e^{-2s_3}+1}) <1)$, together with $-1 <{r^{(1)}_6}/{t^{(2)}_5} <1$, we can choose  $s_3 \in \R$. As in the first case above, operate $g_{56}(is_3)$ on $X^{(3)}$, then we have that
\begin{eqnarray*}
g_{56}(is_3)X^{(3)} \!\!\!&=&\!\!\! g_{56}(is_3)F_1(t^{(2)}_5 e_5+ir^{(1)}_6e_6)
\\
\!\!\!&=&\!\!\! F_1((\cos (is_3)t^{(2)}_5+\sin (is_3)(ir^{(1)}_6))e_5+(\cos (is_3)(ir^{(1)}_6)-\sin (is_3)t^{(2)}_5)e_6)
\\
\!\!\!&=&\!\!\! F_1((\cos (is_3)t^{(2)}_5+\sin (is_3)(ir^{(1)}_6))e_5)
\\
\!\!\!&=&\!\!\!F_1(t^{(3)}_5e_5),
\end{eqnarray*}
where $t^{(3)}_5:=\cos (is_3)t^{(2)}_5+\sin (is_3)(ir^{(1)}_6) \in C$. 

 Hence, from $F_1(t^{(3)}_5e_5) \in (S^C)^2$ we have that $t^{(3)}_5=1$ or $t^{(3)}_5=-1$. In the latter case, again operate $g_{57}(\pi)$ on $F_1(-e_5)$, then we have that $g_{57}(\pi)F_1(-e_5)=F_1(e_5)$. 

\noindent This shows the transitivity of action to $(S^C)^2$ by the group $({F_4}^C)_{E_{1,2,3}, F_1(0,\ldots,4)}$. 
The isotropy subgroup of the group $({F_4}^C)_{E_{1,2,3}, F_1(0,\ldots,4)}$ at $F_1(e_5)$ is $({F_4}^C)_{E_{1,2,3}, F_1(0,\ldots,5)} \cong U(1, \C^C)$ (Proposition \ref{prop 3.4}). Thus we have the required homeomorphism
$$
    ({F_4}^C)_{E_{1,2,3}, F_1(0,\ldots,4)}/U(1,\C^C) \simeq (S^C)^2.
$$

Therefore we see that the group $({F_4}^C)_{E_{1,2,3}, F_1(0,\ldots,4)}$ is connected.
\end{proof}

\begin{thm}\label{thm 3.7}
The group $({F_4}^C)_{E_{1,2,3}, F_1(0,\ldots,4)}$ is isomorphic to $S\!pin(3,C)${\rm:} $({F_4}^C)_{E_{1,2,3}, F_1(0,\ldots,4)} \cong S\!pin(3,C)$.
\end{thm}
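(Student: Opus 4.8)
The plan is to exhibit an explicit homomorphism $\varphi : S\!pin(3,C) \to ({F_4}^C)_{E_{1,2,3}, F_1(0,\ldots,4)}$ and to apply Lemma \ref{lemma 2.1}. Recall the classical low-dimensional isomorphism $S\!pin(3,C) \cong S\!L(2,C) \cong S\!p(1,C)$; we realize $S\!pin(3,C)$ concretely, say as the group $S\!p(1,C)$ of unit-norm elements of the complexified quaternions $\H^C \subset \mathfrak{C}^C$ (the span of $e_0, e_5, e_6, e_7$ is a copy of $\H^C$ inside $\mathfrak{C}^C$, which is exactly the subalgebra left available after killing $E_i$ and $F_1(e_k)$ for $k=0,\ldots,4$). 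For $q \in S\!p(1,C)$ one defines a triple $(\alpha_1, \alpha_2, \alpha_3)$ of $S\!O(8,C)$-transformations of $\mathfrak{C}^C$ by the usual two-sided quaternionic multiplication formulas — e.g. $\alpha_1 x = \bar q\, x\, \bar q$ (or $q x q^{-1}$ on the quaternion part and fixing the orthogonal complement appropriately), $\alpha_2 x = q x$, $\alpha_3 x = x q$ — chosen so that the triality relation $(\alpha_1 x)(\alpha_2 y) = \ov{\alpha_3(\ov{xy})}$ holds, and then sets $\varphi(q) = (\alpha_1, \alpha_2, \alpha_3) \in ({F_4}^C)_{E_1,E_2,E_3} \cong S\!pin(8,C)$ via Theorem \ref{thm 3.1}. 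One then checks directly that this $\varphi(q)$ fixes each $F_1(e_k)$, $k=0,1,2,3,4$: indeed $e_0, e_1, e_2, e_3, e_4$ lie in the part of $\mathfrak{C}^C$ on which the two-sided multiplication by a quaternion acts trivially (this is precisely why the relevant subalgebra, and not all of $\mathfrak{C}^C$, appears), so $\varphi(q) F_1(e_k) = F_1(\alpha_1 e_k) = F_1(e_k)$.

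Next I would verify $\varphi$ is a homomorphism (immediate from the multiplicativity of $q \mapsto (\bar q \cdot \bar q,\, q\cdot,\, \cdot q)$) and compute its kernel. The kernel consists of those $q$ with $\bar q x \bar q = x$, $qx = x$, $xq = x$ for all $x$; from $q x = x$ with $x = 1$ we get $q = 1$, but one should be slightly careful because $\varphi$ is really defined on the double cover, so the honest computation gives $\Ker\,\varphi = \{1, -1\} = \Z_2$ — discrete, as required. (Alternatively: realize $S\!pin(3,C)$ abstractly and note that the induced map of Lie algebras $\mathfrak{sp}(1,C) \to ({\mathfrak{f}_4}^C)_{E_{1,2,3}, F_1(0,\ldots,4)}$ has image $\{d_{56}G_{56} + d_{57}G_{57} + d_{67}G_{67}\}$ of Lemma \ref{lem 3.5}, hence is injective, so $\Ker\,\varphi$ is discrete.) For the dimension count, Lemma \ref{lem 3.5} gives $\dim_C ({\mathfrak{f}_4}^C)_{E_{1,2,3}, F_1(0,\ldots,4)} = 3 = \dim_C \mathfrak{sp}(1,C) = \dim_C \mathfrak{spin}(3,C)$. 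The target group $({F_4}^C)_{E_{1,2,3}, F_1(0,\ldots,4)}$ is connected by Proposition \ref{prop 3.6}. Therefore Lemma \ref{lemma 2.1} applies and $\varphi$ is surjective, yielding an isomorphism $S\!pin(3,C)/\Z_2 \cong ({F_4}^C)_{E_{1,2,3}, F_1(0,\ldots,4)}$ — which is not yet what we want; we must upgrade this to an isomorphism with $S\!pin(3,C)$ itself rather than with $S\!O(3,C)$.

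That upgrade is the real content, and I expect it to be the main obstacle. One clean way: instead of using the triple $(\bar q\cdot\bar q, q\cdot, \cdot q)$ — which factors through $S\!O(3,C)$ on the essential $3$-dimensional block and hence only sees $PS\!p(1,C)$ — one should use the genuine spin representation. Concretely, take the isomorphism $({F_4}^C)_{E_{1,2,3}, F_1(0,\ldots,4)} \cong (({F_4}^C)_{E_{1,2,3}})_{F_1(e_k), k=0,\ldots,4}$ sitting inside $S\!pin(8,C)$, and observe that the conditions $\alpha_1 e_k = e_k$ for $k = 0,\ldots,4$ pin $\alpha_1$ down to $S\!O(3,C)$ acting on $\langle e_5, e_6, e_7\rangle$, while the \emph{simultaneous} constraint coming from triality forces the pair $(\alpha_2,\alpha_3)$ to be a genuine lift, so that the whole triple ranges over $S\!pin(3,C)$, not its quotient; this is the standard mechanism by which $S\!pin(8,C)$-triality promotes an $S\!O(3,C)$ to a $S\!pin(3,C)$. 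Making this precise amounts to checking that $-1 \in S\!p(1,C)$ maps to the nontrivial triple $(\mathrm{id}, -\mathrm{id}, -\mathrm{id}) \neq (\mathrm{id},\mathrm{id},\mathrm{id})$, i.e. that $\varphi$ is injective after all. So the careful bookkeeping is: confirm $\varphi(-1) \neq \mathrm{id}$ in $({F_4}^C)_{E_{1,2,3}, F_1(0,\ldots,4)}$ — equivalently $\varphi(-1)$ acts as $-1$ on the off-diagonal blocks $(\mathfrak{J}^C)_2$ and $(\mathfrak{J}^C)_3$ — whence $\Ker\,\varphi = \{1\}$, and then Lemma \ref{lemma 2.1} gives the isomorphism $({F_4}^C)_{E_{1,2,3}, F_1(0,\ldots,4)} \cong S\!pin(3,C)$ directly. (If one prefers, one may bypass triality bookkeeping entirely by invoking the already-established $({F_4}^C)_{E_1,E_2,E_3} \cong S\!pin(8,C)$ and identifying $({F_4}^C)_{E_{1,2,3}, F_1(0,\ldots,4)}$ with the subgroup of $S\!pin(8,C)$ covering $S\!O(3,C) \subset S\!O(8,C)$, which is connected of dimension $3$ and is well known to be $S\!pin(3,C)$.)
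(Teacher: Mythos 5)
Your closing parenthetical is, in substance, the paper's own proof: the paper restricts each $\alpha\in({F_4}^C)_{E_{1,2,3},F_1(0,\ldots,4)}$ to the $3$-dimensional space $(V^C)^3$, uses the connectedness from Proposition \ref{prop 3.6} to land in $S\!O(3,C)$, computes $\Ker\,p=\{(1,1,1),(1,-1,-1)\}=\{1,\sigma\}\cong\Z_2$ inside $S\!pin(8,C)$ via Theorem \ref{thm 3.1}, gets surjectivity from the dimension count of Lemma \ref{lem 3.5}, and identifies the resulting connected double cover of $S\!O(3,C)$ with $S\!pin(3,C)$. If you run that argument you are done. The explicit construction that occupies the main body of your proposal, however, does not work as written, and since you present it as the intended proof I have to flag it as a genuine gap.

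Concretely: in the convention for $\mathfrak{C}$ used in this paper ($e_5=e_1e_4$, $e_6=e_2e_4$, $e_7=e_3e_4$), the product $e_5e_6$ lies back in the quaternion subalgebra spanned by $e_0,e_1,e_2,e_3$ (indeed $e_5e_6=\pm e_3$), so the span of $e_0,e_5,e_6,e_7$ is \emph{not} a subalgebra of $\mathfrak{C}^C$, and the group ``$S\!p(1,C)$ of unit elements of that span'' on which your $\varphi$ is supposed to be defined does not exist. Even after replacing it by a genuine quaternion subalgebra (which, by a conjugation as in Proposition \ref{prop 3.2}, would shift the problem to the stabilizer of $F_1(e_k)$, $k=0,4,5,6,7$), the displayed formulas still do not land in the stabilizer: $\alpha_1x=\ov{q}\,x\,\ov{q}$ sends $e_0$ to $\ov{q}^{\,2}\ne e_0$, so $F_1(e_0)$ is not fixed, while the alternative $\alpha_1x=qx\ov{q}$ fixes $e_0$ and rotates $e_1,e_2,e_3$ but moves the complementary directions (for instance $(q(ae_4))\ov{q}=(aq^2)e_4$), so $F_1(e_4)$ is not fixed. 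The identity $e_i\theta=\ov{\theta}e_i$ ($i\ge 2$) that drives Theorem \ref{thm 3.3} is special to the commutative subalgebra $\C^C$ and has no two-sided-multiplication analogue for a noncommuting $q$; this is precisely why the paper abandons explicit parametrizations beyond $S\!pin(2,C)$ and switches to the restriction homomorphism. Your wavering on the kernel (first $\Z_2$, then trivial) is a symptom of the same problem: with no well-defined $\varphi$ there is nothing to compute. The repair is simply to promote your final remark to the proof.
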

\begin{proof}
Let $O(3,C)=O((V^C)^3)=\{\beta \in \Iso_{C} ((V^C)^3) \,|\,(\beta X, \beta Y)=(X, Y)\}$. We consider the restriction $\beta=\alpha \bigm|_{(V^C)^3}$ of $\alpha \in ({F_4}^C)_{E_{1,2,3}, F_1(0,\ldots,4)}$ to $(V^C)^3$, then we have $\beta \in O(3,C)$. Hence we can define a homomorphism $p: ({F_4}^C)_{E_{1,2,3}, F_1(0,\ldots,4)} \to O(3, C)=O((V^C)^3)$ by
$$
   p(\alpha)=\alpha \bigm|_{(V^C)^3}.
$$
Since the mapping $p$ is continuous and the group $({F_4}^C)_{E_{1,2,3}, F_1(0,\ldots,4)}$ is connected (Proposition \ref{prop 3.6}), the mapping $p$ induces  a homomorphism $p :({F_4}^C)_{E_{1,2,3}, F_1(0,\ldots,4)} \to S\!O(3, C)=S\!O((V^C)^3)$. 
It is not difficult to obtain that  $\Ker\,p=\{1, \sigma \} \cong \Z_2$. Indeed, Let $\alpha \in \Ker\,p $. Then, since $\alpha \in ({F_4}^C)_{E_{1,2,3}, F_1(0,\ldots,4)} \subset ({F_4}^C)_{E_{1,2,3}} \cong S\!pin(8, C)$, we can set $\alpha=(\alpha_1, \alpha_2, \alpha_3)$ (Theorem \ref{thm 3.1}). Moreover, from $\alpha F_1(e_i)=F_1(e_i), i=0, \ldots, 4$ and $\alpha \bigm|_{(V^C)^3}=1$, we have $\alpha_1 x=x$ for all $x \in \mathfrak{C}^C$, that is, $\alpha_1=1$. Hence we have that 
$$
  \alpha=(1,1,1)  \qquad {\text{or}}\qquad \alpha=(1, -1, -1)=\sigma,
$$
that is, $\Ker\,p \subset \{1, \sigma \}$  
and vice versa. Thus we obtain $\Ker\, p=\{1, \sigma \}$. From Lemma \ref{lem 3.5}, we have that
$$
   \dim_{C}(({\mathfrak{f}_4}^C)_{E_{1,2,3}, F_1(0,\ldots,4)})=3=\dim{{}_C}(\mathfrak{so}(3,C)),
$$
and in addition to this, $S\!O(3,C)$ is connected and $\Ker\,p$ is discrete. Hence $p$ is surjection. Thus we have the isomorphism 
$$
   ({F_4}^C)_{E_{1,2,3}, F_1(0,\ldots,4)}/\Z_2 \cong S\!O(3,C).
$$

Therefore the group $({F_4}^C)_{E_{1,2,3}, F_1(0,\ldots,4)}$ is isomorphic to  $S\!pin(3, C)$ as the universal covering group of $S\!O(3,C)$, that is, 
$
    ({F_4}^C)_{E_{1,2,3}, F_1(0,\ldots,4)} \cong S\!pin(3, C).  
$
\end{proof}

Continuously, we shall construct $S\!pin(4,C)$ in ${F_4}^C$.
\vspace{1mm}

We consider a group $({F_4}^C)_{E_{1,2,3},F_1(0,\ldots,3)}$:
$$
({F_4}^C)_{E_{1,2,3},F_1(0,\ldots,3)}= \biggl\{ \alpha \in {F_4}^C \,\biggm| \,
\begin{array}{l}
\alpha E_i=E_i, i=1,2,3, \\
\alpha  F_1(e_k)= F_1(e_k), k=0,1,2,3 
\end{array}\biggr\}.
$$
\begin{lem}\label{lem 3.8}
The Lie algebra $({\mathfrak{f}_4}^C)_{E_{1,2,3},F_1(0,\ldots,3)}$ of the group $({F_4}^C)_{E_{1,2,3},F_1(0,\ldots,3)}$ is given by
\begin{eqnarray*}
 ({\mathfrak{f}_4}^C)_{E_{1,2,3},F_1(0,\ldots,3)}\!\!\!&=&\!\!\!\{\delta \in {\mathfrak{f}_4}^C\,|\, 
%\begin{array}{l}
\delta E_i=0, i=1,2,3, 
%\\
\,\delta  F_1(e_k)= 0, k=0,1,2,3
%\end{array}
\}
\\[1mm]
%\end{eqnarray*}
%\begin{eqnarray*}
\!\!\!&=&\!\!\!\{
%\begin{array}{l}
  \delta=d_{45}G_{45}+d_{46}G_{46}+d_{47}G_{47}
%\\
% \,\,\,\,\,\,        
+d_{56}G_{56}+d_{57}G_{57}+d_{67}G_{67}
%\end{array}
\,|\,d_{kl} \in C \}.
\end{eqnarray*}

In particular, $\dim_C(({\mathfrak{f}_4}^C)_{E_{1,2,3},F_1(0,\ldots,3)})=6$.
\end{lem}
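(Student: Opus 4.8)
The plan is to prove the two displayed equalities in succession. The first is the routine identification of the Lie algebra of an isotropy subgroup with the annihilator of the fixed vectors: if $\delta$ lies in the Lie algebra of $({F_4}^C)_{E_{1,2,3},F_1(0,\ldots,3)}$, then differentiating $\exp(t\delta)E_i = E_i$ and $\exp(t\delta)F_1(e_k) = F_1(e_k)$ at $t=0$ yields $\delta E_i = 0$ ($i=1,2,3$) and $\delta F_1(e_k) = 0$ ($k=0,1,2,3$); conversely, any $\delta \in {\mathfrak{f}_4}^C$ satisfying these linear equations exponentiates into the subgroup, since $\exp(t\delta)$ then fixes each $E_i$ and each $F_1(e_k)$. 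This gives the first equality and reduces the lemma to computing that linear subspace of ${\mathfrak{f}_4}^C$.

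For the second equality I would start from the fact — obtained by differentiating Theorem \ref{thm 3.1}, or equivalently from the standard description of $\mathfrak{so}(8,C) \subset {\mathfrak{f}_4}^C$ — that $({\mathfrak{f}_4}^C)_{E_{1,2,3}} = \mathfrak{so}(8,C)$, so a $\delta$ with $\delta E_i = 0$ is automatically of the form $\delta = \sum_{i<j}d_{ij}G_{ij}$ with $d_{ij} \in C$ (the summands $\ti{A}_k(a_k)$ in the presentation of ${\mathfrak{f}_4}^C$ recalled in Section~2 do not annihilate the $E_i$, hence drop out). Since each $G_{ij}$ kills every $E_l$ and acts on the subspace $\{F_1(x)\mid x\in\mathfrak{C}^C\}$ of $\mathfrak{J}^C$ by $G_{ij}F_1(e_k) = \delta_{jk}F_1(e_i) - \delta_{ik}F_1(e_j)$ (the normalization already used in Lemma \ref{lem 3.5} and in the $g_{56}(s)$ computation of Proposition \ref{prop 3.6}; see also \cite[Section 1.3]{Yokotaichiro}), the remaining conditions collapse to
$$
\delta F_1(e_k) = \sum_{i<k}d_{ik}F_1(e_i) - \sum_{k<j}d_{kj}F_1(e_j) = 0, \qquad k=0,1,2,3,
$$
which forces $d_{ij} = 0$ for every pair $\{i,j\}$ meeting $\{0,1,2,3\}$. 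Hence exactly the six coefficients $d_{45}, d_{46}, d_{47}, d_{56}, d_{57}, d_{67}$ survive, giving the stated basis and $\dim_C = 6$; equivalently, $\delta$ restricts to the copy of $\mathfrak{so}(4,C)$ acting on $\langle e_4, e_5, e_6, e_7\rangle_C$ and vanishing on $\langle e_0, e_1, e_2, e_3\rangle_C$.

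This mirrors Lemma \ref{lem 3.5} almost verbatim: there the five conditions $F_1(e_k) = 0$, $k=0,\ldots,4$, left the three generators $G_{56}, G_{57}, G_{67}$, and here one fewer condition leaves three more. I do not expect a genuine obstacle; the only points needing a little care are the standard isotropy–Lie-algebra identity and keeping straight which $G_{ij}$ survive, both elementary once Theorem \ref{thm 3.1} and the action of the $G_{ij}$ on $\mathfrak{J}^C$ are in hand. Accordingly I would present this as a short direct computation, in the same spirit as the proof of Lemma \ref{lem 3.5}.
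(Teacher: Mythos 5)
Your proposal is correct and is exactly the computation the paper leaves implicit: the paper's proof of Lemma \ref{lem 3.8} consists only of the remark that it follows ``by doing simple computation'' with a pointer to the definition of the $G_{ij}$, and your argument (isotropy algebra $=$ annihilator, then $\delta E_i=0$ forcing $\delta\in\mathfrak{so}(8,C)=\sum d_{ij}G_{ij}$, then $G_{ij}F_1(e_k)=\delta_{jk}F_1(e_i)-\delta_{ik}F_1(e_j)$ killing every $G_{ij}$ with an index in $\{0,1,2,3\}$) is that computation carried out, with the sign convention matching the $g_{56}(s)$ formula of Proposition \ref{prop 3.6}. No gap; nothing further is needed.
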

\begin{proof}
By doing simple computation, this lemma is proved easily(As for $G_{ij}$, see \cite[Section 1.3]{Yokotaichiro}).
\end{proof}
We define a $4$-dimensional $C$-vector subspace $(V^C)^4$ of $\mathfrak{J}^C$ by
\begin{eqnarray*}
  (V^C)^4\!\!\!&=&\!\!\!\biggl\{X \in \mathfrak{J}^C \,\biggm|\,
\begin{array}{l}E_1 \circ X=0, (E_2, X)=(E_3, X)=0,\\
(F_1(e_k),X)=0,k=0,1,2,3 
\end{array} \biggr\}
\\[1mm]
\!\!\!&=&\!\!\!\{X =F_1(t)\,|\,t=t_4 e_4+t_5 e_5+t_6 e_6+t_7 e_7, t_k \in C \}
\end{eqnarray*}
with the norm $(X,X)=2({t_4}^2+{t_5}^2+{t_6}^2+{t_7}^2)$.
Obviously, the group $({F_4}^C)_{E_{1,2,3}, F_1(0,\ldots,3)}$ acts on $(V^C)^4$.

\begin{prop}\label{prop 3.9}
The homogeneous space $({F_4}^C)_{E_{1,2,3}, F_1(0,\ldots,3)}/S\!pin(3,C)$ is homeomorphic to the complex sphere $(S^C)^3${\rm :} $({F_4}^C)_{E_{1,2,3}, F_1(0,\ldots,3)}/S\!pin(3,C) \simeq (S^C)^3$.

In particular, the group $({F_4}^C)_{E_{1,2,3}, F_1(0,\ldots,3)}$ is connected.
\end{prop}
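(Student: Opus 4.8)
The plan is to imitate the proof of Proposition \ref{prop 3.6} one dimension higher. First I would introduce the $3$-dimensional complex sphere
$$
  (S^C)^3 = \{X \in (V^C)^4 \,|\, (X,X)=2 \} = \{X=F_1(t) \,|\, t=t_4e_4+t_5e_5+t_6e_6+t_7e_7,\ {t_4}^2+{t_5}^2+{t_6}^2+{t_7}^2=1,\ t_k \in C \},
$$
observe that $({F_4}^C)_{E_{1,2,3}, F_1(0,\ldots,3)}$ acts on $(S^C)^3$, and prove that this action is transitive. The tool is the family of one-parameter subgroups $g_{ij}(s):=\exp(s G_{ij})$ for $(i,j)\in\{(4,5),(4,6),(4,7),(5,6),(5,7),(6,7)\}$, all of which lie in $(({F_4}^C)_{E_{1,2,3}, F_1(0,\ldots,3)})_0$ by Lemma \ref{lem 3.8} and which act on $F_1(t)$ by the plane rotation of the pair $(t_i,t_j)$.

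To move an arbitrary $F_1(t)\in(S^C)^3$ to the base point $F_1(e_4)$, I would proceed exactly as in Proposition \ref{prop 3.6}: apply in turn $g_{45}(s_0),\,g_{46}(s_1),\,g_{47}(s_2)$ with real parameters chosen via $\tan s=\Re(\cdot)/\Re(\cdot)$ (or $s=\pi/2$ in the degenerate case) so as to kill the real parts of the coefficients of $e_5,e_6,e_7$ in succession; then apply $g_{56}(s_3),\,g_{57}(s_4)$ to consolidate the three remaining purely imaginary coefficients into a single one, arriving at an element $F_1(t_4'e_4+ir\,e_5)$ with $t_4'\in C$, $r\in\R$. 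The sphere condition then forces $(t_4')^2=1+r^2$, whence $t_4'\in\R\setminus\{0\}$ and $-1<r/t_4'<1$, so — just as in Proposition \ref{prop 3.6} — one may choose $s_5\in\R$ with $\tan(is_5)=ir/t_4'$ and apply the hyperbolic rotation $g_{45}(is_5)$ to reach $F_1(t_4''e_4)$ with $(t_4'')^2=1$, i.e. $t_4''=\pm1$; if $t_4''=-1$, a further rotation $g_{45}(\pi)$ yields $F_1(e_4)$. This establishes transitivity.

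The isotropy subgroup of $({F_4}^C)_{E_{1,2,3}, F_1(0,\ldots,3)}$ at $F_1(e_4)$ consists of the $\alpha\in{F_4}^C$ with $\alpha E_i=E_i$ and $\alpha F_1(e_k)=F_1(e_k)$ for $k=0,\ldots,4$, hence equals $({F_4}^C)_{E_{1,2,3}, F_1(0,\ldots,4)}\cong S\!pin(3,C)$ by Theorem \ref{thm 3.7}. Therefore
$$
  ({F_4}^C)_{E_{1,2,3}, F_1(0,\ldots,3)}/S\!pin(3,C)\simeq(S^C)^3 .
$$
Since $(S^C)^3$ is connected and $S\!pin(3,C)$ is connected (Proposition \ref{prop 3.6} together with Theorem \ref{thm 3.7}), the total space $({F_4}^C)_{E_{1,2,3}, F_1(0,\ldots,3)}$ is connected.

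The step I expect to be the main obstacle is purely the bookkeeping of the reduction above: one has to record precisely how each $G_{ij}$ acts on $F_1(t)$, verify that the successive choices of parameters are mutually consistent — in particular that after the consolidation step one is genuinely left with a single real parameter $r$, which is exactly what legitimizes the concluding hyperbolic rotation — and dispose of the degenerate subcases where a relevant real part vanishes. Conceptually nothing new arises beyond Proposition \ref{prop 3.6}; the only difference is that the four coordinates $e_4,e_5,e_6,e_7$ require one additional trigonometric rotation before the hyperbolic step.
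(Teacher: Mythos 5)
Your proof is correct and follows essentially the same approach as the paper: transitivity of the group on the complex sphere $(S^C)^3$ via the one-parameter subgroups $\exp(sG_{ij})$ supplied by Lemma 3.8, identification of the isotropy subgroup at the base point with $({F_4}^C)_{E_{1,2,3},F_1(0,\ldots,4)}\cong S\!pin(3,C)$, and connectedness of the total space from connectedness of fibre and base. The only difference is organizational: the paper uses three real rotations to push a general point of $(S^C)^3$ down into $(S^C)^2$ and then invokes the transitivity already established in Proposition 3.6 (which is where the hyperbolic rotation lives), whereas you carry out the full reduction directly, repeating the hyperbolic step; both routes are valid.
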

\begin{proof}
We define a $3$-dimensional complex sphere $(S^C)^3$ by
\begin{eqnarray*}
  (S^C)^3\!\!\!&=&\!\!\!\{X \in (V^C)^3 \,|\, (X, X)=2 \}
\\
\!\!\!&=&\!\!\! \{X=F_1 (t) \,|\, t=t_4e_4+t_5 e_5+t_6 e_6+t_7 e_7, {t_4}^2+{t_5}^2+{t_6}^2+{t_7}^2=1, t_k \in C \} 
\end{eqnarray*}
Then the group $({F_4}^C)_{E_{1,2,3}, F_1(0,\ldots,3)}$ acts on $(S^C)^3$, obviously. We shall show that this action is transitive.
In order to prove this, it is sufficient to show that any element $F_1(t) \in (S^C)^5$ can be transformed to $F_1(e_4) \in (S^C)^3$.

Now, for a given $X=F_1(t) \in (S^C)^3$, we choose $s_0 \in  \R, 0\leq s_0 \leq \pi$ such that $\tan s_0=-{\Re(t_4)}/{\Re(t_5)}$ (if $\Re(t_5)=0$, let $s_0=\pi/2$). Operate $g_{45}(s_0):=\exp (s_0 G_{45}) \in (({F_4}^C)_{E_{1,2,3}, F_1(0,\ldots,3)})_0$ on $X=F_1(t)$ (Lemma \ref{lem 3.8}), then we have that
\begin{eqnarray*}
g_{45}(s_0)X\!\!\!&=&\!\!\!\! g_{45}(s_0)F_1(t)
\\
\!\!\!&=&\!\!\!\!F_1 (((\cos s_0)t_4 + (\sin s_0)t_5)e_4 +((\cos s_0)t_5 - (\sin s_0)t_4)e_5+t_6 e_6+t_7 e_7)
%\\
%&& \hspace*{65mm}+t_6 e_6+t_7 e_7)
\\
\!\!\!&=&\!\!\!\!F_1 (i((\cos s_0)\Im(t_4) \!+ (\sin s_0)\Im(t_5) )e_4 \!+((\cos s_0)t_5-(\sin s_0)t_4) e_5+t_6 e_6+t_7 e_7)
%\\
%&& \hspace*{75mm}+t_6 e_6+t_7 e_7)
\\
\!\!\!&=&\!\!\!\!F_1(ir^{(1)}_4 e_4+t^{(1)}_5 e_5+t_6 e_6+t_7 e_7)=:X^{(1)},
\end{eqnarray*}
where $r^{(1)}_4:=(\cos s_0)\Im(t_4) \!+ (\sin s_0)\Im(t_5) \in \R, t^{(1)}_5:=(\cos s_0)t_5-(\sin s_0)t_4 \in C$.

\noindent Moreover, we choose $s_1 \in \R, 0\leq s_1 \leq \pi$ such that $\tan s_1=-{\Re(t_5)}/{\Re(t_6)}$ (if $\Re(t_6)=0$, let $s_1=\pi/2$). Operate $g_{56}(s_1):=\exp (s_1 G_{56}) \in (({F_4}^C)_{E_{1,2,3}, F_1(0,\ldots,3)})_0$ on $X^{(1)}$ (Lemma \ref{lem 3.8}), then we have that
\begin{eqnarray*}
g_{56}(s_1)X^{(1)}\!\!\!&=&\!\!\! g_{34}(s_1)F_1(ir^{(1)}_4 e_4+t^{(1)}_5 e_5+t_6 e_6+t_7 e_7)
\\
\!\!\!&=&\!\!\!F_1 (ir^{(1)}_4 e_4+((\cos s_1)t_5 + (\sin s_1)t_6)e_5 +((\cos s_1)t_6 - (\sin s_1)t_5)e_6+t_7 e_7)
\\
\!\!\!&=&\!\!\!F_1 (ir^{(1)}_4 e_4+i((\cos s_1)\Im(t_5) + (\sin s_1)\Im(t_6) )e_5 +((\cos s_1)t_6-(\sin s_1)t_5) e_6
\\
&& \hspace*{90mm}+t_7 e_7)
\\[-3mm]
\!\!\!&=&\!\!\!F_1(ir^{(1)}_4 e_4+ir^{(1)}_5 e_5+t^{(1)}_6 e_6+t_7 e_7)=:X^{(2)},
\end{eqnarray*}
where $r^{(1)}_5:=(\cos s_1)\Im(t_5) + (\sin s_1)\Im(t_6)  \in \R, t^{(1)}_6:= (\cos s_1)t_6-(\sin s_1)t_5\in C $.

\noindent Additionally, we choose $s_2 \in \R, 0\leq s_2 \leq \pi$ such that $\tan s_2=-{(r^{(1)}_4)}/{(r^{(1)}_5)}$ (if $r^{(1)}_5=0$, let $s_2=\pi/2$). Again, operate $g_{45}(s_2)=\exp (s_2 G_{45}) \in (({F_4}^C)_{E_{1,2,3}, F_1(0,\ldots,3)})_0$ on $X^{(2)}$, then we have that
\begin{eqnarray*}
g_{45}(s_2)X^{(2)}\!\!\!&=&\!\!\! g_{45}(s_2)F_1(ir^{(1)}_4 e_4+ir^{(1)}_5 e_5+t^{(1)}_6 e_6+t_7 e_7)
\\
\!\!\!&=&\!\!\!F_1 (i((\cos s_2)r^{(1)}_4 + (\sin s_2)r^{(1)}_5)e_4 +i((\cos s_2)r^{(1)}_5 - (\sin s_2)r^{(1)}_4)e_5+t^{(1)}_6 e_6
\\
&& \hspace*{88mm}+t_7 e_7)
\\[-3mm]
\!\!\!&=&\!\!\!F_1 (i((\cos s_2)r^{(1)}_5 - (\sin s_2)r^{(1)}_4)e_5+t^{(1)}_6 e_6+t_7 e_7)
\\
\!\!\!&=&\!\!\!F_1(t^{(2)}_5 e_5+t^{(1)}_6 e_6+t_7 e_7)=:X' \in (S^C)^2,
\end{eqnarray*}
where $ t^{(2)}_5:= i((\cos s_2)r^{(1)}_5 - (\sin s_2)r^{(1)}_4)\in C $.

 Since $S\!pin(3,C)\, (\cong ({F_4}^C)_{E_{1,2,3}, F_1(0,\ldots,4)} \!\subset ({F_4}^C)_{E_{1,2,3}, F_1(0,\ldots,3)})$ acts transitively on $(S^C)^2$ (Proposition \ref{prop 3.6}), there exists $\alpha \in S\!pin(3,C)$ such that 
$$
  \alpha X'=F_1(e_5), X' \in (S^C)^2. 
$$ 
Again operate $g_{45}({\pi}/{2}) \in ({F_4}^C)_{E_{1,2,3}, F_1(0,\ldots,3)}$ on $F_1(e_5)$, then we have that 
$$
g_{45}\Bigl(\dfrac{\pi}{2}\Bigr)F_1(e_5) =F_1(e_4).
$$ 
This shows the  transitivity of action to $(S^C)^3$ by the group $({F_4}^C)_{E_{1,2,3}, F_1(0,\ldots,3)}$. The isotropy subgroup of the group $({F_4}^C)_{E_{1,2,3}, F_1(0,\ldots,3)}$ at $F_1(e_4)$ is $({F_4}^C)_{E_{1,2,3}, F_1(0,\ldots,4)} \cong S\!pin(3,C)$ (Theorem \ref{thm 3.7}). 
Thus we have the required homeomorphism 
$$
({F_4}^C)_{E_{1,2,3}, F_1(0,\ldots,3)}/S\!pin(3,C) \simeq (S^C)^3.
$$

Therefore we see that the group $({F_4}^C)_{E_{1,2,3}, F_1(0,\ldots,3)}$ is connected.
\end{proof}

\begin{thm}\label{thm 3.10}
The group $({F_4}^C)_{E_{1,2,3}, F_1(0,\ldots,3)}$ is isomorphic to $S\!pin(4,C)${\rm:} $({F_4}^C)_{E_{1,2,3}, F_1(0,\ldots,3)}$ $ \cong S\!pin(4,C)$.
\end{thm}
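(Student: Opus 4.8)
The plan is to run exactly the argument used in the proof of Theorem \ref{thm 3.7}. Write $G:=({F_4}^C)_{E_{1,2,3}, F_1(0,\ldots,3)}$ and let $O(4,C)=O((V^C)^4)=\{\beta \in \Iso_C((V^C)^4)\,|\,(\beta X,\beta Y)=(X,Y)\}$. First I would check that the restriction map is well defined: any $\alpha \in G$ fixes $E_1$, hence preserves $\{X \in \mathfrak{J}^C\,|\,E_1 \circ X=0\}$ (using $\alpha(E_1\circ X)=\alpha E_1 \circ \alpha X$); it also fixes $E_2,E_3$ and $F_1(e_k)$, $k=0,1,2,3$, hence preserves the conditions $(E_2,X)=(E_3,X)=0$ and $(F_1(e_k),X)=0$, $k=0,\ldots,3$, since ${F_4}^C$ preserves the inner product. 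So $\alpha$ leaves $(V^C)^4$ invariant and preserves its quadratic form, and $p(\alpha):=\alpha|_{(V^C)^4}$ defines a homomorphism $p:G \to O(4,C)$.

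Since $G$ is connected (Proposition \ref{prop 3.9}) and $p$ is continuous, $p$ maps $G$ into $S\!O(4,C)=S\!O((V^C)^4)$. Next I would compute the kernel and show $\Ker\,p=\{1,\sigma\}\cong \Z_2$. On the one hand, under the identification $({F_4}^C)_{E_{1,2,3}} \cong S\!pin(8,C)$ one has $\sigma=(1,-1,-1)$, which fixes every $F_1(x)$, $x\in\mathfrak{C}^C$; in particular $\sigma \in G$ and $\sigma|_{(V^C)^4}=1$, whence $\{1,\sigma\}\subset \Ker\,p$. On the other hand, for $\alpha \in \Ker\,p$ write $\alpha=(\alpha_1,\alpha_2,\alpha_3)\in S\!pin(8,C)$ (Theorem \ref{thm 3.1}); from $\alpha F_1(e_i)=F_1(e_i)$, $i=0,1,2,3$, together with $\alpha|_{(V^C)^4}=1$ we get $\alpha_1 e_i=e_i$ for all $i=0,\ldots,7$, so $\alpha_1=1$, and therefore $\alpha=(1,1,1)$ or $\alpha=(1,-1,-1)=\sigma$. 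Thus $\Ker\,p=\{1,\sigma\}\cong \Z_2$.

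To finish, Lemma \ref{lem 3.8} gives $\dim_C(({\mathfrak{f}_4}^C)_{E_{1,2,3}, F_1(0,\ldots,3)})=6=\dim_C(\mathfrak{so}(4,C))$; as $S\!O(4,C)$ is connected and $\Ker\,p$ is discrete, Lemma \ref{lemma 2.1} shows $p$ is surjective, so $G/\Z_2 \cong S\!O(4,C)$. In particular $p:G \to S\!O(4,C)$ is a connected double covering, and since $\pi_1(S\!O(4,C)) \cong \Z_2$ this must be the universal covering; hence $G \cong S\!pin(4,C)$, which is the required isomorphism $({F_4}^C)_{E_{1,2,3}, F_1(0,\ldots,3)} \cong S\!pin(4,C)$.

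The computation is essentially routine — the substantial work, namely the connectedness of $G$, has already been carried out in Propositions \ref{prop 3.6} and \ref{prop 3.9}. Two points deserve a little care: first, verifying that $\sigma$ really lies in $G$ and acts trivially on $(V^C)^4$, which is exactly what makes $\Ker\,p$ nontrivial and so forces $G$ to be $S\!pin(4,C)$ rather than $S\!O(4,C)$; and second, the concluding covering-space step, which rests on the fact that $\pi_1(S\!O(4,C)) \cong \Z_2$.
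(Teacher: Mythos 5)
Your proof is correct and is essentially the paper's intended argument: the paper omits the proof of Theorem \ref{thm 3.10}, stating only that it proceeds as in Theorem \ref{thm 3.7}, and your write-up is precisely that argument carried out for $(V^C)^4$, using the connectedness from Proposition \ref{prop 3.9}, the kernel computation $\Ker\,p=\{1,\sigma\}$ via Theorem \ref{thm 3.1}, and the dimension count from Lemma \ref{lem 3.8}.
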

\begin{proof}
Since we can prove this theorem as in Theorem \ref{thm 3.7}
, this proof is omitted.
\end{proof}

Continuously, we shall construct $S\!pin(5,C)$ in ${F_4}^C$.
\vspace{1mm}

We consider a group $({F_4}^C)_{E_{1,2,3},F_1(0,1,2)}$:
$$
({F_4}^C)_{E_{1,2,3},F_1(0,1,2)}= \biggl\{ \alpha \in {F_4}^C \,\biggm| \,
\begin{array}{l}
\alpha E_i=E_i, i=1,2,3, \\
\alpha  F_1(e_k)= F_1(e_k), k=0,1,2
\end{array}\biggr\}.
$$
\begin{lem}\label{lem 3.11}
The Lie algebra $({\mathfrak{f}_4}^C)_{E_{1,2,3},F_1(0,1,2)}$ of the group $({F_4}^C)_{E_{1,2,3},F_1(0,1,2)}$ is given by
\begin{eqnarray*}
 ({\mathfrak{f}_4}^C)_{E_{1,2,3},F_1(0,1,2)}\!\!\!&=&\!\!\!\{\delta \in {\mathfrak{f}_4}^C\,|\, 
%\begin{array}{l}
\delta E_i=0, i=1,2,3, 
%\\
\, \delta  F_1(e_k)= 0, k=0,1,2
%\end{array}
\}
\\[1mm]
\!\!\!&=&\!\!\!\Biggl\{\begin{array}{l}
 \delta=d_{34}G_{34}+d_{35}G_{35}+d_{36}G_{36}+d_{37}G_{37}+d_{45}G_{45}
\\
\,\,\,\,\,\,+d_{46}G_{46}+d_{47}G_{47}+d_{56}G_{56}+d_{57}G_{57}+d_{67}G_{67}
%\\
%\,\,\,\,\,\,+d_{56}G_{56}+d_{57}G_{57}+d_{67}G_{67}
\end{array}\,\Biggm|\,d_{kl} \in C \Biggr\}.  
\end{eqnarray*}

In particular, $\dim_C(({\mathfrak{f}_4}^C)_{E_{1,2,3},F_1(0,1,2)})=10$.
\end{lem}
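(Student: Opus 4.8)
The plan is to follow the same linear-algebra computation as in Lemmas \ref{lem 3.5} and \ref{lem 3.8}, now imposing one fewer of the conditions $\delta F_1(e_k)=0$. Recall from \cite[Theorem 2.2.2]{realization G_2} that every $\delta \in {\mathfrak{f}_4}^C$ is uniquely of the form $\delta = D + \tilde{A}_1(a_1) + \tilde{A}_2(a_2) + \tilde{A}_3(a_3)$ with $D \in \mathfrak{so}(8,C)$ and $a_k \in \mathfrak{C}^C$. First I would record that the conditions $\delta E_i = 0$ $(i=1,2,3)$ are equivalent to $a_1 = a_2 = a_3 = 0$: every $D \in \mathfrak{so}(8,C)$ annihilates the diagonal $\mathrm{span}_C\{E_1,E_2,E_3\}$, whereas the $\tilde{A}_k(a_k)$ send it into the $F_k$-components and vanish there only when all $a_k$ vanish; equivalently, this is just Theorem \ref{thm 3.1} at the Lie-algebra level, namely $({\mathfrak{f}_4}^C)_{E_{1,2,3}} = \mathfrak{so}(8,C)$. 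Hence it remains to cut out, inside $\mathfrak{so}(8,C)$, those $D$ with $D F_1(e_k) = 0$ for $k = 0, 1, 2$.

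Expanding $D = \sum_{0 \le i < j \le 7} d_{ij}G_{ij}$ in the standard basis $\{G_{ij}\}$ of $\mathfrak{so}(8,C)$ (see \cite[Section 1.3]{Yokotaichiro}), the action on the $F_1$-component is $D F_1(x) = F_1(Dx)$, where $G_{ij}$ acts on $\mathfrak{C}^C$ as the elementary skew transformation on the pair $(e_i,e_j)$, annihilating every other $e_\ell$ -- this is precisely the normalization already used in Lemmas \ref{lem 3.5} and \ref{lem 3.8}. Since $G_{ij}e_k \neq 0$ exactly when $k \in \{i,j\}$, comparing the coefficients of each $e_\ell$ in $De_0 = De_1 = De_2 = 0$ forces $d_{ij} = 0$ for every pair with $\{i,j\}\cap\{0,1,2\} \neq \emptyset$. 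The surviving coefficients are exactly those $d_{ij}$ with $3 \le i < j \le 7$, i.e.
\[
\delta = d_{34}G_{34}+d_{35}G_{35}+d_{36}G_{36}+d_{37}G_{37}+d_{45}G_{45}+d_{46}G_{46}+d_{47}G_{47}+d_{56}G_{56}+d_{57}G_{57}+d_{67}G_{67},
\]
which is the claimed description, and $\dim_C(({\mathfrak{f}_4}^C)_{E_{1,2,3},F_1(0,1,2)}) = \binom{5}{2} = 10$.

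I do not expect any real obstacle: the argument is the same bookkeeping as in Lemmas \ref{lem 3.5} and \ref{lem 3.8}, with the set of excluded indices shrunk from $\{0,\ldots,4\}$ (resp. $\{0,\ldots,3\}$) to $\{0,1,2\}$. The only two points needing any attention are the reduction to the $\mathfrak{so}(8,C)$-part (covered by Theorem \ref{thm 3.1} as above) and keeping straight the $\mathfrak{so}(8,C)$-action convention on the $F_1$-slot, both of which are standard from \cite{Yokotaichiro}.
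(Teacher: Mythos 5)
Your proof is correct and is exactly the ``simple computation'' that the paper omits: reduce to $(\mathfrak{f}_4^C)_{E_{1,2,3}}=\mathfrak{so}(8,C)$ via the decomposition $\delta=D+\tilde A_1(a_1)+\tilde A_2(a_2)+\tilde A_3(a_3)$, then kill every $G_{ij}$ with $\{i,j\}\cap\{0,1,2\}\neq\emptyset$ from $De_k=0$, $k=0,1,2$, leaving the $\binom{5}{2}=10$ generators $G_{ij}$, $3\le i<j\le 7$. This is the same bookkeeping as in Lemmas \ref{lem 3.5} and \ref{lem 3.8}, consistent with the convention of Theorem \ref{thm 3.1} that the first triality component acts on the $F_1$-slot, so there is nothing to add.
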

\begin{proof}
By doing simple computation, this lemma is proved easily (As for $G_{ij}$, see \cite[Section 1.3]{Yokotaichiro}).
\end{proof}

We define a $5$-dimensional $C$-vector subspace $(V^C)^5$ of $\mathfrak{J}^C$ by
\begin{eqnarray*}
  (V^C)^5\!\!\!&=&\!\!\!\biggl\{X \in \mathfrak{J}^C \,\biggm|\,
\begin{array}{l}E_1 \circ X=0, (E_2, X)=(E_3, X)=0,\\
(F_1(e_k),X)=0,k=0,1,2
\end{array} \biggr\}
\\[1mm]
\!\!\!&=&\!\!\!\{X =F_1(t)\,|\,t=t_3 e_3+t_4 e_4+t_5 e_5+t_6 e_6+t_7 e_7, t_k \in C \}
\end{eqnarray*}
with the norm $(X,X)=2({t_3}^2+{t_4}^2+{t_5}^2+{t_6}^2+{t_7}^2)$.
Obviously, the group $({F_4}^C)_{E_{1,2,3}, F_1(0,1,2)}$ acts on $(V^C)^5$.

\begin{prop}\label{prop 3.12}
The homogeneous space $({F_4}^C)_{E_{1,2,3}, F_1(0,1,2)}/S\!pin(4,C)$ is homeomorphic to the complex sphere $(S^C)^4${\rm :} $({F_4}^C)_{E_{1,2,3}, F_1(0,1,2)}/S\!pin(4,C) \simeq (S^C)^4$.

In particular, the group $({F_4}^C)_{E_{1,2,3}, F_1(0,1,2)}$ is connected.
\end{prop}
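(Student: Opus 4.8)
The plan is to follow the same scheme as in Propositions \ref{prop 3.6} and \ref{prop 3.9}: exhibit the complex sphere $(S^C)^4$ as a homogeneous space of $({F_4}^C)_{E_{1,2,3}, F_1(0,1,2)}$ with connected isotropy subgroup $S\!pin(4,C)$, and then read off connectedness from the resulting fibration. First I would introduce
$$
(S^C)^4 = \{X \in (V^C)^5 \mid (X,X) = 2\} = \{F_1(t) \mid t = t_3 e_3 + \cdots + t_7 e_7,\ {t_3}^2 + \cdots + {t_7}^2 = 1,\ t_k \in C\},
$$
and note that $({F_4}^C)_{E_{1,2,3}, F_1(0,1,2)}$ acts on $(S^C)^4$, since it preserves both $(V^C)^5$ and the inner product of $\mathfrak{J}^C$.

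The heart of the proof is transitivity of this action, and for that it suffices to move an arbitrary $F_1(t) \in (S^C)^4$ to $F_1(e_3)$. By Lemma \ref{lem 3.11} the one-parameter subgroups $g_{jk}(s) = \exp(s G_{jk})$ with $3 \le j < k \le 7$ lie in $(({F_4}^C)_{E_{1,2,3}, F_1(0,1,2)})_0$, so I would apply, in turn, $g_{34}(s_0)$, $g_{45}(s_1)$, $g_{34}(s_2)$ with parameters read off from the real parts of the coefficients $t_k$ exactly as in Proposition \ref{prop 3.9}; this clears the $e_3$-component of $t$ and lands the element in the subsphere spanned by $e_4, \ldots, e_7$, namely in $(S^C)^3 \subset (S^C)^4$. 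Since $S\!pin(4,C) \cong ({F_4}^C)_{E_{1,2,3}, F_1(0,1,2,3)}$ (Theorem \ref{thm 3.10}) is a subgroup of $({F_4}^C)_{E_{1,2,3}, F_1(0,1,2)}$ that already acts transitively on $(S^C)^3$ (Proposition \ref{prop 3.9}), I can then carry the element to $F_1(e_4)$, and finally $g_{34}(\pi/2)$ sends $F_1(e_4)$ to $F_1(e_3)$. This proves transitivity.

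The isotropy subgroup at $F_1(e_3)$ is, by definition, $({F_4}^C)_{E_{1,2,3}, F_1(0,1,2,3)}$, which is isomorphic to $S\!pin(4,C)$ by Theorem \ref{thm 3.10}. Hence one obtains the homeomorphism
$$
({F_4}^C)_{E_{1,2,3}, F_1(0,1,2)}/S\!pin(4,C) \simeq (S^C)^4.
$$
Since the complex sphere $(S^C)^4$ is connected and $S\!pin(4,C)$ is connected, the fiber bundle $S\!pin(4,C) \to ({F_4}^C)_{E_{1,2,3}, F_1(0,1,2)} \to (S^C)^4$ has connected base and connected fiber, so its total space $({F_4}^C)_{E_{1,2,3}, F_1(0,1,2)}$ is connected.

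I expect the only real difficulty to be purely computational bookkeeping: arranging the chain of rotations in the correct order so that the real, and then the imaginary, parts of $t_3, \ldots, t_7$ are eliminated one coordinate at a time, together with the small case distinction that the reduction to $F_1(e_4)$ inherits from Propositions \ref{prop 3.6} and \ref{prop 3.9} (an extra rotation through $\pi$ when a coordinate ends up at $-1$ rather than $1$). Because this is entirely parallel to what has already been done, I would present it briefly and only stress the shifted index range.
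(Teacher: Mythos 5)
Your proposal is correct and follows essentially the same route as the paper's own proof: the same chain $g_{34}(s_0)$, $g_{45}(s_1)$, $g_{34}(s_2)$ to clear the $e_3$-component and land in $(S^C)^3$, then the transitivity of $S\!pin(4,C)\cong({F_4}^C)_{E_{1,2,3},F_1(0,\ldots,3)}$ from Proposition \ref{prop 3.9} followed by $g_{34}(\pi/2)$, with the isotropy subgroup at $F_1(e_3)$ identified via Theorem \ref{thm 3.10}. No substantive differences to report.
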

\begin{proof}
We define a $4$-dimensional complex sphere $(S^C)^4$ by
\begin{eqnarray*}
  (S^C)^4\!\!\!&=&\!\!\!\{X \in (V^C)^4 \,|\, (X, X)=2 \}
\\
\!\!\!&=&\!\!\! \biggl\{X=F_1 (t) \,\biggm|\,\begin{array}{l}
t=t_3 e_3+t_4 e_4+t_5 e_5+t_6 e_6+t_7 e_7, 
\\
{t_3}^2+{t_4}^2+{t_5}^2+{t_6}^2+{t_7}^2=1, t_k \in C 
\end{array}\biggr \}. 
\end{eqnarray*}
Then the group $({F_4}^C)_{E_{1,2,3}, F_1(0,1,2)}$ acts on $(S^C)^4$, obviously. We shall show that this action is transitive.
In order to prove this, it is sufficient to show that any element $F_1(t) \in (S^C)^4$ can be transformed to $F_1(e_3) \in (S^C)^4$.

Now, for a given $X=F_1(t) \in (S^C)^4$, we choose $s_0 \in \R, 0\leq s_0 \leq \pi$ such that $\tan s_0=-{\Re(t_3)}/{\Re(t_4)}$ (if $\Re(t_4)=0$, let $\s_0=\pi/2$). Operate $g_{34}(s_0):=\exp (s_0 G_{34}) \in (({F_4}^C)_{E_{1,2,3}, F_1(0,1,2)})_0$ on $X=F_1(t)$ (Lemma \ref{lem 3.11}), then we have that
\begin{eqnarray*}
g_{34}(s_0)X\!\!\!&=&\!\!\! g_{34}(s_0)F_1(t)
\\
\!\!\!&=&\!\!\!F_1 (((\cos s_0)t_3 + (\sin s_0)t_4)e_3 +((\cos s_0)t_4 - (\sin s_0)t_3)e_4+t_5 e_5+t_6 e_6+t_7 e_7)
\\
\!\!\!&=&\!\!\!F_1 (i((\cos s_0)\Im(t_3) + (\sin s_0)\Im(t_4) )e_3 +((\cos s_0)t_4-(\sin s_0)t_3) e_4
\\
&& \hspace*{75mm}+t_5 e_5+t_6 e_6+t_7 e_7)
\\[-3mm]
\!\!\!&=&\!\!\!F_1(ir^{(1)}_3 e_3+t^{(1)}_4 e_4+t_5 e_5+t_6 e_6+t_7 e_7)=:X^{(1)},
\end{eqnarray*}
where $r^{(1)}_3:= (\cos s_0)\Im(t_3) + (\sin s_0)\Im(t_4) \in \R, t^{(1)}_4:=(\cos s_0)t_4-(\sin s_0)t_3 \in C$.

\noindent Moreover, we choose $s_1 \in \R, 0\leq s_1 \leq \pi$ such that $\tan s_1=-{\Re(t_4)}/{\Re(t_5)}$ (if $\Re(t_5)=0$, let $s_1=\pi/2$). Operate $g_{45}(s_1):=\exp (s_1 G_{45}) \in (({F_4}^C)_{E_{1,2,3}, F_1(0,1,2)})_0$ on $X^{(1)}$ (Lemma \ref{lem 3.11}), then we have that
\begin{eqnarray*}
g_{45}(s_1)X^{(1)}\!\!\!&=&\!\!\! g_{45}(s_1)F_1(ir^{(1)}_3 e_3+t^{(1)}_4 e_4+t_5 e_5+t_6 e_6+t_7 e_7)
\\
\!\!\!&=&\!\!\!F_1 (ir^{(1)}_3 e_3+((\cos s_1)t_4 + (\sin s_1)t_5)e_4 +((\cos s_1)t_5 - (\sin s_1)t_4)e_5
%+t_7 e_7)
\\
&& \hspace*{82mm}+t_6 e_6+t_7 e_7)
\\
\!\!\!&=&\!\!\!F_1 (ir^{(1)}_3 e_3+i((\cos s_1)\Im(t_4) + (\sin s_1)\Im(t_5) )e_4 +((\cos s_1)t_5-(\sin s_1)t_4) e_5
\\
&& \hspace*{82mm}+t_6 e_6+t_7 e_7)
\\[-3mm]
\!\!\!&=&\!\!\!F_1(ir^{(1)}_3 e_3+ir^{(1)}_4 e_4+t^{(1)}_5 e_5+t_6 e_6+t_7 e_7):=X^{(2)},
\end{eqnarray*}
where $r^{(1)}_4:=(\cos s_1)\Im(t_4) + (\sin s_1)\Im(t_5) \in \R, t^{(1)}_5:=(\cos s_1)t_5-(\sin s_1)t_4 \in C $.

\noindent Additionally, we choose $s_2 \in \R, 0\leq s_2 \leq \pi$ such that $\tan s_2=-{(r^{(1)}_3)}/{(r^{(1)}_4)}$ (if $r^{(1)}_4=0$, let $s_2=\pi/2$). Again, operate $g_{34}(s_2)=\exp (s_2 G_{34}) \in (({F_4}^C)_{E_{1,2,3}, F_1(0,1,2)})_0$ on $X^{(2)}$ (Lemma \ref{lem 3.11}), then we have that
\begin{eqnarray*}
g_{34}(s_2)X^{(2)}\!\!\!&=&\!\!\! g_{34}(s_2)F_1(ir^{(1)}_3 e_3+ir^{(1)}_4 e_4+t^{(1)}_5 e_5+t_6 e_6+t_7 e_7)
\\
\!\!\!&=&\!\!\!F_1 (i((\cos s_2)r^{(1)}_3 + (\sin s_2)r^{(1)}_4)e_3 +i((\cos s_2)r^{(1)}_4 - (\sin s_2)r^{(1)}_3)e_4+t_5 e_5
\\
&& \hspace*{70mm}+t_5 e_5++t_6 e_6+t_7 e_7)
\\[-1mm]
\!\!\!&=&\!\!\!F_1 (i((\cos s_2)r^{(1)}_4 - (\sin s_2)r^{(1)}_3)e_4+t_5 e_5+t_6 e_6+t_7 e_7)
\\
\!\!\!&=&\!\!\!F_1(t^{(2)}_4 e_4+t_5 e_5+t_6 e_6+t_7 e_7)=:X' \in (S^C)^3,
\end{eqnarray*}
where $ t^{(2)}_4:=i((\cos s_2)r^{(1)}_4 - (\sin s_2)r^{(1)}_3) \in C $.

 Since $S\!pin(4,C)\, (\cong ({F_4}^C)_{E_{1,2,3}, F_1(0,\ldots,3)} \!\subset ({F_4}^C)_{E_{1,2,3}, F_1(0,\ldots,2)})$ acts transitively on $(S^C)^3$ (Proposition \ref{prop 3.9}), there exists $\alpha \in S\!pin(4,C)$ such that 
$$
  \alpha X'=F_1(e_4), X' \in (S^C)^3. 
$$ 
Again operate $g_{34}({\pi}/{2}) \in ({F_4}^C)_{E_{1,2,3}, F_1(0,1,2)}$ on $F_1(e_4)$, then we have that 
$$
g_{34}\Bigl(\dfrac{\pi}{2}\Bigr)(F_1(e_4)) =F_1(e_3).
$$
This shows the transitivity of action to $(S^C)^4$ by the group $({F_4}^C)_{E_{1,2,3}, F_1(0,1,2)}$. The isotropy subgroup of the group $({F_4}^C)_{E_{1,2,3}, F_1(0,1,2)}$ at $F_1(e_3)$ is $({F_4}^C)_{E_{1,2,3}, F_1(0,\ldots,3)} \cong S\!pin(4,C)$ (Theorem \ref{thm 3.10}). 
Thus we have the required homeomorphism 
$$
({F_4}^C)_{E_{1,2,3}, F_1(0,1,2)}/S\!pin(4,C) \simeq (S^C)^4.
$$

Therefore we see that the group $({F_4}^C)_{E_{1,2,3}, F_1(0,1,2)}$ is connected.
\end{proof}

\begin{thm}\label{thm 3.13}
The group $({F_4}^C)_{E_{1,2,3}, F_1(0,1,2)}$ is isomorphic to $S\!pin(5,C)${\rm:} $({F_4}^C)_{E_{1,2,3}, F_1(0,1,2)} $ $ \cong S\!pin(5,C)$.
\end{thm}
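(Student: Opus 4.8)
The plan is to run, essentially word for word, the argument of Theorem~\ref{thm 3.7}, with the role of $(V^C)^3$ now played by the $5$-dimensional space $(V^C)^5$. First I would restrict the natural action: for $\alpha \in ({F_4}^C)_{E_{1,2,3}, F_1(0,1,2)}$ the map $\alpha|_{(V^C)^5}$ preserves the bilinear form with quadratic part $2({t_3}^2+{t_4}^2+{t_5}^2+{t_6}^2+{t_7}^2)$ on $(V^C)^5$, hence lies in $O(5,C) = O((V^C)^5)$, so $p(\alpha) = \alpha|_{(V^C)^5}$ defines a continuous homomorphism $p : ({F_4}^C)_{E_{1,2,3}, F_1(0,1,2)} \to O(5,C)$. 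Since the source is connected by Proposition~\ref{prop 3.12}, the image lies in $S\!O(5,C) = S\!O((V^C)^5)$.

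Next I would compute $\Ker\,p$. For $\alpha \in \Ker\,p$ we have $\alpha \in ({F_4}^C)_{E_{1,2,3}} \cong S\!pin(8,C)$ by Theorem~\ref{thm 3.1}, so write $\alpha = (\alpha_1, \alpha_2, \alpha_3)$. The conditions $\alpha F_1(e_k) = F_1(e_k)$ for $k = 0,1,2$ together with $\alpha|_{(V^C)^5} = 1$ (which yields $\alpha_1 e_k = e_k$ for $k = 3,\ldots,7$) force $\alpha_1 = 1$ on $\mathfrak{C}^C$; exactly as in the proof of Theorem~\ref{thm 3.7}, this gives $\alpha = (1,1,1)$ or $\alpha = (1,-1,-1) = \sigma$. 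Since $\sigma$ fixes every $F_1(x)$ it does lie in $\Ker\,p$, so $\Ker\,p = \{1, \sigma\} \cong \Z_2$.

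Finally I would count dimensions: Lemma~\ref{lem 3.11} gives $\dim_C(({\mathfrak{f}_4}^C)_{E_{1,2,3}, F_1(0,1,2)}) = 10 = \dim_C(\mathfrak{so}(5,C))$, and since $S\!O(5,C)$ is connected while $\Ker\,p$ is discrete, Lemma~\ref{lemma 2.1} makes $p$ surjective. Hence $({F_4}^C)_{E_{1,2,3}, F_1(0,1,2)}/\Z_2 \cong S\!O(5,C)$, and because the source is connected (Proposition~\ref{prop 3.12}) it must be the universal covering group $S\!pin(5,C)$ of $S\!O(5,C)$, yielding $({F_4}^C)_{E_{1,2,3}, F_1(0,1,2)} \cong S\!pin(5,C)$.

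The point is that the two substantive ingredients, namely the connectedness of the source (Proposition~\ref{prop 3.12}, obtained from the transitive chain of complex-sphere actions) and the Lie-algebra dimension $10$ (Lemma~\ref{lem 3.11}), are already established, so what remains is the purely formal covering-space manipulation above. The only place requiring any attention is the kernel computation, but it is literally the one appearing in Theorem~\ref{thm 3.7}, so I anticipate no genuine obstacle here.
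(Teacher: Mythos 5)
Your proposal is correct and is precisely the argument the paper intends: the paper's proof of Theorem \ref{thm 3.13} simply says it is proved ``as in Theorem \ref{thm 3.7}'' and omits the details, and your write-up carries out exactly that template (restriction to $(V^C)^5$, connectedness from Proposition \ref{prop 3.12}, kernel $\{1,\sigma\}$ via the $S\!pin(8,C)$ triality description, the dimension count $10=\dim_C(\mathfrak{so}(5,C))$ from Lemma \ref{lem 3.11}, and the universal-cover conclusion). No discrepancies to report.
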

\begin{proof}
Since we can also prove this theorem as in Theorem \ref{thm 3.7}, this proof is omitted.
\end{proof}
\vspace{1mm}

Now, we determine the structure of the group $({F_4}^C)_{E_1, E_2,E_3, F_1(e_k), k=0,1}$ as the aim of this section . 

\begin{lem}\label{lem 3.14}
The Lie algebra $({\mathfrak{f}_4}^C)_{E_1, E_2,E_3, F_1(e_k), k=0,1}$ of the group $({F_4}^C)_{E_1, E_2,E_3, F_1(e_k), k=0,1}$ is given by
\begin{eqnarray*}
 ({\mathfrak{f}_4}^C)_{E_1, E_2,E_3, F_1(e_k), k=0,1}\!\!\!&=&\!\!\!\{\delta \in {\mathfrak{f}_4}^C\,|\, 
%\begin{array}{l}
\delta E_i=0, i=1,2,3, 
%\\
\,\delta  F_1(e_k)= 0, k=0,1 
%\end{array}\biggr
\}
\\[1mm]
\!\!\!&=&\!\!\!\left\{ \begin{array}{l}
\delta=d_{23}G_{23}+d_{24}G_{24}+d_{25}G_{25}+d_{26}G_{26}+d_{27}G_{27}\\
\quad +d_{34}G_{34}+d_{35}G_{35}+d_{36}G_{36}+d_{37}G_{37}+d_{45}G_{45}\\  
\quad +d_{46}G_{46}+d_{47}G_{47}+d_{56}G_{56}+d_{57}G_{57}+d_{67}G_{67}
\end{array} 
\!\left|\! \begin{array}{l}
\!{} \\
\! d_{kl} \in C \\
\!{}
\end{array}\right. \right \}.  
\end{eqnarray*}

In particular, $\dim_{{}_C}(({\mathfrak{f}_4}^C)_{E_1, E_2,E_3, F_1(e_k), k=0,1})=15$.
\end{lem}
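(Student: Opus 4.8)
The plan is a direct computation, carried out in exactly the same way as in Lemmas~\ref{lem 3.5}, \ref{lem 3.8} and \ref{lem 3.11}. First I would write an arbitrary element of ${\mathfrak{f}_4}^C$ as $\delta = D + \ti{A}_1(a_1) + \ti{A}_2(a_2) + \ti{A}_3(a_3)$ with $D \in \mathfrak{so}(8,C)$ and $a_k \in \mathfrak{C}^C$, and dispose of the conditions $\delta E_i = 0$ $(i=1,2,3)$ all at once: they force $a_1 = a_2 = a_3 = 0$, so that $\delta = D \in \mathfrak{so}(8,C)$. (Equivalently, by Theorem~\ref{thm 3.1} the group $({F_4}^C)_{E_1,E_2,E_3}$ is $S\!pin(8,C)$, whose Lie algebra is the $28$-dimensional $\mathfrak{so}(8,C)$, consisting precisely of the $D$-terms; nothing more can satisfy $\delta E_i = 0$.) The task thus reduces to deciding which $D \in \mathfrak{so}(8,C)$ satisfy $D F_1(e_0) = 0$ and $D F_1(e_1) = 0$.

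Next I would expand $D = \sum_{0 \le k < l \le 7} d_{kl} G_{kl}$ and apply the explicit formulas for the action of the $G_{kl}$ on $\mathfrak{J}^C$ (see \cite[Section 1.3]{Yokotaichiro}) to the vectors $F_1(e_0)$ and $F_1(e_1)$. The equation $D F_1(e_0) = 0$ kills exactly the coefficients $d_{0l}$ for $l = 1, \ldots, 7$, and then $D F_1(e_1) = 0$ kills in addition the coefficients $d_{1l}$ for $l = 2, \ldots, 7$; that is, only the $G_{kl}$ with $2 \le k < l \le 7$ survive. This yields the displayed form of $({\mathfrak{f}_4}^C)_{E_1,E_2,E_3,F_1(e_k),k=0,1}$ and the count $\dim_C = \binom{6}{2} = 15$. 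The reverse inclusion is immediate, since each $G_{kl}$ with $2 \le k < l \le 7$ visibly annihilates $E_1, E_2, E_3$ and also $F_1(e_0), F_1(e_1)$.

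I do not expect a genuine obstacle. The one point that needs care is using the triality-correct form of the $\mathfrak{so}(8,C)$-action on the $F_1$-slot of $\mathfrak{J}^C$; but since the octonion units $e_0, e_1$ held fixed are themselves coordinate directions, the surviving $G_{kl}$ are read off immediately, just as in the earlier lemmas. Accordingly the whole verification is of the short, mechanical kind that the paper elsewhere settles "by simple computation."
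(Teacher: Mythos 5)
Your proposal is correct and is precisely the "simple computation" that the paper invokes without writing out: the conditions $\delta E_i=0$ reduce $\delta$ to the $\mathfrak{so}(8,C)$-part $D=\sum d_{kl}G_{kl}$, and $DF_1(e_0)=DF_1(e_1)=0$ then eliminate exactly the coefficients $d_{0l}$ and $d_{1l}$, leaving the $\binom{6}{2}=15$ generators $G_{kl}$ with $2\le k<l\le 7$. No gap.
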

\begin{proof}
By doing simple computation, this lemma is proved easily (As for $G_{ij}$, see \cite[Section 1.3]{Yokotaichiro}).
\end{proof}

We define a $6$-dimensional $C$-vector subspace $(V^C)^6$ of $\mathfrak{J}^C$ by
\begin{eqnarray*}
  (V^C)^6\!\!\!&=&\!\!\!\biggl\{X \in \mathfrak{J}^C \,\biggm|\,
\begin{array}{l}E_1 \circ X=0, 
%\bcancel{\tr(X)=0}, 
(E_2, X)=(E_3, X)=0,\\
(F_1(e_k),X)=0,k=0,1
\end{array} \biggr\}
\\[1mm]
\!\!\!&=&\!\!\!\{X =F_1(t)\,|\,t=t_2 e_2+t_3 e_3+t_4 e_4+t_5 e_5+t_6 e_6+t_7 e_7, t_k \in C \}
\end{eqnarray*}
with the norm $(X,X)=2({t_2}^2+{t_3}^2+{t_4}^2+{t_5}^2+{t_6}^2+{t_7}^2)$.
The group $({F_4}^C)_{E_1, E_2,E_3, F_1(e_k), k=0,1}$ acts on $(V^C)^6$, obviously.

\begin{prop}\label{prop 3.15}
The homogeneous space $({F_4}^C)_{E_1, E_2,E_3, F_1(e_k), k=0,1}/S\!pin(5,C)$  is homeomorphic to the complex sphere $(S^C)^5${\rm:} $({F_4}^C)_{E_1, E_2,E_3, F_1(e_k), k=0,1}/S\!pin(5,C) \simeq (S^C)^5$.

In particular, the group $({F_4}^C)_{E_1, E_2,E_3, F_1(e_k), k=0,1}$ is connected.
\end{prop}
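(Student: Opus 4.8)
The plan is to follow verbatim the template of Propositions~\ref{prop 3.6}, \ref{prop 3.9} and \ref{prop 3.12}: realize $(S^C)^5$ as a homogeneous space of $({F_4}^C)_{E_1, E_2,E_3, F_1(e_k), k=0,1}$ whose isotropy subgroup is $S\!pin(5,C)$, and then read off connectedness from the connectedness of the sphere and of the fibre.

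First I would introduce the $5$-dimensional complex sphere
$$
 (S^C)^5 = \{X \in (V^C)^6 \mid (X,X)=2\} = \{F_1(t) \mid t = t_2 e_2 + \cdots + t_7 e_7,\ {t_2}^2 + \cdots + {t_7}^2 = 1,\ t_k \in C\},
$$
on which $({F_4}^C)_{E_1, E_2,E_3, F_1(e_k), k=0,1}$ obviously acts. The heart of the proof is the transitivity of this action, and for that it suffices to carry an arbitrary $X = F_1(t) \in (S^C)^5$ to $F_1(e_2)$.

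By Lemma~\ref{lem 3.14} the group contains $g_{ij}(s) := \exp(s G_{ij})$ for all $2 \le i < j \le 7$, each acting on $F_1(t)$ by the plane rotation of the coefficient vector $t$. Using the same three-step rotation trick as in Proposition~\ref{prop 3.12}, I would annihilate the $e_2$-component of $t$: first apply $g_{23}(s_0)$ with $\tan s_0 = -\Re(t_2)/\Re(t_3)$ to make the $e_2$-coefficient purely imaginary; then apply $g_{34}(s_1)$ with $\tan s_1 = -\Re(t_3)/\Re(t_4)$ to make the $e_3$-coefficient purely imaginary while leaving the $e_2$-coefficient purely imaginary; then apply $g_{23}(s_2)$ with $\tan s_2$ equal to the ratio of those two purely imaginary coefficients to kill the $e_2$-coefficient entirely, always taking $s = \pi/2$ when a denominator vanishes. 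Since the Freudenthal inner product is preserved, the resulting element lies in $(S^C)^4 \subset (S^C)^5$. By Proposition~\ref{prop 3.12} the group $S\!pin(5,C) \cong ({F_4}^C)_{E_{1,2,3}, F_1(0,1,2)}$ acts transitively on $(S^C)^4$, so there is $\alpha \in S\!pin(5,C)$ carrying this element to $F_1(e_3)$; finally $g_{23}(\pi/2)$ sends $F_1(e_3)$ to $F_1(e_2)$ (composing with one more elementary rotation if a sign intervenes). This proves transitivity.

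It then remains to identify the isotropy subgroup at $F_1(e_2)$, which is exactly $({F_4}^C)_{E_{1,2,3}, F_1(0,1,2)} \cong S\!pin(5,C)$ by Theorem~\ref{thm 3.13}; hence $({F_4}^C)_{E_1, E_2,E_3, F_1(e_k), k=0,1}/S\!pin(5,C) \simeq (S^C)^5$, and since both $(S^C)^5$ and $S\!pin(5,C)$ are connected, so is $({F_4}^C)_{E_1, E_2,E_3, F_1(e_k), k=0,1}$. I do not expect any essential obstacle: the only point requiring care is the bookkeeping in the chain $g_{23}, g_{34}, g_{23}$, namely verifying that the $e_2$-component is genuinely eliminated in every degenerate configuration (which is precisely what the $s = \pi/2$ escape clauses handle), after which everything reduces transparently to Proposition~\ref{prop 3.12}.
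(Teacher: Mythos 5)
Your proposal is correct and follows essentially the same route as the paper's own proof: the identical three-step chain $g_{23}(s_0)$, $g_{34}(s_1)$, $g_{23}(s_2)$ (with the same angle choices and the same $\pi/2$ escape clauses) reducing to $(S^C)^4$, the appeal to the transitivity of $S\!pin(5,C)$ from Proposition 3.12, the final rotation $g_{23}(\pi/2)$ carrying $F_1(e_3)$ to $F_1(e_2)$, and the identification of the isotropy subgroup via Theorem 3.13. No discrepancies worth noting.
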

\begin{proof}
We define a $5$-dimensional complex sphere $(S^C)^5$ by
\begin{eqnarray*}
  (S^C)^5\!\!\!&=&\!\!\!\{X \in (V^C)^6 \,|\, (X, X)=2 \}
\\
\!\!\!&=&\!\!\! \biggl\{X=F_1 (t) \,\biggm|\, \begin{array}{l}t=t_2 e_2+t_3 e_3+t_4 e_4+t_5 e_5+t_6 e_6+t_7 e_7, \\
{t_2}^2+{t_3}^2+{t_4}^2+{t_5}^2+{t_6}^2+{t_7}^2=1, t_k \in C   
\end{array}\biggr\}.
\end{eqnarray*}
Then the group $({F_4}^C)_{E_1, E_2,E_3, F_1(e_k), k=0,1}$ acts on $(S^C)^5$, obviously. We shall show that this action is transitive.
In order to prove this, it is sufficient to show that any element $F_1(t) \in (S^C)^5$ can be transformed to $F_1(e_2) \in (S^C)^5$.

Now, for a given $X=F_1(t) \in (S^C)^5$, we choose $s_0 \in \R, 0\leq s_0 \leq \pi$ such that $\tan s_0=-{\Re(t_2)}/{\Re(t_3)}$ (if $\Re(t_3)=0$, let $s_0=\pi/2$). Operate $g_{23}(s_0):=\exp (s_0 G_{23}) \in ({F_4}^C)_{E_1, E_2,E_3, F_1(e_k), k=0,1}$ on $X=F_1(t)$ (Lemma \ref{lem 3.14}), then we have that
\begin{eqnarray*}
g_{23}(s_0)X\!\!\!&=&\!\!\! g_{23}(s_0)F_1(t)
\\
\!\!\!&=&\!\!\!F_1 (((\cos s_0)t_2 + (\sin s_0)t_3)e_2 +((\cos s_0)t_3 - (\sin s_0)t_2)e_3
\\
&& \hspace*{60mm}+t_4 e_4+t_5 e_5+t_6 e_6+t_7 e_7)
\\
\!\!\!&=&\!\!\!F_1 (i((\cos s_0)\Im(t_2) + (\sin s_0)\Im(t_3) )e_2 +((\cos s_0)t_3-(\sin s_0)t_2) e_3
\\
&& \hspace*{60mm}+t_4 e_4+t_5 e_5+t_6 e_6+t_7 e_7)
\\
\!\!\!&=&\!\!\!F_1(ir^{(1)}_2 e_2+t^{(1)}_3 e_3+t_4 e_4+t_5 e_5+t_6 e_6+t_7 e_7)=:X^{(1)},
\end{eqnarray*}
where $r^{(1)}_2:= (\cos s_0)\Im(t_2) + (\sin s_0)\Im(t_3) \in \R, t^{(1)}_3:=(\cos s_0)t_3-(\sin s_0)t_2 \in C$.

\noindent Moreover, we choose $s_1 \in \R, 0\leq s_1 \leq \pi$ such that $\tan s_1=-{\Re(t_3)}/{\Re(t_4)}$ (if $\Re(t_4)=0$, let $s_1=\pi/2$). Operate $g_{34}(s_1):=\exp (s_1 G_{34}) \in (({F_4}^C)_{E_1, E_2,E_3, F_1(e_k), k=0,1})_0$ on $X^{(1)}$ (Lemma \ref{lem 3.14}), then we have that
\begin{eqnarray*}
g_{34}(s_1)X^{(1)}\!\!\!&=&\!\!\! g_{34}(s_1)F_1(ir^{(1)}_2 e_2+t^{(1)}_3 e_3+t_4 e_4+t_5 e_5+t_6 e_6+t_7 e_7)
\\
\!\!\!&=&\!\!\!F_1 (ir^{(1)}_2 e_2+((\cos s_1)t_3 + (\sin s_1)t_4)e_3 +((\cos s_1)t_4 - (\sin s_1)t_3)e_4
\\
&& \hspace*{70mm}+t_5 e_5+t_6 e_6+t_7 e_7)
\\
\!\!\!&=&\!\!\!F_1 (ir^{(1)}_2 e_2+i((\cos s_1)\Im(t_3) + (\sin s_1)\Im(t_4) )e_3 +((\cos s_1)t_4-(\sin s_1)t_3) e_4
\\
&& \hspace*{70mm}+t_5 e_5+t_6 e_6+t_7 e_7)
\\
\!\!\!&=&\!\!\!F_1(ir^{(1)}_2 e_2+ir^{(1)}_3 e_3+t^{(1)}_4 e_4+t_5 e_5+t_6 e_6+t_7 e_7)=:X^{(2)},
\end{eqnarray*}
where $r^{(1)}_3:=(\cos s_1)\Im(t_3) + (\sin s_1)\Im(t_4) \in \R, t^{(1)}_4:=(\cos s_1)t_4-(\sin s_1)t_3 \in C $.

\noindent Additionally, we choose $s_2 \in \R, 0\leq s_2 \leq \pi$ such that $\tan s_2=-{(r^{(1)}_2)}/{(r^{(1)}_3)}$ (if $r^{(1)}_3=0$, let $s_2=\pi/2$). Again, operate $g_{23}(s_2)=\exp (s_2 G_{23}) \in (({F_4}^C)_{E_1, E_2,E_3, F_1(e_k), k=0,1})_0$ on $X^{(2)}$, then we have that
\begin{eqnarray*}
g_{23}(s_2)X^{(2)}\!\!\!&=&\!\!\! g_{23}(s_2)F_1(ir^{(1)}_2 e_2+ir^{(1)}_3 e_3+t^{(1)}_4 e_4+t_5 e_5+t_6 e_6+t_7 e_7)
\\
\!\!\!&=&\!\!\!F_1 (i((\cos s_2)r^{(1)}_2 + (\sin s_2)r^{(1)}_3)e_2 +i((\cos s_2)r^{(1)}_3 - (\sin s_2)r^{(1)}_2)e_3
\\
&& \hspace*{60mm}+t_4 e_4+t_5 e_5+t_6 e_6+t_7 e_7)
\\
\!\!\!&=&\!\!\!F_1 (i((\cos s_2)r^{(1)}_3 - (\sin s_2)r^{(1)}_2) e_3+t_4 e_4+t_5 e_5+t_6 e_6+t_7 e_7)
\\
\!\!\!&=&\!\!\!F_1(ir^{(2)}_3 e_3+t_4 e_4+t_5 e_5+t_6 e_6+t_7 e_7)=:X' \in (S^C)^4,
\end{eqnarray*}
where $ r^{(2)}_3:=(\cos s_2)r^{(1)}_3 - (\sin s_2)r^{(1)}_2 \in \R $.

Since $S\!pin(5,C) (\cong ({F_4}^C)_{E_{1,2,3}, F_1(0,\ldots,2)} \!\subset ({F_4}^C)_{E_1, E_2,E_3, F_1(e_k), k=0,1})$ acts transitively on $(S^C)^4$ (Proposition \ref{prop 3.12}), there exists $\alpha \in S\!pin(5,C)$ such that 
$$
  \alpha X'=F_1(e_3), X' \in (S^C)^4. 
$$ 
%\noindent Since, for $X \in (S^C)^4$, there exists $\alpha_0 \in  ({F_4}^C)_{E_{1,2,3}, F_1(0,1,2)} \subset ({F_4}^C)_{E_1, E_2,E_3, F_1(e_k), k=0,1}$ such that $\alpha_0 X =F_1(e_3)$, 
Again operate $g_{23}({\pi}/{2}) \in (({F_4}^C)_{E_1, E_2,E_3, F_1(e_k), k=0,1})_0$ on $F_1(e_3)$, then we have that
$$
g_{23}\Bigl(\dfrac{\pi}{2}\Bigr)F_1(e_3)=F_1(e_2).
$$
 This shows the transitivity of action to $(S^C)^5$ by the group $({F_4}^C)_{E_1, E_2,E_3, F_1(e_k), k=0,1}$. The isotropy subgroup of the group $({F_4}^C)_{E_1, E_2,E_3, F_1(e_k), k=0,1}$ at $F_1(e_2)$ is  $S\!pin(5,C)$ (Theorem \ref{thm 3.13}). 
Thus we have the required homeomorphism 
$$
({F_4}^C)_{E_1, E_2,E_3, F_1(e_k), k=0,1}/S\!pin(5,C) \simeq (S^C)^5.
$$

Therefore we see that the group $({F_4}^C)_{E_1, E_2,E_3, F_1(e_k), k=0,1}$ is connected.
\end{proof}

\begin{thm}\label{thm 3.16}
The group $({F_4}^C)_{E_1, E_2,E_3, F_1(e_k), k=0,1}$ is isomorphic to $S\!pin(6,C)${\rm :}\\ $({F_4}^C)_{E_1, E_2,E_3, F_1(e_k), k=0,1} \cong S\!pin(6,C)$.
\end{thm}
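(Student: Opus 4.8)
The plan is to proceed exactly as in the proofs of Theorems \ref{thm 3.7}, \ref{thm 3.10} and \ref{thm 3.13}, now using the $6$-dimensional space $(V^C)^6$ in the role played there by $(V^C)^3$, $(V^C)^4$, $(V^C)^5$. Put $G:=({F_4}^C)_{E_1, E_2,E_3, F_1(e_k), k=0,1}$. Since $G$ leaves the bilinear form $(X,Y)$ on $(V^C)^6$ invariant, restriction gives a continuous homomorphism
$$
   p : G \to O(6,C)=O((V^C)^6), \qquad p(\alpha)=\alpha\bigm|_{(V^C)^6},
$$
and because $G$ is connected (Proposition \ref{prop 3.15}), $p$ in fact takes its values in $S\!O(6,C)=S\!O((V^C)^6)$.

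The next step is to compute $\Ker\,p$ and to check that it is discrete. Let $\alpha\in\Ker\,p$. Since $\alpha\in G\subset({F_4}^C)_{E_1,E_2,E_3}\cong S\!pin(8,C)$ we may write $\alpha=(\alpha_1,\alpha_2,\alpha_3)$ with $\alpha F_1(x)=F_1(\alpha_1 x)$ (Theorem \ref{thm 3.1}). Membership in $G$ gives $\alpha_1 e_k=e_k$ for $k=0,1$, while $\alpha\bigm|_{(V^C)^6}=1$ gives $\alpha_1 e_k=e_k$ for $k=2,\ldots,7$; hence $\alpha_1=1$ on $\mathfrak{C}^C$. Substituting $\alpha_1=1$ into the triality relation $(\alpha_1 x)(\alpha_2 y)=\ov{\alpha_3(\ov{xy})}$ and arguing exactly as in the proof of Theorem \ref{thm 3.7} (upon setting $x=1$ and then $y=1$ the relation reduces to $x(yc)=(xy)c$ for all $x,y\in\mathfrak{C}^C$ with $c:=\alpha_2(1)$; this forces $c\in C\cdot 1$, and then $c\,\ov{c}=1$ forces $c=\pm1$), one obtains $\alpha=(1,1,1)$ or $\alpha=(1,-1,-1)=\sigma$. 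Hence $\Ker\,p=\{1,\sigma\}\cong\Z_2$, which is discrete.

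Finally, Lemma \ref{lem 3.14} gives $\dim_C(({\mathfrak{f}_4}^C)_{E_1, E_2,E_3, F_1(e_k), k=0,1})=15=\dim_C(\mathfrak{so}(6,C))$; since $S\!O(6,C)$ is connected and $\Ker\,p$ is discrete, Lemma \ref{lemma 2.1} shows that $p$ is surjective, so that $G/\Z_2\cong S\!O(6,C)$. Therefore $G$ is a connected $2$-fold covering group of $S\!O(6,C)$, and since $\pi_1(S\!O(6,C))\cong\Z_2$ this is the universal covering group $S\!pin(6,C)$; thus $G\cong S\!pin(6,C)$. The only point that needs any care is the determination of $\Ker\,p$, but it repeats verbatim the octonionic computation already carried out in Theorem \ref{thm 3.7}, so in the actual write-up this proof can simply be referred back to that argument, as was done for Theorems \ref{thm 3.10} and \ref{thm 3.13}.
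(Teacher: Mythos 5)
Your proposal is correct and follows essentially the same route as the paper: the restriction homomorphism $p$ to $S\!O((V^C)^6)$, the identification $\Ker\,p=\{1,\sigma\}$ via the triality description of $({F_4}^C)_{E_1,E_2,E_3}\cong S\!pin(8,C)$, the dimension count $15=\dim_C(\mathfrak{so}(6,C))$ together with connectedness to get surjectivity, and the conclusion that the group is the universal double cover $S\!pin(6,C)$. Your kernel computation is, if anything, slightly more explicit than the paper's.
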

\begin{proof}
This proof is proved by an argument similar to the proof in Theorem \ref{thm 3.7}, however we write as possible in detail.
Let $O(6,C)=O((V^C)^6)=\{\beta \in \Iso_{C} ((V^C)^6)\,|\,(\beta X, \beta Y)=(X, Y)\}$. We consider the restriction $\beta=\alpha \bigm|_{(V^C)^6}$ of $\alpha \in ({F_4}^C)_{E_1, E_2,E_3, F_1(e_k), k=0,1}$ to $(V^C)^6$, then we have $\beta \in O(6,C)$. Hence we can define a homomorphism $p: ({F_4}^C)_{E_1, E_2,E_3, F_1(e_k), k=0,1} \to O(6, C)=O((V^C)^6)$ by
$$
   p(\alpha)=\alpha \bigm|_{(V^C)^6}.
$$

Since the mapping $p$ is continuous and the group $({F_4}^C)_{E_1, E_2,E_3, F_1(e_k), k=0,1}$ is connected (Proposition \ref{prop 3.15}), the mapping $p$ induces  a homomorphism $p :({F_4}^C)_{E_1, E_2,E_3, F_1(e_k), k=0,1} \to S\!O(6, C)$ $=S\!O((V^C)^6)$. 
It is easy to obtain that  $\Ker\,p=\{1, \sigma \} \cong \Z_2$. Indeed, Let $\alpha \in \Ker\,p $. Then, since $\alpha \in ({F_4}^C)_{E_1, E_2,E_3, F_1(e_k), k=0,1} \subset ({F_4}^C)_{E_1,E_2,E_3} \cong S\!pin(8, C)$ (Theorem \ref{thm 3.1}), we can set $\alpha=(\alpha_1, \alpha_2, \alpha_3)$. Moreover, from $\alpha F_1(e_k)=F_1(e_k), k=0,2$ and $\alpha \bigm|_{(V^C)^6}=1$, we have $\alpha_1 x=x$ for all $x \in \mathfrak{C}^C$, that is, $\alpha_1=1$. Hence we have that 
$$
  \alpha=(1,1,1)  \qquad {\text{or}}\qquad \alpha=(1, -1, -1)=\sigma,
$$
that is, $\Ker\,p \subset \{1, \sigma  \}$ and vice versa. Thus we obtain $\Ker\,p = \{1, \sigma  \}$.
From Lemma \ref{lem 3.11}, we have that
$
   \dim_{C}(({F_4}^C)_{E_1, E_2,E_3, F_1(e_k), k=0,1})=15=\dim{{}_C}(\mathfrak{so}(6,C)),
$
and in addition to this, since $S\!O(6,C)$ is connected and $\Ker\,p$ is discrete, $p$ is surjection. Thus we have the isomorphism 
$$
  ({F_4}^C)_{E_1, E_2,E_3, F_1(e_k), k=0,1}/\Z_2 \cong S\!O(6,C).
$$

Therefore the group $({F_4}^C)_{E_1, E_2,E_3, F_1(e_k), k=0,1}$ is isomorphic  $S\!pin(6, C)$ as the universal covering group of $S\!O(6,C)$, that is, 
$({F_4}^C)_{E_1, E_2,E_3, F_1(e_k), k=0,1} \cong S\!pin(6, C)$.
\end{proof}
%\vspace{-1mm}

Here, we make a summary of the results as the low dimensional spinor groups which were constructed in this section. It is as follows:
\begin{eqnarray*}
&&
%(({E_7}^C)^{\kappa, \mu})_{\ov{E}_1,\ov{E}_{-1},E_2\dot{+}E_3, E_2\dot{-}E_3, \dot{F}_1(e_k), k=0,1} \cong 
({F_4}^C)_{E_1, E_2, E_3, F_1(e_k), k=0,1}\,\,\,\cong S\!pin(6,C)
\\[0mm]
&&\hspace*{15mm} \cup
\\[-1.5mm]
&&
%(({E_7}^C)^{\kappa, \mu})_{\ov{E}_1,\ov{E}_{-1},E_2\dot{+}E_3, E_2\dot{-}E_3, \dot{F}_1(e_k), k=0,1,2} \cong 
({F_4}^C)_{E_1, E_2, E_3, F_1(e_k), k=0,1,2}\cong S\!pin(5,C)
\\[0mm]
&&\hspace*{15mm} \cup
\\[-1.5mm]
&&
({F_4}^C)_{E_1, E_2, E_3, F_1(e_k), k=0,\ldots,3}\cong S\!pin(4,C)
\\[0mm]
&&\hspace*{15mm} \cup
\\[-1.5mm]
&&
({F_4}^C)_{E_1, E_2, E_3, F_1(e_k), k=0,\ldots,4}\cong S\!pin(3,C)
\\[0mm]
&&\hspace*{15mm} \cup
\\[-1.5mm]
&&
({F_4}^C)_{E_1, E_2, E_3, F_1(e_k), k=0,\ldots,5}\cong S\!pin(2,C) \cong U(1, \C^C).
\end{eqnarray*}

In the last of this section, we prove the following important lemma.
\begin{lem}\label{lem 3.17}
The group $({F_4}^C)_{E_1, E_2,E_3, F_1(e_k), k=0,1} \cong S\!pin(6, C)$ is the subgroup of the group $({F_4}^C)^{\sigma'_{\!\!4}}=\{\alpha \in {F_4}^C \,|\, \sigma'_{\!\!4}\alpha=\alpha \sigma'_{\!\!4} \}${\rm :} $S\!pin(6, C) \cong ({F_4}^C)_{E_1, E_2,E_3, F_1(e_k), k=0,1} \subset  ({F_4}^C)^{\sigma'_{\!\!4}}$.
\end{lem}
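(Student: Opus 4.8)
The plan is to pass to the triality description of $({F_4}^C)_{E_1,E_2,E_3}$, check that $\alpha$ and $\sigma'_{\!\!4}$ have the same image under one of the vector representations, and then promote this to honest commutativity by a connectedness argument.

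First I would note that $\sigma'_{\!\!4}$ itself lies in $({F_4}^C)_{E_1,E_2,E_3}$: its defining matrix formula leaves the diagonal entries $\xi_1,\xi_2,\xi_3$ untouched, so $\sigma'_{\!\!4}E_i=E_i$ for $i=1,2,3$. By Theorem \ref{thm 3.1} we may therefore write $\sigma'_{\!\!4}=(\beta_1,\beta_2,\beta_3)\in S\!pin(8,C)$, and comparing the $(2,3)$-entry of $\varphi((\alpha_1,\alpha_2,\alpha_3))X$ in Theorem \ref{thm 3.1} with the $(2,3)$-entry of $\sigma'_{\!\!4}X$ gives $\beta_1(x)=-e_1xe_1$, $x\in\mathfrak{C}^C$. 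A short octonion computation — using $e_1e_1=-1$, and $e_1x=-xe_1$ together with alternativity for imaginary $x$ orthogonal to $e_1$ — shows that $\beta_1$ fixes $e_0$ and $e_1$ and acts as $-\mathrm{id}$ on the orthogonal complement of $\{e_0,e_1\}_C$ in $\mathfrak{C}^C$; that is, $\beta_1$ is a scalar ($+1$, resp. $-1$) on each of the two summands $\mathfrak{C}^C=\{e_0,e_1\}_C\oplus(\{e_0,e_1\}_C)^{\perp}$. I would not need the explicit form of $\beta_2,\beta_3$.

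Next, take $\alpha\in({F_4}^C)_{E_1,E_2,E_3,F_1(e_k),k=0,1}$ and write $\alpha=(\alpha_1,\alpha_2,\alpha_3)\in S\!pin(8,C)$ (Theorem \ref{thm 3.1}). From $\alpha F_1(e_k)=F_1(e_k)$ for $k=0,1$ we get $\alpha_1e_0=e_0$ and $\alpha_1e_1=e_1$, so $\alpha_1$ fixes $\{e_0,e_1\}_C$ pointwise; since $\alpha_1\in S\!O(8,C)$ preserves the bilinear form, $\alpha_1$ also maps $(\{e_0,e_1\}_C)^{\perp}$ onto itself. As $\beta_1$ is a scalar on each of these two $\alpha_1$-invariant subspaces, $\alpha_1\beta_1=\beta_1\alpha_1$. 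Hence $\alpha\sigma'_{\!\!4}$ and $\sigma'_{\!\!4}\alpha$ have equal first components in $S\!pin(8,C)$, so $\alpha\sigma'_{\!\!4}\alpha^{-1}(\sigma'_{\!\!4})^{-1}$ lies in the kernel of the first projection $S\!pin(8,C)\to S\!O(8,C)$, which is $\{1,\sigma\}$ — this is precisely the computation carried out inside the proof of Theorem \ref{thm 3.7} (if the first component of an element of $S\!pin(8,C)$ is $1$, that element is $(1,1,1)$ or $(1,-1,-1)=\sigma$).

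Finally I would invoke connectedness. The map $\alpha\mapsto\alpha\sigma'_{\!\!4}\alpha^{-1}(\sigma'_{\!\!4})^{-1}$ is continuous on the connected group $({F_4}^C)_{E_1,E_2,E_3,F_1(e_k),k=0,1}$ (Proposition \ref{prop 3.15}), takes values in the discrete set $\{1,\sigma\}\subset{F_4}^C$, and sends the identity to $1$; hence it is identically $1$. Thus $\sigma'_{\!\!4}\alpha=\alpha\sigma'_{\!\!4}$ for every such $\alpha$, which together with Theorem \ref{thm 3.16} gives $S\!pin(6,C)\cong({F_4}^C)_{E_1,E_2,E_3,F_1(e_k),k=0,1}\subset({F_4}^C)^{\sigma'_{\!\!4}}$. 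The one place where genuine computation is required, and the step I would be most careful with, is the identification of $\beta_1$: it is exactly the fact that $\beta_1$ is \emph{scalar} on each summand — not merely that it preserves them — that forces $\beta_1$ to commute with the otherwise arbitrary rotation $\alpha_1$ of $(\{e_0,e_1\}_C)^{\perp}$.
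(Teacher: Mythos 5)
Your proposal is correct, but it reaches the conclusion by a genuinely different route than the paper. The paper's own proof stays inside $\mathfrak{J}^C$: it splits $X=X_1+X_2$ with $X_1$ in the fixed space of $\sigma'_{\!\!4}$ (which $\alpha$ fixes pointwise, since $\alpha$ fixes $E_1,E_2,E_3$ and $F_1(e_0),F_1(e_1)$) and $X_2$ in a complementary subspace $F_1((\C^C)^\perp)\oplus F_2(\mathfrak{C}^C)\oplus F_3(\mathfrak{C}^C)$ preserved by $\alpha$, and then evaluates $\sigma'_{\!\!4}\alpha X$ and $\alpha\sigma'_{\!\!4}X$ term by term. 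That computation has to track how $\sigma'_{\!\!4}$ acts on the $F_2$- and $F_3$-components, where $\sigma'_{\!\!4}$ does \emph{not} act as a scalar (it has order $4$ there, since $({\sigma'}_{\!\!4})^2=\sigma=-1$ on $F_2\oplus F_3$) — a point the paper treats rather loosely. Your argument sidesteps exactly this: you only compute the first triality component $\beta_1x=-e_1xe_1=\mathrm{diag}(1,1,-1,\dots,-1)$, observe that it is scalar on each of the two $\alpha_1$-invariant summands $\C^C\oplus(\C^C)^\perp$, conclude that the commutator $\alpha\sigma'_{\!\!4}\alpha^{-1}({\sigma'}_{\!\!4})^{-1}$ lies in the kernel $\{1,\sigma\}$ of the first projection $S\!pin(8,C)\to S\!O(8,C)$ (the same kernel computation as in Theorems \ref{thm 3.7} and \ref{thm 3.16}), and finish by continuity on the connected group of Proposition \ref{prop 3.15}. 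What your approach buys is that the commutativity of $\alpha_2,\alpha_3$ with $\beta_2,\beta_3$ (left and right multiplication by $\pm e_1$, where no scalar argument is available) comes for free from discreteness of the kernel rather than from an explicit octonion computation; what it costs is the extra appeal to connectedness, which the paper's direct method does not need. If you wanted to avoid Proposition \ref{prop 3.15}, you could instead derive $\alpha_2(e_1y)=e_1\alpha_2(y)$ directly from the triality relation $(\alpha_1x)(\alpha_2y)=\ov{\alpha_3(\ov{xy})}$ by substituting $x=e_0$ and $x=e_1$, but your connectedness shortcut is perfectly sound.
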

\begin{proof}
We consider the following complex eigenspaces of $\sigma'_{\!\!4}$ in $\mathfrak{J}^C$:
\begin{eqnarray*}
(\mathfrak{J}^C)_{\sigma'_{\!\!4}}\!\!\!&=&\!\!\!\{X \in \mathfrak{J}^C\,|\, \sigma'_{\!\!4}X=X \} (\subset (\mathfrak{J}^C)_\sigma)
\\
\!\!\!&=&\!\!\!\{X=\xi_1 E_1+\xi_2 E_2+\xi_3 E_3+F_1(x_1)\,|\,\xi_k \in C, x_1 \in \C^C  \},
\\[1mm]
(\mathfrak{J}^C)_{-\sigma'_{\!\!4}}\!\!\!&=&\!\!\!\{X \in \mathfrak{J}^C\,|\, \sigma'_{\!\!4}X=-X \}(=\{ F_1({x_1}^\perp)\} \oplus (\mathfrak{J}^C)_{-\sigma} )
\\
\!\!\!&=&\!\!\!\{X=F_1({x_1}^\perp) +F_2(x_2)+F_3(x_3))\,|\,{x_1}^\perp \in (\C^C)^\perp\,{\text{in}}\,\mathfrak{C}^C, x_k \in \mathfrak{C}^C  \},
\end{eqnarray*}
where $(\mathfrak{J}^C)_\sigma, (\mathfrak{J}^C)_{-\sigma}$ are the complex eigenspaces of the $C$-linear transformation $\sigma$ in $\mathfrak{J}^C$.

Now, let $\alpha \in ({F_4}^C)_{E_1, E_2, E_3, F_1(e_k), k=0,1}$, and set $X =X_1 + X_2 \in (\mathfrak{J}^C)_{\sigma'_{\!\!4}} \oplus (\mathfrak{J}^C)_{-\sigma'_{\!\!4}}=\mathfrak{J}^C$. 
Then we have that
\begin{eqnarray*}
\sigma'_{\!\!4}\alpha X \!\!\!&=&\!\!\!\sigma'_{\!\!4}\alpha(X_1 +X_2)
%\\
%\!\!\!&=&\!\!\!
=\sigma'_{\!\!4}\alpha X_1+\sigma'_{\!\!4}\alpha X_2
\\[0mm]
\!\!\!&=&\!\!\!\sigma'_{\!\!4}X_1+
\sigma'_{\!\!4}\alpha(F_1({x_1}^\perp) +F_2(x_2)+F_3(x_3))
\\[0mm]
\!\!\!&=&\!\!\!X_1+\sigma'_{\!\!4}\alpha F_1({x_1}^\perp)+\sigma'_{\!\!4}(F_2({x_2}')+F_3({x_3}')) \,\,(\alpha \in  ({F_4}^C)_{E_1}=({F_4}^C)^\sigma)
\\[0mm]
\!\!\!&=&\!\!\!X_1+\sigma'_{\!\!4}\alpha F_1({x_1}^\perp)-(F_2({x_2}')+F_3({x_3}'))
\\[0mm]
\!\!\!&=&\!\!\!X_1+\sigma'_{\!\!4}F_1({x_1}^\perp)-(F_2({x_2}')+F_3({x_3}'))
\\[0mm]
\!\!\!&=&\!\!\!X_1-F_1({x_1}^\perp)-F_2({x_2}')-F_3({x_3}'),
\\[0.5mm]
\alpha \sigma'_{\!\!4} X \!\!\!&=&\!\!\!\alpha \sigma'_{\!\!4}(X_1+X_2)=\alpha (X_1-X_2)=X_1-\alpha X_2
\\[0mm]
\!\!\!&=&\!\!\!X_1-\alpha (F_1({x_1}^\perp) +F_2(x_2)+F_3(x_3))
\\[0mm]
\!\!\!&=&\!\!\!X_1-\alpha F_1({x_1}^\perp)-(F_2({x_2}')+F_3({x_3}'))
\\[0mm]
\!\!\!&=&\!\!\!X_1-F_1({x_1}^\perp)-F_2({x_2}')-F_3({x_3}'). 
\end{eqnarray*}

Hence we see  $\sigma'_{\!\!4}\alpha=\alpha \sigma'_{\!\!4}$, that is, 
 $S\!pin(6, C) \cong ({F_4}^C)_{E_1, E_2,E_3, F_1(e_k), k=0,1} \subset  ({F_4}^C)^{\sigma'_{\!\!4}}$ .
\end{proof}
%%%%%%%%%%%%%%%%%%%%%%%%%%%%%%%%%%%%%%%%%%
\vspace{-3mm}

%Section 4
\section{ The groups $({E_7}^C)^{{\sigma'}_{\!\!4}}$ and $({E_7}^C)^{{\sigma'}_{\!\!4}, \mathfrak{so}(6,C)}$}
%\vspace{2mm}

The aim of this section is to show the connectedness of the group
 $({E_7}^C)^{{\sigma'}_{\!\!4}, \mathfrak{so}(6,C)}$ after determining the structure of the group \vspace{1mm}$({E_7}^C)^{{\sigma'}_{\!\!4}}$. 

Now, we define subgroups $({E_7}^C)^{{\sigma'}_{\!\!4}}$ and $({E_7}^C)^{{\sigma'}_{\!\!4}, \mathfrak{so}(6,C)}$ of ${E_7}^C$ respectively by
\begin{eqnarray*}
     ({E_7}^C)^{{\sigma'}_{\!\!4}} \!\!\! &=&\!\!\! \{\alpha \in {E_7}^C \, | \, {\sigma'}_{\!\!4}\alpha = \alpha{\sigma'}_{\!\!4} \},
\\[1mm]
     ({E_7}^C)^{{\sigma'}_{\!\!4}, \mathfrak{so}(6,C)} \!\!\! &=&\!\!\! \{\alpha \in ({E_7}^C)^{{\sigma'}_{\!\!4}} \, | \, \varPhi_D\alpha = \alpha\varPhi_D \; \mbox{for all}\; D \in \mathfrak{so}(6, C)\},
\end{eqnarray*}
where $\varPhi_D = (D, 0, 0, 0) \in {\mathfrak{e}_7}^C 
%({\mathfrak{e}_7}^C)_{\ov{E}_1, \ov{E}_{-1}, E_2 \dot{+}E_3,E_2 \dot{-}E_3,F_1(e_k),k=0,1}
, D \in \mathfrak{so}(6,C)
%=\mathfrak{so}(8,C)_{e_0, e_1} 
\cong ({\mathfrak{f}_4}^C)_{E_1, E_2, E_3, F_1(e_k),k=0,1}$.
Hereafter, we often denote $D$ above by $D_6$.
\begin{lem}\label{lem 4.1}
We have the following.

{\rm (1)}\, The Lie algebra $({\mathfrak{e}_7}^C)^{{\sigma'}_{\!\!4}}$ of the group $({E_7}^C)^{{\sigma'}_{\!\!4}}$ is given by
\begin{eqnarray*}
({\mathfrak{e}_7}^C)^{{\sigma'}_{\!\!4}} \!\!\! &=&\!\!\! \{\varPhi \in {\mathfrak{e}_7}^C \, | \, {\sigma'}_{\!\!4}\varPhi = \varPhi\,{\sigma'}_{\!\!4} \} \
\\[1mm]
\!\!\! &=&\!\!\!\left\{\varPhi(\phi, A, B, \nu) \in {\mathfrak{e}_7}^C \,\left| \, 
\begin{array}{l}
     \phi =\left(
\begin{array}{@{\,}cc|ccc@{\,}}
\multicolumn{2}{c|}{\raisebox{-2.0ex}[-5pt][0pt]{\large $D_2$}}&&& \\
&&\multicolumn{3}{c}{\raisebox{0.9ex}[0pt]{\large $0$}} \\
\hline
%&&&&& \\
%&&&&& \\
&&\multicolumn{3}{c}{\raisebox{-10pt}[0pt][0pt]{\large $D_6$}}\\
\multicolumn{2}{c|}{\raisebox{1.0ex}[0pt]{\large $0$}}&&& \\
%&&&&& \\
%&&&&& \\
\end{array}
\right) \begin{array}{l}
\!+ \tilde{A}_1(a)
\\ \vspace{1mm}
\!+(\tau_1 E_1\!+\!\tau_2 E_2\!+\!\tau E_3\!+\!F_1(t_1))^\sim
\end{array}
%\begin{pmatrix}\tau_1 & 0       & 0   \\
%                                        0   & \tau_2  & t_1 \\
%                                        0   & \ov{t}_1& \tau_3
%                      \end{pmatrix}^\sim,
\vspace{1mm}\\
\quad D_2 \in \mathfrak{so}(2,C), D_6 \in \mathfrak{so}(6,C), a \in \C^C, \tau_k \in C,
\\ \quad \tau_1+\tau_2+\tau_3=0, t_1 \in \C^C,
\\
A=\begin{pmatrix}\xi_1 & 0       & 0   \\
                    0   & \xi_2  & x_1 \\
                    0   & \ov{x}_1& \xi_3
   \end{pmatrix},
B=\begin{pmatrix}\eta_1 & 0       & 0   \\
                    0   & \eta_2  & y_1 \\
                    0   & \ov{y}_1& \eta_3
   \end{pmatrix},
\vspace{1mm}\\
\quad \xi_k \in C, x_1 \in \C^C, \eta_k \in C, y_1 \in \C^C, 
\\[1mm]
\nu \in C 
\end{array}
\right. \right \}.
\end{eqnarray*}

In particular, $\dim_C(({\mathfrak{e}_7}^C)^{{\sigma'}_{\!\!4}})=((1+15)+2+(2+2))+(3+2) \times 2 +1=33$.
\vspace{2mm}

{\rm (2)}\, The Lie algebra $({\mathfrak{e}_7}^C)^{{\sigma'}_{\!\!4}, \mathfrak{so}(6,C)}$ of the group $({E_7}^C)^{{\sigma'}_{\!\!4}, \mathfrak{so}(6,C)}$ is given by

\begin{eqnarray*}
({\mathfrak{e}_7}^C)^{{\sigma'}_{\!\!4}, \mathfrak{so}(6,C)} \!\!\! &=&\!\!\!\! \{\varPhi \in ({\mathfrak{e}_7}^C)^ {{\sigma'}_{\!\!4}}\, | \, [\varPhi, \varPhi_D]=0 \,\,\mbox{for all}\, \,D \in \mathfrak{so}(6,C) \} \
\\[1mm]
\!\!\! &=&\!\!\!\!\left\{\varPhi(\phi, A, B, \nu) \in {\mathfrak{e}_7}^C \left| \, 
\begin{array}{l}
     \phi = \left(
\begin{array}{@{\,}cc|cc@{\,\,\,}}
\multicolumn{2}{c|}{\raisebox{-2.0ex}[-5pt][0pt]{\large $D_2$}}&& \\
&&\multicolumn{2}{c}{\raisebox{0.9ex}[0pt]{ \large $0$}} \\
\hline
%&&&&& \\
%&&&&& \\
&&\multicolumn{2}{c}{\raisebox{-10pt}[0pt][0pt]{\large $0$}}\\
\multicolumn{2}{c|}{\raisebox{3pt}[0pt]{\large $0$}}&& \\
%&&&&& \\
%&&&&& \\
\end{array}
\right) \begin{array}{l}
\!+ \tilde{A}_1(a)
\\ \vspace{1mm}
\!+(\tau_1 E_1\!+\!\tau_2 E_2\!+\!\tau E_3\!+\!F_1(t_1))^\sim
\end{array}
\vspace{1mm}\\
\quad D_2 \in \mathfrak{so}(2,C), a \in \C^C, \tau_k \in C,
\\ \quad \tau_1+\tau_2+\tau_3=0, t_1 \in \C^C,
\\
A=\begin{pmatrix}\xi_1 & 0       & 0   \\
                    0   & \xi_2  & x_1 \\
                    0   & \ov{x}_1& \xi_3
   \end{pmatrix},
B=\begin{pmatrix}\eta_1 & 0       & 0   \\
                    0   & \eta_2  & y_1 \\
                    0   & \ov{y}_1& \eta_3
   \end{pmatrix},
\vspace{1mm}\\
\quad \xi_k \in C, x_1 \in \C^C, \eta_k \in C, y_1 \in \C^C, 
\\
\nu \in C 
\end{array}
\right.\!\!\!\!\! \right \}.
\end{eqnarray*}

In particular, $\dim_C(({\mathfrak{e}_7}^C)^{{\sigma'}_{\!\!4}, \mathfrak{so}(6,C)})=(1+2+(2+2))+(3+2) \times 2 +1=18$.
\end{lem}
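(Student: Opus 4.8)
The plan is to read off both Lie algebras from the model ${\mathfrak{e}_7}^C=\{\varPhi(\phi,A,B,\nu)\mid \phi\in{\mathfrak{e}_6}^C,\ A,B\in\mathfrak{J}^C,\ \nu\in C\}$, reducing the two defining conditions to fixed-point computations on ${\mathfrak{e}_6}^C$ and $\mathfrak{J}^C$. Since ${\sigma'}_{\!\!4}\in S\!pin(8)\subset{F_4}^C$ preserves the inner product of $\mathfrak{J}^C$, we have ${}^t({\sigma'}_{\!\!4})^{-1}={\sigma'}_{\!\!4}$, so conjugation by ${\sigma'}_{\!\!4}$ on ${E_7}^C$ is $\varPhi(\phi,A,B,\nu)\mapsto\varPhi({\sigma'}_{\!\!4}\,\phi\,({\sigma'}_{\!\!4})^{-1},\ {\sigma'}_{\!\!4}A,\ {\sigma'}_{\!\!4}B,\ \nu)$. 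Hence $\varPhi(\phi,A,B,\nu)\in({\mathfrak{e}_7}^C)^{{\sigma'}_{\!\!4}}$ precisely when $\phi\in({\mathfrak{e}_6}^C)^{{\sigma'}_{\!\!4}}$, $A,B\in(\mathfrak{J}^C)_{{\sigma'}_{\!\!4}}$, and $\nu$ is unrestricted. The eigenspace $(\mathfrak{J}^C)_{{\sigma'}_{\!\!4}}=\{\xi_1E_1+\xi_2E_2+\xi_3E_3+F_1(x_1)\mid \xi_k\in C,\ x_1\in\C^C\}$ was already determined in the proof of Lemma \ref{lem 3.17}; this produces the displayed matrix forms of $A$ and $B$, each contributing $3+2$ to the dimension.

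For $\phi$ I would use ${\mathfrak{e}_6}^C=\{\delta+\tilde{T}\mid \delta\in{\mathfrak{f}_4}^C,\ T\in(\mathfrak{J}^C)_0\}$; as ${F_4}^C$ normalizes ${\mathfrak{f}_4}^C$ and $\beta\,\tilde{T}\,\beta^{-1}=\widetilde{\beta T}$ for $\beta\in{F_4}^C$, this decomposition is ${\sigma'}_{\!\!4}$-stable, so $({\mathfrak{e}_6}^C)^{{\sigma'}_{\!\!4}}=({\mathfrak{f}_4}^C)^{{\sigma'}_{\!\!4}}\oplus\{\tilde{T}\mid T\in(\mathfrak{J}^C)_{{\sigma'}_{\!\!4}}\cap(\mathfrak{J}^C)_0\}$, and the second summand is exactly the term $(\tau_1E_1+\tau_2E_2+\tau_3E_3+F_1(t_1))^\sim$ subject to $\tau_1+\tau_2+\tau_3=0$ and $t_1\in\C^C$. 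To compute $({\mathfrak{f}_4}^C)^{{\sigma'}_{\!\!4}}$, write a general element of ${\mathfrak{f}_4}^C$ as $D+\tilde{A}_1(a_1)+\tilde{A}_2(a_2)+\tilde{A}_3(a_3)$ with $D\in\mathfrak{so}(8,C)$, and represent ${\sigma'}_{\!\!4}$ by the triple $(\beta_1,\beta_2,\beta_3)\in S\!pin(8,C)\cong({F_4}^C)_{E_1,E_2,E_3}$ of Theorem \ref{thm 3.1}, namely $\beta_1a=-e_1ae_1$, $\beta_2a=e_1a$, $\beta_3a=-ae_1$; here $\beta_1^2=1$ with $\beta_1=\mathrm{id}$ on $\C^C$ and $-\mathrm{id}$ on $(\C^C)^\perp$, whereas $\beta_2^2=\beta_3^2=-1$. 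Conjugation by ${\sigma'}_{\!\!4}$ sends $\tilde{A}_k(a_k)\mapsto\tilde{A}_k(\beta_ka_k)$ and, restricted to $\mathfrak{so}(8,C)=({\mathfrak{f}_4}^C)_{E_1,E_2,E_3}$, is $\Ad$ within $S\!pin(8,C)$ of the triple, hence $D\mapsto\beta_1D\beta_1^{-1}$ under $S\!pin(8,C)\to S\!O(8,C)$. Therefore its fixed part contains, among the octonionic pieces, only $\tilde{A}_1(a)$ with $a\in\C^C$ (two-dimensional; $\tilde{A}_2,\tilde{A}_3$ disappear since $\beta_2,\beta_3$ have no eigenvalue $1$), and among $\mathfrak{so}(8,C)$ only the block subalgebra $\mathfrak{so}(2,C)\oplus\mathfrak{so}(6,C)$ of the splitting $\mathfrak{C}^C=\C^C\oplus(\C^C)^\perp$, whose $\mathfrak{so}(6,C)$-summand is precisely $({\mathfrak{f}_4}^C)_{E_1,E_2,E_3,F_1(e_k),k=0,1}$ of Lemma \ref{lem 3.14}. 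Collecting: $\dim_C\bigl(({\mathfrak{f}_4}^C)^{{\sigma'}_{\!\!4}}\bigr)=1+15+2=18$, $\dim_C\bigl(({\mathfrak{e}_6}^C)^{{\sigma'}_{\!\!4}}\bigr)=18+4=22$, and $\dim_C\bigl(({\mathfrak{e}_7}^C)^{{\sigma'}_{\!\!4}}\bigr)=22+(3+2)+(3+2)+1=33$, which is part (1).

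For part (2), start from the description in (1) and impose $[\varPhi(\phi,A,B,\nu),\varPhi_D]=0$ for all $D\in\mathfrak{so}(6,C)$, where $\varPhi_D=\varPhi(D,0,0,0)$. From the Lie bracket of ${\mathfrak{e}_7}^C$, this bracket has ${\mathfrak{e}_6}^C$-component $[\phi,D]$, $A$- and $B$-components proportional to $DA$ and $DB$, and vanishing $\nu$-component, so the condition becomes $[\phi,D]=0$, $DA=0$, $DB=0$ for all such $D$. Now $DA=DB=0$ is automatic, because $A,B\in(\mathfrak{J}^C)_{{\sigma'}_{\!\!4}}$ lie in the span of $E_1,E_2,E_3$ and $F_1(\C^C)$, all annihilated by $\mathfrak{so}(6,C)=({\mathfrak{f}_4}^C)_{E_1,E_2,E_3,F_1(e_k),k=0,1}$; for the same reason the pieces $D_2$, $\tilde{A}_1(a)$ with $a\in\C^C$, and the $\tilde{T}$-term of $\phi$ all commute with $\mathfrak{so}(6,C)$, so that $[\phi,D]=[D_6,D]$, and the condition forces $D_6$ into the centre of the simple algebra $\mathfrak{so}(6,C)$, i.e. $D_6=0$. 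Thus $({\mathfrak{e}_7}^C)^{{\sigma'}_{\!\!4},\mathfrak{so}(6,C)}$ is obtained from $({\mathfrak{e}_7}^C)^{{\sigma'}_{\!\!4}}$ by deleting the $D_6$-summand, which gives the displayed form and $\dim_C=33-15=18$, i.e. part (2).

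The main obstacle is the triality bookkeeping of the second paragraph: determining the action of $\Ad({\sigma'}_{\!\!4})$ on ${\mathfrak{f}_4}^C=\mathfrak{so}(8,C)\oplus\tilde{A}_1(\mathfrak{C}^C)\oplus\tilde{A}_2(\mathfrak{C}^C)\oplus\tilde{A}_3(\mathfrak{C}^C)$ in a manner consistent with Yokota's index conventions, and checking that exactly the $\tilde{A}_1$-piece restricted to $\C^C$ together with the block $\mathfrak{so}(2,C)\oplus\mathfrak{so}(6,C)$ survive. The remaining ingredients --- the description of $(\mathfrak{J}^C)_{{\sigma'}_{\!\!4}}$, the trace condition on $T$, and the commutation checks needed for (2) --- are of the same elementary computational nature as those used for Lemmas \ref{lem 3.5}, \ref{lem 3.8}, \ref{lem 3.11} and \ref{lem 3.14}.
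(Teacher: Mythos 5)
Your proposal is correct and follows essentially the same route as the paper, which simply asserts the lemma "by doing simple computation"; you have supplied that computation explicitly (the reduction to $(\mathfrak{J}^C)_{\sigma'_{\!\!4}}$ and $({\mathfrak{f}_4}^C)^{\sigma'_{\!\!4}}$ via the triality triple $(\beta_1,\beta_2,\beta_3)$ with $\beta_1=-e_1(\cdot)e_1$, and the commutator analysis for part (2)), and all the pieces — the fixed spaces of $\beta_1,\beta_2,\beta_3$, the block $\mathfrak{so}(2,C)\oplus\mathfrak{so}(6,C)$, and the dimension counts $33$ and $18$ — check out against the stated forms.
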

\begin{proof}
By doing simple computation, this lemma is proved easily.
\end{proof}

First, making some preparations, we shall determine the structure of the group $({E_7}^C)^{{\sigma'}_{\!\!4}}$. Hereafter, we often use the following notations in $\mathfrak{P}^C$:
\begin{eqnarray*}
&&\dot{X}=(X, 0,0,0), \underset{\dot{}}{Y}=(0,Y,0,0), \dot{\xi}=(0,0,\xi, 0), \underset{\dot{}}{\eta}=(0,0,0,\eta),
\\[-2mm]
&&\tilde{E}_1=(0, E_1, 0, 1), \,\,\tilde{E}_{-1}=(0, -E_1, 0, 1),\,\,E_2\dot{+}E_3=(E_2+E_3, 0,0,0), 
\\
&&E_2\dot{-}E_3=(E_2-E_3, 0,0,0),\,\,\dot{F}_1(e_k)=(F_1(e_k),0,0,0), k=0, \ldots, 7.
\end{eqnarray*}

We define $C$-linear transformations $\kappa, \mu$ of $\mathfrak{P}^C$ by
\begin{eqnarray*}
  \kappa (X, Y, \xi, \eta)\!\!\!&=&\!\!\!(-\kappa_1 X, \kappa_1 Y, -\xi, \eta), 
%(X, Y, \xi, \eta) \in \mathfrak{P}^C,
\\
   \mu (X, Y, \xi, \eta)\!\!\!&=&\!\!\!(2 E_1 \times Y+\eta E_1, 
2 E_1 \times X+\xi E_1, (E_1, Y), (E_1, X)),
\end{eqnarray*}
respectively, where $\kappa_1 X=(E_1, X)E_1-4E_1 \times (E_1 \times X), X \in \mathfrak{J}^C$. The explicit forms of $\kappa, \mu$ are as follows:
\begin{eqnarray*}
\kappa(X, Y, \xi, \eta)\!\!\!&=&\!\!\!\kappa(\begin{pmatrix}
                               \xi_1 & x_3 & \ov{x}_2 \\
                               \ov{x}_3 & \xi_2 & x_1 \\
                               x_2  & \ov{x}_1 & \xi_3
                      \end{pmatrix}, \,\,
                      \begin{pmatrix}
                               \eta_1 & y_3 & \ov{y}_2 \\
                               \ov{y}_3 & \eta_2 & y_1 \\
                               y_2 & \ov{y}_1 & \eta_3
                      \end{pmatrix}, \xi, \eta ) 
\end{eqnarray*}
\begin{eqnarray*}
\hspace*{20mm}\!\!\!&=&\!\!\!(\begin{pmatrix}
                               -\xi_1 &    0  &   0 \\
                                   0  & \xi_2 & x_1 \\
                                   0  & \ov{x}_1 & \xi_3
                      \end{pmatrix}, \,\,
                      \begin{pmatrix}
                               \eta_1 & 0       & 0   \\
                                    0 & -\eta_2 & -y_1 \\
                                    0 & -\ov{y}_1 & -\eta_3
                      \end{pmatrix}, -\xi, \eta ),
\\[2mm]
\mu(X, Y, \xi, \eta)\!\!\!&=&\!\!\!(\begin{pmatrix}
                                 \eta &    0   &   0 \\
                                   0  & \eta_3 & -y_1 \\
                                   0  & -\ov{y}_1 & \eta_2
                      \end{pmatrix}, \,\,
                      \begin{pmatrix}
                                  \xi & 0       & 0   \\
                                    0 & \xi_3 & -x_1 \\
                                    0 & -\ov{x}_1 & \xi_2
                      \end{pmatrix}, \eta_1, \xi_1 ).
\end{eqnarray*}
By doing simple computation, we can easily confirm that $\kappa {\sigma'}_{\!\!4}={\sigma'}_{\!\!4}\kappa, \mu{\sigma'}_{\!\!4}={\sigma'}_{\!\!4}\mu$.
\vspace{1mm}

We define a group $(({E_7}^C)^{\kappa, \mu})_{\ti{E}_1,\ti{E}_{-1}, E_2\dot{+}E_3,E_2\dot{-}E_3, \dot{F}_1(e_k), k=0,1 }$ by
$$
(({E_7}^C)^{\kappa, \mu})_{\ti{E}_1,\ti{E}_{-1}, E_2\dot{+}E_3,E_2\dot{-}E_3, \dot{F}_1(e_k), k=0,1 }=\left\{\alpha \in {E_7}^C \,\left |\,\begin{array}{l}
        \kappa\alpha=\alpha\kappa,  \mu\alpha=\alpha\mu,\\
        \alpha \ti{E}_1=\ti{E}_1, \alpha \ti{E}_{-1}=\ti{E}_{-1}\\ 
        \alpha  (E_2\dot{+}E_3) =E_2\dot{+}E_3, \\
        \alpha  (E_2\dot{-}E_3) =E_2\dot{-}E_3, \\
        \alpha  \dot{F}_1(e_k)=\dot{F}_1(e_k), k=0,1
    \end{array} \right. \right \}.
$$
\begin{prop}\label{prop 4.2}
The group $(({E_7}^C)^{\kappa, \mu})_{\ti{E}_1,\ti{E}_{-1}, E_2\dot{+}E_3,E_2\dot{-}E_3, \dot{F}_1(e_k), k=0,1 }$ is isomorphic to \\$S\!pin(6, C)${\rm :}
$(({E_7}^C)^{\kappa, \mu})_{\ti{E}_1,\ti{E}_{-1}, E_2\dot{+}E_3,E_2\dot{-}E_3, \dot{F}_1(e_k), k=0,1 }\cong S\!pin(6,C)$.
\end{prop}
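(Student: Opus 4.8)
The plan is to identify this group with $({F_4}^C)_{E_1, E_2, E_3, F_1(e_k), k=0,1}$ via the standard embedding ${F_4}^C \subset {E_6}^C \subset {E_7}^C$ and then invoke Theorem \ref{thm 3.16}. Concretely, I would define a mapping $\varphi : ({F_4}^C)_{E_1, E_2, E_3, F_1(e_k), k=0,1} \to (({E_7}^C)^{\kappa, \mu})_{\ti{E}_1,\ti{E}_{-1}, E_2\dot{+}E_3,E_2\dot{-}E_3, \dot{F}_1(e_k), k=0,1}$ by $\varphi(\alpha)=\ti{\alpha}$, where $\ti{\alpha}(X, Y, \xi, \eta)=(\alpha X, {}^t\alpha^{-1}Y, \xi, \eta)=(\alpha X, \alpha Y, \xi, \eta)$, using that ${}^t\alpha^{-1}=\alpha$ for $\alpha \in {F_4}^C$ since ${F_4}^C$ preserves the inner product. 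First I would check that $\varphi$ is well-defined. Since $\alpha E_i=E_i$ and $\alpha F_1(e_k)=F_1(e_k)$, it is immediate that $\ti{\alpha}$ fixes $\ti{E}_{\pm 1}=(0,\pm E_1,0,1)$, $E_2\dot{\pm}E_3=(E_2\pm E_3,0,0,0)$ and $\dot{F}_1(e_k)$ for $k=0,1$; and because $\alpha$ preserves $E_1$, the inner product and the Freudenthal cross product (so in particular $\alpha\kappa_1=\kappa_1\alpha$, where $\kappa_1 X=(E_1,X)E_1-4E_1\times(E_1\times X)$, and $(E_1,\alpha X)=(E_1,X)$), a routine computation gives $\kappa\ti{\alpha}=\ti{\alpha}\kappa$ and $\mu\ti{\alpha}=\ti{\alpha}\mu$. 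Thus $\varphi$ is a well-defined homomorphism, and it is clearly injective, since an element $\ti\alpha$ of ${E_7}^C$ recovers $\alpha$ from its action on the elements $\dot{X}=(X,0,0,0)$.

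The substance of the argument is surjectivity. Given $\beta$ in the target group, I would first show that $\beta$ fixes the four elements $\dot{1}=(0,0,1,0)$, $\underset{\dot{}}{1}=(0,0,0,1)$, $(E_1,0,0,0)$ and $\underset{\dot{}}{E_1}=(0,E_1,0,0)$. Indeed, $\beta\ti{E}_1=\ti{E}_1$ and $\beta\ti{E}_{-1}=\ti{E}_{-1}$ give at once that $\beta$ fixes $\frac12(\ti{E}_1+\ti{E}_{-1})=\underset{\dot{}}{1}$ and $\frac12(\ti{E}_1-\ti{E}_{-1})=\underset{\dot{}}{E_1}$; then, using $E_1\times E_1=0$ and $(E_1,E_1)=1$, one computes from the explicit form of $\mu$ that $\mu(\underset{\dot{}}{1})=(E_1,0,0,0)$ and $\mu(\underset{\dot{}}{E_1})=\dot{1}$, so from $\mu\beta=\beta\mu$ it follows that $\beta$ also fixes $(E_1,0,0,0)$ and $\dot{1}$. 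In particular $\beta$ fixes $(0,0,1,0)$ and $(0,0,0,1)$, hence $\beta\in({E_7}^C)_{(0,0,1,0),(0,0,0,1)}={E_6}^C$, so I may write $\beta=\ti{\gamma}$ with $\gamma\in{E_6}^C$.

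Finally I would descend to ${F_4}^C$. From $\beta(E_2\dot{\pm}E_3)=E_2\dot{\pm}E_3$ we obtain $\gamma(E_2\pm E_3)=E_2\pm E_3$, hence $\gamma E_2=E_2$ and $\gamma E_3=E_3$, while $\beta(E_1,0,0,0)=(E_1,0,0,0)$ gives $\gamma E_1=E_1$; therefore $\gamma E=E$, so $\gamma\in({E_6}^C)_E={F_4}^C$ and then ${}^t\gamma^{-1}=\gamma$, which makes the identification $\beta=\ti{\gamma}$ consistent with $\varphi$. Since moreover $\beta\dot{F}_1(e_k)=\dot{F}_1(e_k)$ forces $\gamma F_1(e_k)=F_1(e_k)$ for $k=0,1$, we conclude $\gamma\in({F_4}^C)_{E_1, E_2, E_3, F_1(e_k), k=0,1}$ and $\beta=\varphi(\gamma)$. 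Hence $\varphi$ is an isomorphism onto the target group, and Theorem \ref{thm 3.16} yields $(({E_7}^C)^{\kappa, \mu})_{\ti{E}_1,\ti{E}_{-1}, E_2\dot{+}E_3,E_2\dot{-}E_3, \dot{F}_1(e_k), k=0,1}\cong({F_4}^C)_{E_1, E_2, E_3, F_1(e_k), k=0,1}\cong S\!pin(6,C)$. The only step that is not pure bookkeeping is the use of the $\mu$-equivariance (together with $E_1\times E_1=0$) to force $\beta$ to fix $\dot{1}$, i.e. to land inside ${E_6}^C$; everything else follows formally from the chain ${F_4}^C\subset{E_6}^C\subset{E_7}^C$ and Theorem \ref{thm 3.16}.
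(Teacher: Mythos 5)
Your proposal is correct and follows essentially the same route as the paper: the paper likewise uses $\beta\ti{E}_{\pm1}=\ti{E}_{\pm1}$ together with $\mu$-equivariance (via $\mu(0,0,0,1)=(E_1,0,0,0)$ and $\mu(0,E_1,0,0)=(0,0,1,0)$) to place $\beta$ in ${E_6}^C$, then uses $\beta E=E$ to descend to $({F_4}^C)_{E_1,E_2,E_3,F_1(e_k),k=0,1}$ and invokes Theorem \ref{thm 3.16}. The only cosmetic difference is that the paper states the conclusion as an equality of subgroups of ${E_7}^C$ rather than packaging it as an explicit isomorphism $\varphi$.
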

\begin{proof}
Let $\alpha \in (({E_7}^C)^{\kappa, \mu})_{\ti{E}_1,\ti{E}_{-1}, E_2\dot{+}E_3,E_2\dot{-}E_3, \dot{F}_1(e_k), k=0,1 }$. Then \vspace{0.5mm}from $\alpha (0, E_1, 0, 1)=(0, E_1, 0, 1)$ and $\alpha (0, -E_1, 0, 1)=(0, -E_1, 0, 1)$, we have that 
$
  \alpha (0,E_1,0,0)=(0,E_1,0,0), \,\alpha (0,0,0,1)$ $=(0,0,0,1).
$
Hence we see that
$
   \alpha (E_1,0,0,0)=(E_1,0,0,0), \,\alpha (0,0,1,0)=(0,0,1,0).
$
Indeed, it follows that
\begin{eqnarray*}
& &\alpha (E_1,0,0,0) =\alpha \mu (0,0,0,1)=\mu\alpha(0,0,0,1)
= \mu (0,0,0,1)=(E_1,0,0,0), 
\\
& &\alpha (0,0,1,0) =\alpha \mu (0,E_1,0,0)=\mu\alpha(0,E_1,0,0)=\mu (0,E_1,0,0)=(0,0,1,0).
\end{eqnarray*}
Thus from $\alpha \dot{1}=\dot{1}$ and $\alpha \underset{\dot{}}{1}=\underset{\dot{}}{1}$, we see $\alpha \in {E_6}^C$, moreover from $\alpha E_i =E_i, i=1,2,3$, that is, $\alpha E=E$, we see $\alpha \in {F_4}^C$. Note that suppose $\alpha \in {F_4}^C$, $\alpha$ satisfies  $\kappa\alpha=\alpha\kappa, \alpha\mu=\mu\alpha$, automatically. Hence we have $\alpha \in ({F_4}^C)_{E_1, E_2, E_3, {F}_1(e_k), k=0,1}$, and vice versa. Thus we have  
$$
(({E_7}^C)^{\kappa, \mu})_{\ti{E}_1,\ti{E}_{-1}, E_2\dot{+}E_3,E_2\dot{-}E_3, \dot{F}_1(e_k), k=0,1 }
%= ({E_6}^C)_{E_1,E_2,E_3, {F}_1(e_k), k=0,1 }
= ({F_4}^C)_{E_1, E_2, E_3, {F}_1(e_k), k=0,1}.
$$
%$({E_7}^C)_{\dot{1},\underset{\dot{}}{1}}={E_6}^C$
%$\alpha \dot{1}=\dot{1}$ and $\alpha \underset{\dot{}}{1}=\underset{\dot{}}{1}$
%, we have  $(({E_7}^C)^{\kappa, \mu})_{\ti{E}_1,\ti{E}_{-1}, E_2\dot{+}E_3,E_2\dot{-}E_3, \dot{F}_1(e_k), k=0,1 }= ({E_6}^C)_{E_1,E_2,E_3, {F}_1(e_k), k=0,1 }$, moreover from $({E_6}^C)_E = {F_4}^C$ 
%and $({F_4}^C)_{E_1, E_2, E_3} \cong S\!pin(8, C)$ (Theorem 5.3.1)
%, we have  $({E_6}^C)_{E_1,E_2,E_3, {F}_1(e_k), k=0,1 } $ $ \cong ({F_4}^C)_{E_1, E_2, E_3, {F}_1(e_k), k=0,1}$. 
Therefore, from Theorem \ref{thm 3.16} we have the required isomorphism 
$$
   (({E_7}^C)^{\kappa, \mu})_{\ti{E}_1,\ti{E}_{-1}, E_2\dot{+}E_3,E_2\dot{-}E_3, \dot{F}_1(e_k), k=0,1 } \cong S\!pin(6,C).
$$
\end{proof}
%We shall construct one more \vspace{1mm} $S\!pin(6,C)$ in ${E_7}^C$.
First we shall construct one more $S\!pin(3,C)$ in ${F_4}^C$.

Here, we define a group $({F_4}^C)_{E_1, F_1(e_k), k=2,\ldots,7}
%,(({E_6}^C)^\sigma)_{E_1}, (({E_6}^C)^\sigma)_{E_1, F_1(e_k), k=2,\ldots,7}
$ by
\begin{eqnarray*}
&&({F_4}^C)_{E_1, F_1(e_k), k=2,\ldots,7}=\{\alpha \in {F_4}^C \,|\, \alpha E_1=E_1, \alpha F_1(e_k)=F_1(e_k), k=2,\ldots, 7 \}, 
\end{eqnarray*}
moreover define a $3$-dimensional $C$-vector subspace $({V_-}^{\!\!C})^3$ of $\mathfrak{J}^C$ by
\vspace{-1mm}
\begin{eqnarray*}
({V_-}^{\!\!C})^3\!\!\!&=&\!\!\! \{X \in \mathfrak{J}^C \,|\, E_1 \circ X=0, \tr (X)=0, (F_1(e_k), X)=0, k=2, \ldots, 7  \}
\\[1mm]
\!\!\!&=&\!\!\!\biggl \{\begin{pmatrix} 0 & 0      & 0  \\
                                          0 & \xi    & x  \\
                                          0 & \ov{x} & -\xi
                          \end{pmatrix}\, \biggm| \, \xi \in C, x \in \C^C  \biggr \}
\end{eqnarray*}
with the norm $(X,X)=2(\xi^2+\ov{x}x)$.
Obviously, the group $({F_4}^C)_{E_1, F_1(e_k), k=2,\ldots,7}$ acts on $({V_-}^{\!\!C})^3$.

\begin{lem}\label{lem 4.3}%We have the following. 
%{\rm (1)}\,
The Lie algebra $({\mathfrak{f}_4}^C)_{E_1, F_1(e_k), k=2,\ldots,7}$ of the group $({F_4}^C)_{E_1, F_1(e_k), k=2,\ldots,7}$ is given by
\begin{eqnarray*}
({\mathfrak{f}_4}^C)_{E_1, F_1(e_k), k=2,\ldots,7}\!\!\!&=&\!\!\!\left\{\delta \in {\mathfrak{f}_4}^C \,\left| \, \begin{array}{l}
%D \in \mathfrak{so}(8, C), a_k \in C \\
\delta E_1=0, \\
\delta F_1(e_k)=0, k=2, \ldots, 7
\end{array} \right.\right \}
\\
\!\!\!&=&\!\!\! \Biggl\{ \delta=\left(
\begin{array}{@{\,}cc|ccc@{}}
\multicolumn{2}{c|}{\raisebox{-2.0ex}[-5pt][0pt]{\large $D_2$}}&&& \\
&&\multicolumn{3}{c}{\raisebox{0.9ex}[0pt]{\large $0$}} \\
\hline
%&&&&& \\
%&&&&& \\
&&\multicolumn{3}{c}{\raisebox{-10pt}[0pt][0pt]{\large $0$}}\\
\multicolumn{2}{c|}{\raisebox{1.0ex}[0pt]{\large $0$}}&&& \\
%&&&&& \\
%&&&&& \\
\end{array}
\right)+\tilde{A}_1(a)\,\Biggm|\, D_2 \in \mathfrak{so}(2,C), a \in \C^C  \Biggr\}.
\end{eqnarray*}

In particular, $\dim_C(({\mathfrak{f}_4}^C)_{E_1, F_1(e_k), k=2,\ldots,7})=3$.
\end{lem}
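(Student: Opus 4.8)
The plan is to substitute the general form ${\mathfrak{f}_4}^C \ni \delta = D + \tilde{A}_1(a_1) + \tilde{A}_2(a_2) + \tilde{A}_3(a_3)$, with $D \in \mathfrak{so}(8,C)$ and $a_k \in \mathfrak{C}^C$, into the two defining conditions $\delta E_1 = 0$ and $\delta F_1(e_k) = 0$ ($k = 2, \ldots, 7$), and to read off the constraints on $D$ and the $a_k$ using the explicit action formulas of ${\mathfrak{f}_4}^C$ on $\mathfrak{J}^C$ (\cite[Theorem 2.2.2]{realization G_2}, \cite[Section 1.3]{Yokotaichiro}) together with the decomposition
$$\mathfrak{J}^C = CE_1 \oplus CE_2 \oplus CE_3 \oplus (\mathfrak{J}^C)_1 \oplus (\mathfrak{J}^C)_2 \oplus (\mathfrak{J}^C)_3 .$$

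First I would exploit the condition $\delta E_1 = 0$. Since $D E_1 = 0$ and $\tilde{A}_1(a_1)E_1 = 0$ always, while $\tilde{A}_2(a_2)E_1$ and $\tilde{A}_3(a_3)E_1$ are nonzero constant multiples of $F_2(a_2) \in (\mathfrak{J}^C)_2$ and $F_3(a_3) \in (\mathfrak{J}^C)_3$, this condition is equivalent to $a_2 = a_3 = 0$, so $\delta = D + \tilde{A}_1(a_1)$. Next, for $k = 2, \ldots, 7$ one has $\delta F_1(e_k) = F_1(D e_k) + \tilde{A}_1(a_1)F_1(e_k)$, and a short computation with the explicit formula for $\tilde{A}_1$ shows that $\tilde{A}_1(a_1)F_1(e_k)$ is a constant multiple of $(a_1, e_k)(E_2 - E_3)$. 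The two terms lie respectively in $(\mathfrak{J}^C)_1$ and in $CE_1 \oplus CE_2 \oplus CE_3$, which intersect only in $0$, so $\delta F_1(e_k) = 0$ forces $D e_k = 0$ and $(a_1, e_k) = 0$ for all $k = 2, \ldots, 7$. The equations $(a_1, e_k) = 0$ ($k = 2, \ldots, 7$) say exactly that $a_1 \in \C^C = \langle e_0, e_1\rangle_C$, and the equations $D e_k = 0$ ($k = 2, \ldots, 7$), together with the skew-symmetry of $D$, confine $D$ to the rotations in the plane spanned by $e_0$ and $e_1$ (exactly as in Lemmas \ref{lem 3.5} and \ref{lem 3.8}), i.e. $D = d_{01}G_{01}$ with $d_{01} \in C$; this is the block $\left(\begin{smallmatrix} D_2 & 0 \\ 0 & 0 \end{smallmatrix}\right)$ with $D_2 \in \mathfrak{so}(2,C)$ appearing in the statement. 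Conversely, every $\delta = D + \tilde{A}_1(a_1)$ of this shape annihilates $E_1$ (both summands do) and every $F_1(e_k)$, $k = 2, \ldots, 7$ (because $G_{01}e_k = 0$ and $(a_1, e_k) = 0$), so the two descriptions agree; and $\dim_C(({\mathfrak{f}_4}^C)_{E_1, F_1(e_k), k=2,\ldots,7}) = \dim_C\mathfrak{so}(2,C) + \dim_C\C^C = 1 + 2 = 3$.

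This is an entirely routine, finite linear computation — which is why the author legitimately dispatches it with ``by doing simple computation'' — and there is no real obstacle. The only points that call for a little care are identifying which summand $(\mathfrak{J}^C)_j$ (or which line in $CE_1 \oplus CE_2 \oplus CE_3$) each of $\tilde{A}_2(a_2)E_1$, $\tilde{A}_3(a_3)E_1$ and $\tilde{A}_1(a_1)F_1(e_k)$ falls into, so that the linear-independence arguments decoupling the two families of conditions are valid, and fixing the normalization conventions of $\tilde{A}_k$ and of $G_{ij}$ so that the precise block form in the statement is reproduced. The grading of $\mathfrak{J}^C$ displayed above is what makes the whole thing fall out cleanly.
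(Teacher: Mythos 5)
Your computation is correct and is exactly the ``simple computation'' that the paper's proof omits: substitute the general element $D+\tilde{A}_1(a_1)+\tilde{A}_2(a_2)+\tilde{A}_3(a_3)$ into the two defining conditions and read off $a_2=a_3=0$, $(a_1,e_k)=0$ and $De_k=0$ for $k=2,\ldots,7$, which (using skew-symmetry of $D$) yields precisely $D=d_{01}G_{01}\in\mathfrak{so}(2,C)$ and $a_1\in\C^C$, hence dimension $1+2=3$. Your identification $\tilde{A}_1(a_1)F_1(e_k)=(a_1,e_k)(E_2-E_3)$ agrees with the formula the author himself uses later in the proof of Lemma \ref{lem 4.14}, so the decoupling of the $F_1$-component from the diagonal component is exactly as you argue.
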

\begin{proof}
By doing simple computation, this lemma is proved easily.
\end{proof}

\begin{prop}\label{prop 4.4}
The homogeneous space $({F_4}^C)_{E_1, F_1(e_k), k=2,\ldots,7}/U(1,\C^C)$ is homeomorphic to the complex sphere $({S_-}^{\!\!C})^2${\rm :} $({F_4}^C)_{E_1, F_1(e_k), k=2,\ldots,7}/U(1,\C^C) \simeq ({S_-}^{\!\!C})^2$.

In particular, the group $({F_4}^C)_{E_1, F_1(e_k), k=2,\ldots,7}$ is connected.
\end{prop}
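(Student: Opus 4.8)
The plan is to follow the scheme of Propositions \ref{prop 3.6}, \ref{prop 3.9}, \ref{prop 3.12} and \ref{prop 3.15}: to exhibit a transitive action of $G:=({F_4}^C)_{E_1, F_1(e_k), k=2,\ldots,7}$ on a complex $2$-sphere, to identify the isotropy subgroup with $U(1,\C^C)$, and then to deduce connectedness from the connectedness of the sphere and of the fibre. Concretely, I would put
$$
({S_-}^{\!\!C})^2=\{X \in ({V_-}^{\!\!C})^3 \mid (X,X)=2\}=\Bigl\{\xi(E_2-E_3)+F_1(x) \Bigm| \xi \in C,\ x \in \C^C,\ \xi^2+\ov{x}x=1 \Bigr\}.
$$
Since $G$ preserves both $({V_-}^{\!\!C})^3$ and the inner product, it acts on $({S_-}^{\!\!C})^2$; writing $x=x_0 e_0+x_1 e_1$ with $x_0,x_1\in C$, so that $\ov{x}x={x_0}^2+{x_1}^2$, the defining relation becomes ${\xi}^2+{x_0}^2+{x_1}^2=1$, so $({S_-}^{\!\!C})^2$ is the standard complex $2$-sphere.

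For transitivity I would show that any $X=\xi(E_2-E_3)+F_1(x)\in({S_-}^{\!\!C})^2$ can be carried to $E_2-E_3$ by an element of the identity component $G_0$, via successive applications of the one-parameter subgroups afforded by Lemma \ref{lem 4.3}, in the spirit of the computation of Proposition \ref{prop 3.6}. By Lemma \ref{lem 4.3} the generators at our disposal are a single rotation $G\in\mathfrak{so}(2,C)$, which fixes every $E_i$ (being part of $\mathfrak{so}(8,C)$) and hence acts on $({V_-}^{\!\!C})^3$ as a rotation of the plane $\langle F_1(e_0),F_1(e_1)\rangle_C$, together with $\tilde{A}_1(a)$ for $a\in\C^C$, which mix the $E_2-E_3$-component with the $F_1$-part; on $({V_-}^{\!\!C})^3$ these three act as the full rotation algebra $\mathfrak{so}(3,C)$, with $\exp(t\tilde{A}_1(e_0))$ (resp. $\exp(t\tilde{A}_1(e_1))$) a rotation of the plane $\langle E_2-E_3,F_1(e_0)\rangle_C$ (resp. $\langle E_2-E_3,F_1(e_1)\rangle_C$). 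A convenient route is then: first use $\exp(sG)$ to bring $x$ into $\C e_0$, reducing to $X=\xi(E_2-E_3)+x_0F_1(e_0)$ with ${\xi}^2+{x_0}^2=1$; then observe that $\{\exp(t\tilde{A}_1(e_0))\mid t\in C\}$ already exhausts the whole group $S\!O(2,C)\cong C^{\times}$ of rotations of the plane $\langle E_2-E_3,F_1(e_0)\rangle_C$ — because $\exp\colon C\to C^{\times}$ is onto — and that $S\!O(2,C)$ acts transitively on the conic ${\xi}^2+{x_0}^2=1$; hence a suitable $\exp(t_0\tilde{A}_1(e_0))$ sends $X$ to $E_2-E_3$. (One could equally mimic Proposition \ref{prop 3.6} verbatim, or instead map $G_0$ into $O(({V_-}^{\!\!C})^3)=O(3,C)$, check that the kernel is discrete, and invoke Lemma \ref{lemma 2.1} together with the transitivity of $S\!O(3,C)$ on the complex $2$-sphere.) The main obstacle is exactly this transitivity step: it needs the explicit formulas for the action of the $\tilde{A}_1(a)$ on $({V_-}^{\!\!C})^3$ and the attendant bookkeeping, including the degenerate cases ($x_0=0$, or $\ov{x}x=0$, disposed of as in Proposition \ref{prop 3.6} by inserting rotations through angle $\pi/2$) — the computational core common to Propositions \ref{prop 3.6}--\ref{prop 3.15}.

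It remains to compute the isotropy subgroup of $G$ at $E_2-E_3$. If $\alpha\in G$ fixes $E_2-E_3$, then since $\alpha E=E$ (as $\alpha\in{F_4}^C$) and $\alpha E_1=E_1$ we get $\alpha(E_2+E_3)=E_2+E_3$, hence $\alpha E_2=E_2$ and $\alpha E_3=E_3$; conversely any such $\alpha$ fixes $E_2-E_3$. Thus the isotropy subgroup is $({F_4}^C)_{E_{1,2,3},F_1(2,\ldots,7)}$, which is isomorphic to $U(1,\C^C)$ by Theorem \ref{thm 3.3}, and together with transitivity this yields the homeomorphism $({F_4}^C)_{E_1, F_1(e_k), k=2,\ldots,7}/U(1,\C^C)\simeq({S_-}^{\!\!C})^2$. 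Finally, $({S_-}^{\!\!C})^2$ is connected (it is a smooth complex quadric surface) and $U(1,\C^C)\cong C^{\times}$ is connected, so $({F_4}^C)_{E_1, F_1(e_k), k=2,\ldots,7}$ is connected. (Equivalently: the transitivity above is already achieved by $G_0$, and the isotropy subgroup $U(1,\C^C)$, being connected, is contained in $G_0$, so $G=G_0\cdot U(1,\C^C)=G_0$.)
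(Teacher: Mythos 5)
Your overall architecture is the same as the paper's: a transitive action on $({S_-}^{\!\!C})^2$, identification of the isotropy subgroup at $E_2-E_3$ with $({F_4}^C)_{E_1,E_2,E_3,F_1(e_k),k=2,\ldots,7}\cong U(1,\C^C)$ exactly as you argue it, and connectedness from the fibration. The isotropy computation and the final connectedness deduction are correct. The one genuine gap is in your primary route to transitivity. Rotating $x=x_0e_0+x_1e_1$ into $\C e_0$ by $\exp(sG)$, $G\in\mathfrak{so}(2,C)$, is impossible when $x$ is a nonzero isotropic vector of $\C^C$ (i.e.\ $x_0^2+x_1^2=0$ with $x\ne 0$, which forces $\xi^2=1$): the $S\!O(2,C)$-orbit of such an $x$ is the punctured isotropic line $C^\times x$, which never meets $\C e_0$, so no choice of $s$ makes $x_1=0$. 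This is not a ``$\pi/2$-rotation'' degeneracy of the kind occurring in Proposition \ref{prop 3.6}; it needs a separate move, e.g.\ first apply $\exp(t\tilde{A}_1(e_0))$ for a generic small $t$, which mixes $\xi$ into the $e_0$-coordinate and makes $x$ anisotropic, after which your reduction goes through. (The paper's transitivity argument is mechanically different --- one element $\alpha(a)=\exp\tilde{A}(a)$ with $(a,x)=0$ and $a\ov{a}=(\pi/4)^2$ annihilates the $\xi$-part in a single stroke, and a second $\alpha(\tfrac{\pi}{4}x')$ lands on $E_2-E_3$ --- but its choice of $a$ is likewise unavailable for nonzero isotropic $x$, since $x^\perp$ in $\C^C$ is then totally isotropic, so the same supplementary step is needed there as well.)

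Your parenthetical alternative is in fact the cleanest complete proof and I would promote it to the main argument: restrict $p(\alpha)=\alpha|_{({V_-}^{\!\!C})^3}$ to the identity component, note that $p_*$ is injective on the $3$-dimensional Lie algebra of Lemma \ref{lem 4.3} so that $\Ker\,p$ is discrete, apply Lemma \ref{lemma 2.1} to get surjectivity onto $S\!O(3,C)$, and quote the transitivity of $S\!O(3,C)$ on the norm-$2$ complex sphere (Witt's theorem, or Proposition \ref{prop 3.6} combined with Theorem \ref{thm 3.7} after identifying the two nondegenerate rank-$3$ quadratic spaces over $C$). That route avoids the isotropic-vector bookkeeping entirely and still yields the homeomorphism and the connectedness statement.
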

\begin{proof}
We define a $2$-dimensional complex sphere $({S_-}^{\!\!C})^2$  by 
\begin{eqnarray*}
({S_-}^{\!\!C})^2\!\!\!&=&\!\!\! \{X \in ({V_-}^{\!\!C})^3 \,|\, (X, X)=2  \}
\\[1mm]
\!\!\!&=&\!\!\!\biggl \{\begin{pmatrix} 0 & 0      & 0  \\
                                          0 & \xi    & x  \\
                                          0 & \ov{x} & -\xi
                          \end{pmatrix}\, \biggm| \xi^2+x\ov{x}=1, \, \xi \in C, x \in \C^C  \biggr \}. 
\end{eqnarray*}
%\vspace{-4mm}
Then the group $({F_4}^C)_{E_1,F_1(e_k), k=2,\ldots,7}$ acts on $({S_-}^{\!\!C})^2$, obviously. We shall show that this action is transitive. In order to prove this, it is sufficient to show that any element $X \in ({S_-}^{\!\!C})^2$  can be transformed to $E_2 - E_3 \in ({S_-}^{\!\!C})^2$.
Here, we prepare some element of $({F_4}^C)_{E_1,F_1(e_k), k=2,\ldots,7}$.
For $a \in \C^C$ such that $a\ov{a}\ne 0$, we define a $C$-linear transformation $\alpha(a)$ of $\mathfrak{J}^C, \alpha(a) X(\xi, x)=:Y(\eta, y)$, by
\begin{eqnarray*}
&&\left\{\begin{array}{l}
     \eta_1 = \xi_1 
\vspace{1mm}\\
     \eta_2 = \displaystyle{\frac{\xi_2 + \xi_3}{2}} 
            + \displaystyle{\frac{\xi_2 - \xi_3}{2}}\cos 2\nu
            + \displaystyle{\frac{(a,x_1)}{\nu}}\sin2 \nu
\vspace{1mm}\\
     \eta_3 = \displaystyle{\frac{\xi_2 + \xi_3}{2}}
            - \displaystyle{\frac{\xi_2 - \xi_3}{2}}\cos2 \nu
            - \displaystyle{\frac{(a,x_1)}{\nu}}\sin2 \nu,
\end{array}\right.\\
&&\left\{\begin{array}{l}
     y_1 = x_1 
            - \displaystyle{\frac{(\xi_2 - \xi_3)a}{2\nu}}\sin2 \nu
            - \displaystyle{\frac{2(a, x_1)a}{\nu^2}}\sin^2 \nu
\vspace{1mm}\\
     y_2 = x_2\cos\nu - \displaystyle{\frac{\ov{x_3a}}{\nu}}\sin\nu
\vspace{1mm}\\
     y_3 = x_3\cos|a| + \displaystyle{\frac{\ov{ax_2}}{\nu}}\sin\nu,
\end{array}\right.
\end{eqnarray*}
where $\nu \in C, \nu^2=a\ov{a}$. 

\noindent Then we see $\alpha(a)=\exp \tilde{A}(a) $ $ \in (({F_4}^C)_{E_1,F_1(e_k), k=2,\ldots,7})_0$ (Lemma \ref{lem 4.3}).

Now, let $X=\begin{pmatrix} 0 & 0      & 0  \\
                                          0 & \xi    & x  \\
                                          0 & \ov{x} & -\xi
                          \end{pmatrix} \in ({S_-}^{\!\!C})^2$. 
We choose $a \in \C^C$ such that $(a, x)=0$ and $a\ov{a}=({\pi}/{4})^2$. Operate $\alpha(a) \in (({F_4}^C)_{E_1,F_1(e_k), k=2,\ldots,7})_0$ on $X$, then we have that 
$$
\alpha(a)X=\begin{pmatrix} 0 & 0      & 0  \\
                           0 & 0    & x'  \\
                           0 & \ov{x}' & 0
                          \end{pmatrix}=:X', \,x'\ov{x}'=1.
$$
Moreover, using this $x'$ above, operate $\alpha({\pi}/{4}x')$ on $X'$, then we have 
$$
\alpha\Bigl(\dfrac{\pi}{4}x'\Bigr)X'=E_2-E_3.
$$
This shows the transitivity of this action to $({S_-}^{\!\!C})^2$ by the group $({F_4}^C)_{E_1,F_1(e_k), k=2,\ldots,7}$. The isotropy subgroup of $({F_4}^C)_{E_1,F_1(e_k), k=2,\ldots,7}$ at $E_2-E_3$ is the group $({F_4}^C)_{E_1,E_2-E_3,F_1(e_k), k=2,\ldots,7}=({F_4}^C)_{E_1,E_2+E_3,E_2-E_3,F_1(e_k), k=2,\ldots,7}=({F_4}^C)_{E_1,E_2,E_3,F_1(e_k), k=2,\ldots,7} \cong U(1, \C^C)$ \vspace{0.5mm}(Theorem \ref{thm 3.3}).

Thus we have the required homeomorphism 
$$
  ({F_4}^C)_{E_1,F_1(e_k), k=2,\ldots,7}/U(1, \C^C) \simeq ({S_-}^{\!\!C})^2.
$$

Therefore we see that the group $({F_4}^C)_{E_1,F_1(e_k), k=2,\ldots,7}$ is connected.
\end{proof}

\begin{thm}\label{thm 4.5}
The group $({F_4}^C)_{E_1,F_1(e_k), k=2,\ldots,7}$ is isomorphic to $S\!pin(3,C)${\rm :} \\
$({F_4}^C)_{E_1,F_1(e_k), k=2,\ldots,7} \cong S\!pin(3,C)$.
\end{thm}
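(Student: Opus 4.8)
The plan is to run the same scheme as in Theorem \ref{thm 3.7}, with the $3$-dimensional complex quadratic space $({V_-}^{\!\!C})^3$ playing the role of $(V^C)^3$. Since every $\alpha\in{F_4}^C$ preserves the inner product of $\mathfrak{J}^C$, the group $({F_4}^C)_{E_1,F_1(e_k), k=2,\ldots,7}$ acts on $({V_-}^{\!\!C})^3$ by isometries; so, setting $O(3,C)=O(({V_-}^{\!\!C})^3)$, the restriction $p(\alpha)=\alpha\bigm|_{({V_-}^{\!\!C})^3}$ defines a continuous homomorphism $p:({F_4}^C)_{E_1,F_1(e_k), k=2,\ldots,7}\to O(3,C)$. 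Because the source is connected by Proposition \ref{prop 4.4}, $p$ actually takes values in $S\!O(3,C)=S\!O(({V_-}^{\!\!C})^3)$.

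The heart of the proof is the computation $\Ker\,p=\{1,\sigma\}\cong\Z_2$. Here one extra step is needed compared with Theorem \ref{thm 3.7}, because a general element of $({F_4}^C)_{E_1,F_1(e_k), k=2,\ldots,7}$ need not fix $E_2,E_3$ individually. If $\alpha\in\Ker\,p$, then $\alpha$ fixes $E_2-E_3\in({V_-}^{\!\!C})^3$; combined with $\alpha E=E$ and $\alpha E_1=E_1$ this forces $\alpha E_i=E_i$ for $i=1,2,3$, so by Theorem \ref{thm 3.1} we may write $\alpha=(\alpha_1,\alpha_2,\alpha_3)\in S\!pin(8,C)$. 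As $\alpha$ also fixes every $F_1(x)$, $x\in\C^C$ (which lie in $({V_-}^{\!\!C})^3$), and $F_1(e_k)$, $k=2,\ldots,7$, we get $\alpha_1=1$ on all of $\mathfrak{C}^C$. Feeding $\alpha_1=1$ into the triality relation $(\alpha_1 x)(\alpha_2 y)=\ov{\alpha_3(\ov{xy})}$ and using that in $\mathfrak{C}^C$ one has $x(yc)=(xy)c$ for all $x,y$ only when $c\in C$, one concludes $\alpha=(1,1,1)$ or $\alpha=(1,-1,-1)=\sigma$; conversely $\sigma$ clearly lies in $\Ker\,p$. (Alternatively, $\alpha\mapsto\alpha_1$ realizes $S\!pin(8,C)$ as a double cover of $S\!O(8,C)$, so its kernel has order $2$ and hence equals $\{1,\sigma\}$.)

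Then I would finish exactly as before: by Lemma \ref{lem 4.3}, $\dim_C(({\mathfrak{f}_4}^C)_{E_1,F_1(e_k), k=2,\ldots,7})=3=\dim_C\mathfrak{so}(3,C)$, and since $S\!O(3,C)$ is connected and $\Ker\,p$ is discrete, Lemma \ref{lemma 2.1} makes $p$ surjective, giving $({F_4}^C)_{E_1,F_1(e_k), k=2,\ldots,7}/\Z_2\cong S\!O(3,C)$. As the source group is connected, it is the universal (double) covering group of $S\!O(3,C)$, that is, $({F_4}^C)_{E_1,F_1(e_k), k=2,\ldots,7}\cong S\!pin(3,C)$.

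The main obstacle is the kernel computation — in particular promoting $\alpha E_1=E_1$ and $\alpha(E_2-E_3)=E_2-E_3$ to $\alpha E_i=E_i$, and then cutting the condition $\alpha_1=1$ down to $\alpha\in\{1,\sigma\}$; the rest is the routine ``continuous homomorphism into $S\!O$, discrete kernel, dimension count, Lemma \ref{lemma 2.1}'' pattern used repeatedly above.
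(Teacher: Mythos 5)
Your proposal is correct and follows essentially the same route as the paper: restriction to $({V_-}^{\!\!C})^3$ gives a homomorphism into $S\!O(3,C)$ by connectedness (Proposition \ref{prop 4.4}), the kernel is pinned down to $\{1,\sigma\}$ by first upgrading $\alpha E_1=E_1$, $\alpha E=E$, $\alpha(E_2-E_3)=E_2-E_3$ to $\alpha E_i=E_i$ and then using triality to force $\alpha_1=1$, and surjectivity follows from the dimension count in Lemma \ref{lem 4.3} together with Lemma \ref{lemma 2.1}. The extra step you flag (recovering $\alpha E_2=E_2$, $\alpha E_3=E_3$) is exactly the one the paper's own kernel computation performs.
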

\begin{proof}
Let $O(3,C)=O(({V_-}^{\!\!C})^3)=\{\beta \in \Iso_{C} (({V_-}^{\!\!C})^3)\,|\,(\beta X, \beta Y)=(X, Y)\}$. We consider the restriction $\beta=\alpha \bigm|_{({V_-}^{\!\!C})^3}$ of $\alpha \in ({F_4}^C)_{E_1, F_1(e_k), k=2,\ldots,7}$ to $({V_-}^{\!\!C})^3$, then we have $\beta \in O(3,C)$. Hence we can define a homomorphism $p: ({F_4}^C)_{E_1, F_1(e_k), k=2,\dots,7} \to O(3, C)=O(({V_-}^{\!\!C})^3)$ by
$$
   p(\alpha)=\alpha \bigm|_{({V_-}^{\!\!C})^3}.
$$
Since the mapping $p$ is continuous and the group $({F_4}^C)_{E_1,F_1(e_k), k=2,\ldots,7}$ is connected, the mapping $p$ induces  a homomorphism $p: ({F_4}^C)_{E_1,F_1(e_k), k=2,\ldots,7} \to S\!O(3, C)=S\!O(({V_-}^{\!\!C})^3)$ by 
%\vspace{-1mm}
$$
   p(\alpha)=\alpha|_{{({V_-}^{\!\!C})^3}}.
$$

It is not difficult to obtain that $\Ker\,p=\{1, \sigma  \} \cong \Z_2$. 
Indeed, let $\alpha \in \Ker\,p=\{\alpha \in  ({F_4}^C)_{E_1,F_1(e_k), k=2,\ldots,7}\,|\,p(\alpha)=1 \}$, that is, $\alpha \in \{\alpha \in  ({F_4}^C)_{E_1,F_1(e_k), k=2,\ldots,7}\,|\,\alpha|_{{({V_-}^{\!\!C})^3}}=1 \}$. 
It follows from $\alpha E_1=E_1, \alpha E=E$ that $\alpha(E_2+E_3)=E_2+E_3$, moreover since $\alpha|_{{({V_-}^{\!\!C})^3}}=1$, 
we also see $\alpha(E_2-E_3)=E_2-E_3$. Hence, since we have $\alpha E_1=E_1, \alpha E_2=E_2,\alpha E_3=E_3$,
we see $\alpha \in ({F_4}^C)_{E_1, E_2, E_3} \cong S\!pin(8, C)$, and so set $\alpha=(\alpha_1, \alpha_2, \alpha_3), \alpha_k \in S\!O(8,C)$.
Thus again from $\alpha F_1(e_k)=F_1(e_k), {\scriptstyle k=2,\ldots,7}$ and $\alpha|_{{({V_-}^{\!\!C})^3}}=1$, we have $\alpha_1=1$. Hence from the Principal of triality on $S\!O(8,C)$, we see $\alpha=(1,1,1)=1$ or $\alpha=(1,-1,-1)=\sigma$, that is, $\Ker\,p \subset \{1, \sigma \}$ and vice versa. Thus we see $\Ker\,p=\{1, \sigma  \}$.
%Conversely, it is clear. 
Finally, we shall show that $p$ is surjection. Since $S\!O(3,C)$ is connected, $\Ker\,p$ is discrete and 
%From Lemma \ref{lem 4.3}, we have that 
$\dim_C(({\mathfrak{f}_4}^C)_{E_1,F_1(e_k), k=2,\ldots,7})=3=\dim_C(\mathfrak{so}(3,C))$ (Lemma \ref{lem 4.3}), 
%in addition to this, 
%since $({F_4}^C)_{E_1,F_1(e_k), k=2,\ldots,7}$ is connected and $\Ker\,p$ is discrete, 
$p$ is surjection.
Thus we have that 
$$
({F_4}^C)_{E_1,F_1(e_k), k=2,\ldots,7}/\Z_2 \cong S\!O(3,C).
$$ 

Therefore the group $({F_4}^C)_{E_1,F_1(e_k), k=2,\ldots,7}$ is isomorphic to $S\!pin(3, C)$ as the universal covering group of $S\!O(3,C)$, that is, $({F_4}^C)_{E_1,F_1(e_k), k=2,\ldots,7} \cong S\!pin(3, C)$.
\end{proof}
Next, we shall construct one more $S\!pin(4,C)$ in ${E_6}^C$, and so we define subgroups  
$(({E_6}^C)^\sigma)_{E_1}, (({E_6}^C)^\sigma)_{E_1, F_1(e_k), k=2,\ldots,7}
$ of ${E_6}^C$ by
\begin{eqnarray*}
  (({E_6}^C)^\sigma)_{E_1}\!\!\!&=&\!\!\!\{\alpha \in {E_6}^C \,|\, \sigma \alpha=\alpha\sigma, \alpha E_1=E_1 \} (\cong S\!pin(10,C)),
\\[0mm]
  (({E_6}^C)^\sigma)_{E_1, F_1(e_k), k=2,\ldots,7}\!\!\!&=&\!\!\!\{\alpha \in (({E_6}^C)^\sigma)_{E_1} \,|\, \alpha F_1(e_k)=F_1(e_k), k=2,\ldots, 7 \},
\end{eqnarray*}
respectively, where as for $(({E_6}^C)^\sigma)_{E_1} \!\cong \! S\!pin(10,C)$,  see \cite[Proposition 3.6.4]{realization G_2} in detail, and the $C$-linear transformation $\sigma$ defined in Section 2 induces the involutive automorphism $\ti{\sigma}$ of ${E_6}^C$,
%: $\ti{\sigma}(\alpha)\!=\!\sigma\alpha\sigma, \alpha \!\in \! {E_6}^C$, 
moreover define a $4$-dimensional $C$-vector subspace $({V_-}^{\!\!C})^4$ of $\mathfrak{J}^C$ by
\begin{eqnarray*}
  ({V_-}^{\!\!C})^4\!\!\!&=&\!\!\!\{X \in \mathfrak{J}^C \,|\,4E_1 \times (E_1 \times X)=X, F_1(e_k) \times X=0, k=2, \ldots,7\}
\\[0mm]
  \!\!\!&=&\!\!\!\Bigl\{ \begin{pmatrix}0 & 0 & 0 \\
                                        0 & \xi_2 & x_1 \\
                                        0 &  \ov{x}_1 & \xi_3
                          \end{pmatrix}\,\Bigm |\,  \xi_i \in C, x_1 \in \C^C \subset \mathfrak{C}^C 
\Bigr\}
\end{eqnarray*}
with the norm $(-E_1, X, X)=-\xi_2\xi_3+\ov{x}_1 x_1$. The group $(({E_6}^C)^\sigma)_{E_1, F_1(e_k), k=2,\ldots,7}$ acts on $({V_-}^{\!\!C})^4$, obviously.
\begin{lem}\label{lem 4.6}
The Lie algebra $(({\mathfrak{e}_6}^C)^\sigma)_{E_1, F_1(e_k), k=2,\ldots,7}$ of the group $(({E_6}^C)^\sigma)_{E_1, F_1(e_k), k=2,\ldots,7}$ is given by
\begin{eqnarray*}
{(({\mathfrak{e}_6}^C)^\sigma)}_{E_1, F_1(e_k), k=2,\ldots,7}\!\!\!\!&=&\!\!\!\!\Bigl\{ \phi \in {\mathfrak{e}_6}^C \,\Bigm|\, 
\begin{array}{l} 
%\delta \in {\mathfrak{f}_4}^C, T \in (\mathfrak{J}^C)_{0}
\sigma \phi=\phi \sigma,
\\
\phi E_1=0, \phi F_1(e_k)=0, k=2,\ldots, 7 
\end{array} \Bigr \}
\\[0mm]
\!\!\!\!&=&\!\!\!\!\Biggl\{ \phi=\left(
\begin{array}{@{\,}cc|ccc@{}}
\multicolumn{2}{c|}{\raisebox{-2.0ex}[-5pt][0pt]{\large $D_2$}}&&& \\
&&\multicolumn{3}{c}{\raisebox{0.9ex}[0pt]{\large $0$}} \\
\hline
%&&&&& \\
%&&&&& \\
&&\multicolumn{3}{c}{\raisebox{-10pt}[0pt][0pt]{\large $0$}}\\
\multicolumn{2}{c|}{\raisebox{1.0ex}[0pt]{\large $0$}}&&& \\
%&&&&& \\
%&&&&& \\
\end{array}
\right)
\begin{array}{l}
+\tilde{A}_1(a)\\
+(\tau E_2-\tau E_3+F_1(t))^\sim 
\end{array}\!\!\Biggm|\!\!\! \begin{array}{l}D_2 \in \mathfrak{so}(2,C),\\
 a, t \in \C^C, \tau \in C
\end{array} \Biggr\}.
\end{eqnarray*}

In particular, $\dim_C((({\mathfrak{e}_6}^C)^\sigma)_{E_1, F_1(e_k), k=2,\ldots,7})=6$.
\end{lem}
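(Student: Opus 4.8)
The plan is to identify the Lie algebra of $(({E_6}^C)^\sigma)_{E_1, F_1(e_k), k=2,\ldots,7}$ with the subalgebra of ${\mathfrak{e}_6}^C$ cut out by the infinitesimal versions of the three defining conditions, namely
$$
(({\mathfrak{e}_6}^C)^\sigma)_{E_1, F_1(e_k), k=2,\ldots,7}=\{\phi \in {\mathfrak{e}_6}^C \mid \sigma\phi = \phi\sigma,\ \phi E_1 = 0,\ \phi F_1(e_k) = 0,\ k = 2,\ldots,7\},
$$
and then to compute this subalgebra directly, using the direct-sum description ${\mathfrak{e}_6}^C = \{\phi = \delta + \tilde{T} \mid \delta \in {\mathfrak{f}_4}^C,\ T \in (\mathfrak{J}^C)_0\}$ together with ${\mathfrak{f}_4}^C = \{D + \tilde{A}_1(a_1) + \tilde{A}_2(a_2) + \tilde{A}_3(a_3)\}$ from Section 2.

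First I would dispose of $\sigma\phi = \phi\sigma$. Since $\sigma$ is an automorphism of the Jordan algebra $\mathfrak{J}^C$, we have $\sigma\tilde{T}\sigma = (\sigma T)^\sim$ and $\sigma\delta\sigma \in {\mathfrak{f}_4}^C$, so by the directness of the decomposition the condition $\sigma\phi = \phi\sigma$ is equivalent to $\sigma\delta\sigma = \delta$ together with $\sigma T = T$. The latter forces $T = \tau_1 E_1 + \tau_2 E_2 + \tau_3 E_3 + F_1(t_1)$ with $\tau_1 + \tau_2 + \tau_3 = 0$ and $t_1 \in \mathfrak{C}^C$ (the $\sigma$-fixed part of $(\mathfrak{J}^C)_0$), while $\sigma\delta\sigma = \delta$ restricts $\delta$ to $\mathfrak{so}(8,C) \oplus \tilde{A}_1(\mathfrak{C}^C)$, since conjugation by $\sigma = (1,-1,-1) \in S\!pin(8,C)$ fixes $D$ and $\tilde{A}_1(a_1)$ but negates $\tilde{A}_2(a_2)$ and $\tilde{A}_3(a_3)$. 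Next I would impose $\phi E_1 = 0$: for every $\delta$ in $\mathfrak{so}(8,C) \oplus \tilde{A}_1(\mathfrak{C}^C)$ one has $\delta E_1 = 0$, and $\tilde{T}E_1 = T \circ E_1 = \tau_1 E_1$, so $\phi E_1 = 0$ is exactly $\tau_1 = 0$; with $\tau_1 + \tau_2 + \tau_3 = 0$ this gives $T = \tau E_2 - \tau E_3 + F_1(t_1)$, $\tau := \tau_2 = -\tau_3 \in C$.

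Finally I would impose $\phi F_1(e_k) = 0$ for $k = 2,\ldots,7$. Writing $\phi F_1(e_k) = D F_1(e_k) + \tilde{A}_1(a_1)F_1(e_k) + T \circ F_1(e_k)$, the three summands lie in $\{F_1(x) \mid x \in \mathfrak{C}^C\}$, in $C(E_2 - E_3)$, and in $C(E_2 + E_3)$ respectively — here $T \circ F_1(e_k) = (t_1, e_k)(E_2 + E_3)$ because $(E_2 - E_3)\circ F_1(e_k) = 0$ — and these three subspaces are linearly independent, so $\phi F_1(e_k) = 0$ splits into $D^{(1)}e_k = 0$, $(a_1, e_k) = 0$ and $(t_1, e_k) = 0$ for all $k = 2,\ldots,7$. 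The last two say $a_1, t_1 \in \C^C$, and the first, by skew-symmetry of $D$ on $\mathfrak{C}^C$, forces $D$ to preserve $\C^C$ and to vanish on $(\C^C)^{\perp}$, i.e. $D$ is given by a single block $D_2 \in \mathfrak{so}(2,C)$. Assembling the pieces gives precisely the asserted form $\phi = D_2 + \tilde{A}_1(a) + (\tau E_2 - \tau E_3 + F_1(t))^\sim$, and the dimension is $1 + 2 + 1 + 2 = 6$.

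Everything here is routine once the action formulas for $D$, $\tilde{A}_k(a_k)$ and $\tilde{T}$ on the $E_i$ and on $F_1(x)$ are recalled; the one step that needs a little care is the three-way splitting of $\phi F_1(e_k) = 0$, where one must observe that the $\tilde{A}_1$-contribution (in $C(E_2-E_3)$) and the $\tilde{T}$-contribution (in $C(E_2+E_3)$) cannot cancel, so each condition must hold separately, and then that a skew endomorphism of $\mathfrak{C}^C$ annihilating $e_2,\dots,e_7$ automatically stabilizes $\C^C$ — which is what produces the $\mathfrak{so}(2,C)$-block in the answer.
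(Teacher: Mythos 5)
Your proposal is correct and is precisely the ``simple computation'' that the paper leaves to the reader: splitting $\phi=\delta+\tilde{T}$ along ${\mathfrak{e}_6}^C={\mathfrak{f}_4}^C\oplus((\mathfrak{J}^C)_0)^\sim$, using that $\sigma=(1,-1,-1)$ kills the $\tilde{A}_2,\tilde{A}_3$ parts, and then separating $\phi F_1(e_k)=0$ into the three linearly independent components in $\{F_1(x)\}$, $C(E_2-E_3)$ and $C(E_2+E_3)$ all check out, as does the resulting dimension count $1+2+1+2=6$. No issues.
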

\begin{proof}
By doing simple computation, this lemma is proved easily.
\end{proof}
\begin{prop}\label{prop 4.7}
The homogeneous space $(({E_6}^C)^\sigma)_{E_1, F_1(e_k), k=2,\ldots,7}/S\!pin(3,C)$ is homeomorphic to the complex sphere $({S_-}^{\!\!C})^3${\rm :} $(({E_6}^C)^\sigma)_{E_1, F_1(e_k), k=2,\ldots,7}/S\!pin(3,C) \simeq ({S_-}^{\!\!C})^3$. 

In particular, the group $(({E_6}^C)^\sigma)_{E_1, F_1(e_k), k=2,\ldots,7}$ is connected.
\end{prop}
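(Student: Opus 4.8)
The plan is to follow the template of Propositions~\ref{prop 4.4}, \ref{prop 3.9}, \ref{prop 3.12} and~\ref{prop 3.15}: exhibit a transitive action of $G:=(({E_6}^C)^\sigma)_{E_1,F_1(e_k),k=2,\ldots,7}$ on a complex $3$-sphere whose isotropy subgroup at a suitable point is the $S\!pin(3,C)$ of Theorem~\ref{thm 4.5}, and deduce the connectedness from the resulting fibre bundle.

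First I would set
$$
  ({S_-}^{\!\!C})^3 \;=\; \{X \in ({V_-}^{\!\!C})^4 \,|\, (-E_1,X,X)=1\},
$$
a complex $3$-sphere on which $G$ acts, since any $\alpha \in G \subset {E_6}^C$ fixes $E_1$ and preserves the trilinear form, hence preserves $({V_-}^{\!\!C})^4$ and the quadratic form $(-E_1,X,X)$; note that $({S_-}^{\!\!C})^2 = ({S_-}^{\!\!C})^3 \cap ({V_-}^{\!\!C})^3$, because on $({V_-}^{\!\!C})^3$ one has $(X,X)=2(-E_1,X,X)$. As distinguished point I would take $X_0:=\sqrt{-1}\,(E_2+E_3)\in ({S_-}^{\!\!C})^3$. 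Its isotropy subgroup in $G$ is precisely $({F_4}^C)_{E_1,F_1(e_k),k=2,\ldots,7}$: if $\alpha\in G$ fixes $E_2+E_3$ then it fixes $E=E_1+(E_2+E_3)$, so $\alpha\in({E_6}^C)_E={F_4}^C$ and hence $\alpha\in({F_4}^C)_{E_1,F_1(e_k),k=2,\ldots,7}$; conversely, any such $\alpha$ lies in $G$, since it fixes $E_2+E_3=E-E_1$ and, being in ${F_4}^C$ and fixing $E_1$, it preserves the eigenspaces of the multiplication $X\mapsto E_1\circ X$, on each of which $\sigma$ acts as a scalar, so $\alpha$ commutes with $\sigma$. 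By Theorem~\ref{thm 4.5} this isotropy subgroup is $\cong S\!pin(3,C)$, and it is connected.

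For transitivity I would use that the Lie algebra of $G$ (Lemma~\ref{lem 4.6}) contains the elements $(E_2-E_3)^\sim$, $(F_1(e_0))^\sim$ and $(F_1(e_1))^\sim$: on $({V_-}^{\!\!C})^4$ the transformation $\exp(s\,(E_2-E_3)^\sim)$ acts by $\xi_2\mapsto e^s\xi_2$, $\xi_3\mapsto e^{-s}\xi_3$ and fixes $F_1(x_1)$, while $\exp(s\,(F_1(e_0))^\sim)$ and $\exp(s\,(F_1(e_1))^\sim)$ mix the $E_2+E_3$-direction with $F_1(e_0)$, resp.\ $F_1(e_1)$, and fix $E_2-E_3$. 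Given $X=\xi_2E_2+\xi_3E_3+F_1(x_1)\in ({S_-}^{\!\!C})^3$, I would first apply these last two one-parameter subgroups to annihilate the $E_2+E_3$-component, reaching a point $X'$ with $\tr X'=0$, hence $X'\in ({S_-}^{\!\!C})^2$; then Proposition~\ref{prop 4.4} supplies $\beta\in S\!pin(3,C)=({F_4}^C)_{E_1,F_1(e_k),k=2,\ldots,7}\subset G$ with $\beta X'=E_2-E_3$; finally $\exp\!\bigl(\tfrac{\sqrt{-1}\,\pi}{2}\,(E_2-E_3)^\sim\bigr)$ carries $E_2-E_3$ to $\sqrt{-1}(E_2+E_3)=X_0$. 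Hence $G$ acts transitively on $({S_-}^{\!\!C})^3$, the orbit map induces the homeomorphism $G/S\!pin(3,C)\simeq ({S_-}^{\!\!C})^3$, and since $({S_-}^{\!\!C})^3$ is connected and $S\!pin(3,C)$ is connected, the bundle $S\!pin(3,C)\to G\to ({S_-}^{\!\!C})^3$ forces $G$ to be connected.

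The step I expect to be the main obstacle is the transitivity, specifically the degenerate configurations inside it: when $x_1$ is positioned so that the parameter of one of the rotations cannot be solved for (the analogue of the $\Re(t_5)=0$ cases of Proposition~\ref{prop 3.6}), or when $\tr X$ takes a value for which the quadric that $S\!pin(3,C)$ must act on through Proposition~\ref{prop 4.4} degenerates to an isotropic cone. As in the earlier propositions these are cleared by a preliminary application of one of the available one-parameter subgroups (an $\exp(s\,(E_2-E_3)^\sim)$, or one of the $\alpha(a)$ from the proof of Proposition~\ref{prop 4.4}) to move $X$ into the generic locus; I expect this bookkeeping, not any new idea, to be the bulk of the work.
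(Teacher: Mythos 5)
Your proposal follows the paper's proof essentially step for step: the same complex sphere $({S_-}^{\!\!C})^3$, the same target point $i(E_2+E_3)$ reached via $\exp\bigl(i\tfrac{\pi}{2}(E_2-E_3)^\sim\bigr)$, the same identification of the isotropy subgroup with $({F_4}^C)_{E_1,F_1(e_k),k=2,\ldots,7}\cong S\!pin(3,C)$, and the same reduction of transitivity to Proposition \ref{prop 4.4} by exponentiating elements of the Lie algebra of Lemma \ref{lem 4.6}. The only divergence is in the transitivity bookkeeping: where you solve for a rotation angle in $\exp(sF_1(e_0)^\sim)$, $\exp(sF_1(e_1)^\sim)$ and clear the degenerate configurations by preliminary $\alpha_{23}$-moves, the paper kills the $E_2+E_3$-component in one stroke with $\beta_1(t_0)=\exp F_1(t_0)^\sim$ for $t_0$ chosen orthogonal to $x_1$ with $t_0\ov{t}_0=-(\pi/2)^2$ (so that $\cosh\nu=0$), reserving $\alpha_{23}$ for the single degenerate case $x_1\ov{x}_1=0$ --- a cleaner disposal of exactly the obstacles you flag.
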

\begin{proof}
We define a $3$-dimensional complex sphere $({S_-}^{\!\!C})^3$ by 
\begin{eqnarray*}
  ({S_-}^{\!\!C})^3\!\!\!&=&\!\!\!\{ X \in ({V_-}^{\!\!C})^4 \,|\, (-E_1, X, X)=1  \}
\\[0mm]
         \!\!\!&=&\!\!\!\Bigl\{ \begin{pmatrix}0 & 0 & 0 \\
                                        0 & \xi_2 & x_1 \\
                                        0 &  \ov{x}_1 & \xi_3
                          \end{pmatrix}\,\Bigm |\,  -\xi_2\xi_3+\ov{x}_1 x_1=1, \xi_k \in C, x_1 \in \C^C \subset \mathfrak{C}^C \Bigr \}. 
\end{eqnarray*}
The group $(({E_6}^C)^\sigma)_{E_1, F_1(e_k), k=2,\ldots,7}$ acts on $({S_-}^{\!\!C})^3$. Indeed, for $\alpha \in (({E_6}^C)^\sigma)_{E_1, F_1(e_k), k=2,\ldots,7} \subset (({E_6}^C)^\sigma)_{E_1}$, it follows from \cite[Lemma 3.6.2]{realization G_2} that ${}^t \alpha ^{-1} \in(({E_6}^C)^\sigma)_{E_1, F_1(e_k), k=2,\ldots,7}$. Hence, for $X \in ({S_-}^{\!\!C})^3$ we have that
\begin{eqnarray*}
4 E_1 \times (E_1 \times \alpha X)\!\!\!&=&\!\!\! 4 {}^t\alpha^{-1} E_1 \times (\alpha E_1 \times \alpha X)
=4 {}^t\alpha^{-1} E_1 \times {}^t \alpha^{-1}(E_1 \times X)
\\ 
     \!\!\!&=&\!\!\! \alpha (4 E_1 \times (E_1 \times X))
\\
\!\!\!&=&\!\!\! \alpha X,
\\[1mm]
F_1(e_k) \times \alpha X \!\!\!&=&\!\!\! \alpha F_1(e_k) \times \alpha X
={}^t \alpha^{-1}(F_1(e_k) \times  X)
\\
      \!\!\!&=&\!\!\!  0,
\end{eqnarray*}
that is, $\alpha X \in ({V_-}^{\!\!C})^4$. Moreover, it is clear that $(-E_1, \alpha X, \alpha X)=1$. Thus we see $\alpha X \in ({S_-}^{\!\!C})^3$. 
We shall prove that this action is transitive. In order to
prove this, it is sufficient to show that any element $X \in ({S_-}^{\!\!C})^3$
can be transformed to $i(E_2 + E_3) \in ({S_-}^{\!\!C})^3$. 
Then we prepare some elements of $(({E_6}^C)^\sigma)_{E_1, F_1(e_k), k=2,\ldots,7}$. 

For $t \in \C^C \subset \mathfrak{C}^C$ such that $t\,\ov{t} \ne 0$, we define a $C$-linear transformation $\beta_1 (t)$ of $\mathfrak{J}^C$, $\beta_1 (t) X(\xi, x)=Y(\eta, y)$, by 
 \begin{eqnarray*}
&&\left\{\begin{array}{l}
     \eta_1 = \xi_1 
\vspace{0mm}\\
     \eta_2 = \displaystyle{\frac{\xi_2 - \xi_3}{2}} 
            + \displaystyle{\frac{\xi_2 + \xi_3}{2}}\cosh\nu
            + \displaystyle{\frac{(t,x_1)}{\nu}}\sinh\nu
\vspace{0mm}\\
     \eta_3 = -\displaystyle{\frac{\xi_2 - \xi_3}{2}}
            + \displaystyle{\frac{\xi_2 + \xi_3}{2}}\cosh\nu
            + \displaystyle{\frac{(t,x_1)}{\nu}}\sinh\nu,
\end{array}\right.\\
%\end{eqnarray*}
%\begin{eqnarray*}
&&\left\{\begin{array}{l}
     y_1 = x_1 
            +\displaystyle{\frac{(\xi_2 + \xi_3)t}{2\nu}}\sinh\nu
            + \displaystyle{\frac{2(t, x_1)t}{\nu^2}}\sinh^2 \frac{\nu}{2}
\vspace{0mm}\\
     y_2 = x_2\cosh\dfrac{\nu}{2} + \displaystyle{\frac{\ov{x_3 t}}{\nu}}\sinh\frac{\nu}{2}
\vspace{0mm}\\
     y_3 = x_3\cosh\dfrac{\nu}{2} + \displaystyle{\frac{\ov{t x_2}}{\nu}}\sinh\frac{\nu}{2},
\end{array}\right.
\end{eqnarray*}
where $\nu \in C, \nu^2=t\,\ov{t}$, moreover define a $C$-linear transformation $\alpha_{23} (c)$ of $\mathfrak{J}^C$ by 
$$
\alpha_{23}(c)\begin{pmatrix}
                               \xi_1 & x_3 & \ov{x}_2 \\
                               \ov{x}_3 & \xi_2 & x_1 \\
                               x_2  & \ov{x}_1 & \xi_3
                      \end{pmatrix}=\begin{pmatrix}
                               \xi_1 & e^{c/2}x_3 & e^{-c/2}\ov{x}_2 \\
                               e^{c/2}\ov{x}_3 & e^c \xi_2 & x_1 \\
                               e^{-c/2}x_2  & \ov{x}_1 & e^{-c}\xi_3
                      \end{pmatrix},
$$
where $c \in C$.
Then since we can express $\beta_1 (t)={\exp F_1(t)}^\sim$ and $\alpha_{23}(c)=\exp c(E_2-E_3)^\sim$ for ${F_1(t)}^\sim,  c(E_2-E_3)^\sim \in {(({\mathfrak{e}_6}^C)^\sigma)}_{E_1, F_1(e_k), k=2,\ldots,7}$  (Lemma 
 \ref{lem 4.6}), we see that $\beta_1 (t), \alpha_{23}(c) \in ({(({E_6}^C)^\sigma)}_{E_1, F_1(e_k), k=2,\ldots,7})_0$.
\vspace{1mm}

Now, let $X=\begin{pmatrix}0 & 0 & 0 \\
                                        0 & \xi_2 & x_1 \\
                                        0 &  \ov{x}_1 & \xi_3
                          \end{pmatrix} \in ({S_-}^{\!\!C})^3$. Operate  some  $\alpha_0 \in ({(({E_6}^C)^\sigma)}_{E_1, F_1(e_k), k=2,\ldots,7})_0$ on $X$, and so $X$ can be transformed to $\begin{pmatrix}0 & 0 & 0 \\
                                        0 & \xi & x'_1 \\
                                        0 &  \ov{x}'_1 & -\xi
                          \end{pmatrix} \in ({S_-}^{\!\!C})^2$, that is, 
$$
\alpha_0 X=\begin{pmatrix}0 & 0 & 0 \\
                                        0 & \xi & x'_1 \\
                                        0 &  \ov{x}'_1 & -\xi
                          \end{pmatrix}\in ({S_-}^{\!\!C})^2.
$$
 Indeed, we have the following.
 
Case (i) where \vspace{1mm}$x_1 \ov{x}_1 \ne 0$.   

We choose some $t_0=i ({\pi}/{2})({e_1 x_1}/({x_1  \ov{x}_1})^{1/2}) \in \C^C$. Then since it is easy to verify that 
\begin{eqnarray*}
(t_0, x_1)\!\!\!&=&\!\!\! (i \Bigl(\dfrac{\pi}{2}\Bigr)\dfrac{e_1 x_1}{\sqrt{x_1  \ov{x}_1}}, x_1)=i \Bigl(\dfrac{\pi}{2}\Bigr)\dfrac{1}{\sqrt{x_1  \ov{x}_1}}(e_1x_1, x_1)=i \Bigl(\dfrac{\pi}{2}\Bigr)\dfrac{1}{\sqrt{x_1  \ov{x}_1}}(e_1, 1)(x_1, x_1)=0,
\\[1mm]
{\nu}^2\!\!\!&=&\!\!\!t_0\,\ov{t}_0=i\Bigl(\dfrac{\pi}{2}\Bigr)\dfrac{e_1 x_1}{\sqrt{x_1  \ov{x}_1}}
\,\ov{i \Bigl(\dfrac{\pi}{2}\Bigr)\dfrac{e_1 x_1}{\sqrt{x_1  \ov{x}_1}}}=
-\Bigl(\dfrac{\pi}{2}\Bigr)^2
\dfrac{(e_1 x_1)(\ov{e_1 x_1})}{{\sqrt{x_1  \ov{x}_1}}^2}=-\Bigl(\dfrac{\pi}{2}\Bigr)^2 \dfrac{x_1  \ov{x}_1}{x_1 \ov{x}_1}=-\Bigl(\dfrac{\pi}{2}\Bigr)^2,
\end{eqnarray*}                     
operate $\alpha_0:=\beta_1(t_0)$ on $X$, and so we easily see that the $\eta_2$-part and the $\eta_3$-part of $\alpha_0 X$ are 
 $({\xi_2-\xi_3})/{2}$ and $-({\xi_2-\xi_3})/{2}$, respectively. %(Remark. $\cosh \nu=\cosh (i \dfrac{\pi}{2})=\cos \dfrac{\pi}{2}=0$). 
 Hence we can confirm the form of $\alpha_0 X$ as above.
%the form of $\alpha_0 X$ is as $X'$ above.
 \vspace{1mm}
 
 Case (ii) where $x_1  \ov{x}_1 = 0$. 
 
 From the condition of $({S_-}^{\!\!C})^3$, we have $\xi_2 \xi_3=1$. Then set $\xi_2=e^{r_2+i\theta_2}, r_2, \theta_2 \in \R$. Operate $\alpha_{23}(-r_2-i\theta_2)$ on X, and so we easily see that the $\eta_2$-part and the $\eta_3$-part of $\alpha_{23}(-r_2-i\theta_2) X$ are equal to $1$, that is, 
$$
\alpha_{23}(-r_2-i\theta_2)X=\begin{pmatrix}0 & 0 & 0 \\
                                        0 & 1 & x_1 \\
                                        0 &   \ov{x}_1 & 1
                          \end{pmatrix}=:X_1.
$$
 
\noindent Moreover, operate $\alpha_{23}(i{\pi}/{2})$ on $X_1$, then we have that  
$$
\alpha_{23}\Bigl(i\dfrac{\pi}{2}\Bigr) X_1=\begin{pmatrix}0 & 0 & 0 \\
                                        0 & i & x_1 \\
                                        0 &   \ov{x}_1 & -i
                          \end{pmatrix} \in ({S_-}^{\!\!C})^2.
$$ Hence this case is reduced to Case (i).

 Since $S\!pin(3,C)\,( \cong ({F_4}^C)_{E_1, F_1(e_k), k=2,\ldots,7} \subset (({E_6}^C)^\sigma)_{E_1, F_1(e_k), k=2,\ldots,7})$ acts transitively on $({S_-}^{\!\!C})^2$ (Proposition \ref{prop 4.4}), there exists $\alpha \in S\!pin(3,C)$ such that  
$$
\alpha X= E_2 -E_3.
$$
Again, operate $\alpha_{23}(i{\pi}/{2})$ on $E_2 -E_3$, then we have that
$$
     \alpha_{23}\Bigl(i\dfrac{\pi}{2}\Bigr)(E_2-E_3)=i(E_2+E_3).   
$$
%Hence there exists some  $\alpha \in ((({E_6}^C)^\sigma)_{E_1, F_1(e_k), k=2,\ldots,7})_0 \subset  (({E_6}^C)^\sigma)_{E_1, F_1(e_k), k=2,\ldots,7}$ such that $\alpha X=i(E_2 + E_3) \in ({S_-}^C)^3$ for any $X \in ({S_-}^C)^3$. 
This shows the transitivity of this action to $({S_-}^{\!\!C})^3$ by 
the group $(({E_6}^C)^\sigma)_{E_1, F_1(e_k), k=2,\ldots,7}$. The isotropy subgroup of the group $(({E_6}^C)^\sigma)_{E_1, F_1(e_k), k=2,\ldots,7}$ at $i(E_2 +E_3)$ is 
%the group $({F_4}^C)_{E_1, F_1(e_k), k=2,\ldots,7} \cong 
$S\!pin(3,C) \cong ({F_4}^C)_{E_1, F_1(e_k), k=2,\ldots,7}=(({E_6}^C)^\sigma)_{E_1, E_2+E_3, F_1(e_k), k=2,\ldots,7}$ (Theorem \ref{thm 4.5}, Section 2).
Thus we have the required homeomorphism 
$$
(({E_6}^C)^\sigma)_{E_1, F_1(e_k), k=2,\ldots,7}/S\!pin(3,C) \simeq ({S_-}^{\!\!C})^3.
$$

Therefore we see that the group $(({E_6}^C)^\sigma)_{E_1, F_1(e_k), k=2,\ldots,7}$ is connected.
\end{proof}

\begin{thm}\label{thm 4.8}
The group $(({E_6}^C)^\sigma)_{E_1, F_1(e_k), k=2,\ldots,7}$ is isomorphic to $S\!pin(4, C)${\rm :}\\$(({E_6}^C)^\sigma)_{E_1, F_1(e_k), k=2,\ldots,7} \cong S\!pin(4, C)$.
\end{thm}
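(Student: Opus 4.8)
The plan is to carry out the same scheme as in the proofs of Theorem \ref{thm 3.7} and Theorem \ref{thm 4.5}, now with the $4$-dimensional quadratic space $({V_-}^{\!\!C})^4$ carrying the form $(-E_1, X, X)=-\xi_2\xi_3+\ov{x}_1 x_1$. Since the group $(({E_6}^C)^\sigma)_{E_1, F_1(e_k), k=2,\ldots,7}$ maps $({V_-}^{\!\!C})^4$ into itself and preserves this form (this was already established in the proof of Proposition \ref{prop 4.7}), restriction defines a homomorphism
$$
  p : (({E_6}^C)^\sigma)_{E_1, F_1(e_k), k=2,\ldots,7} \longrightarrow O(4,C)=O(({V_-}^{\!\!C})^4), \qquad p(\alpha)=\alpha\bigm|_{({V_-}^{\!\!C})^4}.
$$
As $p$ is continuous and its domain is connected (Proposition \ref{prop 4.7}), the image of $p$ is contained in $S\!O(4,C)=S\!O(({V_-}^{\!\!C})^4)$.

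Next I would compute $\Ker\,p$ and show $\Ker\,p=\{1,\sigma\}\cong\Z_2$. The key point is that $E_2, E_3\in({V_-}^{\!\!C})^4$ and that $F_1(x_1)\in({V_-}^{\!\!C})^4$ for every $x_1\in\C^C$. Hence an $\alpha\in\Ker\,p$ fixes $E_2$, $E_3$ and $F_1(x_1)$ for all $x_1\in\C^C$; together with $\alpha E_1=E_1$ this gives $\alpha E=E$, so $\alpha\in({E_6}^C)_E={F_4}^C$, and then $\alpha\in({F_4}^C)_{E_1,E_2,E_3}\cong S\!pin(8,C)$ (Theorem \ref{thm 3.1}), say $\alpha=(\alpha_1,\alpha_2,\alpha_3)$. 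The relations $\alpha F_1(x)=F_1(x)$ for $x\in\C^C$ and $\alpha F_1(e_k)=F_1(e_k)$ for $k=2,\ldots,7$ force $\alpha_1 x=x$ for all $x\in\mathfrak{C}^C$, i.e. $\alpha_1=1$, so by the principle of triality $\alpha=(1,1,1)=1$ or $\alpha=(1,-1,-1)=\sigma$; conversely $\sigma$ fixes $({V_-}^{\!\!C})^4$ pointwise, whence $\Ker\,p=\{1,\sigma\}$.

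Finally, by Lemma \ref{lem 4.6} one has $\dim_C((({\mathfrak{e}_6}^C)^\sigma)_{E_1, F_1(e_k), k=2,\ldots,7})=6=\dim_C(\mathfrak{so}(4,C))$; since $S\!O(4,C)$ is connected and $\Ker\,p$ is discrete, Lemma \ref{lemma 2.1} yields that $p$ is surjective, so $(({E_6}^C)^\sigma)_{E_1, F_1(e_k), k=2,\ldots,7}/\Z_2\cong S\!O(4,C)$. Since the domain is connected and $\pi_1(S\!O(4,C))\cong\Z_2$, it follows that $(({E_6}^C)^\sigma)_{E_1, F_1(e_k), k=2,\ldots,7}$ is the universal covering group of $S\!O(4,C)$, that is, $(({E_6}^C)^\sigma)_{E_1, F_1(e_k), k=2,\ldots,7}\cong S\!pin(4,C)$. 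I expect the only mildly delicate step to be the kernel computation — specifically, verifying $E_2,E_3,F_1(x_1)\in({V_-}^{\!\!C})^4$ and thereby legitimately placing $\alpha$ in ${F_4}^C$ so that the triality argument in $({F_4}^C)_{E_1,E_2,E_3}$ becomes available, even though the ambient group is a priori only a subgroup of ${E_6}^C$; the rest is routine, the genuine geometric content (transitivity and connectedness, via the one-parameter subgroups $\beta_1(t)$ and $\alpha_{23}(c)$) having already been supplied by Proposition \ref{prop 4.7}.
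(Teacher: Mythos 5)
Your proposal is correct and follows essentially the same route as the paper: the restriction homomorphism $p$ onto $S\!O(({V_-}^{\!\!C})^4)$, the identification $\Ker\,p=\{1,\sigma\}\cong\Z_2$, surjectivity via the dimension count $\dim_C=6=\dim_C(\mathfrak{so}(4,C))$ with Lemma \ref{lemma 2.1}, and the universal-covering conclusion. The only (harmless) variation is in the kernel step: the paper funnels $\Ker\,p$ through $(({E_6}^C)^\sigma)_{E_1,E_2,E_3,F_1(e_k),k=2,\ldots,7}\cong U(1,\C^C)$ and evaluates $\phi(\theta)$ on $F_1(1)$ to get $\theta^2=1$, whereas you place $\alpha$ in $({F_4}^C)_{E_1,E_2,E_3}\cong S\!pin(8,C)$ and use triality with $\alpha_1=1$ — exactly the argument the paper itself uses in Theorems \ref{thm 3.7} and \ref{thm 4.5} — so both are equally valid.
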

\begin{proof}
Let $O(4,C)=O(({V_-}^{\!\!C})^4)=\{\beta \in \Iso_{C} (({V_-}^{\!\!C})^4)\,|\,(E_1,\beta X, \beta Y)=(E_1, X, Y)\}$. We  consider the restriction $\beta=\alpha \bigm|_{({V_-}^{\!\!C})^4}$ of $\alpha \in (({E_6}^C)^\sigma)_{E_1, F_1(e_k), k=2,\ldots,7}$ to $({V_-}^{\!\!C})^4$, then we have $\beta \in O(4,C)$. Hence we can define a homomorphism $p: (({E_6}^C)^\sigma)_{E_1, F_1(e_k), k=2,\ldots,7} \to O(4, C)=O(({V_-}^{\!\!C})^4)$ by
$$
   p(\alpha)=\alpha \bigm|_{({V_-}^{\!\!C})^4}.
$$

Since the mapping $p$ is continuous and the group $(({E_6}^C)^\sigma)_{E_1, F_1(e_k), k=2,\ldots,7}$ is connected (Proposition \ref{prop 4.7}), the mapping $p$ induces  a homomorphism $p :(({E_6}^C)^\sigma)_{E_1, F_1(e_k), k=2,\ldots,7} \!\to S\!O(4, C)=S\!O(({V_-}^{\!\!C})^4)$. 
It is not difficult to obtain that  $\Ker\,p=\{1, \sigma \} \cong \Z_2$. Indeed, Let $\alpha \in \Ker\,p $. For $E_2+E_3, E_2-E_3 \in ({V_-}^{\!\!C})^4$, since $\alpha(E_2+E_3)=E_2+E_3$ and  $\alpha(E_2-E_3)=E_2-E_3$, that is, $\alpha E_2=E_2, \alpha E_3=E_3$, we have that $\alpha \in (({E_6}^C)^\sigma)_{E_1,E_2, E_3, F_1(e_k), k=2,\ldots,7} \cong ({F_4}^C)_{E_1,E_2, E_3, F_1(e_k), k=2,\ldots,7} \cong U(1, \C^C)$ (Theorem \ref{thm 3.3}). Hence there exists $\theta \in U(1, \C^C)$ such that $\alpha=\phi(\theta)$, where $\phi$ is defined in Theorem 3.3, then it follows from $\alpha F_1 (1)=F_1 (1), F_1(1) \in ({V_-}^{\!\!C})^4$ that we have $(\ov{\theta})^2=1$, that is, $\theta=1$ or $\theta=-1$. Thus we have that
$$
    \alpha=\phi(1)=1 \quad {\text{or}}\quad \alpha=\phi(-1)=\sigma,
$$
that is, $\Ker\,p \subset \{ 1, \sigma \}$ and vice versa. 
Hence we obtain that $\Ker\, p=\{1, \sigma  \}$. Finally, we shall show that $p$ is surjection. Since $S\!O(4,C)$ is connected, $\Ker\,p$ is discrete and $\dim_{C}((({\mathfrak{e}_6}^C)^\sigma)_{E_1, F_1(e_k), k=2,\ldots,7}) =6=\dim_{{C}}(\mathfrak{so}(4, C))$(Lemma \ref{lem 4.6}), $p$ is surjection. Thus we have that 
$$
(({E_6}^C)^\sigma)_{E_1, F_1(e_k), k=2,\ldots,7}/\Z_2 \cong S\!O(4, C).
$$

Therefore the group $(({E_6}^C)^\sigma)_{E_1, F_1(e_k), k=2,\ldots,7}$ is isomorphic to $S\!pin(4, C)$ as the universal double covering group of $S\!O(4, C)$, that is, $(({E_6}^C)^\sigma)_{E_1, F_1(e_k), k=2,\ldots,7} \cong S\!pin(4,C)$.
\end{proof}

%Continuously, we shall construct $S\!pin(5,C)$ in ${E_7}^C$.

We define a group $(({E_7}^C)^{\kappa, \mu})_{\ti{E}_1, \ti{E}_{-1},\dot{F}_1(e_k), k=2,\ldots,7}$ by
$$
(({E_7}^C)^{\kappa, \mu})_{\ti{E}_1, \ti{E}_{-1},\dot{F}_1(e_k), k=2,\ldots,7}=\left\{\alpha \in {E_7}^C \,\left|\,\begin{array}{l}\kappa\alpha=\alpha\kappa, \mu\alpha=\alpha\mu,\\
\alpha \ti{E}_1=\ti{E}_1, \alpha \ti{E}_{-1}=\ti{E}_{-1}\\
\alpha \dot{F}_1(e_k)=\dot{F}_1(e_k), k=2, \ldots, 7
\end{array} \right. \right \}. 
$$
Then we have the following proposition.

\begin{prop}\label{prop 4.9}
The group  $(({E_7}^C)^{\kappa, \mu})_{\ti{E}_1, \ti{E}_{-1},\dot{F}_1(e_k), k=2,\ldots,7}$ is equal to the group \\
$(({E_6}^C)^\sigma)_{E_1, F_1(e_k), k=2,\ldots,7}${\rm:}$(({E_7}^C)^{\kappa, \mu})_{\ti{E}_1, \ti{E}_{-1},\dot{F}_1(e_k), k=2,\ldots,7} = (({E_6}^C)^\sigma)_{E_1, F_1(e_k), k=2,\ldots,7}\cong S\!pin(4,C)$.
\end{prop}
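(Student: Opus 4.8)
The plan is to prove Proposition \ref{prop 4.9} by a double inclusion that reproduces the element chase of Proposition \ref{prop 4.2}, and then to read off the isomorphism type from Theorem \ref{thm 4.8}. The only point that is not pure bookkeeping is the bridge between the $\kappa$-condition and the $\sigma$-condition, which I would package in the identity $\sigma = 2{\kappa_1}^{2} - 1$ on $\mathfrak{J}^C$.

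\emph{Inclusion $\subseteq$.} Let $\alpha \in (({E_7}^C)^{\kappa, \mu})_{\ti{E}_1, \ti{E}_{-1},\dot{F}_1(e_k), k=2,\ldots,7}$. As in Proposition \ref{prop 4.2}, from $\alpha\ti{E}_1 = \ti{E}_1$ and $\alpha\ti{E}_{-1} = \ti{E}_{-1}$ one gets $\alpha(0,E_1,0,0) = (0,E_1,0,0)$ and $\alpha(0,0,0,1) = (0,0,0,1)$, and then $\mu\alpha = \alpha\mu$ forces $\alpha(E_1,0,0,0) = \alpha\mu(0,0,0,1) = \mu\alpha(0,0,0,1) = (E_1,0,0,0)$ and $\alpha(0,0,1,0) = \alpha\mu(0,E_1,0,0) = \mu\alpha(0,E_1,0,0) = (0,0,1,0)$. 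Fixing $(0,0,1,0)=\dot 1$ and $(0,0,0,1)=\underset{\dot{}}{1}$ puts $\alpha \in {E_6}^C$, while fixing $(E_1,0,0,0)$ and $(0,E_1,0,0)$ gives $\alpha E_1 = E_1$ and ${}^t\alpha^{-1}E_1 = E_1$. Next I would use $\kappa\alpha = \alpha\kappa$: reading off the first two $\mathfrak{J}^C$-slots, this says $\kappa_1\alpha = \alpha\kappa_1$ and $\kappa_1\,{}^t\alpha^{-1} = {}^t\alpha^{-1}\kappa_1$ on $\mathfrak{J}^C$. A direct computation from the explicit form of $\kappa_1$ gives $\sigma = 2{\kappa_1}^{2}-1$ on $\mathfrak{J}^C$, hence $\alpha$ and ${}^t\alpha^{-1}$ commute with $\sigma$, i.e.\ $\sigma\alpha = \alpha\sigma$ in ${E_6}^C$, so $\alpha \in ({E_6}^C)^\sigma$. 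Finally $\alpha\dot{F}_1(e_k) = \dot{F}_1(e_k)$ gives $\alpha F_1(e_k) = F_1(e_k)$ for $k=2,\ldots,7$, so $\alpha \in (({E_6}^C)^\sigma)_{E_1, F_1(e_k), k=2,\ldots,7}$.

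\emph{Inclusion $\supseteq$.} Let $\alpha \in (({E_6}^C)^\sigma)_{E_1, F_1(e_k), k=2,\ldots,7}\subset {E_6}^C \subset {E_7}^C$. By \cite[Lemma 3.6.2]{realization G_2} one has ${}^t\alpha^{-1}\in (({E_6}^C)^\sigma)_{E_1}$, so $\alpha E_1 = E_1$ and ${}^t\alpha^{-1}E_1 = E_1$ both hold. Using these two facts together with the relation $\alpha X \times \alpha Y = {}^t\alpha^{-1}(X\times Y)$ valid for $\alpha \in {E_6}^C$, a routine computation gives $\kappa_1\alpha = \alpha\kappa_1$ and $\kappa_1\,{}^t\alpha^{-1} = {}^t\alpha^{-1}\kappa_1$, hence $\kappa\alpha = \alpha\kappa$; the same bookkeeping applied to the explicit form of $\mu$ gives $\mu\alpha = \alpha\mu$. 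Moreover $\alpha E_1 = E_1$, ${}^t\alpha^{-1}E_1 = E_1$ and the fact that $\ti{\alpha}$ fixes the last two coordinates of $\mathfrak{P}^C$ yield $\alpha\ti{E}_1 = \ti{E}_1$ and $\alpha\ti{E}_{-1} = \ti{E}_{-1}$, while $\alpha F_1(e_k) = F_1(e_k)$ gives $\alpha\dot{F}_1(e_k) = \dot{F}_1(e_k)$ for $k=2,\ldots,7$. Thus $\alpha$ satisfies all defining conditions of the left-hand group. Combining the two inclusions with Theorem \ref{thm 4.8} then gives $(({E_7}^C)^{\kappa, \mu})_{\ti{E}_1, \ti{E}_{-1},\dot{F}_1(e_k), k=2,\ldots,7} = (({E_6}^C)^\sigma)_{E_1, F_1(e_k), k=2,\ldots,7} \cong S\!pin(4,C)$.

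The only genuinely non-formal ingredient is the identity $\sigma = 2{\kappa_1}^{2}-1$ on $\mathfrak{J}^C$, which converts $\kappa$-commutation into $\sigma$-commutation; once this is available everything reduces to the standard Freudenthal-algebra manipulations with $\times$ and transposes already used in Proposition \ref{prop 4.2} (and one could alternatively shortcut the argument by quoting a previously established identity $(({E_7}^C)^{\kappa,\mu})_{\ti{E}_1,\ti{E}_{-1}} = (({E_6}^C)^\sigma)_{E_1}$ and simply restricting both sides by the conditions $\dot{F}_1(e_k)$, $k=2,\ldots,7$). I expect the main care to be needed in verifying, in the $\supseteq$ direction, that $\kappa\alpha=\alpha\kappa$ and $\mu\alpha=\alpha\mu$ hold for \emph{every} $\alpha$ fixing $E_1$ on $\mathfrak{J}^C$ and on its dual — that is, that no hypothesis beyond these is actually used.
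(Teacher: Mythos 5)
Your proof is correct and follows essentially the same double-inclusion element chase as the paper: fixed vectors plus $\mu$-commutation force $\alpha\in({E_6}^C)_{E_1}$, the $\kappa$-condition yields $\sigma$-commutation, and the converse uses only $\alpha E_1=E_1$ (and ${}^t\alpha^{-1}E_1=E_1$) to recover $\kappa$- and $\mu$-commutation, exactly as the paper verifies. The one cosmetic difference is your bridge $\sigma=2\kappa_1^{\,2}-1$ (which is indeed valid, since $\kappa_1^{\,2}$ is the projection onto the $E_1,E_2,E_3,F_1$ components) in place of the paper's $-\sigma=\exp(\pi i\kappa)$; both give the same conclusion in one line.
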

\begin{proof}
Let $\alpha \in (({E_7}^C)^{\kappa, \mu})_{\ti{E}_1, \ti{E}_{-1},F_1(e_k), k=2,\ldots,7}$. From $\alpha \ti{E}_1=\ti{E}_1$ and $\alpha \ti{E}_{-1}=\ti{E}_{-1}$, we have $\alpha \underset{\dot{}}{1}=\underset{\dot{}}{1}$, and so as in the proof of Proposition \ref{prop 4.2}, \vspace{-1mm}we have $\alpha \dot{1}=\dot{1}$. Thus we see $\alpha \in ({E_7}^C)_{ \dot{1}, \underset{\dot{}}{1}}={E_6}^C$. Moreover, since we can confirm $\alpha \dot{E}_1=\dot{E}_1$ from the condition above \vspace{-1mm}, we have $\alpha \in ({E_6}^C)_{E_1}$, and from $\kappa\alpha=\alpha\kappa$, together with $-\sigma=\exp(\pi i\kappa)$, we have $(-\sigma)\alpha=\alpha(-\sigma)$, that is, $\sigma\alpha=\alpha\sigma$. Thus we have $\alpha \in (({E_6}^C)^\sigma)_{E_1, F_1(e_k), k=2,\ldots,7}$.

Conversely, let $\beta \in (({E_6}^C)^\sigma)_{E_1, F_1(e_k), k=2,\ldots,7}$. It is clear that $\beta \ti{E}_1=\ti{E}_1$ and $\beta \ti{E}_{-1}=\ti{E}_{-1}$. For $C$-linear transformation $\kappa_1$ of $\mathfrak{J}^C$: $\kappa_1 X=(E_1, X)E_1-4E_1 \times (E_1 \times X)$, we have $\kappa_1 \beta=\beta\kappa_1$. Indeed, note that suppose $\beta E_1=E_1$, we have ${}^t \beta^{-1} E_1=E_1$ (see \cite[Lemma 3.6.2]{realization G_2}).
\begin{eqnarray*}
\kappa_1 \beta X \!\!\!&=&\!\!\!(E_1, \beta X)E_1 - 4E_1 \times (E_1 \times \beta X)
\\[0.5mm]
\!\!\!&=&\!\!\!({}^t \beta^{-1}E_1, \beta E_1)\beta E_1-4{}^t \beta^{-1}E_1 \times (\beta E_1 \times \beta X)
\\[0.5mm]
\!\!\!&=&\!\!\!({}^t \beta {}^t \beta^{-1}E_1, E_1)\beta E_1 -4{}^t \beta^{-1}E_1 \times {}^t \beta^{-1}(E_1 \times X)
\\[0.5mm]
\!\!\!&=&\!\!\!(E_1, \beta X)\beta E_1 - 4\beta (E_1 \times (E_1 \times \beta X))
\\[0.5mm]
\!\!\!&=&\!\!\!\beta((E_1, X)E_1 - 4E_1 \times (E_1 \times  X))
\\[0.5mm]
\!\!\!&=&\!\!\!\beta \kappa_1 X,
\end{eqnarray*}
that is, $\kappa_1 \beta=\beta \kappa_1$. Similarly, we can show $ \kappa_1 {}^t \beta^{-1}={}^t \beta^{-1} \kappa_1$. Hence we have that 
\begin{eqnarray*}
\kappa \beta (X, Y, \xi, \eta)\!\!\!&=&\!\!\!\kappa (\beta X, {}^t \beta^{-1}, \xi, \eta)
\\[0.5mm]
\!\!\!&=&\!\!\!(-\kappa_1 \beta X, \kappa_1 {}^t \beta^{-1}Y, -\xi, \eta)
\\[0.5mm]
\!\!\!&=&\!\!\!(-\beta \kappa_1 X, {}^t \beta^{-1} \kappa_1 Y, -\xi, \eta)
\\[0.5mm]
\!\!\!&=&\!\!\!\beta(-\kappa_1 X, \kappa_1 Y, -\xi, \eta)
\\[0.5mm]
\!\!\!&=&\!\!\!\beta \kappa (X, Y, \xi, \eta),
\end{eqnarray*}
that is, $\kappa\beta=\beta\kappa$. Additionally, we can show that $\mu\beta=\beta\mu$. Indeed, for $(X, Y, \xi, \eta) \in \mathfrak{P}^C$, we do simple computation as follows:
\begin{eqnarray*}
\mu\beta(X, Y, \xi, \eta)\!\!\!&=&\!\!\!\mu(\beta X, {}^t \beta^{-1} Y, \xi, \eta)
\\[0.5mm]
\!\!\!&=&\!\!\!\varPhi(0, E_1, E_1, 0)(\beta X, {}^t \beta^{-1} Y, \xi, \eta)
\\[0.5mm]
\!\!\!&=&\!\!\!(2E_1 \times {}^t \beta^{-1} Y+\eta E_1, 2E_1 \times \beta X+\xi E_1, (E_1,{}^t \beta^{-1} Y), (E_1, \beta X) )
\\[0.5mm]
\!\!\!&=&\!\!\!(2{}^t \beta^{-1}E_1 \times {}^t \beta^{-1} Y\!+\!\eta \beta E_1, 2\beta E_1 \times \beta X\!+\!\xi {}^t \beta^{-1}E_1, (\beta E_1,{}^t \beta^{-1} Y), ({}^t \beta^{-1}E_1, \beta X) )
\\[0.5mm]
\!\!\!&=&\!\!\!(2\beta(E_1 \times  Y)\!+\!\eta \beta E_1, 2{}^t \beta^{-1}( E_1 \times  X)\!+\!\xi {}^t \beta^{-1}E_1, (\beta^{-1}\beta E_1,Y), ({}^t \beta^{-1} \beta^{-1}E_1, X) )
\\[0.5mm]
%\end{eqnarray*}
%\begin{eqnarray*}
\!\!\!&=&\!\!\!(\beta(2E_1 \times  Y+\eta E_1), {}^t \beta^{-1}(2 E_1 \times  X+\xi E_1), ( E_1,Y), (E_1, X) )
\end{eqnarray*}
\begin{eqnarray*}
\!\!\!&=&\!\!\!\beta (2E_1 \times  Y+\eta E_1, 2 E_1 \times  X+\xi E_1, ( E_1,Y), (E_1, X) )
\\[0.5mm]
\!\!\!&=&\!\!\!\beta \varPhi(0, E_1, E_1, 0)(X, Y, \xi, \eta)
\\[0.5mm]
\!\!\!&=&\!\!\!\beta\mu(X,Y,\xi,\eta),
\end{eqnarray*}
that is, $\mu\beta=\beta\mu$. Hence we have $\beta \in (({E_7}^C)^{\kappa, \mu})_{\ti{E}_1, \ti{E}_{-1},\dot{F}_1(e_k), k=2,\ldots,7}$. This proof is completed.
\end{proof}

Continuously, we shall construct one more $S\!pin(5,C)$ in ${E_7}^C$.

We define a group $(({E_7}^C)^{\kappa, \mu})_{\tilde{E}_1,\dot{F}_1(e_k), k=2,\ldots,7}$ by
$$
(({E_7}^C)^{\kappa, \mu})_{\ti{E}_1, \dot{F}_1(e_k), k=2,\ldots,7}=\left\{\alpha \in {E_7}^C \,\left|\,\begin{array}{l}\kappa\alpha=\alpha\kappa, \mu\alpha=\alpha\mu,\\
\alpha \ti{E}_1=\ti{E}_1,\\
\alpha \dot{F}_1(e_k)=\dot{F}_1(e_k), k=2, \ldots, 7
\end{array} \right. \right \},
$$
moreover define a $5$-dimensional $C$-vector subspace $({V_-}^{\!\!C})^5$ of $\mathfrak{P}^C$ by
\begin{eqnarray*}
({V_-}^{\!\!C})^5\!\!\!&=&\!\!\!\left\{P \in \mathfrak{P}^C\,\left |\,\begin{array}{l}\kappa P=P, P \times \ti{E}_1=0,\\
                P \times \dot{F}_1(e_k)=0,k=2,\ldots,7
\end{array}\right. \right    \}
\\[1mm]
\!\!\!&=&\!\!\!\{(X, -\eta E_1, 0, \eta)\,|\, 4E_1 \times (E_1 \times X)=X, X \times F_1(e_k)=0,k=2, \ldots, 7, \eta \in C \}
\\[1mm]
\!\!\!&=&\!\!\!\Biggl\{(\begin{pmatrix} 0 & 0      & 0 \\
                                       0 & \xi_2  & x_1 \\
                                       0 & \ov{x}_1 & \xi_3 
                       \end{pmatrix},
                   \begin{pmatrix} -\eta & 0  & 0 \\
                                       0 & 0  & 0 \\
                                       0 & 0  & 0 
                       \end{pmatrix},
              0, \eta ) \,\Biggm |\,  x \in \C^C,
                                         \xi_2, \xi_3 ,\eta \in C
                                     \Biggr \}
\end{eqnarray*}
with the norm $(P, P)_\mu = ({1}/{2}) \{\mu P, P \}=-\xi_2 \xi_3+x_1\ov{x}_1-\eta^2$, here the alternative inner product $\{P, Q \}$ is defined as follows: $\{P, Q \}=(X, W)-(Z,Y)+\xi\omega-\zeta\eta$ for $P=(X,Y,\xi,\eta), Q=(Z,W,\zeta,\omega)$. The group $(({E_7}^C)^{\kappa, \mu})_{\ti{E}_1, F_1(e_k), k=2,\ldots,7}$ acts on $({V_-}^{\!\!C})^5$, obviously.

\begin{lem}\label{lem 4.10}
The Lie algebra $(({\mathfrak{e}_7}^C)^{\kappa, \mu})_{\ti{E}_1, \dot{F}_1(e_k), k=2,\ldots,7}$ of the group $(({E_7}^C)^{\kappa, \mu})_{\ti{E}_1, \dot{F}_1(e_k), k=2,\ldots,7}$ is given by 
\begin{eqnarray*}
(({\mathfrak{e}_7}^C)^{\kappa, \mu})_{\ti{E}_1, \dot{F}_1(e_k), k=2,\ldots,7}\!\!\!&=&\!\!\!\left\{\varPhi(\phi, A, B, \nu) \in {\mathfrak{e}_7}^C \,\left |\, \begin{array}{l}                               \kappa\varPhi=\varPhi\kappa,\mu\varPhi=\varPhi\mu, \\
\varPhi \ti{E}_1=0,\\
\varPhi \dot{F}_1(e_k)=0, k=2, \dots, 7
\end{array}\right. \right \}
\\[1mm]
\!\!\!&=&\!\!\! \left\{\varPhi(\phi, A, B, 0) 
\in {\mathfrak{e}_7}^C \,\left |\, \begin{array}{l} \phi 
\in ({\mathfrak{e}_6}^C)^\sigma, \\
\quad \phi E_1=\phi F_1(e_k)=0, 
\\[1mm]
A=\varepsilon_2 E_2 +\varepsilon_3 E_3 + F_1(a), 
\\
\quad \varepsilon_k \in C, a \in \C^C, 
%(-2E_1 \times A=
%\in (\mathfrak{J}^C)_\sigma, (E_1, A)=0, 
\\[1mm]
%x \in \C^C, x \,{\text{is}}\,F_1\!-{\text{part of}}\,\,A 
B=-2E_1 \times A\\
\,\,\,\,\,\,=-\varepsilon_3 E_2 - \varepsilon_2 E_3 +F_1(a)
\end{array}\right. \right \}.
\end{eqnarray*}

In particular, $\dim_C((({\mathfrak{e}_7}^C)^{\kappa, \mu})_{\ti{E}_1, \dot{F}_1(e_k), k=2,\ldots,7})=6+(1+1+2)=10$.
\end{lem}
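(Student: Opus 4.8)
The Lie algebra in question is obtained by differentiating the defining equations of the group: it consists of all $\varPhi = \varPhi(\phi, A, B, \nu) \in {\mathfrak{e}_7}^C$ satisfying $\kappa\varPhi = \varPhi\kappa$, $\mu\varPhi = \varPhi\mu$, $\varPhi\ti{E}_1 = 0$ and $\varPhi\dot{F}_1(e_k) = 0$ for $k = 2,\ldots,7$, which is precisely the middle expression in the statement. The plan is to decode these four conditions in turn, using the explicit formula for the action of $\varPhi(\phi, A, B, \nu)$ on $\mathfrak{P}^C$; each step is a short linear-algebra computation, as the phrase ``simple computation'' indicates.

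First I would treat $\kappa\varPhi = \varPhi\kappa$ and $\mu\varPhi = \varPhi\mu$. Since $-\sigma = \exp(\pi i\kappa)$ on $\mathfrak{P}^C$, commuting with $\kappa$ forces $\phi$ to commute with $\sigma$ and places $A$, $B$ in the appropriate $\sigma$-eigenspaces, while commuting with $\mu = \varPhi(0, E_1, E_1, 0)$ couples $A$ and $B$ and constrains the $E_1$-behaviour of $\phi$; this is the same mechanism used in the proof of Proposition \ref{prop 4.9}. Next, expanding $\varPhi\ti{E}_1 = 0$ with $\ti{E}_1 = (0, E_1, 0, 1)$ componentwise gives $\nu = 0$, $(A, E_1) = 0$, $\phi E_1 = 0$, and $A + 2E_1\times B = 0$, equivalently $B = -2E_1\times A$ on the relevant subspace (using $4E_1\times(E_1\times Z) = Z$ there). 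Finally, $\varPhi\dot{F}_1(e_k) = 0$ with $\dot{F}_1(e_k) = (F_1(e_k), 0, 0, 0)$ yields $\phi F_1(e_k) = 0$ together with $A\times F_1(e_k) = 0$ and $(B, F_1(e_k)) = 0$, which forces $A = \varepsilon_2 E_2 + \varepsilon_3 E_3 + F_1(a)$ with $\varepsilon_k \in C$ and $a \in \C^C$; then $B = -2E_1\times A = -\varepsilon_3 E_2 - \varepsilon_2 E_3 + F_1(a)$, using $-2E_1\times E_2 = -E_3$, $-2E_1\times E_3 = -E_2$, $-2E_1\times F_1(a) = F_1(a)$.

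Collecting the accumulated constraints on $\phi$ — it commutes with $\sigma$ and kills $E_1$ and $F_1(e_k)$ for $k = 2,\ldots,7$ — Lemma \ref{lem 4.6} identifies $\phi$ as an arbitrary element of $(({\mathfrak{e}_6}^C)^\sigma)_{E_1, F_1(e_k), k=2,\ldots,7}$, which gives exactly the displayed normal form. The converse inclusion, that every $\varPhi$ of the stated shape satisfies the four conditions, is the same chain of componentwise identities read backwards, so it is routine. For the dimension count: $\phi$ contributes $\dim_C(({\mathfrak{e}_6}^C)^\sigma)_{E_1, F_1(e_k), k=2,\ldots,7} = 6$ by Lemma \ref{lem 4.6}, the pair $(\varepsilon_2, \varepsilon_3)$ contributes $2$ and $a \in \C^C$ contributes $2$, while $\nu = 0$ and $B$ is determined by $A$, so the total is $6 + (1 + 1 + 2) = 10$.

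I expect the only real obstacle to be bookkeeping: one must work with the correct normalization of the action formula for $\varPhi(\phi, A, B, \nu)$ on $\mathfrak{P}^C$ (the signs and the coefficients $1/3$, $2$ in each slot) so that the four conditions collapse to exactly this normal form rather than a slightly skewed one, and one must keep straight which of $A$, $B$ is determined by the other. Once the action formula is fixed, every verification is immediate.
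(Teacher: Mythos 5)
Your proposal is correct and follows essentially the same route as the paper: the paper likewise uses $-\sigma=\exp(\pi i\kappa)$ to reduce the $\kappa$-condition to $\phi\in({\mathfrak{e}_6}^C)^\sigma$, exploits $\mu$-commutation to pair $\varPhi\ti{E}_1=0$ with $\varPhi(E_1,0,1,0)=0$, and then extracts the normal form of $A$, $B$, $\nu$ by the same componentwise expansion you describe (your identities $-2E_1\times E_2=-E_3$, $-2E_1\times E_3=-E_2$, $-2E_1\times F_1(a)=F_1(a)$ and the restriction $a\in\C^C$ from $A\times F_1(e_k)=0$ all check out). The only caveat is that the second slot of $\varPhi\ti{E}_1$ directly yields ${}^t\phi E_1=0$ rather than $\phi E_1=0$, so one genuinely needs the $\mu$-equivalence (as the paper notes) to get $\phi E_1=0$; this is exactly the bookkeeping you flag, and it does not affect the result.
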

\begin{proof}
Suppose $\kappa\varPhi=\varPhi\kappa$ for $\varPhi \in {\mathfrak{e}_7}^C$, from $-\sigma=\exp(\pi i\kappa)$ we see that $(-\sigma)\varPhi=\varPhi(-\sigma)$, that is, $\sigma\varPhi=\varPhi\sigma$. Hence, we have $\phi \in ({\mathfrak{e}_6}^C)^\sigma$. Moreover, from $\mu\varPhi=\varPhi\mu$, the condition $\varPhi \ti{E}_1=0$ is equivalent to the condition $\varPhi(E_1, 0, 1, 0)=0$. Using these facts, by doing simple computation, we have the required result.   
\end{proof}

\begin{lem}\label{lem 4.11}
 For $0 \ne a \in C$, we define a mapping $\alpha_i(a) : 
\mathfrak{P}^C \to \mathfrak{P}^C$, $i=1, 2, 3$ by
$$
\alpha_i(a) = \begin{pmatrix}
 1 + (\cos|a| - 1)p_i & -2\tau a\dfrac{\sin|a|}{|a|}E_i & 0                      & a\dfrac{\sin|a|}{|a|}E_i \cr
       2a\dfrac{\sin|a|}{|a|}E_i & 1 + (\cos|a| - 1)p_i &       
               -\tau a\dfrac{\sin|a|}{|a|}E_i & 0 \cr
        0 & a\dfrac{\sin|a|}{|a|}E_i & \cos|a| & 0 \cr
          -\tau a\dfrac{\sin|a|}{|a|}E_i & 0 & 0 & \cos|a|
               \end{pmatrix},  
$$
%\Big( if $a = 0$, then $\dfrac{\sin|a|}{|a|}$ means $1$ \Big), 
then we have $\alpha_i(a) \in E_7 \subset {E_7}^C$, where $p_i : \mathfrak{J}^C \to \mathfrak{J}^C$ is the $C$-linear mapping defined by

$$
      p_i\begin{pmatrix} \xi_1 & x_3 & \ov{x}_2 \cr
                  \ov{x}_3 & \xi_2 & x_1 \cr
                  x_2 & \ov{x}_1 & \xi_3
          \end{pmatrix}
       = \begin{pmatrix} \xi_1 & \delta_{i3}x_3 & \delta_{i2}\ov{x}_2 \cr
                  \delta_{i3}\ov{x}_3 & \xi_2 & \delta_{i1}x_1 \cr
                  \delta_{i2}x_2 & \delta_{i1}\ov{x}_1 & \xi_3
         \end{pmatrix},
$$
where $\delta_{ij}$ is the Kronecker delta symble. The mappings $\alpha_1(a_1), \alpha_2(a_2), \alpha(a_3), (a_i \in C)$ are commutative for each other.
\end{lem}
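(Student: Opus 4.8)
The plan is to realize $\alpha_i(a)$ as the exponential of an explicit element of $\mathfrak{e}_7$, which makes membership in $E_7$ automatic. Set $\Phi_i(a):=\varPhi(0,\,aE_i,\,-\tau a\,E_i,\,0)\in{\mathfrak{e}_7}^C$. First I would check that $\Phi_i(a)$ actually lies in the compact real form $\mathfrak{e}_7\subset{\mathfrak{e}_7}^C$: its $\phi$-component is $0$, its $\nu$-component is $0$, and its second $\mathfrak{J}^C$-component is $-\tau$ applied to its first one; this is exactly the reality condition (vanishing of the $\phi$- and $\nu$-parts together with $B=-\tau A$) cutting out $\mathfrak{e}_7$ inside ${\mathfrak{e}_7}^C$ (cf.\ \cite{Yokotaichiro}). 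Consequently $\exp\Phi_i(a)\in E_7\subset{E_7}^C$, and the task reduces to checking that this exponential is the matrix displayed in the statement.

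For the exponential, write $\Phi:=\Phi_i(a)$ and use the Jordan-algebra relations $E_i\times E_i=0$, $E_i\circ E_i=E_i$, $(E_i,E_i)=1$, $E_i\times F_j(x)=0$ for $j\neq i$, together with the identity $p_iX=(E_i,X)E_i+4E_i\times(E_i\times X)$ (a short coordinate check, of the same flavour as the identities used in the definition of $\kappa$). These show that $\Phi$ exchanges the subspaces $\{(X,0,\xi,0)\}$ and $\{(0,Y,0,\eta)\}$ of $\mathfrak{P}^C$, that $\Phi^2$ is block-diagonal and acts as $-|a|^2p_i$ on each $\mathfrak{J}^C$-slot and as $-|a|^2$ on each $C$-slot, and hence that $\Phi^3=-|a|^2\Phi$ on all of $\mathfrak{P}^C$. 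Summing the exponential series then gives $\exp\Phi=1+\dfrac{\sin|a|}{|a|}\Phi+\dfrac{1-\cos|a|}{|a|^2}\Phi^2$; substituting the explicit block forms of $\Phi$ and $\Phi^2$ reproduces the displayed matrix entry by entry — the diagonal blocks come from $1+\frac{1-\cos|a|}{|a|^2}\Phi^2$ (namely $1+(\cos|a|-1)p_i$ on the $\mathfrak{J}^C$-slots and $\cos|a|$ on the $C$-slots), and the off-diagonal blocks come from $\frac{\sin|a|}{|a|}\Phi$.

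For the commutativity of $\alpha_1(a_1),\alpha_2(a_2),\alpha_3(a_3)$, since each is an exponential it suffices to show $[\Phi_i(a_i),\Phi_j(a_j)]=0$ for $i\neq j$. In the bracket formula of ${\mathfrak{e}_7}^C$ all $\phi$- and $\nu$-inputs are zero, so the bracket collapses to its ${\mathfrak{e}_6}^C$-component, a combination of $E_i\vee E_j$ and $E_j\vee E_i$, and a $\nu$-component proportional to $(E_i,E_j)$. Both vanish: $(E_i,E_j)=0$ for $i\neq j$, and $E_i\vee E_j=[\tilde E_i,\tilde E_j]+(E_i\circ E_j-\tfrac13(E_i,E_j)E)^{\sim}=0$ because $E_i\circ E_j=0$ and $[\tilde E_i,\tilde E_j]=0$ (the Peirce decomposition of $\mathfrak{J}^C$, verified directly on the basis $E_k,F_l(e_m)$). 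Hence $\Phi_i(a_i)$ and $\Phi_j(a_j)$ commute, and so do $\alpha_i(a_i)$ and $\alpha_j(a_j)$.

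I expect the main obstacle to be purely one of conventions: fixing the precise normalizations and signs (the factors $\tau$, the $2$'s in the $(1,2)$- and $(2,1)$-entries, and the interpretation of the symbols $cE_i$ as operators between the four slots of $\mathfrak{P}^C$) so that $\exp\Phi_i(a)$ matches the displayed matrix verbatim, and relatedly recalling the exact reality condition defining $\mathfrak{e}_7$ in ${\mathfrak{e}_7}^C$. Once these are pinned down, the identities $\Phi^3=-|a|^2\Phi$ and $E_i\vee E_j=0$ are routine, and the rest is the bookkeeping above. A direct verification that the displayed matrix preserves the Freudenthal cross operation on $\mathfrak{P}^C$ is of course also possible, but far more laborious, so the exponential route is preferable.
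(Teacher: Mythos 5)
Your proposal is correct and follows essentially the same route as the paper: the paper's proof likewise exhibits $\alpha_i(a)=\exp\varPhi_i(a)$ with $\varPhi_i(a)=\varPhi(0,aE_i,-\tau aE_i,0)\in\mathfrak{e}_7$ and deduces commutativity from $[\varPhi_i(a_i),\varPhi_j(a_j)]=0$; you merely supply the verification details (the identity $\varPhi^3=-|a|^2\varPhi$ and $E_i\vee E_j=0$) that the paper leaves implicit.
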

\begin{proof}
For $ {\varPhi}_i(a) = {\varPhi}(0, aE_i, -\tau aE_i, 0 ) \in \mathfrak{e}_7$,
%$$
%    {\varPhi}_i(a) = {\varPhi}(0, - \tau aE_i, \tau aE_i, 0 ) = 
%                       \begin{pmatrix} 0 & 2aE_i & 0 & -\tau a E_i \cr
%                                -2\tau aE_i & 0 & a E_i & 0 \cr
%                                 0 & -\tau aE_i & 0 & 0 \cr
%                                 aE_i & 0 & 0 & 0
%                        \end{pmatrix} \in \mathfrak{e}_7,
%$$
it follows from $\alpha_i(a) = \exp{\varPhi}_i(a)$ that $\alpha_i(a) \in E_7 \subset {E_7}^C$. The relation formula $[{\varPhi}_i(a_i), {\varPhi}_j(a_j)] = 0$ shows that $\alpha_i(a_i)$ and $\alpha_j(a_j)$ are commutative (As for the Lie algebra $\mathfrak{e}_7$ of the compact Lie group $E_7$, see \cite[Theorem 4.3.4]{Yokotaichiro} in detail).
\end{proof}

\begin{prop}\label{prop 4.12}
The homogeneous space $(({E_7}^C)^{\kappa, \mu})_{\ti{E}_1, \dot{F}_1(e_k), k=2,\ldots,7}/S\!pin(4,C)$ is homeomorphic to the complex sphere $({S_-}^{\!\!C})^4${\rm :} $(({E_7}^C)^{\kappa, \mu})_{\ti{E}_1, \dot{F}_1(e_k), k=2,\ldots,7}/S\!pin(4,C) \simeq ({S_-}^{\!\!C})^4$. 

In particular, the group $(({E_7}^C)^{\kappa, \mu})_{\ti{E}_1, \dot{F}_1(e_k), k=2,\ldots,7}$ is connected.
\end{prop}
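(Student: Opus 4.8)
The plan is to run the homogeneous-space argument of Propositions \ref{prop 4.4} and \ref{prop 4.7}. Write $G := (({E_7}^C)^{\kappa, \mu})_{\ti{E}_1, \dot{F}_1(e_k), k=2,\ldots,7}$ and $({S_-}^{\!\!C})^4 = \{P \in ({V_-}^{\!\!C})^5 \mid (P, P)_\mu = 1\}$. First I would check that $G$ acts on $({S_-}^{\!\!C})^4$: any $\alpha \in G$ commutes with $\kappa$ and $\mu$ and fixes $\ti{E}_1$ and the $\dot{F}_1(e_k)$, hence preserves the three defining relations $\kappa P = P$, $P \times \ti{E}_1 = 0$, $P \times \dot{F}_1(e_k) = 0$ of $({V_-}^{\!\!C})^5$ (via $\alpha(P \times Q)\alpha^{-1} = \alpha P \times \alpha Q$), and it preserves $(P, P)_\mu = ({1}/{2})\{\mu P, P\}$ because $\alpha$ preserves $\{\,,\,\}$ and commutes with $\mu$.

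Next, transitivity. I would take a general $P = (X, -\eta E_1, 0, \eta) \in ({S_-}^{\!\!C})^4$, with $X = \xi_2 E_2 + \xi_3 E_3 + F_1(x_1)$, and transform it to the base point $i\ti{E}_{-1} = (0, -iE_1, 0, i)$, which lies on $({S_-}^{\!\!C})^4$ because $(\ti{E}_{-1}, \ti{E}_{-1})_\mu = -1$. The reduction would use one-parameter subgroups of $G_0$ coming from the four ``extra'' directions $\varPhi(0, A, B, 0)$ of Lemma \ref{lem 4.10} (where $A = \varepsilon_2 E_2 + \varepsilon_3 E_3 + F_1(a)$ and $B = -2E_1 \times A$), written down concretely through the maps $\alpha_2(a), \alpha_3(a)$ of Lemma \ref{lem 4.11} (and $\exp$ of $\varPhi(0, F_1(a), F_1(a), 0)$): choosing the parameters from the real and imaginary parts of the coordinates as in the proof of Proposition \ref{prop 3.15} (or \ref{prop 4.7}), one first moves $P$ into the slice $\{\eta = 0\}$, i.e.\ to some $\dot{X}'$ with $X' \in ({S_-}^{\!\!C})^3$; then, since $S\!pin(4, C) = (({E_6}^C)^\sigma)_{E_1, F_1(e_k), k=2,\ldots,7} = (({E_7}^C)^{\kappa,\mu})_{\ti{E}_1, \ti{E}_{-1}, \dot{F}_1(e_k), k=2,\ldots,7} \subset G$ (Propositions \ref{prop 4.7} and \ref{prop 4.9}) acts transitively on $({S_-}^{\!\!C})^3$, one carries $\dot{X}'$ to $\dot{X}_0$ with $X_0 = i(E_2 + E_3)$; and finally a further element of $G_0$ of the form $\exp\varPhi(0, \varepsilon(E_2 + E_3), -\varepsilon(E_2 + E_3), 0)$ sends $\dot{X}_0$ to $i\ti{E}_{-1}$.

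I would then determine the isotropy subgroup of $G$ at $i\ti{E}_{-1}$. As each $\alpha \in G$ is $C$-linear, $\alpha(i\ti{E}_{-1}) = i\ti{E}_{-1}$ is equivalent to $\alpha\ti{E}_{-1} = \ti{E}_{-1}$, so this isotropy subgroup is exactly $(({E_7}^C)^{\kappa,\mu})_{\ti{E}_1, \ti{E}_{-1}, \dot{F}_1(e_k), k=2,\ldots,7}$, which by Proposition \ref{prop 4.9} and Theorem \ref{thm 4.8} is $(({E_6}^C)^\sigma)_{E_1, F_1(e_k), k=2,\ldots,7} \cong S\!pin(4, C)$. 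Hence $G/S\!pin(4,C) \simeq ({S_-}^{\!\!C})^4$. Since $({S_-}^{\!\!C})^4$ is connected (a nondegenerate complex affine quadric, diffeomorphic to the tangent bundle of $S^4$) and $S\!pin(4, C)$ is connected, $G$ is connected; this matches the count $\dim_C G = 6 + 4 = 10$ of Lemma \ref{lem 4.10}.

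The main obstacle will be the explicit transitivity step: checking that the chosen one-parameter subgroups really lie in $G_0$ (compatibility with $\kappa$, $\mu$ and the fixed vectors of $\mathfrak{P}^C$), carrying out the coordinate bookkeeping that forces a general element into the slice $\{\eta = 0\}$, and pinning down the element of $G_0$ that bridges the base point $\dot{X}_0$ of $({S_-}^{\!\!C})^3$ to $i\ti{E}_{-1}$.
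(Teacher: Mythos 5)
Your proposal follows essentially the same route as the paper's proof: the same one-parameter subgroups $\alpha_{23}(a)=\alpha_2(a)\alpha_3(a)=\exp\varPhi(0,a(E_2+E_3),-a(E_2+E_3),0)$ (applied twice, with parameters chosen from the real and imaginary parts of $\eta$ and $\xi_2+\xi_3$) to reach the slice $\{\eta=0\}$, the same appeal to the transitivity of $S\!pin(4,C)$ on $({S_-}^{\!\!C})^3$ to reach $i(E_2\dot{+}E_3)$, the same bridge $\alpha_{23}(-\pi/4)$ to the base point $i\ti{E}_{-1}$, and the same identification of the isotropy group via Proposition \ref{prop 4.9} and Theorem \ref{thm 4.8}. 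The ``coordinate bookkeeping'' you flag as the remaining obstacle is exactly the routine two-step computation the paper carries out, in direct analogy with Propositions \ref{prop 3.12} and \ref{prop 3.15}.
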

\begin{proof}
We define a $4$-dimensional complex sphere $({S_-}^{\!\!C})^4$ by
\begin{eqnarray*}
({S_-}^{\!\!C})^4\!\!\!&=&\!\!\! \{P \in ({V_-}^{\!\!C})^5 \,|\, (P, P)_\mu =1  \}
\\[1mm]
\!\!\!&=&\!\!\!\Biggl\{(\begin{pmatrix} 0 & 0      & 0 \\
                                       0 & \xi_2  & x_1 \\
                                       0 & \ov{x}_1 & \xi_3 
                       \end{pmatrix},
                   \begin{pmatrix} -\eta & 0  & 0 \\
                                       0 & 0  & 0 \\
                                       0 & 0  & 0 
                       \end{pmatrix},
              0, \eta ) \,\Biggm |\, -\xi_2 \xi_3+x_1\ov{x}_1-\eta^2=1  \Biggr \}.
\end{eqnarray*}
The group $(({E_7}^C)^{\kappa, \mu})_{\ti{E}_1, \dot{F}_1(e_k), k=2,\ldots,7}$ acts on $({S_-}^{\!\!C})^4$. Indeed, for $\alpha \in (({E_7}^C)^{\kappa, \mu})_{\ti{E}_1, \dot{F}_1(e_k), k=2,\ldots,7}$ and $P \in ({S_-}^{\!\!C})^4$, 
%since it follows that
from the following relational formulas:
\begin{eqnarray*}
\kappa \alpha P\!\!\!&=&\!\!\!\alpha \kappa P=\alpha P,
\\[1mm]
\alpha P \times \ti{E}_1\!\!\!&=&\!\!\!\alpha P \times \alpha \ti{E}_1=\alpha(P \times \ti{E}_1){}^t \alpha^{-1}=0,
\\[1mm]
\alpha P \times \dot{F}_1(e_k)\!\!\!&=&\!\!\!\alpha P \times \alpha \dot{F}_1(e_k)=\alpha(P \times \dot{F}_1(e_k)){}^t \alpha^{-1}=0,
\\[1mm]
(\alpha P, \alpha P)_\mu \!\!\!&=&\!\!\! \frac{1}{2} \{\mu \alpha P, \alpha P \}= \frac{1}{2} \{\alpha \mu P, \alpha P \}= \frac{1}{2} \{\mu P, P \}=1,
\end{eqnarray*}
we have $\alpha P \in ({S_-}^{\!\!C})^4$. We shall show that this action is transitive. In order to prove this, it is sufficient to show that any element $P \in ({S_-}^{\!\!C})^4$ can be transformed to $\ti{E}_{-1}=(0, -E_1, 0, 1)$.

Now, for a given
$$
  P=(\begin{pmatrix} 0 & 0      & 0 \\
                                       0 & \xi_2  & x \\
                                       0 & \ov{x} & \xi_3 
                       \end{pmatrix},
                   \begin{pmatrix} -\eta & 0  & 0 \\
                                       0 & 0  & 0 \\
                                       0 & 0  & 0 
                       \end{pmatrix},
              0, \eta ) \in({S_-}^{\!\!C})^4,
$$

\noindent we choose $a \in \R, 0 \leq a \leq {\pi}/{2}$ such that $\tan 2a ={2{\rm Re}(\eta)}/{{\rm Re}(\xi_2 + \xi_3)}$ (if ${\rm Re}(\xi_2 + \xi_3)=0$, let $a={\pi}/{4}$). Operate $\alpha_{23}(a):=\alpha_2(a)\alpha_3(a)=\exp(\varPhi(0, a(E_2+E_3),  -a(E_2+E_3), 0)) \in ((({E_7}^C)^{\kappa, \mu})_{\ti{E}_1, \dot{F}_1(e_k), k=2,\ldots,7})_0$ on $P$ (Lemmas \ref{lem 4.10}, \ref{lem 4.11}), then the part $({1}/{2}){\rm Re}(\xi_2 + \xi_3)\sin 2a -{\rm Re}(\eta) \cos 2a$ of $\eta$-term in $\alpha_{23}(a)P$ is equal to $0$, that is, 
$$
({1}/{2}){\rm Re}(\xi_2 + \xi_3)\sin 2a -{\rm Re}(\eta) \cos 2a=0.
$$
Again we choose $b \in \R, 0 \leq b \leq {\pi}/{2}$ such that \vspace{1mm} $\tan 2b ={2{\rm Im}(\eta)}/{{\rm Im}(\xi_2 + \xi_3)}$ (if ${\rm Im}(\xi_2 + \xi_3)=0$,  let $b={\pi}/{4}$), then $\eta$-term of $\alpha_{23}(b)\alpha_{23}(a)P$ is equal to $0$.

\noindent Hence we have that 
$$
       \alpha_{23}(b)\alpha_{23}(a)P=:P' \in ({S_-}^C)^3.  
$$

Since $S\!pin(4, C) (\cong (({E_6}^C)^{\sigma})_{{E}_1, {F}_1(e_k), k=2,\ldots,7}\subset (({E_7}^C)^{\kappa, \mu})_{\ti{E}_1, \dot{F}_1(e_k), k=2,\ldots,7})$ acts transitively on $({S_-}^{\!\!C})^3$, there exists $\beta \in S\!pin(4, C)$ such that 
$$
   \beta P'=(i(E_2 +E_3), 0, 0, 0)=:P''.
$$

\noindent Again, operate $\alpha_{23}(-{\pi}/{4})$ on $P''$, then we have that 
$$
 \alpha_{23}(-\dfrac{\pi}{4})P''=(0, -iE_1, 0, i)(=i\ti{E}_{-1}).    
$$

\noindent This shows the transitivity of this action to $({S_-}^{\!\!\!C})^4$ by 
the group $({E_7}^C)^{\kappa, \mu})_{\ti{E}_1, \dot{F}_1(e_k), k=2,\ldots,7}$. The isotropy subgroup of the group $({E_7}^C)^{\kappa, \mu})_{\ti{E}_1, \dot{F}_1(e_k), k=2,\ldots,7}$ at $i\ti{E}_{-1}$ is $S\!pin(4,C)$ (Theorem \ref{thm 4.8}, Proposition \ref{prop 4.9}). Thus we have the required homeomorphism 
$$
      (({E_7}^C)^{\kappa, \mu})_{\ti{E}_1, \dot{F}_1(e_k), k=2,\ldots,7}/S\!pin(4,C) \simeq ({S_-}^{\!\!\!C})^4.
$$

Therefore we see that the group $ (({E_7}^C)^{\kappa, \mu})_{\ti{E}_1, \dot{F}_1(e_k), k=2,\ldots,7}$ is connected.
 \end{proof}

\begin{thm}\label{thm 4.13}
The group $(({E_7}^C)^{\kappa, \mu})_{\ti{E}_1, \dot{F}_1(e_k), k=2,\ldots,7}$ is isomorphic to $S\!pin(5, C)${\rm :}\\$(({E_7}^C)^{\kappa, \mu})_{\ti{E}_1, \dot{F}_1(e_k), k=2,\ldots,7} \cong S\!pin(5,C)$.
\end{thm}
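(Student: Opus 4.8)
The plan is to run the same homomorphism-theorem argument that settled Theorems \ref{thm 3.7}, \ref{thm 4.5} and \ref{thm 4.8}, now attached to the $5$-dimensional space $({V_-}^{\!\!C})^5$ equipped with the $\mu$-form. Set $O(5,C)=O(({V_-}^{\!\!C})^5)=\{\beta\in\Iso_C(({V_-}^{\!\!C})^5)\mid (\beta P,\beta Q)_\mu=(P,Q)_\mu\}$. The computations already carried out in the proof of Proposition \ref{prop 4.12} show that each $\alpha\in(({E_7}^C)^{\kappa,\mu})_{\ti{E}_1,\dot{F}_1(e_k),k=2,\ldots,7}$ maps $({V_-}^{\!\!C})^5$ into itself and preserves $(\ ,\ )_\mu$, so restriction gives a continuous homomorphism
$$
p:(({E_7}^C)^{\kappa,\mu})_{\ti{E}_1,\dot{F}_1(e_k),k=2,\ldots,7}\to O(5,C),\qquad p(\alpha)=\alpha\bigm|_{({V_-}^{\!\!C})^5}.
$$
Since the source is connected (Proposition \ref{prop 4.12}), $p$ in fact takes values in $S\!O(5,C)$.

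The heart of the argument is the identification $\Ker p=\{1,\sigma\}\cong\Z_2$. Let $\alpha\in\Ker p$, so $\alpha$ fixes $({V_-}^{\!\!C})^5$ pointwise. As $\ti{E}_{-1}=(0,-E_1,0,1)\in({V_-}^{\!\!C})^5$, combining $\alpha\ti{E}_1=\ti{E}_1$ with $\alpha\ti{E}_{-1}=\ti{E}_{-1}$ gives $\alpha\underset{\dot{}}{1}=\underset{\dot{}}{1}$; applying $\mu$ and using $\mu\alpha=\alpha\mu$ together with $\mu\underset{\dot{}}{1}=\dot{E}_1$ yields $\alpha\dot{E}_1=\dot{E}_1$, while $E_2\dot{+}E_3\in({V_-}^{\!\!C})^5$ gives $\alpha\dot{1}=\dot{1}$; hence $\alpha\in({E_7}^C)_{\dot{1},\underset{\dot{}}{1}}={E_6}^C$. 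Using in addition $E_2\dot{-}E_3\in({V_-}^{\!\!C})^5$ we get $\alpha E_1=E_1,\alpha E_2=E_2,\alpha E_3=E_3$, so $\alpha E=E$ and $\alpha\in({E_6}^C)_E={F_4}^C$; and since $\dot{F}_1(e_0),\dot{F}_1(e_1)\in({V_-}^{\!\!C})^5$, the defining conditions of the group give $\alpha F_1(e_k)=F_1(e_k)$ for all $k=0,\ldots,7$. Writing $\alpha=(\alpha_1,\alpha_2,\alpha_3)\in S\!pin(8,C)$ via Theorem \ref{thm 3.1}, the relations $\alpha F_1(e_k)=F_1(e_k)$ force $\alpha_1=1$, whence by the principle of triality $\alpha=(1,1,1)=1$ or $\alpha=(1,-1,-1)=\sigma$. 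Conversely $\sigma$ fixes $({V_-}^{\!\!C})^5$ pointwise and commutes with $\kappa$ and $\mu$, so $\sigma\in\Ker p$, giving $\Ker p=\{1,\sigma\}\cong\Z_2$.

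To conclude, $\Ker p$ is discrete and, by Lemma \ref{lem 4.10}, $\dim_C((({\mathfrak{e}_7}^C)^{\kappa,\mu})_{\ti{E}_1,\dot{F}_1(e_k),k=2,\ldots,7})=10=\dim_C\mathfrak{so}(5,C)$; since $S\!O(5,C)$ is connected, Lemma \ref{lemma 2.1} makes $p$ surjective, so $(({E_7}^C)^{\kappa,\mu})_{\ti{E}_1,\dot{F}_1(e_k),k=2,\ldots,7}/\Z_2\cong S\!O(5,C)$. Being connected (Proposition \ref{prop 4.12}), the source is then the connected double cover of $S\!O(5,C)$, that is, the universal covering group $S\!pin(5,C)$, which yields the asserted isomorphism. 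The only genuinely delicate point is the kernel computation: one must distil membership in ${F_4}^C$ purely from the geometry of $({V_-}^{\!\!C})^5$ (using the $\mu$-relations to produce $\alpha\dot{E}_1=\dot{E}_1$) before Theorem \ref{thm 3.1} and triality can be brought in; the dimension count and surjectivity are then completely routine.
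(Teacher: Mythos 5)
Your proof is correct and follows the paper's argument essentially verbatim: restriction to $(({V_-}^{\!\!C})^5, (\ ,\ )_\mu)$ yields a homomorphism onto $S\!O(5,C)$ with kernel $\{1,\sigma\}$, and the dimension count from Lemma \ref{lem 4.10} together with connectedness (Proposition \ref{prop 4.12}) identifies the group as the double cover $S\!pin(5,C)$. The only deviations are cosmetic: the paper pins down the kernel by landing in $U(1,\C^C)\cong({F_4}^C)_{E_1,E_2,E_3,F_1(e_k),k=2,\ldots,7}$ and evaluating $\phi(\theta)$ on $F_1(1)$, whereas you use the fixed vectors $\dot{F}_1(e_0),\dot{F}_1(e_1)\in({V_-}^{\!\!C})^5$ and triality in $S\!pin(8,C)$ directly (note only that $\alpha\dot{1}=\dot{1}$ comes from the $\mu$-relation applied to $(0,E_1,0,0)$, exactly as in Proposition \ref{prop 4.2}, rather than from $E_2\dot{+}E_3$).
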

\begin{proof}
Let $O(5,C)=O(({V_-}^{\!\!C})^5)=\{\beta \in \Iso_{C} (({V_-}^{\!\!C})^5)\,|\,(\alpha P,\alpha P)_\mu=(P,P)_\mu\}$. We consider the restriction $\beta=\alpha \bigm|_{({V_-}^{\!\!C})^5}$ of $\alpha \in (({E_7}^C)^{\kappa, \mu})_{\ti{E}_1, \dot{F}_1(e_k), k=2,\ldots,7}$ to $({V_-}^{\!\!C})^5$, then we have $\beta \in O(5,C)$. Hence we can define a homomorphism $p: (({E_7}^C)^{\kappa, \mu})_{\ti{E}_1, \dot{F}_1(e_k), k=2,\ldots,7} \to O(5, C)=O(({V_-}^{\!\!C})^5)$ by
$$
   p(\alpha)=\alpha \bigm|_{({V_-}^{\!\!C})^5}.
$$
Since the mapping $p$ is continuous and the group $(({E_7}^C)^{\kappa, \mu})_{\ti{E}_1, \dot{F}_1(e_k), k=2,\ldots,7}$ is connected (Proposition \ref{prop 4.12}), the mapping $p$ induces  a homomorphism 
$$
p :(({E_7}^C)^{\kappa, \mu})_{\ti{E}_1, \dot{F}_1(e_k), k=2,\ldots,7} \to S\!O(5, C)=S\!O(({V_-}^{\!\!C})^5).
$$
 
It is not difficult to obtain that  $\Ker\,p=\{1, \sigma \} \cong \Z_2$. Indeed, let $\alpha \in \Ker\,p$. For $\ti{E}_{-1}\!\!=$ $(0,-E_1, 0,1) \in (V^{C})^5$, since $\alpha \ti{E}_{-1}=\ti{E}_{-1}$, together with $\alpha \ti{E}_{1}=\ti{E}_{1}$, we have that $\alpha \dot{E}_1=\dot{E}_1$ and $\alpha \underset{\dot{}}{1}=\underset{\dot{}}{1}$. \vspace{-1mm}Hence we have that $\alpha \!\in\! (({E_7}^C)^{\kappa, \mu})_{\dot{E}_1,\underset{\dot{}}{1}, \dot{F}_1(e_k), k=2,\ldots,7} \!\cong (({E_6}^C)^{\sigma})_{E_1, {F}_1(e_k), k=2,\ldots,7}$.
Moreover, for $E_2\dot{+}E_3, E_2\dot{-}E_3 \in ({V_-}^{\!\!C})^5$, since $\alpha(E_2\dot{+}E_3)=E_2\dot{+}E_3$ and  $\alpha(E_2\dot{-}E_3)=E_2\dot{-}E_3$, we have that $\alpha \!\in\! (({E_6}^C)^\sigma)_{E_1,E_2, E_3, F_1(e_k), k=2,\ldots,7} = ({F_4}^C)_{E_1,E_2, E_3, F_1(e_k), k=2,\ldots,7} \cong U(1, \C^C)$. Hence there exists $\theta \in U(1, \C^C)$ such that $\alpha=\phi(\theta)$, where $\phi$ is defined in Theorem \ref{thm 3.3}, and so since $\alpha F_1 (1)=F_1 (1), F_1(1) \in ({V_-}^{\!\!C})^5$, we have $(\ov{\theta})^2=1$, that is, $\theta=1$ or $\theta=-1$. Thus we have that
$$
    \alpha=\phi(1)=1 \quad {\text{or}}\quad \alpha=\phi(-1)=\sigma,
$$
that is, $\Ker\,p \subset \{ 1, \sigma \}$ and vice versa.
%Conversely, it is clear.
Hence we obtain that $\Ker\, p=\{1, \sigma  \}$. Finally, we shall show that $p$ is surjection. Since $S\!O(5,C)$ is connected, $\Ker\,p$ is discrete and $\dim_{C}((({\mathfrak{e}_7}^C)^{\kappa, \mu})_{\ti{E}_1, \dot{F}_1(e_k), k=2,\ldots,7})$ $ =10=\dim_{C}(\mathfrak{so}(5, C))$(Lemma \ref{lem 4.10}), $p$ is surjection. Thus we have that 
$$
(({E_7}^C)^{\kappa, \mu})_{\ti{E}_1, \dot{F}_1(e_k), k=2,\ldots,7}/\Z_2 \cong S\!O(5, C).
$$

Therefore the group $(({E_7}^C)^{\kappa, \mu})_{\ti{E}_1,\dot{F}_1(e_k), k=2,\ldots,7}$ is isomorphic to $S\!pin(5, C)$ as the universal double covering group of $S\!O(5, C)$, that is, $(({E_7}^C)^{\kappa, \mu})_{\ti{E}_1,\dot{F}_1(e_k), k=2,\ldots,7} \cong S\!pin(5,C)$.
\end{proof}

Continuously, we shall construct one more $S\!pin(6,C)$ in ${E_7}^C$.

We define a group $(({E_7}^C)^{\kappa, \mu})_{\dot{F}_1(e_k), k=2,\ldots,7}$ by
$$
(({E_7}^C)^{\kappa, \mu})_{\dot{F}_1(e_k), k=2,\ldots,7}=\left\{\alpha \in {E_7}^C \,\left|\,\begin{array}{l}\kappa\alpha=\alpha\kappa, \mu\alpha=\alpha\mu,\\
%\alpha \ov{E}_1=\ov{E}_1,\\
\alpha \dot{F}_1(e_k)=\dot{F}_1(e_k), k=2, \ldots, 7
\end{array} \right. \right \},
$$
moreover, define a $6$-dimensional $C$-vector subspace $({V_-}^C)^6$ of $\mathfrak{P}^C$ by
\begin{eqnarray*}
({V_-}^{\!\!C})^6\!\!\!&=&\!\!\!\left\{P \in \mathfrak{P}^C\,\left |\,\begin{array}{l}\kappa P=P,
%P \times \ov{E}_1=0,
\\
                P \times \dot{F}_1(e_k)=0,k=2,\ldots,7
\end{array}\right. \right    \}
%\\[1mm]
%\!\!\!&=&\!\!\!\left\{(X, \eta_1 E_1, 0, \eta)\,\left|\, 
%\begin{array}{l}  4E_1 \times (E_1 \times X)=X,\\
% X \times F_1(e_k)=0,k=2, \ldots, 7, \\
%\eta_1, \eta \in C 
%\end{array} \right. \right \}
\\[1mm]
\!\!\!&=&\!\!\!\Biggl\{(\begin{pmatrix} 0 & 0      & 0 \\
                                       0 & \xi_2  & x_1 \\
                                       0 & \ov{x}_1 & \xi_3 
                       \end{pmatrix},
                   \begin{pmatrix} \eta_1 & 0  & 0 \\
                                       0 & 0  & 0 \\
                                       0 & 0  & 0 
                       \end{pmatrix},
              0, \eta ) \,\Biggm |\,  x_1 \in \C^C,
                                \xi_2, \xi_3 ,\eta_1, \eta \in C
                                     \Biggr \}
\end{eqnarray*}
with the norm $(P, P)_\mu = ({1}/{2}) \{\mu P, P \}=-\xi_2 \xi_3+x_1\ov{x}_1+\eta_1 \eta$.
%, here for $P=(X, Y, \xi, \eta), Q=(Z, W, \zeta, \omega)$ the alternative inner product $\{ P,Q \}=(X, W)-(Z, Y)+\xi\omega-\zeta\eta$ (As for  the alternative inner product, see \cite[Section 4.1]{Yokotaichiro} in detail).

\begin{lem}\label{lem 4.14}
The Lie algebra $(({\mathfrak{e}_7}^C)^{\kappa, \mu})_{\dot{F}_1(e_k), k=2,\ldots,7}$ of the group $(({E_7}^C)^{\kappa, \mu})_{\dot{F}_1(e_k), k=2,\ldots,7}$ is given by 
\begin{eqnarray*}
(({\mathfrak{e}_7}^C)^{\kappa, \mu})_{\dot{F}_1(e_k), k=2,\ldots,7}\!\!\!&=&\!\!\!\!\left\{\varPhi(\phi, A, B, \nu) \in {\mathfrak{e}_7}^C \left |\begin{array}{l}                               \kappa\varPhi=\varPhi\kappa,\mu\varPhi=\varPhi\mu, \\
%\varPhi \ov{E}_1=0,\\
\varPhi \dot{F}_1(e_k)=0, k=2, \dots, 7
\end{array}\right. \right \}
\\[1mm]
%\end{eqnarray*}
%\begin{eqnarray*}
\!\!\!&=&\!\!\! \!\left\{\varPhi(\phi, A, B, \nu) 
\in {\mathfrak{e}_7}^C \left |\! \begin{array}{l} \phi 
%=\delta
=\left(
\begin{array}{@{\,}cc|cc@{\,\,\,}}
\multicolumn{2}{c|}{\raisebox{-2.0ex}[-5pt][0pt]{\large $D_2$}}&& \\
&&\multicolumn{2}{c}{\raisebox{0.9ex}[0pt]{\large $0$}} \\
\hline
%&&&&& \\
%&&&&& \\
&&\multicolumn{2}{c}{\raisebox{-10pt}[0pt][0pt]{\large $0$}}\\
\multicolumn{2}{c|}{\raisebox{3pt}[0pt]{\large $0$}}&& \\
%&&&&& \\
%&&&&& \\
\end{array}
\right)
\begin{array}{l}
\!+ \tilde{A}_1(a)
\\ \vspace{1mm}
\!+(\tau_1 E_1\!+\!\tau_2 E_2\!+\!\tau E_3\!
\\ \vspace{1mm}
\qquad\qquad +\!F_1(t_1))^\sim
\end{array}
%+\tilde{A}_1(a)
%\\
%\hspace*{25mm}+\begin{pmatrix}\tau_1 & 0       & 0   \\
%                                        0   & \tau_2  & t_1 \\
%                                        0   & \ov{t_1}& \tau_3
%                     \end{pmatrix}^\sim,              
\vspace{1mm}\\
\qquad D_2 \in \mathfrak{so}(2, C), a \in \C^C, \tau_k \in C,
\\
\qquad \tau_1 +\tau_2 + \tau_3=0, t_1 \in \C^C,\\        
\!A=\varepsilon_2 E_2 +\varepsilon_3 E_3 + F_1(a),\, \varepsilon_k \in C, a \in \C^C, 
\\[1mm]
\!B=\upsilon_2 E_2 + \upsilon_3 E_3 +F_1(b),\, \upsilon_k \in C, b \in \C^C,
\\[1mm]
\nu=-({3}/{2})\tau_1
\end{array}\right. \!\!\!\right \}.
\end{eqnarray*}

In particular, $\dim_C((({\mathfrak{e}_7}^C)^{\kappa, \mu})_{\ti{E}_1, \dot{F}_1(e_k), k=2,\ldots,7})=(1+2+(2+2))+4+4=15$.
\end{lem}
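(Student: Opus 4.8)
The plan is to peel off the three defining conditions of $(({\mathfrak{e}_7}^C)^{\kappa, \mu})_{\dot{F}_1(e_k), k=2,\ldots,7}$ one at a time, in the spirit of the proof of Lemma~\ref{lem 4.10}. First I would exploit the relation $-\sigma = \exp(\pi i\kappa)$: an element $\varPhi \in {\mathfrak{e}_7}^C$ commuting with $\kappa$ commutes with $\exp(\pi i\kappa) = -\sigma$, hence with $\sigma$. Since $\sigma \in F_4$ satisfies ${}^t\sigma^{-1} = \sigma$, the operator $\sigma$ on $\mathfrak{P}^C$ coincides with $\ti{\sigma}$, and conjugating $\varPhi = \varPhi(\phi, A, B, \nu)$ by $\ti{\sigma}$ produces $\varPhi(\sigma\phi\sigma^{-1}, \sigma A, \sigma B, \nu)$; thus $\sigma\varPhi = \varPhi\sigma$ forces $\phi \in ({\mathfrak{e}_6}^C)^\sigma$ and $A, B \in (\mathfrak{J}^C)_\sigma = \{\xi_1 E_1 + \xi_2 E_2 + \xi_3 E_3 + F_1(x_1) \mid x_1 \in \mathfrak{C}^C\}$. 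This reduces the problem to a small, explicit subspace.

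Next I would impose the remaining conditions $\varPhi\dot{F}_1(e_k) = 0$, $k = 2,\ldots,7$, and $\mu\varPhi = \varPhi\mu$. Writing out $\varPhi(\phi, A, B, \nu)(F_1(e_k), 0, 0, 0)$ --- whose four slots are built from $\phi F_1(e_k)$ (shifted by the $\nu$-term), the cross products $F_1(e_k)\times A$ and $F_1(e_k)\times B$, and the pairings $(A, F_1(e_k))$ and $(B, F_1(e_k))$ --- and expanding $[\varPhi(\phi, A, B, \nu), \varPhi(0, E_1, E_1, 0)] = 0$ via the Lie bracket of ${\mathfrak{e}_7}^C$, the resulting linear equations force: the ${\mathfrak{f}_4}^C$-part of $\phi$ into $({\mathfrak{f}_4}^C)_{E_1, F_1(e_k), k=2,\ldots,7}$ as in Lemma~\ref{lem 4.3}; the $F_1$-slot of the $\ti{T}$-piece of $\phi$ down to $F_1(t_1)$ with $t_1 \in \C^C$ (keeping $\tau_1 + \tau_2 + \tau_3 = 0$); the $F_1$-components of $A$ and $B$ into $\C^C$ with their $E_1$-components killed; and the coupling $\nu = -\frac{3}{2}\tau_1$. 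Conversely, one checks directly that every $\varPhi$ of the displayed form satisfies all three defining conditions, including the full equation $\kappa\varPhi = \varPhi\kappa$ and not merely its consequence $\sigma\varPhi = \varPhi\sigma$. Counting free parameters then gives $\dim_C = (1 + 2 + (2+2)) + 4 + 4 = 15$.

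The one genuinely delicate point is computational: pinning down the precise normalization $\nu = -\frac{3}{2}\tau_1$, rather than merely its qualitative shape, requires running the action of $\varPhi(\phi, A, B, \nu)$ on $(F_1(e_k), 0, 0, 0)$ with every sign and coefficient in place, using the Jordan-algebra identities $E_2\circ F_1(x) = E_3\circ F_1(x) = \frac{1}{2} F_1(x)$ and $F_1(x)\circ F_1(y) = (x, y)(E_2 + E_3)$ together with $\tau_1 + \tau_2 + \tau_3 = 0$. No conceptual obstruction arises --- each step is a finite linear-algebra verification --- so the real effort is bookkeeping, organized around the established descriptions of $({\mathfrak{e}_6}^C)^\sigma$ and of the ${\mathfrak{f}_4}^C$-stabilizer of Lemma~\ref{lem 4.3}.
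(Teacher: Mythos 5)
Your proposal is correct, and its core computation (expanding $\varPhi(\phi,A,B,\nu)\dot F_1(e_k)$, decomposing $\phi=D+\ti{A}_1(a_1)+\ti{T}$, and reading off the linear constraints) is exactly what the paper does. The one organizational difference is how the conditions $\kappa\varPhi=\varPhi\kappa$, $\mu\varPhi=\varPhi\mu$ are handled: the paper simply quotes the explicit form of $({\mathfrak{e}_7}^C)^{\kappa,\mu}$ from \cite[Section 4.6]{realization E_7} (which already packages $\phi\in({\mathfrak{e}_6}^C)^\sigma$, the absence of $E_1$-components in $A$ and $B$, and $\nu=-\tfrac{3}{2}(\phi E_1,E_1)$) and then only has to impose $\varPhi\dot F_1(e_k)=0$, whereas you re-derive these constraints from the weaker consequence $\sigma\varPhi=\varPhi\sigma$ together with an explicit expansion of $[\varPhi,\mu]=0$. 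Your route is self-contained but you should be explicit about one point: since $\kappa$ has eigenvalues $0,\pm1$ on $\mathfrak{P}^C$ while $\sigma$ only sees the parity of these, $({\mathfrak{e}_7}^C)^{\sigma,\mu}$ is strictly larger than $({\mathfrak{e}_7}^C)^{\kappa,\mu}$ --- it contains the extra line $C\mu$ itself (note $\mu=\varPhi(0,E_1,E_1,0)$ commutes with $\sigma$ and with itself, yet its $A$- and $B$-slots are pure $E_1$). Your argument survives because this extra direction is killed by the annihilation condition ($\mu\dot F_1(e_k)=(0,-F_1(e_k),0,0)\ne 0$, and no element of $({\mathfrak{e}_7}^C)^{\kappa,\mu}$ can cancel that second slot, since $2A\times F_1(e_k)$ is always a multiple of $E_1$), and because you close the loop by checking that the displayed form satisfies the full relation $\kappa\varPhi=\varPhi\kappa$, not just $\sigma\varPhi=\varPhi\sigma$; the sandwich $K\subset S\subset D\subset K$ then gives equality. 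With that caveat made explicit, the bookkeeping you describe --- including $E_1\times F_1(x)=-\tfrac12 F_1(x)$, $E_2\times F_1(x)=E_3\times F_1(x)=0$, $F_1(a)\times F_1(e_k)=-(a,e_k)E_1$, and the antisymmetry of $D$ forcing $De_k=0$ --- lands precisely on the paper's list of conditions and the dimension count $7+4+4=15$.
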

\begin{proof}
From \cite[Section 4.6]{realization E_7}, we see that the explicit form of $({\mathfrak{e}_7}^C)^{\kappa, \mu}$ is given by
\begin{eqnarray*}
({\mathfrak{e}_7}^C)^{\kappa, \mu}\!\!\!&=&\!\!\!\{\varPhi(\phi, A, B, \nu) \in {\mathfrak{e}_7}^C \,|\,\kappa\varPhi=\varPhi\kappa, \mu\varPhi=\varPhi\mu  \}
\\
\!\!\!&=&\!\!\!\left\{\varPhi(\phi, A, B, \nu) \in {\mathfrak{e}_7}^C \,\left|\, \begin{array}{l}\phi \in ({\mathfrak{e}_6}^C)^\sigma,\\
A=\varepsilon_2 E_2 +\varepsilon_3 E_3 + F_1(a),\\
B=\upsilon_2 E_2 + \upsilon_3 E_3 +F_1(b),\\
\quad  \varepsilon_k, \upsilon_k \in C, a, b \in \C^C,\\
\nu=-({3}/{2})(\phi E_1, E_1)
                                  \end{array} 
\right. \right \}.
\end{eqnarray*}
Since the result of direct computation of $\varPhi \dot{F}_1(e_k)$ is as follows:
$$
  \varPhi(\phi, A, B, \nu)\dot{F}_1(e_k)=(\phi F_1(e_k)-\frac{\nu}{3}F_1(e_k), 2A \times F_1(e_k), 0, (B, F_1(e_k))), 
$$
for $\varPhi \dot{F}_1(e_k)=0$ we have that
$$
\left\{
\begin{array}{l}
\phi F_1(e_k)-\dfrac{\nu}{3}F_1(e_k)=0 \cdots (1)
\vspace{0.5mm}\\
2A \times F_1(e_k)=0 \cdots (2)
\vspace{0.5mm}\\
(B, F_1(e_k))=0 \cdots (3).
\end{array}
\right. 
$$
From the conditions (2) and (3), it is easy to verify that $x, y \in \C^C$. As for the condition (1), by doing direct computation, we obtain that 
\begin{eqnarray*}
&&\phi F_1(e_k)-\dfrac{\nu}{3}F_1(e_k)
\\[1mm]
\!\!\!&=&\!\!\!(\delta+\tilde{T})F_1(e_k)-\dfrac{\nu}{3}F_1(e_k) \,\,(\delta \in {\mathfrak{f}_4}^C, T \in (\mathfrak{J}^C)_\sigma, \tr(T)=0)
\\[1mm]
\!\!\!&=&\!\!\!\delta F_1(e_k)+\tilde{T}F_1(e_k)-\dfrac{\nu}{3}F_1(e_k)
\\[1mm]
\!\!\!&=&\!\!\!(D+\tilde{A}_1 (a_1))F_1(e_k)+\tilde{T}F_1(e_k)-\dfrac{\nu}{3}F_1(e_k)\,\,( D \in \mathfrak{so}(8, C), a_1 \in \mathfrak{C}^C)
%\end{eqnarray*}
%\begin{eqnarray*}
\\[1mm]
\!\!\!&=&\!\!\!F_1 (De_k)\!+(a_1, e_k)(E_2 -E_3)\!+(\frac{1}{2}\tau_2 F_1(e_k)+\frac{1}{2}\tau_3 F_1(e_k)+(t_1, e_k)(E_2 +E_3))-\dfrac{\nu}{3}F_1(e_k)
\\[1mm]
%\end{eqnarray*}
%\begin{eqnarray*}
\!\!\!&=&\!\!\!\{ (a_1, e_k)+(t_1, e_k) \}E_2 +\{ (a_1, e_k)-(t_1, e_k) \}E_3+F_1(De_k +\frac{1}{2}(\tau_2+\tau_3)e_k-\frac{\nu}{3}e_k),
\end{eqnarray*}
where $T=\tau_1 E_1+\tau_2 E_2 + \tau_3 E_3 +F_1(t_1), \tau_k \in C, t_1 \in \mathfrak{C}^C$.
%\vspace{1mm}

\noindent Hence, from the condition (1), we see that 
$$
\left\{
\begin{array}{l}
(a_1, e_k)+(t_1, e_k)=(a_1, e_k)-(t_1, e_k)=0 \cdots (4)\\
De_k -\dfrac{1}{2}\tau_1 e_k-\dfrac{\nu}{3}e_k=0 \cdots (5) \,\,(\tau_1+\tau_2+\tau_3=0),
\end{array}
\right. 
$$
moreover, for the condition (5), together with $\nu=(-3/2)\tau_1$,
we have $D e_k=0, k=2,\ldots, 7$. Thus we have $D \in \mathfrak{so}(2, C) \subset \mathfrak{so}(8, C)$. From the condition (4), we have $a_1, t_1 \in \C^C$. Therefore we have the required the explicit form of the Lie algebra $(({\mathfrak{e}_7}^C)^{\kappa, \mu})_{\dot{F}_1(e_k), k=2,\ldots,7}$.
\end{proof}

\begin{lem}\label{lem 4.15}
For $\nu \in C$, we define a mapping $\beta (\nu): \mathfrak{P}^C \to \mathfrak{P}^C$ by
\begin{eqnarray*}
\beta (\nu)(X, Y, \xi, \eta)\!\!\!&=&\!\!\!( \begin{pmatrix}
                          e^{2\nu}\xi_1 & e^\nu x_3 & e^\nu\ov{x}_2  \\
                          e^\nu \ov{x}_3 & \xi_2 & x_1 \\
                          e^\nu x_2 & \ov{x}_1 & \xi_3
                          \end{pmatrix},
                                            \begin{pmatrix}
                  e^{-2\nu}\eta_1 & e^{-\nu} y_3 & e^{-\nu}\ov{y}_2  \\
                  e^{-\nu} \ov{y}_3 & \eta_2 & y_1 \\
                  e^{-\nu} y_2 & \ov{y}_1 & \eta_3
                          \end{pmatrix}, e^{-2\nu}\xi, e^{-\nu}\eta).
\end{eqnarray*}
Then we have $\beta(\nu) \in ((({E_7}^C)^{\kappa, \mu})_{\dot{F}_1(e_k), k=2,\ldots,7})_0$.
\end{lem}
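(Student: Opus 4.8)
The map $\beta$ is a holomorphic one-parameter subgroup of $\mathrm{GL}_C(\mathfrak{P}^C)$: the relations $\beta(\nu_1)\beta(\nu_2)=\beta(\nu_1+\nu_2)$ and $\beta(0)=1$ are immediate from the multiplicative form of the entries $e^{\pm\nu},e^{\pm2\nu}$ of the displayed matrix. Hence $\beta(\nu)=\exp\varPhi$ with $\varPhi=\dfrac{d}{ds}\beta(s\nu)\big|_{s=0}\in\mathfrak{gl}_C(\mathfrak{P}^C)$, and the whole statement follows once we show that $\varPhi$ lies in $(({\mathfrak{e}_7}^C)^{\kappa, \mu})_{\dot{F}_1(e_k), k=2,\ldots,7}$, which by Lemma~\ref{lem 4.14} is the Lie algebra of the group $(({E_7}^C)^{\kappa, \mu})_{\dot{F}_1(e_k), k=2,\ldots,7}$; then $\exp\varPhi$ lies in the identity component of that group. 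In particular no separate verification that $\beta(\nu)\in{E_7}^C$ through the cross-product identity is needed, since this is automatic once $\varPhi\in{\mathfrak{e}_7}^C$.

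First I would compute $\varPhi$ explicitly. Carrying out the differentiation one finds that $\varPhi$ acts on $\mathfrak{P}^C$ as $\varPhi(\phi,0,0,\nu_{\mathfrak{e}_7})$ with
$$
\phi=\Bigl(\dfrac{4\nu}{3}E_1-\dfrac{2\nu}{3}E_2-\dfrac{2\nu}{3}E_3\Bigr)^{\sim},\qquad \nu_{\mathfrak{e}_7}=-2\nu,
$$
the value $\nu_{\mathfrak{e}_7}=-2\nu$ being exactly the one forced by the relation $\nu_{\mathfrak{e}_7}=-\tfrac32\tau_1$ of Lemma~\ref{lem 4.14}. Reading this off in the parametrization of Lemma~\ref{lem 4.14}, one has $D_2=0$, $a=0$, $t_1=0$, $\tau_1=\tfrac{4\nu}{3}$, $\tau_2=\tau_3=-\tfrac{2\nu}{3}$ (so $\tau_1+\tau_2+\tau_3=0$), $A=B=0$, and $\nu_{\mathfrak{e}_7}=-\tfrac32\tau_1$; this is precisely the shape of an element of $(({\mathfrak{e}_7}^C)^{\kappa, \mu})_{\dot{F}_1(e_k), k=2,\ldots,7}$. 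In particular $\varPhi$ commutes with $\kappa$ and with $\mu$, and $\varPhi\dot{F}_1(e_k)=0$ for $k=2,\ldots,7$, the latter because $\bigl(\tfrac43E_1-\tfrac23E_2-\tfrac23E_3\bigr)\circ F_1(e_k)=-\tfrac23F_1(e_k)$ cancels the $-\tfrac{\nu_{\mathfrak{e}_7}}{3}=\tfrac23$ term.

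It remains to verify $\exp\varPhi=\beta(\nu)$, which is the only real computation. Since $\phi$ is semisimple, with $\langle E_1\rangle$, $\langle E_2\rangle$, $\langle E_3\rangle$ and the spaces $\{F_k(x)\mid x\in\mathfrak{C}^C\}$ ($k=1,2,3$) lying in its eigenspaces, the operator $\varPhi$ is diagonalizable on $\mathfrak{P}^C$: on the first $\mathfrak{J}^C$-summand it has eigenvalue $2\nu$ on the $E_1$-line, $\nu$ on the $F_2$- and $F_3$-parts, and $0$ on the $E_2$-, $E_3$- and $F_1$-parts; on the second $\mathfrak{J}^C$-summand the eigenvalues are the negatives of these; and the eigenvalues on the two $C$-summands are determined likewise. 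Exponentiating summand by summand then reproduces exactly the entries $e^{2\nu},e^{\nu},1$ (and their reciprocals) of the matrix defining $\beta(\nu)$, whence $\beta(\nu)=\exp\varPhi\in\bigl((({E_7}^C)^{\kappa, \mu})_{\dot{F}_1(e_k), k=2,\ldots,7}\bigr)_0$. The main obstacle is just the bookkeeping: reading off $(\phi,\nu_{\mathfrak{e}_7})$ from the exponents in $\beta(\nu)$ — a small linear system — and then carrying out the eigenspace-by-eigenspace exponentiation carefully.
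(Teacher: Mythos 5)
Your proposal is correct and follows essentially the same route as the paper: the paper's proof simply exhibits the generator $\varPhi\bigl(\tfrac{2}{3}\nu(2E_1-(E_2+E_3))^\sim,0,0,-2\nu\bigr)$, notes via Lemma~\ref{lem 4.14} that it lies in $(({\mathfrak{e}_7}^C)^{\kappa,\mu})_{\dot{F}_1(e_k),k=2,\ldots,7}$, and asserts that its exponential is $\beta(\nu)$ --- which is exactly your computation read in the other direction, with the eigenspace decomposition of $\tilde{T}$ ($T=\tfrac{2\nu}{3}(2E_1-E_2-E_3)$) making the exponentiation transparent. One caveat your final ``matching'' step should flag rather than paper over: exponentiating this generator sends $\eta\mapsto e^{2\nu}\eta$ (the last component of $\varPhi(\phi,A,B,\nu)(X,Y,\xi,\eta)$ is $(B,X)-\nu\eta$, so here $\eta\mapsto-(-2\nu)\eta$), not $e^{-\nu}\eta$ as printed in the statement of the lemma; the printed map does not even preserve the symplectic form $\{P,Q\}$ (test $P=(0,0,1,0)$, $Q=(0,0,0,1)$) and so cannot lie in ${E_7}^C$, so the displayed $e^{-\nu}\eta$ is a typo for $e^{2\nu}\eta$ and your eigenvalue bookkeeping, carried out honestly, is precisely what detects and corrects it.
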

\begin{proof}
From Lemma \ref{lem 4.14}, for $\nu \in C$ we see that 
$\varPhi((2/3)\nu(2E_1-(E_2+E_3))^\sim, 0,0,-2\nu) $ $ \in 
(({\mathfrak{e}_7}^C)^{\kappa, \mu})_{\dot{F}_1(e_k), k=2,\ldots,7}$. 
Hence we have that 
$$
\beta(\nu)=\exp(\varPhi((2/3)\nu(2E_1-(E_2+E_3))^\sim, 0,0,-2\nu)) \in  ((({E_7}^C)^{\kappa, 
\mu})_{\dot{F}_1(e_k), k=2,\ldots,7})_0.
$$
\end{proof}

\begin{prop}\label{prop 4.16}
The homogeneous space $(({E_7}^C)^{\kappa, \mu})_{\dot{F}_1(e_k), k=2,\ldots,7}/S\!pin(5,C)$ is homeomorphic to the complex sphere $({S_-}^{\!\!C})^5${\rm :} $(({E_7}^C)^{\kappa, \mu})_{\dot{F}_1(e_k), k=2,\ldots,7}/S\!pin(5,C) \simeq ({S_-}^{\!\!C})^5$. 

In particular, the group $(({E_7}^C)^{\kappa, \mu})_{\dot{F}_1(e_k), k=2,\ldots,7}$ is connected.
\end{prop}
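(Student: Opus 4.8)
The plan is to run, for the last time in this section, the homogeneous-space argument used for Propositions \ref{prop 4.4}, \ref{prop 4.7} and \ref{prop 4.12}. First I would introduce the $5$-dimensional complex sphere
$$({S_-}^{\!\!C})^5 = \{P \in ({V_-}^{\!\!C})^6 \mid (P,P)_\mu = 1\},$$
so that $({S_-}^{\!\!C})^5$ consists of the $P = (X, \eta_1 E_1, 0, \eta)$ with $X = \xi_2 E_2 + \xi_3 E_3 + F_1(x_1)$, $x_1 \in \C^C$, subject to $-\xi_2\xi_3 + x_1\ov{x}_1 + \eta_1\eta = 1$; this is a nonsingular, hence smooth and connected, affine complex quadric. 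Exactly as in the proof of Proposition \ref{prop 4.12}, the group $(({E_7}^C)^{\kappa, \mu})_{\dot{F}_1(e_k), k=2,\ldots,7}$ acts on $({S_-}^{\!\!C})^5$: for $\alpha$ in this group, $\kappa\alpha = \alpha\kappa$ and $\alpha\dot{F}_1(e_k) = \dot{F}_1(e_k)$ give $\kappa(\alpha P) = \alpha P$ and $(\alpha P) \times \dot{F}_1(e_k) = \alpha(P \times \dot{F}_1(e_k))\alpha^{-1} = 0$, while $\mu\alpha = \alpha\mu$ together with the ${E_7}^C$-invariance of the form $\{\,,\,\}$ gives $(\alpha P, \alpha P)_\mu = (P, P)_\mu$.

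The core is transitivity, for which I take the base point $\ti{E}_1 = (0, E_1, 0, 1) \in ({S_-}^{\!\!C})^5$ and reduce an arbitrary $P = (X, \eta_1 E_1, 0, \eta) \in ({S_-}^{\!\!C})^5$ to $\ti{E}_1$ in three stages. Stage one: bring $P$ into $({V_-}^{\!\!C})^5$, i.e. into $({S_-}^{\!\!C})^4 = \{P \in ({S_-}^{\!\!C})^5 \mid \eta_1 = -\eta\}$, by arranging $\eta_1 = -\eta$; when $\eta_1\eta \ne 0$ this is achieved by a single $\beta(\nu)$ of Lemma \ref{lem 4.15} (which fixes $X$ and rescales the hyperbolic pair $(\eta_1, \eta)$), and the sub-cases in which exactly one of $\eta_1, \eta$ vanishes are first removed by exponentiating a suitable $\varPhi(0, A, 0, 0)$ or $\varPhi(0, 0, B, 0)$ from the Lie algebra of Lemma \ref{lem 4.14}, whose action injects a nonzero quantity (built out of $\xi_2, \xi_3, x_1$, which cannot all vanish since then $-\xi_2\xi_3 + x_1\ov{x}_1 = 1$) into the $E_1$-slot of the $Y$- (respectively $\eta$-) coordinate. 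Stage two: on $({S_-}^{\!\!C})^4$ the group $S\!pin(5, C) \cong (({E_7}^C)^{\kappa, \mu})_{\ti{E}_1, \dot{F}_1(e_k), k=2,\ldots,7}$ acts transitively (Proposition \ref{prop 4.12}), so $P$ is carried to $i\ti{E}_{-1} = (0, -iE_1, 0, i)$. Stage three: $i\ti{E}_{-1}$ and $\ti{E}_1$ both lie in the set $\{(0, \eta_1 E_1, 0, \eta) \mid \eta_1\eta = 1\}$, on which the one-parameter subgroup $\beta(\nu)$ of Lemma \ref{lem 4.15} acts transitively, so a suitable $\beta(\nu_0)$ sends $i\ti{E}_{-1}$ to $\ti{E}_1$. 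This establishes transitivity.

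Finally, the isotropy subgroup of the action at $\ti{E}_1$ is $(({E_7}^C)^{\kappa, \mu})_{\dot{F}_1(e_k), k=2,\ldots,7, \ti{E}_1} = (({E_7}^C)^{\kappa, \mu})_{\ti{E}_1, \dot{F}_1(e_k), k=2,\ldots,7}$, which is $\cong S\!pin(5, C)$ by Theorem \ref{thm 4.13}. Hence $(({E_7}^C)^{\kappa, \mu})_{\dot{F}_1(e_k), k=2,\ldots,7}/S\!pin(5, C) \simeq ({S_-}^{\!\!C})^5$, and since $({S_-}^{\!\!C})^5$ and $S\!pin(5, C)$ are both connected, so is the total space $(({E_7}^C)^{\kappa, \mu})_{\dot{F}_1(e_k), k=2,\ldots,7}$. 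The main obstacle is the bookkeeping in Stage one: verifying that the auxiliary one-parameter subgroups furnished by Lemma \ref{lem 4.14} do reach $({V_-}^{\!\!C})^5$ in every degenerate configuration of $(\eta_1, \eta)$; once $P$ is inside $({V_-}^{\!\!C})^5$ the remaining two stages are immediate from Proposition \ref{prop 4.12} and a one-line computation with $\beta(\nu)$.
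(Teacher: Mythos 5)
Your plan is correct and follows essentially the same route as the paper's proof: the paper also realizes $({S_-}^{\!\!C})^5$ as the orbit of $\ti{E}_1$, first pushing a general $P$ into $({S_-}^{\!\!C})^4$ (its Cases (i)--(viii) are exactly your Stage-one bookkeeping, using $\beta(\nu)$ when $\eta_1\eta\neq 0$ and elements $\exp\varPhi(0,A,0,0)$, $\exp\varPhi(0,0,B,0)$ with $A,B\in\{E_2,E_3,tF_1(x)\}$ in the degenerate configurations), then invoking the $S\!pin(5,C)$-transitivity of Proposition \ref{prop 4.12} to reach $i\ti{E}_{-1}$, and finally applying $\beta(-i\pi/4)$ to land on $\ti{E}_1$, with the isotropy group identified via Theorem \ref{thm 4.13}. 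No substantive differences.
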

\begin{proof}
We define a $5$-dimensional complex sphere $({S_-}^{\!\!C})^5$ by
\begin{eqnarray*}
({S_-}^{\!\!C})^5\!\!\!&=&\!\!\! \{P \in ({V_-}^{\!\!C})^6 \,|\, (P, P)_\mu =1  \}
\\[1mm]
\!\!\!&=&\!\!\!\biggl\{(\begin{pmatrix} 0 & 0      & 0 \\
                                       0 & \xi_2  & x_1 \\
                                       0 & \ov{x}_1 & \xi_3 
                       \end{pmatrix},
                   \begin{pmatrix} \eta_1 & 0  & 0 \\
                                       0 & 0  & 0 \\
                                       0 & 0  & 0 
                       \end{pmatrix},
              0, \eta ) \,\biggm |\, -\xi_2 \xi_3+x_1\ov{x}_1+\eta_1 \eta=1  \biggr\}.
\end{eqnarray*}
As in the proof of Proposition \ref{prop 4.12}, it is easy to verify that the group $(({E_7}^C)^{\kappa, \mu})_{\dot{F}_1(e_k), k=2,\ldots,7}$ acts on $({S_-}^{\!\!C})^5$, and so we shall show that this action is transitive. In order to prove this, it is sufficient to show that any $P \in ({S_-}^{\!\!C})^5$ can be transformed to $\ti{E}_1 \in ({S_-}^{\!\!C})^5$. 

Now, for a given 
$$
 P=(\begin{pmatrix} 0 & 0      & 0 \\
                                       0 & \xi_2  & x \\
                                       0 & \ov{x} & \xi_3 
                       \end{pmatrix},
                   \begin{pmatrix} \eta_1 & 0  & 0 \\
                                       0 & 0  & 0 \\
                                       0 & 0  & 0 
                       \end{pmatrix}, 0, \eta ) \in (S^C)^5, 
$$
first we shall show that there exists $\alpha \in (({E_7}^C)^{\kappa, \mu})_{\dot{F}_1(e_k), k=2,\ldots,7}$ such that \vspace{1mm}$\alpha P \in ({S_-}^{\!\!C})^4$.
\vspace{1mm}

Case (i) where $\eta_1 \ne 0, \eta \ne 0$. 

We choose $\nu \in C$ such that $-e^{-2\nu} \eta_1=e^{2\nu}\eta$, and 
operate $\beta (\nu)$ of Lemma \ref{lem 4.15} on $P$, then we have $\beta (\nu) P \in ({S_-}^{\!\!C})^4$.
\vspace{1mm}

Case (ii) where $\eta_1=0, \eta \ne 0, \xi_2 \ne 0$. 

Operate $\alpha=\exp \varPhi(0, E_3, 0,0) \in ((({E_7}^C)^{\kappa, \mu})_{\dot{F}_1(e_k), k=2,\ldots,7})_0$ on $P$ (Lemma \ref{lem 4.14}), then we have that
$$
  \alpha P=(\xi_2 E_2+(\xi_3+\eta)E_3 +F_1(x), \xi_2 E_1, 0, \eta).
$$
Hence this case is reduced to Case (i).
\vspace{1mm}

Case (iii) where $\eta_1=0, \eta \ne 0, \xi_3 \ne 0$. 

As in Case (ii), operate $\alpha=\exp \varPhi(0, E_2, 0,0) \in ((({E_7}^C)^{\kappa, \mu})_{\dot{F}_1(e_k), k=2,\ldots,7})_0$ on $P$ (Lemma \ref{lem 4.14}), then we have that 
$$
  \alpha P=((\xi_2+\eta)E_2+\xi_3 E_3 +F_1(x), \xi_3 E_1, 0, \eta).
$$
Hence this case is also reduced to Case (i).
\vspace{1mm}

Case (iv) where $\eta_1=\xi_2=\xi_3=0, \eta \ne 0$. 
%As in Case 2, 

For some $t \in \R$, operate $\alpha=\exp\varPhi(0, tF_1(x), 0,0) \in ((({E_7}^C)^{\kappa, \mu})_{\dot{F}_1(e_k), k=2,\ldots,7})_0$ on $P=(F_1(x), 0, 0, \eta)$ (Lemma \ref{lem 4.14}), then we have that
\begin{eqnarray*} 
\alpha P\!\!\!&=&\!\!\!((1+t\eta)F_1(x), -(2t+t^2\eta)(x,x)E_1, 0, \eta)\,\,((x,x)=1)
\\
\!\!\!&=&\!\!\!((1+t\eta)F_1(x), -(2t+t^2\eta)E_1, 0, \eta).
\end{eqnarray*}
Hence this case is also reduced to Case (i) for some $t \in \R$.
\vspace{1mm}

Case (v) where $\eta_1 \ne 0, \eta = 0, \xi_2 \ne 0$. 

Operate $\alpha=\exp \varPhi(0,0, E_2,0) \in ((({E_7}^C)^{\kappa, \mu})_{\dot{F}_1(e_k), k=2,\ldots,7})_0$ on $P$ (Lemma \ref{lem 4.14}), then we have that
$$
  \alpha P=(\xi_2 E_2+\xi_3 E_3 +F_1(x), \eta_1 E_1, 0, \xi_2).
$$
Hence this case is also reduced to Case (i).
\vspace{1mm}

Case (vi) where  $\eta_1 \ne 0, \eta = 0, \xi_3 \ne 0$. 

As in Case (v), operate $\alpha=\exp \varPhi(0,0, E_3,0) \in ((({E_7}^C)^{\kappa, \mu})_{\dot{F}_1(e_k), k=2,\ldots,7})_0$ on $P$ (Lemma \ref{lem 4.14}), then we have that
$$
  \alpha P=(\xi_2 E_2+\xi_3 E_3 +F_1(x), \eta_1 E_1, 0, \xi_3).
$$
Hence this case is also reduced to Case (i).
\vspace{1mm}

Case (vii) where $\eta_1 \ne 0, \eta = 0, \xi_2=\xi_3 =0$. 

For some $t \in \R$, operate $\alpha=\exp\varPhi(0,0, tF_1(x),0) \in ((({E_7}^C)^{\kappa, \mu})_{\dot{F}_1(e_k), k=2,\ldots,7})_0$ on $P=(F_1(x), \eta_1 E_1, 0, 0)$ (Lemma \ref{lem 4.14}), then we have that
\begin{eqnarray*} 
\alpha P\!\!\!&=&\!\!\!((1-t\eta_1)F_1(x), \eta_1 E_1, 0, (2t-t^2 \eta_1(x,x)))\,((x,x)=1)
\\
\!\!\!&=&\!\!\!((1-t\eta_1)F_1(x), \eta_1 E_1, 0, (2t-t^2 \eta_1)).
\end{eqnarray*}
Hence this case is also reduced to Case (i) for some $t \in \R$.
\vspace{1mm}

Case (viii) where  $\eta_1=\eta=0$. 

\noindent Then we see that $P \in ({S_-}^{\!\!C})^3 \subset ({S_-}^{\!\!C})^4$.

Since $S\!pin(5,C)\,(\cong (({E_7}^C)^{\kappa, \mu})_{\ti{E}_1,\underset{\dot{}}{1}, \dot{F}_1(e_k), k=2,\ldots,7} \subset (({E_7}^C)^{\kappa, \mu})_{\dot{F}_1(e_k), k=2,\ldots,7})$ acts transitively on $({S_-}^{\!\!C})^4$ (Proposition \ref{prop 4.12}), there exists $\beta \in S\!pin(5,C)$ such that 
$$
   \beta(\alpha P)=(0, -iE_1, 0, i)(=i\ti{E}_{-1}), 
% \alpha P \in ({S_-}^C)^4.
$$
Again, operate $\beta(-i{\pi}/{4})$ of Lemma \ref{lem 4.15} on $\beta(\alpha P)$, then we have that
$$
     \beta(-i\frac{\pi}{4})(\beta (\alpha P))=(0, E_1, 0, 1)(=\ti{E}_1).
$$
This shows the transitivity of this action to $({S_-}^{\!\!C})^5$ by the group $(({E_7}^C)^{\kappa, \mu})_{\dot{F}_1(e_k), k=2,\ldots,7}$. The isotropy subgroup of the group $(({E_7}^C)^{\kappa, \mu})_{\dot{F}_1(e_k), k=2,\ldots,7}$ at $\ti{E}_1$ is $S\!pin(5, C)$ (Theorem \ref{thm 4.13}). 

Thus we have the required homeomorphism 
$$
(({E_7}^C)^{\kappa, \mu})_{\dot{F}_1(e_k), k=2,\ldots,7}/S\!pin(5,C) \simeq ({S_-}^{\!\!C})^5.
$$

Therefore we see that the group $(({E_7}^C)^{\kappa, \mu})_{\dot{F}_1(e_k), k=2,\ldots,7}$ is connected.
\end{proof}
%\vspace{1mm}

\begin{thm}\label{thm 4.17}
The group $(({E_7}^C)^{\kappa, \mu})_{\dot{F}_1(e_k), k=2,\ldots,7}$ is isomorphic to $S\!pin(6, C)${\rm :}\\$(({E_7}^C)^{\kappa, \mu})_{\dot{F}_1(e_k), k=2,\ldots,7} \cong S\!pin(6,C)$.
\end{thm}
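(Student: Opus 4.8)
The plan is to carry over verbatim the argument used for Theorems \ref{thm 3.16}, \ref{thm 4.8} and \ref{thm 4.13}. First I would put $O(6,C)=O(({V_-}^{\!\!C})^6)=\{\beta \in \Iso_C(({V_-}^{\!\!C})^6)\mid (\beta P,\beta Q)_\mu=(P,Q)_\mu\}$ and observe that for $\alpha \in (({E_7}^C)^{\kappa,\mu})_{\dot{F}_1(e_k), k=2,\ldots,7}$ the restriction $\beta=\alpha|_{({V_-}^{\!\!C})^6}$ lies in $O(6,C)$, since $\alpha$ preserves $({V_-}^{\!\!C})^6$ and the form $(\,\cdot\,,\,\cdot\,)_\mu$ (the relevant invariance relations with $\kappa,\mu,\dot{F}_1(e_k)$ are exactly those checked in Proposition \ref{prop 4.12}). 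This gives a continuous homomorphism $p:(({E_7}^C)^{\kappa,\mu})_{\dot{F}_1(e_k), k=2,\ldots,7}\to O(6,C)$, $p(\alpha)=\alpha|_{({V_-}^{\!\!C})^6}$, and since the source group is connected by Proposition \ref{prop 4.16}, $p$ maps into $S\!O(6,C)=S\!O(({V_-}^{\!\!C})^6)$.

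The only step requiring real work is the kernel computation. Let $\alpha \in \Ker\,p$. The vectors $\tilde{E}_1=(0,E_1,0,1)$, $\tilde{E}_{-1}=(0,-E_1,0,1)$, $E_2\dot{+}E_3$, $E_2\dot{-}E_3$, $\dot{F}_1(e_0)$, $\dot{F}_1(e_1)$ all belong to $({V_-}^{\!\!C})^6$, so $\alpha$ fixes each of them. From $\alpha\tilde{E}_1=\tilde{E}_1$, $\alpha\tilde{E}_{-1}=\tilde{E}_{-1}$ together with $\mu\alpha=\alpha\mu$ one deduces, exactly as in the proof of Proposition \ref{prop 4.2}, that $\alpha\dot{1}=\dot{1}$, $\alpha\underset{\dot{}}{1}=\underset{\dot{}}{1}$ and $\alpha\dot{E}_1=\dot{E}_1$, hence $\alpha\in({E_6}^C)_{E_1}$; combining with $\alpha(E_2\dot{+}E_3)=E_2\dot{+}E_3$, $\alpha(E_2\dot{-}E_3)=E_2\dot{-}E_3$ gives $\alpha E_i=E_i$ for $i=1,2,3$, so $\alpha E=E$ and $\alpha\in{F_4}^C$. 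Adding $\alpha F_1(e_k)=F_1(e_k)$ for $k=2,\ldots,7$ (built into the group) we land in $({F_4}^C)_{E_1,E_2,E_3,F_1(e_k), k=2,\ldots,7}\cong U(1,\C^C)$ by Theorem \ref{thm 3.3}, so $\alpha=\phi(\theta)$ for some $\theta\in U(1,\C^C)$. Finally $\alpha\dot{F}_1(e_0)=\dot{F}_1(e_0)$, i.e. $\phi(\theta)F_1(1)=F_1(1)$, forces $(\ov{\theta})^2=1$ from the explicit form of $\phi$, hence $\theta=\pm1$ and $\alpha=\phi(1)=1$ or $\alpha=\phi(-1)=\sigma$; conversely $\sigma$ acts trivially on $({V_-}^{\!\!C})^6$, so $\Ker\,p=\{1,\sigma\}\cong\Z_2$.

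To finish I would apply Lemma \ref{lemma 2.1}: $S\!O(6,C)$ is connected, $\Ker\,p$ is discrete, and by Lemma \ref{lem 4.14} $\dim_C((({\mathfrak{e}_7}^C)^{\kappa,\mu})_{\dot{F}_1(e_k), k=2,\ldots,7})=15=\dim_C(\mathfrak{so}(6,C))$, so $p$ is surjective and $(({E_7}^C)^{\kappa,\mu})_{\dot{F}_1(e_k), k=2,\ldots,7}/\Z_2\cong S\!O(6,C)$. Since $S\!pin(6,C)$ is the universal double covering group of $S\!O(6,C)$ and the source group is connected, this gives the desired isomorphism $(({E_7}^C)^{\kappa,\mu})_{\dot{F}_1(e_k), k=2,\ldots,7}\cong S\!pin(6,C)$. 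The point that deserves care is \emph{only} the kernel step — choosing enough elements of $({V_-}^{\!\!C})^6$ to force $\alpha$ into the circle group $U(1,\C^C)$ and then eliminating the last parameter; everything else is a direct transcription of the preceding proofs, and indeed one could simply write ``proved as in Theorem \ref{thm 3.16}'' as the paper does elsewhere.
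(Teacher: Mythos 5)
Your proposal is correct and follows essentially the same route as the paper's own proof: restriction to $({V_-}^{\!\!C})^6$, the kernel computation pinning $\alpha$ down to $({F_4}^C)_{E_1,E_2,E_3,F_1(e_k),k=2,\ldots,7}\cong U(1,\C^C)$ and then to $\{1,\sigma\}$ via $\phi(\theta)F_1(1)=F_1(1)$, and surjectivity from connectedness of $S\!O(6,C)$ plus the dimension count of Lemma \ref{lem 4.14}. The only cosmetic difference is that you pass through $({E_6}^C)_{E_1}$ and then ${F_4}^C$ directly, where the paper routes through $(({E_6}^C)^\sigma)_{E_1,F_1(e_k),k=2,\ldots,7}$; both land in the same circle group.
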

\begin{proof}
Let $O(6,C)=O(({V_-}^{\!\!C})^6)=\{\beta \in \Iso_{C} (({V_-}^{\!\!C})^6)\,|\,(\alpha P,\alpha P)_\mu=(P,P)_\mu \}$. We consider the restriction $\beta=\alpha \bigm|_{({V_-}^C)^6}$ of $\alpha \in (({E_7}^C)^{\kappa, \mu})_{\dot{F}_1(e_k), k=2,\ldots,7}$ to $({V_-}^{\!\!C})^6$, then we have $\beta \in O(6,C)$. Hence we can define a homomorphism $p: (({E_7}^C)^{\kappa, \mu})_{\dot{F}_1(e_k), k=2,\ldots,7} \to O(6, C)=O(({V_-}^{\!\!C})^6)$ by
$$
   p(\alpha)=\alpha \bigm|_{({V_-}^C)^6}.
$$
Since the mapping $p$ is continuous and the group $(({E_7}^C)^{\kappa, \mu})_{\dot{F}_1(e_k), k=2,\ldots,7}$ is connected (Proposition \ref{prop 4.16}), the mapping $p$ induces  a homomorphism 
$$
p :(({E_7}^C)^{\kappa, \mu})_{\dot{F}_1(e_k), k=2,\ldots,7} \to S\!O(6, C)=S\!O(({V_-}^{\!\!C})^6).
$$ 

It is not difficult to obtain that  $\Ker\,p=\{1, \sigma \} \cong \Z_2$. Indeed, let $\alpha \in \Ker\,p$. For $\ti{E}_{1}\!\!=(0,E_1, 0,1), \ti{E}_{-1}\!\!=(0,-E_1, 0,1) \in ({V_-}^{\!\!C})^6$, since $\alpha \ti{E}_{1}=\ti{E}_{1}$ and $\alpha \ti{E}_{-1}=\ti{E}_{-1}$, we have that $\alpha \dot{E}_1=\dot{E}_1$ and $\alpha \underset{\dot{}}{1}=\underset{\dot{}}{1}$. \vspace{-1mm}Hence we have that $\alpha \in (({E_7}^C)^{\kappa, \mu})_{\dot{E}_1,\underset{\dot{}}{1}, \dot{F}_1(e_k), k=2,\ldots,7} \cong (({E_6}^C)^{\sigma})_{E_1, {F}_1(e_k), k=2,\ldots,7}$.
Moreover, for $E_2\dot{+}E_3, E_2\dot{-}E_3 \in ({V_-}^{\!\!C})^6$, since $\alpha(E_2\dot{+}E_3)=E_2\dot{+}E_3$ and  $\alpha(E_2\dot{-}E_3)=E_2\dot{-}E_3$, we have that $\alpha \in (({E_6}^C)^\sigma)_{E_1,E_2, E_3, F_1(e_k), k=2,\ldots,7} \cong ({F_4}^C)_{E_1,E_2, E_3, F_1(e_k),} $ $ {}_{ k=2,\ldots,7} \cong U(1, \C^C)$. Hence there exists $\theta \in U(1, \C^C)$ such that $\alpha=\phi(\theta)$, where $\phi$ is defined in Theorem \ref{thm 3.3}, and so since $\alpha F_1 (1)=F_1 (1), F_1(1) \in ({V_-}^{\!\!C})^6$, we have $(\ov{\theta})^2=1$, that is, $\theta=1$ or $\theta=-1$. Thus since we see 
$$
\alpha=\phi(1)=1 \quad {\text{or}}\quad \alpha=\phi(-1)=\sigma,
$$
we have that
$\Ker\, p \subset \{1, \sigma  \}$ and vice versa. Hence we obtain that  $\Ker\, p=\{1, \sigma  \}$. Finally, we shall show that $p$ is surjection. Since $S\!O(6,C)$ is connected, $\Ker\,p$ is discrete and $\dim_{C}((({\mathfrak{e}_7}^C)^{\kappa, \mu})_{\dot{F}_1(e_k), k=2,\ldots,7})$ $ =15=\dim_{{C}}(\mathfrak{so}(6, C))$(Lemma \ref{lem 4.14}), $p$ is surjection. Thus we have that 
$$
(({E_7}^C)^{\kappa, \mu})_{ \dot{F}_1(e_k), k=2,\ldots,7}/\Z_2 \cong S\!O(6, C).
$$

Therefore the group $(({E_7}^C)^{\kappa, \mu})_{\dot{F}_1(e_k), k=2,\ldots,7}$ is isomorphic to $S\!pin(6, C)$ as the universal double covering group of $S\!O(6, C)$, that is, $(({E_7}^C)^{\kappa, \mu})_{\dot{F}_1(e_k), k=2,\ldots,7} \cong S\!pin(6,C)$.
\end{proof}
%\vspace{1mm}
Here, as in previous section, we make a summary of the results as the low dimensional spinor groups which were constructed in this section.
\begin{eqnarray*}
&&(({E_7}^C)^{\kappa, \mu})_{
\dot{F}_1(e_k), k=2,\ldots,7} 
\cong S\!pin(6,C)
\\[0mm]
&&\hspace*{15mm} \cup
\\[-1.5mm]
&&(({E_7}^C)^{\kappa, \mu})_{\ti{E}_1,\dot{F}_1(e_k), k=2,\ldots,7}\cong 
S\!pin(5,C)
\\[0mm]
&&\hspace*{15mm} \cup
\\[-1.5mm]
&&
(({E_6}^C)^{\sigma})_{E_1,F_1(e_k), k=2,\ldots,7} \cong 
S\!pin(4,C)
\\[0mm]
&&\hspace*{15mm} \cup
\\[-1.5mm]
&&
({F_4}^C)_{E_1, F_1(e_k), k=2,\ldots,7} \cong 
S\!pin(3,C)
\\[0mm]
&&\hspace*{15mm} \cup
\\[-1.5mm]
&&
({F_4}^C)_{E_1,E_2, E_3, F_1(e_k), k=2,\ldots,7} \cong 
S\!pin(2,C) \cong U(1, \C^C)
%\\[1mm]
%&&\hspace*{90mm}\cong U(1, \C^C)
%\\[1mm]
\end{eqnarray*}
\indent Together with the results of previous section, we have had two sequences as for the low dimensional spinor groups. 
\vspace{2mm}

After this, by using two $S\!pin(6,C)$, we determine the structure of the groups $({({E_7}^C)}^{\kappa, \mu})^{\sigma'_{\!\!4}}, $ $({E_7}^C)^{\sigma'_{\!\!4}}$, and so we shall prove the connectedness of the group $({E_7}^C)^{\sigma'_{\!\!4}, \mathfrak{so}(6,C)}$.
\vspace{2mm}

First, we determine the structure of the group $({({E_7}^C)}^{\kappa, \mu})^{\sigma'_{\!\!4}}$.

\begin{lem}\label{lem 4.18}
The Lie algebra $({({\mathfrak{e}_7}^C)}^{\kappa, \mu})^{\sigma'_{\!\!4}}$ of the group $({({E_7}^C)}^{\kappa, \mu})^{\sigma'_{\!\!4}}$ is given by 
\begin{eqnarray*}
({({\mathfrak{e}_7}^C)}^{\kappa, \mu})^{\sigma'_{\!\!4}}\!\!\!&=&\!\!\!\left\{\varPhi(\phi, A, B, \nu) \in {\mathfrak{e}_7}^C \,\left|\, \begin{array}{l}\kappa \varPhi=\varPhi\kappa, \mu\varPhi=\varPhi\mu \\
\sigma'_{\!\!4} \varPhi=\varPhi \sigma'_{\!\!4} 
                                           \end{array} \right. \right \}
%\\[1mm]
\end{eqnarray*}
\begin{eqnarray*}
 \!\!\!&=&\!\!\!\left\{\varPhi(\phi, A, B, \nu) \in {\mathfrak{e}_7}^C \,\left|\, \begin{array}{l}
     \phi =\left(
\begin{array}{@{\,}cc|ccc@{\,}}
\multicolumn{2}{c|}{\raisebox{-2.0ex}[-5pt][0pt]{\large $D_2$}}&&& \\
&&\multicolumn{3}{c}{\raisebox{0.9ex}[0pt]{\large $0$}} \\
\hline
%&&&&& \\
%&&&&& \\
&&\multicolumn{3}{c}{\raisebox{-10pt}[0pt][0pt]{\large $D_6$}}\\
\multicolumn{2}{c|}{\raisebox{1.0ex}[0pt]{\large $0$}}&&& \\
%&&&&& \\
%&&&&& \\
\end{array}
\right)
\begin{array}{l}
\!+ \tilde{A}_1(a)
\vspace{1mm}\\
\!+(\tau_1 E_1\!+\!\tau_2 E_2\!+\!\tau E_3\!
%\\ \vspace{1mm}
%\qquad\qquad 
+\!F_1(t_1))^\sim
\end{array}
%+\tilde{A}_1(a)+\begin{pmatrix}\tau_1 & 0       & 0   \\
%                                        0   & \tau_2  & t_1 \\
%                                        0   & \ov{t}_1& \tau_3
%                      \end{pmatrix}^\sim,
\vspace{1mm}\\
\quad D_2 \in \mathfrak{so}(2,C), D_6 \in \mathfrak{so}(6,C), a \in \C^C, \tau_k \in C,
\\ \quad \tau_1+\tau_2+\tau_3=0, t_1 \in \C^C,
\\
A=\varepsilon_2 E_2 +\varepsilon_3 E_3 +F_1(a),\,\varepsilon_k \in C, a \in \C^C,
\vspace{1mm}\\
B=\upsilon_2 E_2 +\upsilon_3 E_3 +F_1(b),\,\upsilon_k \in C, b \in \C^C,
\vspace{1mm}\\
\nu =-({3}/{2})\tau_1 
\end{array} \right. \right \}.                   
\end{eqnarray*}

In particular, $\dim_{{}_C}(({({\mathfrak{e}_7}^C)}^{\kappa, \mu})^{\sigma'_{\!\!4}})=((1+15)+2+(2+2))+(2+2) \times 2=30$.
%=15+15=\mathfrak{so}(6,C) \oplus \mathfrak{so}(6,C)$.
\end{lem}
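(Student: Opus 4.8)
The plan is to reduce the computation to intersecting two descriptions that are already at hand. Directly from the definitions, $(({\mathfrak{e}_7}^C)^{\kappa, \mu})^{\sigma'_{\!\!4}}=(({\mathfrak{e}_7}^C)^{\sigma'_{\!\!4}})^{\kappa, \mu}$, since in either case one is selecting the elements of ${\mathfrak{e}_7}^C$ that commute with all of $\kappa$, $\mu$, $\sigma'_{\!\!4}$; this is consistent with the relations $\kappa\sigma'_{\!\!4}=\sigma'_{\!\!4}\kappa$, $\mu\sigma'_{\!\!4}=\sigma'_{\!\!4}\mu$ noted earlier. Hence it suffices to impose the two conditions $\kappa\varPhi=\varPhi\kappa$, $\mu\varPhi=\varPhi\mu$ on the elements $\varPhi=\varPhi(\phi,A,B,\nu)$ of $({\mathfrak{e}_7}^C)^{\sigma'_{\!\!4}}$ listed in Lemma \ref{lem 4.1}\,(1); equivalently, to intersect that list with the description of $({\mathfrak{e}_7}^C)^{\kappa,\mu}$ recalled in the proof of Lemma \ref{lem 4.14} (taken from \cite[Section 4.6]{realization E_7}), namely that $\varPhi(\phi,A,B,\nu)\in({\mathfrak{e}_7}^C)^{\kappa,\mu}$ holds exactly when $\phi\in({\mathfrak{e}_6}^C)^\sigma$, $A=\varepsilon_2E_2+\varepsilon_3E_3+F_1(a)$, $B=\upsilon_2E_2+\upsilon_3E_3+F_1(b)$ with $\varepsilon_k,\upsilon_k\in C$, $a,b\in\C^C$, and $\nu=-({3}/{2})(\phi E_1,E_1)$.

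I would carry out this intersection component by component. For $A$ (and likewise $B$): the shape $A=\xi_1E_1+\xi_2E_2+\xi_3E_3+F_1(x_1)$ appearing in Lemma \ref{lem 4.1}\,(1), when matched against $A=\varepsilon_2E_2+\varepsilon_3E_3+F_1(a)$, forces $\xi_1=0$, leaving precisely the forms of $A$ and $B$ displayed in the statement. For $\phi$: the block-diagonal shape with blocks $D_2\in\mathfrak{so}(2,C)$ and $D_6\in\mathfrak{so}(6,C)$, together with the $\tilde A_1(a)$-term and the $(\tau_1E_1+\tau_2E_2+\tau_3E_3+F_1(t_1))^\sim$-term, already lies in $({\mathfrak{e}_6}^C)^\sigma$ (because $\sigma=(\sigma'_{\!\!4})^2$, so $({\mathfrak{e}_6}^C)^{\sigma'_{\!\!4}}\subset({\mathfrak{e}_6}^C)^\sigma$), so the extra membership requirement adds nothing and $\phi$ is unchanged. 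For such $\phi$ one checks $\phi E_1=\tau_1E_1$: the $\mathfrak{so}(8,C)$-part kills $E_1$, the $\tilde A_1(a)$-term sends $E_1$ into $F_1(\mathfrak{C}^C)$ which is orthogonal to $E_1$, and $\tilde TE_1=T\circ E_1=\tau_1E_1$; hence $(\phi E_1,E_1)=\tau_1$ and the constraint $\nu=-({3}/{2})(\phi E_1,E_1)$ becomes $\nu=-({3}/{2})\tau_1$. Collecting, the intersection is exactly the set displayed in the statement, and, with the $\nu$-slot now determined, the dimension is $((1+15)+2+(2+2))+(2+2)\times2=30$, where $(1+15)$ accounts for $D_2$ and $D_6$, the first $2$ for the $\C^C$-parameter in the $\tilde A_1$-term, the $(2+2)$ for the $\tau_k$'s with $\tau_1+\tau_2+\tau_3=0$ and for $t_1\in\C^C$, and each factor $(2+2)$ for one of $A$, $B$.

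I do not expect any genuine obstacle: every ingredient is either quoted (Lemma \ref{lem 4.1}, Lemma \ref{lem 4.14}, \cite[Section 4.6]{realization E_7}) or a one-line verification, and the only point that needs a little care is the bookkeeping of which slots of $A$, $B$, $\nu$ survive the intersection. As an alternative route, one may prove the lemma by direct computation, imposing $\sigma'_{\!\!4}\varPhi=\varPhi\sigma'_{\!\!4}$ on the explicit members of $({\mathfrak{e}_7}^C)^{\kappa,\mu}$ and using that $\sigma'_{\!\!4}$ fixes $E_1,E_2,E_3$ and acts as the identity on $F_1(\C^C)$ (indeed $-e_1xe_1=x$ for $x\in\C^C$); in that approach the single genuine new constraint is $\delta\in({\mathfrak{f}_4}^C)^{\sigma'_{\!\!4}}$ on the ${\mathfrak{f}_4}^C$-part $\delta$ of $\phi$, which is the block $\mathfrak{so}(2,C)\oplus\mathfrak{so}(6,C)$.
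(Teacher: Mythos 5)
Your proposal is correct, and since the paper's own ``proof'' of Lemma \ref{lem 4.18} is just the sentence ``by doing simple computation,'' your argument supplies exactly the computation being waved at: intersecting the description of $({\mathfrak{e}_7}^C)^{\sigma'_{\!\!4}}$ from Lemma \ref{lem 4.1}\,(1) with the description of $({\mathfrak{e}_7}^C)^{\kappa,\mu}$ quoted in the proof of Lemma \ref{lem 4.14}. All the individual steps check out: the set identity $(({\mathfrak{e}_7}^C)^{\kappa,\mu})^{\sigma'_{\!\!4}}=(({\mathfrak{e}_7}^C)^{\sigma'_{\!\!4}})^{\kappa,\mu}$ is immediate, the inclusion $({\mathfrak{e}_6}^C)^{\sigma'_{\!\!4}}\subset({\mathfrak{e}_6}^C)^{\sigma}$ via $\sigma=(\sigma'_{\!\!4})^2$ shows the $\phi$-constraint is vacuous, the evaluation $(\phi E_1,E_1)=\tau_1$ (the ${\mathfrak{f}_4}^C$-part being skew and $F_1(t_1)\circ E_1=0$) gives $\nu=-\tfrac{3}{2}\tau_1$, and the $A,B$ slots lose only the $E_1$-coefficient, yielding the stated dimension $30$. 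The only caveat worth recording is that in the quoted explicit form of $({\mathfrak{e}_7}^C)^{\kappa,\mu}$ the condition should read $a,b\in\mathfrak{C}^C$ (otherwise its dimension would not be $66$); this does not affect your intersection, since the constraint $x_1\in\C^C$ coming from Lemma \ref{lem 4.1}\,(1) is the binding one either way.
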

\begin{proof}
By doing simple computation, we can obtain the result above.
\end{proof}

\begin{prop}\label{prop 4.19}
The group $({({E_7}^C)}^{\kappa, \mu})^{\sigma'_{\!\!4}}$ is isomorphic to the group 
$(S\!pin(6,C) \times $ \\ $ S\!pin(6, C))/\Z_2, \Z_2 =\{ (1,1), (\sigma, \sigma)\}${\rm :} $({({E_7}^C)}^{\kappa, \mu})^{\sigma'_{\!\!4}} \cong (S\!pin(6,C) \times S\!pin(6, C))/\Z_2$.
\end{prop}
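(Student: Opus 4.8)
The plan is to realize $({({E_7}^C)}^{\kappa, \mu})^{\sigma'_{\!\!4}}$ as an almost direct product of the two copies of $S\!pin(6,C)$ built in Sections 3 and 4 and then to apply the homomorphism theorem. Put $G_1 := ({F_4}^C)_{E_1, E_2, E_3, F_1(e_k), k=0,1}$, which is isomorphic to $S\!pin(6,C)$ by Theorem \ref{thm 3.16}; by Proposition \ref{prop 4.2} it is contained in $({E_7}^C)^{\kappa,\mu}$, and by Lemma \ref{lem 3.17} each of its elements commutes with $\sigma'_{\!\!4}$, so $G_1 \subset ({({E_7}^C)}^{\kappa, \mu})^{\sigma'_{\!\!4}}$. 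Put $G_2 := (({E_7}^C)^{\kappa, \mu})_{\dot{F}_1(e_k), k=2,\ldots,7}$, which is isomorphic to $S\!pin(6,C)$ by Theorem \ref{thm 4.17}. To see $G_2 \subset ({({E_7}^C)}^{\kappa, \mu})^{\sigma'_{\!\!4}}$ I would argue at the Lie algebra level, legitimate since $G_2$ is connected (Proposition \ref{prop 4.16}): comparing Lemma \ref{lem 4.14} with Lemma \ref{lem 4.18}, every generator of $(({\mathfrak{e}_7}^C)^{\kappa, \mu})_{\dot{F}_1(e_k), k=2,\ldots,7}$ — the block $D_2$, the part $\tilde{A}_1(a)$ with $a\in\C^C$, the part $(\tau_1E_1+\tau_2E_2+\tau_3E_3+F_1(t_1))^\sim$ with $t_1\in\C^C$, and the $A,B,\nu$ parts — commutes with $\sigma'_{\!\!4}$, because $\sigma'_{\!\!4}$ fixes $E_1,E_2,E_3$ and $F_1(e_0),F_1(e_1)$ (Lemma \ref{lem 3.17}).

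Then I would define $\varphi : S\!pin(6,C)\times S\!pin(6,C) \to ({({E_7}^C)}^{\kappa, \mu})^{\sigma'_{\!\!4}}$ by $\varphi((\beta_1,\beta_2)) = \beta_1\beta_2$, with $\beta_1$ read inside $G_1$ and $\beta_2$ inside $G_2$. That $\varphi$ is a homomorphism amounts to the elementwise commutativity of $G_1$ and $G_2$: by Lemma \ref{lem 3.14} the Lie algebra of $G_1$ is spanned by the $G_{ij}$ with $2\le i<j\le7$, that is, the ``$D_6$-block'' of Lemma \ref{lem 4.18}, while the Lie algebra of $G_2$ (Lemma \ref{lem 4.14}) is precisely the complementary summand there; since a $D_6\in\mathfrak{so}(6,C)$ acts, as a derivation of $\mathfrak{J}^C$, only on the $e_2,\ldots,e_7$-coordinates, whereas all generators of the Lie algebra of $G_2$ involve only $\C^C=\{e_0,e_1\}_C$, the idempotents $E_i$, and the $\mathfrak{P}^C$-components, the cross brackets vanish (via the triality bracket relations in ${\mathfrak{f}_4}^C$), so $({({\mathfrak{e}_7}^C)}^{\kappa, \mu})^{\sigma'_{\!\!4}}$ is the direct sum of these two commuting pieces and $[G_1,G_2]=1$ by connectedness. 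Thus $\varphi$ is a homomorphism, and its kernel is $\{(\beta,\beta^{-1}):\beta\in G_1\cap G_2\}$. An $\alpha\in G_1\cap G_2$ fixes $E_1,E_2,E_3$ and $F_1(e_k)$ for all $k=0,\ldots,7$, so, writing $\alpha=(\alpha_1,\alpha_2,\alpha_3)\in S\!pin(8,C)$ (Theorem \ref{thm 3.1}), we get $\alpha_1=1$ and hence $\alpha=(1,1,1)=1$ or $\alpha=(1,-1,-1)=\sigma$; since $\sigma^{-1}=\sigma$ this gives $\Ker\,\varphi=\{(1,1),(\sigma,\sigma)\}\cong\Z_2$.

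It remains to prove $\varphi$ is surjective. Since $\Ker\,\varphi$ is discrete and $\dim_C(S\!pin(6,C)\times S\!pin(6,C))=15+15=30=\dim_C(({({\mathfrak{e}_7}^C)}^{\kappa, \mu})^{\sigma'_{\!\!4}})$ by Lemma \ref{lem 4.18}, Lemma \ref{lemma 2.1} applies as soon as $({({E_7}^C)}^{\kappa, \mu})^{\sigma'_{\!\!4}}$ is known to be connected, which I would obtain in the same way as the connectedness results of the previous sections (or by viewing this group as the fixed-point subgroup of a finite-order transformation of $({E_7}^C)^{\kappa,\mu}$, cf. Lemma \ref{lemma 2.2}). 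Equivalently one may argue surjectivity directly: $({({E_7}^C)}^{\kappa, \mu})^{\sigma'_{\!\!4}}$ acts on the $6$-dimensional space $(V^C)^6$ of Lemma \ref{lem 3.14}, the resulting restriction homomorphism onto $S\!O(6,C)$ has kernel exactly $G_2$ (using $G_2\subset({E_7}^C)^{\sigma'_{\!\!4}}$) and restricts on $G_1$ to the covering $S\!pin(6,C)\to S\!O(6,C)$ of Theorem \ref{thm 3.16}, so it is onto and $({({E_7}^C)}^{\kappa, \mu})^{\sigma'_{\!\!4}}=G_1G_2=\Img\,\varphi$. The homomorphism theorem then yields $({({E_7}^C)}^{\kappa, \mu})^{\sigma'_{\!\!4}}\cong(S\!pin(6,C)\times S\!pin(6,C))/\Z_2$ with $\Z_2=\{(1,1),(\sigma,\sigma)\}$.

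The step I expect to cost the most work is the commutativity $[G_1,G_2]=1$, i.e. the vanishing of the brackets between the $\mathfrak{so}(8,C)$-generators $G_{ij}$ ($i,j\ge2$) and the $\tilde{A}_1$- and $(\,\cdot\,)^\sim$-parts, which requires the explicit triality relations in ${\mathfrak{f}_4}^C$; a secondary point is the book-keeping that identifies $\Ker\,p$ with $G_2$ and secures the connectedness of $({({E_7}^C)}^{\kappa, \mu})^{\sigma'_{\!\!4}}$.
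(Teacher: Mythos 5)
Your proposal is correct and follows essentially the same route as the paper: the same two copies of $S\!pin(6,C)$ (namely $({F_4}^C)_{E_1,E_2,E_3,F_1(e_k),k=0,1}$ via Lemma \ref{lem 3.17}/Proposition \ref{prop 4.2} and $(({E_7}^C)^{\kappa,\mu})_{\dot{F}_1(e_k),k=2,\ldots,7}$ via the Lie-algebra comparison of Lemmas \ref{lem 4.14} and \ref{lem 4.18}), the same multiplication map with commutativity checked on the Lie algebras, the same kernel computation reducing to $\delta_1=1$ in the $S\!pin(8,C)$ triple, and the same surjectivity argument from the dimension count $15+15=30$ together with the connectedness of $(({E_7}^C)^{\kappa,\mu})^{\sigma'_{\!\!4}}$, which the paper secures exactly as you suggest, via Lemma \ref{lemma 2.2} applied to the simply connected group $({E_7}^C)^{\kappa,\mu}\cong S\!pin(12,C)$.
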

\begin{proof}
Let $S\!pin(6,C) \cong ({F_4}^C)_{E_1, E_2, E_3, \dot{F}_1(e_k), k=0,1} \!\cong 
(({E_7}^C)^{\kappa, \mu})_{\ti{E}_1, \ti{E}_{-1},E_2 \dot{+} E_3, E_2 \dot{-} E_3, \dot{F}_1(e_k), k=0,1}$ (Theorem \ref{thm 3.16}, Proposition \ref{prop 4.2}) and one more $S\!pin(6,C) \cong (({E_7}^C)^{\kappa, \mu})_{{F}_1(e_k), k=2,\ldots,7}$ (Theorem \ref{thm 4.17}). Then we define a mapping $\varphi_{\kappa, \mu, \sigma'_{\!\!4}}: S\!pin(6, C) \times S\!pin(6, C) \to ({({E_7}^C)}^{\kappa, \mu})^{\sigma'_{\!\!4}}$ by 
$$
  \varphi_{\kappa, \mu, \sigma'_{\!\!4}}(\beta_1, \beta_2)=\beta_1 \beta_2.
$$

First, we have to prove that the mapping $\varphi_{\kappa, \mu, \sigma'_{\!\!4}}$ is well-defined. It follows from Lemma \ref{lem 3.17} and Proposition \ref{prop 4.2} that $S\!pin(6,C) \cong ({F_4}^C)_{E_1, E_2, E_3, {F}_1(e_k), k=0,1} \subset ({F_4}^C)^{\sigma'_{\!\!4}} \subset (({E_7}^C)^{\kappa, \mu})^{\sigma'_{\!\!4}}$, and since $S\!pin(6,C) \cong (({E_7}^C)^{\kappa, \mu})_{\dot{F}_1(e_k), k=2,\ldots,7}$ and $({({E_7}^C)}^{\kappa, \mu})^{\sigma'_{\!\!4}}$ are connected, in order to prove $S\!pin(6,C) \cong (({E_7}^C)^{\kappa, \mu})_{\dot{F}_1(e_k), k=2,\ldots,7} \subset {({E_7}^C)}^{\kappa, \mu})^{\sigma'_4}$,  it is sufficient to show that the Lie algebra $\mathfrak{spin}(6,C) \cong (({\mathfrak{e}_7}^C)^{\kappa, \mu})_{\dot{F}_1(e_k), k=2,\ldots,7}$ is the subalgebra of the Lie algebra $(({\mathfrak{e}_7}^C)^{\kappa, \mu})^{\sigma'_{\!\!4}}$. However, from Lemmas \ref{lem 4.14}, \ref{lem 4.18}, it is clear. Hence the mapping $\varphi_{\kappa, \mu, \sigma'_{\!\!4}}$ is well-defined.

Next, we shall show that the mapping $\varphi_{\kappa, \mu, \sigma'_{\!\!4}}$ is a homomorphism. Since $S\!pin(6,C) \cong ({F_4}^C)_{E_1, E_2, E_3,{F}_1(e_k), k=0,1}$ and $S\!pin(6,C) \cong (({E_7}^C)^{\kappa, \mu})_{\dot{F}_1(e_k), k=2,\ldots,7}$ are connected, in order to prove that the mapping $\varphi_{\kappa, \mu, \sigma'_{\!\!4}}$ is a homomorphism, 
it is sufficient to show that $\varPhi_1$ commutes with $\varPhi_2$, that is, $[\varPhi_1, \varPhi_2]$ $=0$ 
for $\varPhi_1 \in \mathfrak{spin}(6,C) \cong ({\mathfrak{f}_4}^C)_{E_1, E_2, E_3, \dot{F}_1(e_k), k=0,1}$ and $\varPhi_2 \in \mathfrak{spin}(6,C) \cong (({\mathfrak{e}_7}^C)^{\kappa, \mu})_{\dot{F}_1(e_k), k=2,\ldots,7}$. However, it is also clear from Lemmas \ref{lem 3.14}, \ref{lem 4.14}. 

We determine the $\Ker\,\varphi_{\kappa, \mu, \sigma'_4}$. From the definition of kernel, we have that 
\begin{eqnarray*}
\Ker\,\varphi_{\kappa, \mu, \sigma'_4}\!\!\!&=&\!\!\! \{ (\beta_1, \beta_2) \in S\!pin(6,C) \times S\!pin(6,C)\,|\, \varphi_{\kappa, \mu, \sigma'_4}(\beta_1, \beta_2) =1 \}
\\[1mm]
\!\!\!&=&\!\!\! \{ (\beta_1, \beta_2) \in S\!pin(6,C) \times S\!pin(6,C)\,|\,\beta_1\beta_2=1 \}
\\[1mm]
\!\!\!&=&\!\!\! \{ (\beta_1, \beta_2) \in S\!pin(6,C) \times S\!pin(6,C)\,|\,\beta_1={\beta_2}^{-1} \}.
\end{eqnarray*}
Then, from the condition $\beta_1={\beta_2}^{-1}$, we see
$
    \beta_1 \dot{F}_1(e_k)={\beta_2}^{-1}\dot{F}_1(e_k)=\dot{F}_1(e_k), k=2,\ldots, 7,
$
that is, $\beta_1 \dot{F}_1(e_k)=\dot{F}_1(e_k)$. 
Moreover, since $\beta_1 \in  S\!pin(6,C) \cong ({F_4}^C)_{E_1, E_2, E_3, \dot{F}_1(e_k), k=0,1}
%(\subset ({F_4}^C)_{E_1, E_2, E_3} \cong S\!pin(8, C))
$, we see that $\beta_1 \dot{F}_1(x)=\dot{F}_1(x)$ for all $x \in 
\mathfrak{C}^C$. Here, from $\beta_1 \in  ({F_4}^C)_{E_1, E_2, E_3} 
\cong S\!pin(8, C)$, $\beta_1$ can be expressed by  $\beta_1=(\delta_1, \delta_2, \delta_3) \in S\!O(8, C)^{\times 3}$ such that $(\delta_1 x)(\delta_2 
 y)=\ov{\delta_3 (\ov{x y})},$ $ x,y \in \mathfrak{C}^C$, and so we have that $\delta_1 x =x$ for all $x \in \mathfrak{C}^C$. Hence we have $\delta_1 =1$, and so we see that 
$$
\beta_1=(1, 1, 1)=1\quad {\text{or}}\quad \beta_1=(1, -1, -1)=\sigma.
$$
Hence it follows from the condition $\beta_1={\beta_2}^{-1}$ that $\beta_2=1$ or $\beta_2=\sigma$, 
that is,  $\Ker\,\varphi_{\kappa, \mu, \sigma'_4} \subset \{ (1, 1), (\sigma, \sigma) \}$ and vice versa. Thus we obtain that  $\Ker\,\varphi_{\kappa, \mu, \sigma'_4}$ $=\{ (1, 1), (\sigma, \sigma) \} \cong \Z_2$. Finally, we shall show that $\varphi_{\kappa, \mu, \sigma'_4}$ is surjection. Since $\Ker\,\varphi_{\kappa, \mu, \sigma'_4}$ is discrete, the group $({({E_7}^C)}^{\kappa, \mu})^{\sigma'_{\!\!4}}$ is connected because of ${({E_7}^C)}^{\kappa, \mu} \cong S\!pin(12,C)$ (see \cite [Proposition 4.6.10]{realization E_7}) and $\dim_{{}_C}(({({\mathfrak{e}_7}^C)}^{\kappa, \mu})^{\sigma'_4})$ $=30=\dim_{{}_C}(\mathfrak{so}(6,C)\oplus \mathfrak{so}(6,C))$ (Lemma \ref{lem 4.18}), $\varphi_{\kappa, \mu, \sigma'_4}$ is surjection. 

Therefore we have the required isomorphism 
$$
 ({({E_7}^C)}^{\kappa, \mu})^{\sigma'_{\!\!4}} \cong (S\!pin(6,C) \times S\!pin(6, C))/\Z_2 . 
$$
\end{proof}
%\vspace{1mm}
We determine the structure of the group $({E_7}^C)^{\sigma'_{\!\!4}}$ as one of aims of this section. 
\begin{lem}\label{lem 4.20}
The group $({E_7}^C)^{\sigma'_{\!\!4}}$ contains a subgroup
$$
      \psi(S\!L(2,C)) = \{ \psi(A) \in {E_7}^C  \, | \, A \in S\!L(2,C) \}
$$
which is isomorphic to the special linear group $S\!L(2,C) = \{ A \in M(2,C) \, | \, \det\,A = 1 \}$. Here, for $A \in S\!L(2,C)$, a mapping $\psi(A) : \mathfrak{P}^C \to \mathfrak{P}^C$ is defined by
$$
\psi(A)( \begin{pmatrix}\xi_1 & x_3 & \ov{x}_2 \cr
                             \ov{x}_3 & \xi_2 & x_1 \cr 
                             x_2 & \ov{x}_1 & \xi_3
                            \end{pmatrix},
                           \begin{pmatrix}\eta_1 & y_3 & \ov{y}_2 \cr
                             \ov{y}_3 & \eta_2 & y_1 \cr 
                             y_2 & \ov{y}_1 & \eta_3
                            \end{pmatrix}, \xi, \eta )
%\vspace{1mm}\\
%&=&
 =:(\begin{pmatrix}{\xi_1}' & {x_3}' & {\ov{x}_2}' \cr
                        {\ov{x}_3}' & {\xi_2}' & {x_1}' \cr
                         {x_2}' & {\ov{x}_1}' & {\xi_3}'
               \end{pmatrix},
               \begin{pmatrix}{\eta_1}' & {y_3}' & {\ov{y}_2}' \cr
                        {\ov{y}_3}' & {\eta_2}' & {y_1}' \cr
                         {y_2}' & {\ov{y}_1}' & {\eta_3}'
                \end{pmatrix},
                          \xi', \eta' ),
%\end{eqnarray*}
$$
where
$$
\begin{array}{c}
     \begin{pmatrix} {\xi_1}' \cr \eta'\end{pmatrix} = A\begin{pmatrix}\xi_1 \cr \eta\end{pmatrix}, \;\;
     \begin{pmatrix}\xi' \cr {\eta_1}'\end{pmatrix} = A\begin{pmatrix}\xi \cr \eta_1\end{pmatrix}, \;\;
     \begin{pmatrix}{\eta_2}'\cr {\xi_3}'\end{pmatrix} = A\begin{pmatrix}\eta_2 \cr \xi_3\end{pmatrix},\;\;
     \begin{pmatrix}{\eta_3}' \cr {\xi_2}'\end{pmatrix} = A\begin{pmatrix}\eta_3 \cr \xi_2\end{pmatrix},
\vspace{2mm}\\
     \begin{pmatrix}{x_1}' \cr {y_1}'\end{pmatrix} = \tau A\begin{pmatrix}x_1 \cr y_1\end{pmatrix}, \;\;
    \begin{pmatrix}{x_2}' \cr {y_2}'\end{pmatrix}=\begin{pmatrix}x_2 \cr y_2\end{pmatrix},  \;\;
    \begin{pmatrix}{x_3}' \cr {y_3}'\end{pmatrix}= \begin{pmatrix}x_3 \cr y_3\end{pmatrix}.
\end{array}
$$
\end{lem}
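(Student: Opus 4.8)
The plan is to verify directly that the explicitly given map $\psi(A)$ lies in ${E_7}^C$, commutes with ${\sigma'}_{\!\!4}$, and that $A \mapsto \psi(A)$ is an injective homomorphism of $S\!L(2,C)$ into $({E_7}^C)^{{\sigma'}_{\!\!4}}$. First I would check that $\psi$ is a homomorphism: since the action of $\psi(A)$ on $\mathfrak{P}^C$ is assembled from the left action of $A$ (or $\tau A$) on various $2$-dimensional coordinate blocks — the pairs $(\xi_1,\eta)$, $(\xi,\eta_1)$, $(\eta_2,\xi_3)$, $(\eta_3,\xi_2)$, $(x_1,y_1)$ — and the identity on $(x_2,y_2)$, $(x_3,y_3)$, one computes $\psi(A)\psi(B)=\psi(AB)$ block-by-block, using that $\tau$ is an automorphism of $\mathfrak{C}^C$ so $\tau(AB)=(\tau A)(\tau B)$ acts correctly on the $(x_1,y_1)$-block. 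Injectivity is immediate: if $\psi(A)$ is the identity then reading off the $(\xi_1,\eta)$-block forces $A=E$.

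Next I would show $\psi(A)\in{E_7}^C$, i.e. $\psi(A)(P\times Q)\psi(A)^{-1}=\psi(A)P\times\psi(A)Q$ for all $P,Q\in\mathfrak{P}^C$. This is the computational heart of the lemma, but it follows the standard pattern used for embeddings of $S\!L(2,C)$ into ${E_7}^C$ (as in Yokota's realization): it suffices to check the identity on generators of $S\!L(2,C)$, say the upper/lower unipotents $\begin{pmatrix}1&b\\0&1\end{pmatrix}$, $\begin{pmatrix}1&0\\c&1\end{pmatrix}$ and the diagonal torus $\mathrm{diag}(a,a^{-1})$, and for the unipotents one recognizes $\psi$ of the generator as $\exp$ of an element of ${\mathfrak{e}_7}^C$ of the form $\varPhi(0,bE_1,0,0)$-type or its transpose, so that membership in ${E_7}^C$ is automatic; for the torus part one checks the defining relation on the Freudenthal cross product directly from the formula for $P\times Q$. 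Then I would note that $\psi$ is continuous (it is polynomial in the entries of $A$), so $\psi(S\!L(2,C))$ is a connected subgroup of ${E_7}^C$ isomorphic to $S\!L(2,C)$.

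Finally I would verify ${\sigma'}_{\!\!4}\psi(A)=\psi(A){\sigma'}_{\!\!4}$. Recall ${\sigma'}_{\!\!4}(X,Y,\xi,\eta)=({\sigma'}_{\!\!4}X,{\sigma'}_{\!\!4}Y,\xi,\eta)$, where ${\sigma'}_{\!\!4}$ fixes the diagonal entries $\xi_1,\xi_2,\xi_3$ (hence also $\eta_1,\eta_2,\eta_3$) and the scalars $\xi,\eta$, sends the off-diagonal entry $x_1$ to $-e_1 x_1 e_1$, and permutes/twists $x_2,x_3$ by multiplication by $e_1$. Since $\psi(A)$ acts on the diagonal/scalar blocks by $A$-linear combinations that involve no octonion multiplication, it trivially commutes with ${\sigma'}_{\!\!4}$ there; on the $(x_1,y_1)$-block $\psi(A)$ acts by $\tau A$, i.e. coefficientwise by scalars from $C$, and $X\mapsto -e_1Xe_1$ is also $C$-linear and commutes with scalar multiplication, so the two operations commute; on the $(x_2,y_2)$- and $(x_3,y_3)$-blocks $\psi(A)$ is the identity, so commutativity is trivial. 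Hence ${\sigma'}_{\!\!4}\psi(A)={\psi(A)}{\sigma'}_{\!\!4}$ for all $A$, giving $\psi(S\!L(2,C))\subset({E_7}^C)^{{\sigma'}_{\!\!4}}$ as claimed. The main obstacle is the verification that $\psi(A)\in{E_7}^C$: the cross-product identity involves every component of $\varPhi(\phi,A,B,\nu)$ including the $X\vee W$-term in ${\mathfrak{e}_6}^C$, so the cleanest route is to reduce to the generators and to the $\exp$-of-Lie-algebra-element description rather than expanding $P\times Q$ in full generality, and then to invoke connectedness of $S\!L(2,C)$ to conclude.
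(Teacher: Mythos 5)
Your proposal is correct and follows essentially the same route as the paper: the paper exponentiates the element $\varPhi(\phi(\nu), aE_1, bE_1, \nu)$ of $({\mathfrak{e}_7}^C)^{\sigma'_{\!\!4}}$ (with $\phi(\nu)=\tfrac{2}{3}\nu(2E_1-(E_2+E_3))^\sim$), computes that it acts block-diagonally on $\mathfrak{P}^C$ by the traceless matrix $\left(\begin{smallmatrix}\nu & a\\ b & -\nu\end{smallmatrix}\right)$, and concludes $\exp\varPhi(\phi(\nu),aE_1,bE_1,\nu)=\psi(A)$ for $A=\exp\left(\begin{smallmatrix}\nu & a\\ b & -\nu\end{smallmatrix}\right)$, so membership in $({E_7}^C)^{\sigma'_{\!\!4}}$ comes for free rather than via your separate commutation check. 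Your explicit verification that $\psi$ is multiplicative, combined with the reduction to generators, in fact tidies up a point the paper leaves implicit, namely that $\exp:\mathfrak{sl}(2,C)\to S\!L(2,C)$ is not surjective and one must use that its image generates (equivalently, that every element of $S\!L(2,C)$ is a product of exponentials) to cover all of $\psi(S\!L(2,C))$.
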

\begin{proof}The action of ${\varPhi}(\phi(\nu), aE_1, bE_1, \nu) \in ({\mathfrak{e}_7}^C)^{\sigma'_{\!\!4}}$(Lemma \ref{lem 4.1}) ($\phi(\nu) = ({2}/{3})\nu(2E_1-(E_2+E_3))^\sim,  a, b, \nu \in C$) on $\mathfrak{P}^C$ is as follows:
$$
     {\varPhi}(\phi(\nu), aE_1, bE_1, \nu)(X, Y, \xi, \eta) =:
(X', Y', \xi', \eta'), 
$$
where 
$$
\begin{array}{ll}
     \begin{pmatrix}{\xi_1} '\cr \eta'\end{pmatrix} = \begin{pmatrix}\nu & a \cr 
                                             b & -\nu\end{pmatrix}
    \begin{pmatrix}\xi_1 \cr \eta \end{pmatrix}, &
    \begin{pmatrix}\xi' \cr {\eta_1}'\end{pmatrix} = \begin{pmatrix}\nu & a \cr 
                                             b & -\nu\end{pmatrix}
     \begin{pmatrix}\xi \cr \eta_1\end{pmatrix},
\vspace{2mm}\\
    \begin{pmatrix}{\eta_2}' \cr {\xi_3}'\end{pmatrix} = \begin{pmatrix}\nu & a \cr 
                                            b & -\nu\end{pmatrix}
    \begin{pmatrix}\eta_2 \cr \xi_3\end{pmatrix}, &
  \begin{pmatrix}{\eta_3}' \cr {\xi_2}'\end{pmatrix} = \begin{pmatrix}\nu & a \cr
                                              b & - \nu\end{pmatrix}
     \begin{pmatrix}\eta_3 \cr \xi_2\end{pmatrix},
\vspace{2mm}\\
 \begin{pmatrix}{x_1}' \cr {y_1}'\end{pmatrix} =\begin{pmatrix} \tau\nu & \tau a \cr \tau b & -\tau \nu\end{pmatrix}
     \begin{pmatrix}x_1 \cr y_1\end{pmatrix}, & 
    \begin{pmatrix}{x_2}' \cr {y_2}'\end{pmatrix} =\begin{pmatrix}{x_3}' \cr {y_3}'\end{pmatrix} = \begin{pmatrix}0 \cr 0\end{pmatrix}.
\end{array}
$$

Therefore, for $A = \exp\begin{pmatrix}\nu & a \cr
                                b & -\nu\end{pmatrix}\in S\!L(2, C)\, (\begin{pmatrix}\nu & a \cr
                                b & -\nu\end{pmatrix} \in \mathfrak{sl}(2,C))$, we have that
$$
     \exp({\varPhi}(\phi(\nu), aE_1, bE_1, \nu)) = \psi(A) \in \psi(S\!L(2,C)) \subset ({E_7}^C)^{\sigma'_4}.
$$
\end{proof}

\begin{thm}\label{thm 4.21}
We have that $ ({E_7}^C)^{\sigma'_{\!\!4}} \cong (S\!L(2, C) \times S\!pin(6,C) \times S\!pin(6, C))/\Z_4 , \Z_4=\{(E, 1,1), (E, \sigma, \sigma), (-E,\sigma'_{\!\!4}, -\sigma'_{\!\!4}) , (-E,\sigma\sigma'_{\!\!4}, -\sigma\sigma'_{\!\!4}) \}$.
\end{thm}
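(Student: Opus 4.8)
The plan is to realize $({E_7}^C)^{\sigma'_{\!\!4}}$ via the homomorphism theorem, using the two copies of $S\!pin(6,C)$ already constructed together with the subgroup $\psi(S\!L(2,C))$ of Lemma \ref{lem 4.20}. Writing $S\!pin(6,C) \cong ({F_4}^C)_{E_1, E_2, E_3, F_1(e_k), k=0,1}$ for the first copy (Theorem \ref{thm 3.16}) and $S\!pin(6,C) \cong (({E_7}^C)^{\kappa, \mu})_{\dot{F}_1(e_k), k=2,\ldots,7}$ for the second (Theorem \ref{thm 4.17}), I would define
\[
  \varphi : S\!L(2,C) \times S\!pin(6,C) \times S\!pin(6,C) \to ({E_7}^C)^{\sigma'_{\!\!4}}, \qquad \varphi(A, \beta_1, \beta_2) = \psi(A)\,\beta_1\,\beta_2,
\]
and show it is a surjective homomorphism whose kernel is exactly the $\Z_4$ of the statement.

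First I would verify that $\varphi$ is well defined: $\psi(A) \in ({E_7}^C)^{\sigma'_{\!\!4}}$ by Lemma \ref{lem 4.20}; the first $S\!pin(6,C)$ lies in $({F_4}^C)^{\sigma'_{\!\!4}} \subset ({E_7}^C)^{\sigma'_{\!\!4}}$ by Lemma \ref{lem 3.17}; and the second $S\!pin(6,C)$ lies in $({E_7}^C)^{\sigma'_{\!\!4}}$, as was already observed in the proof of Proposition \ref{prop 4.19} (via Lemmas \ref{lem 4.14} and \ref{lem 4.18}). Next, since the three subgroups are connected, $\varphi$ is a homomorphism as soon as their Lie algebras commute pairwise. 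For $\varPhi = \varPhi(\phi(\nu), aE_1, bE_1, \nu)$ spanning $\mathfrak{sl}(2,C)$ (Lemma \ref{lem 4.20}) and $\delta \in ({\mathfrak{f}_4}^C)_{E_1, E_2, E_3, F_1(e_k), k=0,1}$ (Lemma \ref{lem 3.14}), the bracket $[\varPhi(\delta, 0,0,0), \varPhi]$ vanishes at once because $\delta$ annihilates $E_1, E_2, E_3$ and $F_1(\C^C)$; the two copies of $\mathfrak{so}(6,C)$ commute by the same computation as in Proposition \ref{prop 4.19} (Lemmas \ref{lem 3.14}, \ref{lem 4.14}); and the remaining bracket $[\varPhi, \varPhi_2] = 0$ for $\varPhi_2 \in (({\mathfrak{e}_7}^C)^{\kappa, \mu})_{\dot{F}_1(e_k), k=2,\ldots,7}$ has to be checked by a direct computation in ${\mathfrak{e}_7}^C$. (Equivalently, comparing Lemma \ref{lem 4.1}\,(1) with Lemmas \ref{lem 3.14}, \ref{lem 4.14} and the Lie algebra of $\psi(S\!L(2,C))$ shows that these three subalgebras meet pairwise only in $0$ and together span $({\mathfrak{e}_7}^C)^{\sigma'_{\!\!4}}$, of dimension $3 + 15 + 15 = 33$, so that $({\mathfrak{e}_7}^C)^{\sigma'_{\!\!4}} = \mathfrak{sl}(2,C) \oplus \mathfrak{so}(6,C) \oplus \mathfrak{so}(6,C)$.)

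For the kernel I would start from $\psi(A)\beta_1\beta_2 = 1$, i.e.\ $\beta_1 = \psi(A)^{-1}\beta_2^{-1}$, and apply both sides to $\dot{F}_1(e_k)$, $k = 2, \ldots, 7$. Since $\beta_2$ fixes $\dot{F}_1(e_k)$, the right-hand side is $\psi(A^{-1})\dot{F}_1(e_k)$, while $\beta_1 \in {F_4}^C \subset {E_6}^C$ maps $\dot{X} = (X,0,0,0)$ to $(\beta_1 X, 0,0,0)$; reading off the explicit form of $\psi$ in Lemma \ref{lem 4.20}, this forces the lower-left entry of $A$ to vanish. Writing $\beta_1 = (\delta_1, \delta_2, \delta_3) \in S\!pin(8,C)$ (Theorem \ref{thm 3.1}), $\delta_1$ is then the identity on $\C^C$ and a scalar on $(\C^C)^\perp$, and orthogonality of $\delta_1$, together with a short normalization (using $\mu\psi(A) = \psi(A)\mu$ to kill the remaining off-diagonal parameter of $A$), forces $A = E$ or $A = -E$. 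If $A = E$, then $\beta_1$ fixes every $F_1(e_k)$, so $\delta_1 = 1$, hence $\beta_1 \in \{1, \sigma\}$ and $\beta_2 = \beta_1^{-1}$, giving $(E,1,1)$ and $(E,\sigma,\sigma)$. If $A = -E$, then $\delta_1$ agrees with the first triality component of $\sigma'_{\!\!4}$, so $\beta_1 \in \{\sigma'_{\!\!4}, \sigma\sigma'_{\!\!4}\}$, and using the relation $\psi(-E) = -\ti{\sigma}$ in ${E_7}^C$ one obtains $\beta_2 = -\sigma'_{\!\!4}$, resp.\ $\beta_2 = -\sigma\sigma'_{\!\!4}$. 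Hence $\Ker\,\varphi$ consists exactly of the four listed triples, and it is cyclic of order $4$ generated by $(-E, \sigma'_{\!\!4}, -\sigma'_{\!\!4})$ (whose square is $(E, \sigma, \sigma)$).

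Finally, $({E_7}^C)^{\sigma'_{\!\!4}}$ is connected by Lemma \ref{lemma 2.2} (as $\tilde{\sigma'}_{\!\!4}$ is a finite order automorphism of the simply connected group ${E_7}^C$), $\Ker\,\varphi$ is finite, and $\dim_C(\mathfrak{sl}(2,C) \oplus \mathfrak{so}(6,C) \oplus \mathfrak{so}(6,C)) = 33 = \dim_C(({\mathfrak{e}_7}^C)^{\sigma'_{\!\!4}})$ by Lemma \ref{lem 4.1}\,(1), so $\varphi$ is surjective by Lemma \ref{lemma 2.1}. This gives $({E_7}^C)^{\sigma'_{\!\!4}} \cong (S\!L(2, C) \times S\!pin(6,C) \times S\!pin(6, C))/\Z_4$. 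I expect the genuinely delicate points to be the commutativity of $\psi(S\!L(2,C))$ with the $\kappa,\mu$-side $S\!pin(6,C)$ — an honest bracket computation in ${\mathfrak{e}_7}^C$, the check against the $({F_4}^C)$-side being immediate — and the bookkeeping in the kernel computation needed to pin $A$ down to $\pm E$ and to match the four kernel elements with the stated triples.
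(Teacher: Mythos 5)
Your proposal is correct and uses the same map $\varphi(A,\beta_1,\beta_2)=\psi(A)\beta_1\beta_2$ and the same homomorphism-theorem strategy as the paper, but it substitutes different supporting arguments at the two key junctures. For surjectivity the paper starts from the decomposition $({E_7}^C)^{\sigma}\cong (S\!L(2,C)\times S\!pin(12,C))/\Z_2$ of \cite[Theorem 4.6.13]{realization E_7}: any $\alpha\in({E_7}^C)^{\sigma'_{\!\!4}}\subset({E_7}^C)^{\sigma}$ is written as $\psi(A)\beta$ with $\beta\in({E_7}^C)^{\kappa,\mu}$, and the condition $\sigma'_{\!\!4}\alpha=\alpha\sigma'_{\!\!4}$ forces $\beta\in(({E_7}^C)^{\kappa,\mu})^{\sigma'_{\!\!4}}$, whence Proposition \ref{prop 4.19} applies; you instead invoke connectedness of $({E_7}^C)^{\sigma'_{\!\!4}}$ via Lemma \ref{lemma 2.2} together with the dimension count $3+15+15=33=\dim_C(({\mathfrak{e}_7}^C)^{\sigma'_{\!\!4}})$ and Lemma \ref{lemma 2.1}, which is a legitimate and arguably more self-contained route. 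For the kernel the paper again reads off $A=\pm E$ from $\Ker\,\varphi=\{(E,1),(-E,-\sigma)\}$ of the cited theorem and then applies $\Ker\,\varphi_{\kappa,\mu,\sigma'_{\!\!4}}=\{(1,1),(\sigma,\sigma)\}$, whereas you evaluate $\psi(A)^{-1}\beta_2^{-1}$ on $\dot{F}_1(e_k)$ and use the constraint that $\psi(A)=(\beta_1\beta_2)^{-1}$ must commute with $\mu$ (which indeed lies in $\psi_*(\mathfrak{sl}(2,C))$, so this kills the off-diagonal parameter once $a=\pm1$ is forced by orthogonality of $\delta_1$); this works, and your identification of $\beta_1$ with $\sigma'_{\!\!4}$ or $\sigma\sigma'_{\!\!4}$ via $\delta_1=\sigma'_1$ matches the paper. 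Two small points to tighten: the reverse inclusion $\Z_4\subset\Ker\,\varphi$ requires checking that $-\sigma'_{\!\!4}$ and $-\sigma\sigma'_{\!\!4}$ actually lie in the second copy $S\!pin(6,C)\cong(({E_7}^C)^{\kappa,\mu})_{\dot{F}_1(e_k),k=2,\ldots,7}$ (the paper verifies this explicitly in Case (ii) of its kernel computation); and your parenthetical ``equivalently'' is not quite an equivalence --- a vector-space direct sum of the three subalgebras does not by itself give the vanishing of $[\psi_*(\mathfrak{sl}(2,C)),(({\mathfrak{e}_7}^C)^{\kappa,\mu})_{\dot{F}_1(e_k),k=2,\ldots,7}]$, which is exactly the bracket computation (or the appeal to \cite[Theorem 4.6.13]{realization E_7}) that you correctly single out as the delicate step.
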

\begin{proof}
Let $S\!L(2,C)=\{A \in M(2,C)\,|\, \det\,A=1 \}$ and two $S\!pin(6,C)$ as in Proposition \ref{prop 4.19}. Then we define a mapping $\varphi_{{E_7}^C,\sigma'_4}: S\!L(2,C) \times S\!pin(6,C) \times S\!pin(6,C) \to  ({E_7}^C)^{\sigma'_{\!\!4}}$ by
$$
 \varphi_{{E_7}^C, \sigma'_{\!\!4}}(A, \beta_1, \beta_2)=\psi(A)\beta_1\beta_2.     
$$
From Lemma \ref{lem 4.20} and Proposition \ref{prop 4.19}, it is clear that the mapping $\varphi_{{E_7}^C,\sigma'_{\!\!4}}$ is well-defined. It is easy to verify that $\varphi_{{E_7}^C,\sigma'_{\!\!4}}$ is a homomorphism. Indeed, note that $\beta_1, \beta_2 \in S\!pin(12, C) $ $\cong ({E_7}^C)^{\kappa, \mu}$. From \cite [Theorem 4.6.13]{realization E_7}, we see that $\psi(A)$ commutes with $\beta_1, \beta_2$, respectively. Moreover, as in Proposition \ref{prop 4.19},\, $\beta_1$ commutes with $\beta_2$. Hence since $\psi(A), \beta_1,\beta_2$ commute each other,  
 $\varphi_{{E_7}^C,\sigma'_{\!\!4}}$ is a homomorphism. We shall show that $\varphi_{{E_7}^C,\sigma'_{\!\!4}}$ is surjection. For $\alpha \in ({E_7}^C)^{\sigma'_{\!\!4}} \subset ({E_7}^C)^{\sigma}$, there exist $A \in S\!L(2,C)$ and $\beta \in S\!pin(12, C)$ such that $\alpha=\varphi(A, \beta)$ (see \cite [Theorem 4.6.13]{realization E_7}). Moreover, from the condition $\sigma'_{\!\!4} \alpha=\alpha\sigma'_{\!\!4}$, we have that
$$
\left\{\begin{array}{l}
    A=A \\
    \sigma'_{\!\!4} \,\beta {\sigma'_{\!\!4}}^{-1}=\beta
       \end{array} \right.         \quad {\text{or}}\quad \left\{\begin{array}{l}
    A=-A \\
    \sigma'_{\!\!4} \,\beta {\sigma'_{\!\!4}}^{-1}=-\sigma\beta.
       \end{array} \right.  
$$
Then the latter case is impossible because of $A=0$. As for the former case, from Proposition \ref{prop 4.19}, there exist $\beta_1 \in S\!pin(6,C)$ and $\beta_2 \in S\!pin(6,C)$ such that $\beta=\varphi_{\kappa, \mu, \sigma'_{\!\!4}}(\beta_1, \beta_2)$. Thus, $\varphi_{{E_7}^C,\sigma'_{\!\!4}}$ is surjection. Finally, we determine the $\Ker\,\varphi_{{E_7}^C,\sigma'_{\!\!4}}$. From $\Ker\,\varphi=\{(E,1), (-E,$ $-\sigma) \}$ (see \cite [Theorem 4.6.13]{realization E_7}) , we have that
\begin{eqnarray*}
\Ker\,\varphi_{{E_7}^C,\sigma'_4}\!\!\!&=&\!\!\!\left\{(A, \beta_1, \beta_2) \in S\!L(2, C) \times S\!pin(6,C) \times S\!pin(6, C) \,\left|\,
% \left\{\begin{array}{l}
    A=E ,\,\, \beta_1 \beta_2=1 
%\end{array} \right. ,
%\left\{\begin{array}{l}
%    A=-E \\
 %   \beta_1 \beta_2=-\sigma \end{array} \right.
\right. \right \}
\vspace{1mm}\\
&\cup & \left \{(A, \beta_1, \beta_2) \in S\!L(2, C) \times S\!pin(6,C) \times S\!pin(6, C) \,\left|\,
% \left\{\begin{array}{l}
%    A=E \\
%    \beta_1 \beta_2=1 \end{array} \right. ,
%\left\{\begin{array}{l}
    A=-E, \,\,\beta_1 \beta_2=-\sigma 
%\end{array} \right.
\right. \right \}.
\end{eqnarray*}
%note that the mapping $\varphi$ is defined in \cite [Theorem 4.6.13]{realization E_7}.

\,Case (i) where $A=E, \,\,\beta_1 \beta_2=1$.

From $\Ker\,\varphi_{\kappa, \mu, \sigma'_4}=\{ (1, 1), (\sigma, \sigma) \}$ (Proposition \ref{prop 4.19}), we have that 
$$
 \left\{\begin{array}{l}
    A=E \\
    \beta_1=1 \\
    \beta_2=1
        \end{array} \right.  \quad {\text{or}} \quad \left\{\begin{array}{l}
    A=E \\
    \beta_1=\sigma \\
    \beta_2=\sigma.
        \end{array} \right.
$$

\,Case (ii) where $A=-E, \,\,\beta_1 \beta_2=-\sigma$.

Since $\beta_1 \beta_2=-\sigma \in (({E_7}^C)^{\kappa, \mu})^{\sigma'_{\!\!4}}$, there exist $\beta_1 \in S\!pin(6, C)$ and $\beta_2 \in S\!pin(6, C)$ such that $-\sigma=\beta_1 \beta_2$ (Proposition \ref{prop 4.19}). Here, we easily see that 
\begin{eqnarray*}
  && \sigma'_{\!\!4} \in  S\!pin(6, C) \cong({F_4}^C)_{E_1, E_2, E_3, F_1(e_k), k=0,1} \cong 
(({E_7}^C)^{\kappa, \mu})_{\ti{E}_1, \ti{E}_{-1},E_2 \dot{+} E_3, E_2 \dot{-} E_3, \dot{F}_1(e_k), k=0,1} ,
\\
&&-\sigma'_{\!\!4} \in   S\!pin(6, C) \cong (({E_7}^C)^{\kappa, \mu})_{\dot{F}_1(e_k), k=2,\ldots,7},
\end{eqnarray*}
and $\sigma'_{\!\!4}(-\sigma'_{\!\!4})=-\sigma$, and so together with  $\Ker\,\varphi_{\kappa, \mu, \sigma'_{\!\!4}}=\{(1,1), (\sigma, \sigma) \}$, we have that 
$$
      \left \{ \begin{array}{l}
            \beta_1=\sigma'_{\!\!4} \\
            \beta_2=-\sigma'_{\!\!4}
                  \end{array} \right.         \quad {\text{or}} \quad  \left \{ \begin{array}{l}
            \beta_1=\sigma\sigma'_{\!\!4} \\
            \beta_2=\sigma(-\sigma'_{\!\!4})
                  \end{array} \right.. 
$$
%such that $\beta_1\beta_2=-\sigma$.

\noindent Hence we see  
$$
\Ker\,\varphi_{{E_7}^C,\sigma'_4} \subset \{(E, 1,1), (E, \sigma, \sigma), (-E,\sigma'_4, -\sigma'_4) , (-E,\sigma\sigma'_4, -\sigma\sigma'_4) \},
$$
and vice versa. Thus we obtain that
$$
\Ker\,\varphi_{{E_7}^C,\sigma'_4} = \{(E, 1,1), (E, \sigma, \sigma), (-E,\sigma'_4, -\sigma'_4) , (-E,\sigma\sigma'_4, -\sigma\sigma'_4) \} \cong \Z_4.
$$

Therefore we have the required isomorphism 
$$
({E_7}^C)^{\sigma'_4} \cong (S\!L(2, C) \times S\!pin(6,C) \times S\!pin(6, C))/\Z_4.
$$
\end{proof} 
\vspace{-1mm} 
 
Now, we determine the structure of the group $({E_7}^C)^{\sigma'_{\!\!4}, \mathfrak{so}(6,C)}$, and  prove the connectedness of its group as another aim of this section.

\begin{thm}\label{thm 4.22}
We have that $({E_7}^C)^{\sigma'_{\!\!4}, \mathfrak{so}(6,C)} \cong S\!L(2, C) \times S\!pin(6,C)$. 

In particular, the group $({E_7}^C)^{\sigma'_{\!\!4}, \mathfrak{so}(6,C)}$ is connected.
\end{thm}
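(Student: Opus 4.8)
The plan is to exhibit $({E_7}^C)^{\sigma'_{\!\!4}, \mathfrak{so}(6,C)}$ as the subgroup of $({E_7}^C)^{\sigma'_{\!\!4}}$ obtained by deleting the $S\!pin(6,C)$ factor that acts nontrivially on $\mathfrak{so}(6,C)$. Recall from Theorem \ref{thm 4.21} that $({E_7}^C)^{\sigma'_{\!\!4}} \cong (S\!L(2,C) \times S\!pin(6,C) \times S\!pin(6,C))/\Z_4$ via $\varphi_{{E_7}^C, \sigma'_{\!\!4}}(A, \beta_1, \beta_2) = \psi(A)\beta_1\beta_2$, where the first $S\!pin(6,C)$ is $({F_4}^C)_{E_1,E_2,E_3,F_1(e_k),k=0,1}$ (which contains the subalgebra $\mathfrak{so}(6,C) \cong ({\mathfrak{f}_4}^C)_{E_1,E_2,E_3,F_1(e_k),k=0,1}$ generating the $\varPhi_{D_6}$) and the second $S\!pin(6,C)$ is $(({E_7}^C)^{\kappa,\mu})_{\dot{F}_1(e_k),k=2,\ldots,7}$. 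First I would restrict $\varphi_{{E_7}^C,\sigma'_{\!\!4}}$ to $S\!L(2,C) \times \{1\} \times S\!pin(6,C)$ and define $\varphi : S\!L(2,C) \times S\!pin(6,C) \to ({E_7}^C)^{\sigma'_{\!\!4},\mathfrak{so}(6,C)}$ by $\varphi(A,\beta_2) = \psi(A)\beta_2$.

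The key steps in order: (1) \textbf{Well-definedness.} Show $\psi(A)\beta_2 \in ({E_7}^C)^{\sigma'_{\!\!4}, \mathfrak{so}(6,C)}$, i.e. that both $\psi(A)$ and $\beta_2 \in (({E_7}^C)^{\kappa,\mu})_{\dot{F}_1(e_k),k=2,\ldots,7}$ commute with every $\varPhi_{D_6}$, $D_6 \in \mathfrak{so}(6,C)$. For $\psi(A)$ this follows because $\psi(S\!L(2,C))$ only mixes the $E_1$-components and the $C$-coordinates of $\mathfrak{P}^C$ (Lemma \ref{lem 4.20}), while $\mathfrak{so}(6,C)$ acts on the complementary part $F_1(e_k), k=2,\ldots,7$; for $\beta_2$ it follows from $[\,(({\mathfrak{e}_7}^C)^{\kappa,\mu})_{\dot{F}_1(e_k),k=2,\ldots,7},\ ({\mathfrak{f}_4}^C)_{E_1,E_2,E_3,F_1(e_k),k=0,1}\,]=0$, already used in the proof of Proposition \ref{prop 4.19} (compare Lemmas \ref{lem 3.14}, \ref{lem 4.14}). (2) \textbf{Homomorphism.} Since $\psi(A)$ commutes with $\beta_2$ (again by \cite[Theorem 4.6.13]{realization E_7} or directly from the disjoint supports), $\varphi$ is a homomorphism. (3) \textbf{Injectivity.} Compute $\Ker\,\varphi$: if $\psi(A)\beta_2 = 1$ then, since this lies in $\Ker\,\varphi_{{E_7}^C,\sigma'_{\!\!4}}$ restricted to $\beta_1 = 1$, Theorem \ref{thm 4.21} forces $(A,1,\beta_2) \in \{(E,1,1)\}$, hence $A = E, \beta_2 = 1$; so $\varphi$ is injective. (4) \textbf{Surjectivity.} Take $\alpha \in ({E_7}^C)^{\sigma'_{\!\!4}, \mathfrak{so}(6,C)} \subset ({E_7}^C)^{\sigma'_{\!\!4}}$; by Theorem \ref{thm 4.21} write $\alpha = \psi(A)\beta_1\beta_2$. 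The condition $\varPhi_{D_6}\alpha = \alpha\varPhi_{D_6}$ for all $D_6 \in \mathfrak{so}(6,C)$, together with the fact that $\mathfrak{so}(6,C)$ sits inside the first $S\!pin(6,C)$ and is centralized by both $\psi(S\!L(2,C))$ and the second $S\!pin(6,C)$, should force $\beta_1$ to centralize $\mathfrak{so}(6,C)$ inside $S\!pin(6,C) \cong ({F_4}^C)_{E_1,E_2,E_3,F_1(e_k),k=0,1}$; since $S\!pin(6,C)$ is (almost) simple, its center is finite, and modulo $\Ker\,\varphi_{{E_7}^C,\sigma'_{\!\!4}}$ one can absorb $\beta_1$ into the $\{(E,\sigma,\sigma),(-E,\sigma'_{\!\!4},-\sigma'_{\!\!4}),\ldots\}$ relations, reducing to $\beta_1 = 1$. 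Then $\alpha = \varphi(A,\beta_2)$.

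Alternatively, and perhaps more cleanly, surjectivity can be handled by a dimension count: from Lemma \ref{lem 4.1}(2), $\dim_C (({\mathfrak{e}_7}^C)^{\sigma'_{\!\!4},\mathfrak{so}(6,C)}) = 18 = 3 + 15 = \dim_C(\mathfrak{sl}(2,C) \oplus \mathfrak{so}(6,C))$, and $\Ker\,\varphi = \{1\}$ is discrete; if one knows the target $({E_7}^C)^{\sigma'_{\!\!4},\mathfrak{so}(6,C)}$ is connected then Lemma \ref{lemma 2.1} finishes the job. But connectedness of the target is precisely the assertion ``In particular'' of the theorem, so it cannot be assumed; instead I would first establish surjectivity via the structural argument of step (4) above, and then connectedness follows automatically because $S\!L(2,C) \times S\!pin(6,C)$ is connected and $\varphi$ is surjective.

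The main obstacle I anticipate is step (4), pinning down that the extra $\beta_1 \in S\!pin(6,C)$ must be trivial (in $({E_7}^C)^{\sigma'_{\!\!4}}$, i.e. modulo $\Z_4$). The cleanest route is to observe that centralizing all of $\varPhi_{D_6}$, $D_6 \in \mathfrak{so}(6,C) \cong ({\mathfrak{f}_4}^C)_{E_1,E_2,E_3,F_1(e_k),k=0,1}$, means $\beta_1$ centralizes the Lie algebra $\mathfrak{so}(6,C)$ inside the group $S\!pin(6,C)$ it generates; the centralizer of $\mathfrak{so}(6,C)$ in $S\!pin(6,C)$ is the center $Z(S\!pin(6,C))$, which under the identification used here is exactly $\{1,\sigma\}$ (this is essentially the computation $\Ker\,p = \{1,\sigma\}$ appearing in Theorem \ref{thm 3.16}). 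Since $(E,\sigma,\sigma) \in \Ker\,\varphi_{{E_7}^C,\sigma'_{\!\!4}}$, we may replace $(A,\beta_1,\beta_2)$ by $(A, 1, \sigma\beta_2)$ or $(A,1,\beta_2)$ accordingly, so $\alpha$ lies in the image of $\varphi$. This gives the isomorphism $({E_7}^C)^{\sigma'_{\!\!4}, \mathfrak{so}(6,C)} \cong S\!L(2,C) \times S\!pin(6,C)$ and, a fortiori, its connectedness.
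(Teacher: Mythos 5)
Your overall strategy is the same as the paper's: restrict $\varphi_{{E_7}^C,\sigma'_{\!\!4}}$ to $S\!L(2,C)\times\{1\}\times S\!pin(6,C)$, check well-definedness and the homomorphism property via the commutation relations, and prove surjectivity by showing that the extra factor $\beta_1$ in the decomposition $\alpha=\psi(A)\beta_1\beta_2$ of Theorem \ref{thm 4.21} must centralize $\mathfrak{so}(6,C)$, hence lie in the center of the first $S\!pin(6,C)$, and can then be absorbed using $\Ker\,\varphi_{{E_7}^C,\sigma'_{\!\!4}}$. Your injectivity argument (reading off the kernel directly from the explicit $\Ker\,\varphi_{{E_7}^C,\sigma'_{\!\!4}}$ restricted to $\beta_1=1$) is in fact shorter than the paper's, which instead shows the kernel is discrete by a dimension count and then checks where the eight elements of the center of $S\!L(2,C)\times S\!pin(6,C)$ are sent.

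However, there is a concrete error in your ``cleanest route'' for step (4): the centralizer of $\mathfrak{so}(6,C)$ in $S\!pin(6,C)\cong({F_4}^C)_{E_1,E_2,E_3,F_1(e_k),k=0,1}$ is the full center $\{1,\sigma,\sigma'_{\!\!4},\sigma\sigma'_{\!\!4}\}\cong\Z_4$, not $\{1,\sigma\}$. The subgroup $\{1,\sigma\}=\Ker\,p$ from Theorem \ref{thm 3.16} is only the kernel of the six-dimensional vector representation $p$, a proper subgroup of the center (recall $S\!pin(6,C)\cong S\!L(4,C)$ has center $\Z_4$; the paper states this explicitly in the proof of Theorem \ref{thm 4.22}). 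As written, your absorption via $(E,\sigma,\sigma)$ handles only $\beta_1\in\{1,\sigma\}$ and leaves the cases $\beta_1=\sigma'_{\!\!4}$ and $\beta_1=\sigma\sigma'_{\!\!4}$ untreated. The fix is the one your earlier sketch already hints at and the paper carries out explicitly: use the remaining kernel elements $(-E,\sigma'_{\!\!4},-\sigma'_{\!\!4})$ and $(-E,\sigma\sigma'_{\!\!4},-\sigma\sigma'_{\!\!4})$, i.e.\ write $\sigma'_{\!\!4}=\psi(-E)(-\sigma'_{\!\!4})$ and $\sigma\sigma'_{\!\!4}=\psi(-E)(-\sigma\sigma'_{\!\!4})$ with $-\sigma'_{\!\!4}$, $-\sigma\sigma'_{\!\!4}$ lying in the second $S\!pin(6,C)\cong(({E_7}^C)^{\kappa,\mu})_{\dot{F}_1(e_k),k=2,\ldots,7}$, so that these central values of $\beta_1$ are also absorbed into $\psi(S\!L(2,C))\,S\!pin(6,C)$. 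With that correction the argument is complete and agrees with the paper's.
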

\begin{proof}
Let $S\!L(2, C)$ and $S\!pin(6,C) \cong  (({E_7}^C)^{\kappa, \mu})_{\dot{F}_1(e_k),k=2,\ldots,7}$ as in Theorem \ref{thm 4.21}. Note that $\mathfrak{so}(6,C)=\{\varPhi_D =(D, 0,0,0) \in {\mathfrak{e}_7}^C \,|\, D \in \mathfrak{so}(6,C) 
%= \mathfrak{so}(8,C)_{e_0, e_1} 
\cong ({\mathfrak{f}_4}^C)_{E_1, E_2, E_3, F_1(e_k), k=0,1} \}$. \vspace{0.5mm}Then we define a mapping $\varphi_{{E_7}^C, \sigma'_4, \mathfrak{so}(6,C)}: S\!L(2, C) \times S\!pin(6,C) \to ({E_7}^C)^{\sigma'_4, \mathfrak{so}(6,C)}$ by 
$$
  \varphi_{{E_7}^C, \sigma'_{\!\!4}, \mathfrak{so}(6,C)}(A, \beta_2)=\psi(A)\beta_2, 
$$
where note that the mapping $\varphi_{{E_7}^C, \sigma'_{\!\!4}, \mathfrak{so}(6,C)}$ is the restricted mapping of the mapping $\varphi_{{E_7}^C, \sigma'_4}$ in Theorem \ref{thm 4.21}. We have to prove that $\varphi_{{E_7}^C, \sigma'_{\!\!4}, \mathfrak{so}(6,C)}$ is well-defined. In order to prove this, since $\psi(S\!L(2, C))$ and $S\!pin(6, C)$ are connected, it is sufficient to show that 
for $\varPhi(\phi(\nu), aE_1, $ $ bE_1, \nu) \in \psi_*(\mathfrak{sl}(2, C)), $ $\varPhi_2 \in \mathfrak{spin}(6,C) \cong (({\mathfrak{e}_7}^C)^{\kappa, \mu})_{\dot{F}_1(e_k),k=2,\ldots,7}$, 
$$
[\varPhi_D, \varPhi(\phi(\nu), aE_1, bE_1, \nu)]=0,[\varPhi_D, \varPhi_2]=0,
$$ 
where $\varPhi(\phi(\nu), aE_1, bE_1, \nu) \in \psi_*(\mathfrak{sl}(2, C)), $ $\varPhi_2 \in \mathfrak{spin}(6,C) \cong (({\mathfrak{e}_7}^C)^{\kappa, \mu})_{\dot{F}_1(e_k),k=2,\ldots,7}$, here a mapping $\psi_\ast$ is the differential mapping of the mapping $\psi$ in Lemma \ref{lem 4.20}. However, it is clear that $[\varPhi_D, \varPhi(\phi(\nu), aE_1, bE_1,$ $ \nu)]=0$, moreover from Lemma \ref{lem 4.15}, it is easy to verify that $[\varPhi_D, \varPhi_2]=0$. Hence $\varphi_{{E_7}^C, \sigma'_{\!\!4}, \mathfrak{so}(6,C)}$ is well-defined. Since  the mapping $\varphi_{{E_7}^C, \sigma'_{\!\!4}, \mathfrak{so}(6,C)}$ is the restricted mapping $\varphi_{{E_7}^C, \sigma'_4}$, it is clear that  the mapping $\varphi_{{E_7}^C, \sigma'_{\!\!4}, \mathfrak{so}(6,C)}$ is a homomorphism. We shall show that the mapping $\varphi_{{E_7}^C, \sigma'_{\!\!4}, \mathfrak{so}(6,C)}$ is injection. Since $\dim_C (\varphi_{{\mathfrak{e}_7}^C, \sigma'_{\!\!4}, \mathfrak{so}(6,C)})=18=3+15=\dim_C (\mathfrak{sl}(2,C) \oplus \mathfrak{spin}(6,C))$ (Lemma \ref{lem 4.1} (2)), the differential mapping ${\varphi_{{E_7}^C, \sigma'_{\!\!4}, \mathfrak{so}(6,C)}}_*$ of $\varphi_{{E_7}^C, \sigma'_{\!\!4}, \mathfrak{so}(6,C)}$ is injection. Hence we see that $\Ker\,{\varphi_{{E_7}^C, \sigma'_{\!\!4}, \mathfrak{so}(6,C)}}_*=\{0 \}$, that is, $\Ker\,\varphi_{{E_7}^C, \sigma'_{\!\!4}, \mathfrak{so}(6,C)}$ is discrete. Hence, $\Ker\,\varphi_{{E_7}^C, \sigma'_{\!\!4}, \mathfrak{so}(6,C)}$ is contained in the center $z(S\!L(2, C) \times S\!pin(6,C))=\{(E, 1), (E, \sigma), (E, -\sigma'_{\!\!4}), (E, -\sigma\sigma'_{\!\!4}), (-E, 1), (-E, \sigma), (-E, \sigma'_{\!\!4}),$ $(-E, -\sigma\sigma'_{\!\!4}) \}$. However, since the mapping ${\varphi_{{E_7}^C, \sigma'_{\!\!4}, \mathfrak{so}(6,C)}}$ maps the elements of $z(S\!L(2, C) \times S\!pin(6,C))$ to $1, \sigma,- \sigma'_{\!\!4}, -\sigma\sigma'_{\!\!4}, -1, -\sigma, \sigma'_{\!\!4}, \sigma\sigma'_{\!\!4}$, respectively, we have that $\Ker\,\varphi_{{E_7}^C, \sigma'_{\!\!4}, \mathfrak{so}(6,C)}$ $=\{(E, 1) \}$, that is, the mapping $\varphi_{{E_7}^C, \sigma'_{\!\!4}, \mathfrak{so}(6,C)}$ is injection. Finally, We shall show that the mapping $\varphi_{{E_7}^C, \sigma'_{\!\!4}, \mathfrak{so}(6,C)}$ is surjection. For $\alpha \in ({E_7}^C)^{\sigma'_{\!\!4}, \mathfrak{so}(6,C)} \subset ({E_7}^C)^{\sigma'_{\!\!4}}$, there exist $A \in S\!L(2,C), \beta_1 \in S\!pin(6,C) \cong ({F_4}^C)_{E_1, E_2, E_3,\dot{F}_1(e_k), k=0,1 }\cong  (({E_7}^C)^{\kappa, \mu})_{\ti{E}_1, \ti{E}_{-1},E_2 \dot{+} E_3, E_2 \dot{-} E_3, \dot{F}_1(e_k), k=0,1}$ and $\beta_2 \in S\!pin(6,C) \cong (({E_7}^C)^{\kappa, \mu})_{\dot{F}_1(e_k),k=2,\ldots,7}$ such that $\alpha=\psi(A)\beta_1\beta_2$ (Theorem \ref{thm 4.21}). Moreover,  from the condition $\varPhi_D \alpha=\alpha \varPhi_D$, together with $\varPhi_D \psi(A)=\psi(A) \varPhi_D$(Lemma 4.20) and $\varPhi_D \beta_2=\beta_2 \varPhi_D$(Lemma 4.14), we have $\varPhi_D \beta_1=\beta_1 \varPhi_D$ for all $D \in \mathfrak{so}(6,C)$. Hence 
$\beta_1$ is contained in the center $z(S\!pin(6,C))=z(({F_4}^C)_{E_1, E_2, E_3,\dot{F}_1(e_k), k=0,1 })=\{1, \sigma, \sigma'_{\!\!4}, \sigma\sigma'_{\!\!4} \} \cong \Z_4$. However, we see that 
\begin{eqnarray*}
&& \sigma =(1) \sigma =\psi(E)\sigma \in \psi(S\!L(2,C))S\!pin(6,C),
\\
&&\sigma'_{\!\!4}=(-1)(-\sigma'_{\!\!4})=\psi(-E)(-\sigma'_{\!\!4}) \in \psi(S\!L(2,C)) S\!pin(6,C),
\\
&&\sigma\sigma'_{\!\!4}=(-1)(-\sigma\sigma'_{\!\!4})=\psi(-E)(-\sigma\sigma'_{\!\!4}) \in \psi(S\!L(2,C)) S\!pin(6,C),
\end{eqnarray*}
that is, $\sigma, \sigma'_{\!\!4}, \sigma\sigma'_{\!\!4} \in \psi(S\!L(2,C)) S\!pin(6,C)$, where $S\!pin(6,C) \cong  (({E_7}^C)^{\kappa, \mu})_{\dot{F}_1(e_k),k=2,\ldots,7}$.  Consequently, we have $\beta_1=1$. Hence, $\varphi_{{E_7}^C, \sigma'_{\!\!4}, \mathfrak{so}(6,C)}$ is surjection. 

Thus we have the required isomorphism 
$$
({E_7}^C)^{\sigma'_{\!\!4}, \mathfrak{so}(6,C)} \cong S\!L(2, C) \times S\!pin(6,C).
$$

Therefore we see that the group $({E_7}^C)^{\sigma'_{\!\!4}, \mathfrak{so}(6,C)}$ is connected.
\end{proof}
%\section
%%%%%%%%%%%%%%%%%%%%%%%%%%%%%%%%%%%%%%%%%%%%%%%%%%%%%%%
\section{Connectedness of the group $({E_8}^C)^{\sigma'_{\!\!4},\mathfrak{so}(6,C)}$ }

We define a subgroup  $({E_8}^C)^{\sigma'_{\!\!4},\mathfrak{so}(6,C)}$ of the group $({E_8}^C)^{\sigma'_{\!\!4}}$ by
$$
     ({E_8}^C)^{\sigma'_{\!\!4},\mathfrak{so}(6,C)} = \left\{\alpha
\in {E_8}^C \, \left| \, \begin{array}{l}\sigma'_{\!\!4}\alpha = \alpha\sigma'_{\!\!4}, \\
 \varTheta(R_D)\alpha = \alpha\varTheta(R_D) \; \mbox{for all}\; D \in \mathfrak{so}(6, C) \end{array} \right. \right\}, $$
where $R_D =  (\varPhi_D, 0, 0, 0, 0, 0) \in {\mathfrak{e}_8}^C$ and $\varTheta(R_D)$ means $\ad(R_D)$. Hereafter for $R \in {\mathfrak{e}_8}^C$, we denote $\ad(R)$ by ${\varTheta}(R)$, moreover in ${\mathfrak{e}_8}^C$, we often use the following notations:
$$
\begin{array}{c}
      \varPhi = (\varPhi, 0, 0, 0, 0, 0), \quad P^- = (0, P, 0, 0, 0, 0), \quad Q_- = (0, 0, Q, 0, 0, 0), 
\vspace{1mm}\\
    \wti{r} = (0, 0, 0, r, 0, 0), \quad s^- = (0, 0, 0, 0, s, 0), \quad 
%\vspace{5mm}
t_- = (0, 0, 0, 0, 0, t). 
\end{array} 
$$

In order to prove the connectedness of the group $({E_8}^C)^{\sigma'_{\!\!4},\mathfrak{so}(6,C)}$, we use the method used in \cite{Yokota}. However, we write this method in detail again.
\vspace{1mm}

First, we consider a subgroup $(({E_8}^C)^{\sigma'_{\!\!4},\mathfrak{so}(6,C)})_{1_-}$ of the group $({E_8}^C)^{\sigma'_{\!\!4},\mathfrak{so}(6,C)}$:
$$
   (({E_8}^C)^{\sigma'_{\!\!4},\mathfrak{so}(6,C)})_{1_-} = \Bigl\{\alpha \in ({E_8}^C)^{\sigma'_{\!\!4},\mathfrak{so}(6,C)} \,\bigm| \, \alpha1_- = 1_- \Bigr\}. 
$$

\begin{lem}\label{lem 5.1}
We have the following.

{\rm (1)} The Lie algebra $(({\mathfrak{e}_8}^C)^{\sigma'_{\!\!4},\mathfrak{so}(6,C)})_{1_-}$ of the group $(({E_8}^C)^{\sigma'_{\!\!4},\mathfrak{so}(6,C)})_{1_-}$ is given by
%\vspace{1mm}
\begin{eqnarray*}
%\begin{array}{lll}
(({\mathfrak{e}_8}^C)^{\sigma'_{\!\!4},\mathfrak{so}(6,C)})_{1_-}
%\vspace{2mm}\\
\!\!\!&=&\!\!\! \left\{ R \in {\mathfrak{e}_8}^C  \,\Bigg|\,  
\begin{array}{l}
    \sigma'_{\!\!4}R=R, 
\\[1mm]
       [R, R_D] =0 \,\, \text{for all} \,\, D \in \mathfrak{so}(6,C),
%\\[1mm]
       [R, 1_-] =0 
\end{array}   \right \} 
\\[1mm]
\!\!\!&=&\!\!\! \left\{(\varPhi, 0, Q, 0, 0, t) \in {\mathfrak{e}_8}^C  \left| 
\begin{array}{l}
     \varPhi \in ({\mathfrak{e}_7}^C)^{\sigma'_{\!\!4},\mathfrak{so}(6,C)}, \\
     Q = (Z, W, \zeta, \omega), \\
 \quad
Z=\begin{pmatrix}            \zeta_1 &    0   & 0\\                                 0   &\zeta_2 & z \\
                                 0   &   \ov{z}& \zeta_3
                \end{pmatrix},
 W=\begin{pmatrix} \omega_1 &  \!\!  0      & \!\!0 \\
                        0   &\!\!\omega_2 & \!\!w \\
                        0   &   \!\!\ov{w}& \!\!\omega_3
                \end{pmatrix},
\vspace{0.5mm}\\
\quad \zeta_k, \omega_k, \zeta, \omega \in C, z, w \in \C^C,               
%Z=\zeta_1E_1+\zeta_2E_2+\zeta_3E_3+F_1(z),
%%%Z, W \in (\mathfrak{J}^C)_{\sigma'_{\!\!4}},
%%\zeta_k \in C, z \in \C^C,  \\
%%\quad W=\begin{pmatrix} \omega_1 &  \!\!  0      & \!\!0 \\
%%                                              0   &\!\!\omega_2 & \!\!w \\
%%                                              0   &   \!\!\ov{w}& \!\!\omega_3
%%                \end{pmatrix},
%\omega_1E_1+\omega_2E_2+\omega_3E_3+F_1(w),
%\omega_k \in C, w \in \C^C,  \\
%\quad \zeta, \omega \in C ,
\\
     t \in C
\end{array} \right. \!\!\right\}, 
%\end{array}
\end{eqnarray*}
%%where $(\mathfrak{J}^C)_{\sigma'_{\!\!4}}=\{ X \in \mathfrak{J}^C \,|\,\sigma'_{\!\!4}X=X \}=\{\xi_1E_1+\xi_2E_2+\xi_3E_3+F_1(x)\,|\,\xi_k \in C, x \in \C^C  \}$.
where as for the explicit form of the Lie algebra $({\mathfrak{e}_7}^C)^{\sigma'_{\!\!4},\mathfrak{so}(6,C)}$, see Lemma 4.1 {\rm (}2{\rm)}.

In particular, 
$$
\dim_C((({\mathfrak{e}_8}^C)^{\sigma'_{\!\!4},\mathfrak{so}(6,C)})_{1_-}) = 
18 + ((3+2) \times 2+1 \times 2) + 1 = 31. 
$$

{\rm (2)} The Lie algebra $({\mathfrak{e}_8}^C)^{\sigma'_{\!\!4},\mathfrak{so}(6,C)}$ of the group $({E_8}^C)^{\sigma'_{\!\!4},\mathfrak{so}(6,C)}$ is given by
%\vspace{1mm}
\begin{eqnarray*}
%\begin{array}{lll}
({\mathfrak{e}_8}^C)^{\sigma'_{\!\!4},\mathfrak{so}(6,C)}
%\vspace{2mm}\\
\!\!\!&=&\!\!\! \left\{ R \in {\mathfrak{e}_8}^C \, \Bigg| \, 
\begin{array}{l}
    \sigma'_{\!\!4}R=R, 
\\[1mm]
       [R, R_D] =0 \,\, \text{for all} \,\, D \in \mathfrak{so}(6,C),
%\\[1mm]
\end{array}   \right \} 
%\end{eqnarray*}
%\begin{eqnarray*}
\\[1mm]
\!\!\!&=&\!\!\! \left\{(\varPhi, P, Q, r, s, t) \in {\mathfrak{e}_8}^C \, \left| \,
\begin{array}{l}
     \varPhi \in ({\mathfrak{e}_7}^C)^{\sigma'_{\!\!4},\mathfrak{so}(6,C)}, \\
     P = (X, Y, \xi, \eta), \\
 \quad X=\begin{pmatrix} \xi_1 &    0      & 0 \\
                                              0   &\xi_2 & x \\
                                              0   &   \ov{x}& \xi_3
                \end{pmatrix},
       Y=\begin{pmatrix} \eta_1 &   0      &  0 \\
                                              0   & \eta_2 &  y \\
                                              0   &  \ov{y}& \eta_3
                \end{pmatrix}, 
\vspace{0.5mm}\\        
\quad \xi_k, \eta_k, \xi,\eta, \ \in C, x, y \in \C^C,  \\
%%\quad Y=\begin{pmatrix} \eta_1 &   0      &  0 \\
%%                                            0   & \eta_2 &  y \\
%%                                          0   &  \ov{y}& \eta_3
%%               \end{pmatrix},
%\omega_1E_1+\omega_2E_2+\omega_3E_3+F_1(w),
%%\eta_k \in C, y \in \C^C,  \\
%%\quad \xi, \eta \in C ,\\
Q=(Z, W, \zeta, \omega)\,\,{\text{is same form as }}P, \\
     r,s,t \in C
\end{array} \right. \right\}. 
%\end{array}
\end{eqnarray*}

In particular, 
$$
  \dim_C(({\mathfrak{e}_8}^C)^{\sigma'_{\!\!4},\mathfrak{so}(6,C)}) = 
18 + ((3+2) \times 2+1 \times 2)\times 2 + 3 = 45. 
$$
\end{lem}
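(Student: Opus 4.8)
The plan is to reduce the whole statement to the explicit Lie bracket of ${\mathfrak{e}_8}^C$ recalled in Section 2, the extension of $\sigma'_{\!\!4}$ to ${\mathfrak{e}_8}^C$, and the description of $(\mathfrak{J}^C)_{\sigma'_{\!\!4}}$ obtained in the proof of Lemma \ref{lem 3.17}; the whole argument is a direct computation. First I would dispose of the condition $\sigma'_{\!\!4}R=R$. Since $\sigma'_{\!\!4}$ acts on ${\mathfrak{e}_8}^C$ by $\sigma'_{\!\!4}(\varPhi, P, Q, r, s, t)=({\sigma'_{\!\!4}}^{-1}\varPhi\,\sigma'_{\!\!4}, \sigma'_{\!\!4}P, \sigma'_{\!\!4}Q, r, s, t)$, this condition is equivalent to $\varPhi \in ({\mathfrak{e}_7}^C)^{\sigma'_{\!\!4}}$ and $P, Q \in (\mathfrak{P}^C)^{\sigma'_{\!\!4}}$, with $r, s, t \in C$ unrestricted. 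From $\sigma'_{\!\!4}(X, Y, \xi, \eta)=(\sigma'_{\!\!4}X, \sigma'_{\!\!4}Y, \xi, \eta)$ together with $(\mathfrak{J}^C)_{\sigma'_{\!\!4}}=\{\xi_1 E_1+\xi_2 E_2+\xi_3 E_3+F_1(x_1)\,|\,\xi_k \in C, x_1 \in \C^C\}$ (proof of Lemma \ref{lem 3.17}), one reads off that $(\mathfrak{P}^C)^{\sigma'_{\!\!4}}$ consists exactly of those $(X, Y, \xi, \eta)$ with $X, Y$ of the diagonal-plus-$F_1$ shape displayed in the statement and $\xi, \eta \in C$; the same applies to $Q=(Z, W, \zeta, \omega)$.

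Next I would impose $[R, R_D]=0$ for all $D \in \mathfrak{so}(6, C)$. Substituting $R_2=R_D=(\varPhi_D, 0, 0, 0, 0, 0)$ into the bracket formula of ${\mathfrak{e}_8}^C$ yields
$$
   [R, R_D]=\bigl([\varPhi, \varPhi_D],\, -\varPhi_D P,\, -\varPhi_D Q,\, 0,\, 0,\, 0\bigr),
$$
so the $r$-, $s$- and $t$-components already vanish. The key point is that $D \in \mathfrak{so}(6, C)\cong({\mathfrak{f}_4}^C)_{E_1, E_2, E_3, F_1(e_k), k=0,1}$ annihilates $E_1, E_2, E_3$ and $F_1(e_0), F_1(e_1)$ (Lemma \ref{lem 3.14}), hence annihilates the whole of $(\mathfrak{J}^C)_{\sigma'_{\!\!4}}$; therefore $\varPhi_D$ annihilates $(\mathfrak{P}^C)^{\sigma'_{\!\!4}}$, so that $\varPhi_D P=\varPhi_D Q=0$ automatically once $\sigma'_{\!\!4}R=R$. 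Consequently $[R, R_D]=0$ for all $D$ is equivalent to $[\varPhi, \varPhi_D]=0$ for all $D$, i.e. $\varPhi \in ({\mathfrak{e}_7}^C)^{\sigma'_{\!\!4},\mathfrak{so}(6,C)}$, whose explicit form and dimension $18$ are given by Lemma \ref{lem 4.1} (2). This proves (2), and the count $18+((3+2)\times 2+1\times 2)\times 2+3=45$ follows.

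For (1) I would additionally require $\alpha 1_-=1_-$, i.e. $[R, 1_-]=0$ at the Lie-algebra level. Substituting $R_2=(0, 0, 0, 0, 0, 1)$ into the bracket gives $[R, 1_-]=(0, 0, -P, s, 0, -2r)$, so the extra condition is exactly $P=0$, $r=0$ and $s=0$; imposing this on the description obtained in (2) produces the stated form of $(({\mathfrak{e}_8}^C)^{\sigma'_{\!\!4},\mathfrak{so}(6,C)})_{1_-}$ and the dimension $18+((3+2)\times 2+1\times 2)+1=31$. The computations are entirely routine once the bracket is written out; the only point requiring care is the observation that $\varPhi_D$ acts trivially on $(\mathfrak{P}^C)^{\sigma'_{\!\!4}}$, which is precisely what prevents the $\mathfrak{so}(6, C)$-centralizer condition from cutting the $\mathfrak{P}^C$-components down any further, so no genuine obstacle arises.
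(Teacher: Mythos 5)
Your proposal is correct and follows essentially the same route as the paper: impose $\sigma'_{\!\!4}R=R$ componentwise, note that $\varPhi_D$ annihilates $(\mathfrak{P}^C)_{\sigma'_{\!\!4}}$ so that $[R,R_D]=0$ only forces $\varPhi\in({\mathfrak{e}_7}^C)^{\sigma'_{\!\!4},\mathfrak{so}(6,C)}$, and for (1) read off $P=0$, $r=s=0$ from $[R,1_-]=(0,0,-P,s,0,-2r)$. The only cosmetic difference is that you establish (2) first and cut down to (1), whereas the paper proves (1) directly and declares (2) similar.
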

\begin{proof}
(1)\,For $R=(\varPhi, P, Q, r,s,t) \in {\mathfrak{e}_8}^C$, from the condition $\sigma'_{\!\!4} R=R$, we have that 
\begin{eqnarray*}
  && \varPhi \in ({\mathfrak{e}_7}^C)^{\sigma'_{\!\!4}}, \\
  && P=(X, Y, \xi, \eta), \begin{array}{l}
             \,\,\, X=\xi_1E_1+\xi_2E_2+\xi_3E_3 +F_1(x), \xi_k,                               \xi \in C, x \in \C^C, \\
             \,\,\,Y=\eta_1E_1+\eta_2E_2+\eta_3E_3 +F_1(y),  
                              \eta_k, \eta \in C, y \in \C^C,
                          \end{array} \\
  && Q=(Z, W, \zeta, \omega),\begin{array}{l}
              \,Z=\zeta_1E_1+\zeta_2E_2+\zeta_3E_3 +F_1(z), \zeta_k,                               \zeta \in C, z \in \C^C, \\
             W=\omega_1E_1+\omega_2E_2+\omega_3E_3 +F_1(w),  
                              \omega_k, \omega \in C, w \in \C^C,
                          \end{array} \\
   && r, s, t \in C.
\end{eqnarray*}
Moreover, from the condition $[R, R_D]=0$, we have that 
$$
 %&&  
\varPhi \in ({\mathfrak{e}_7}^C)^{\sigma'_{\!\!4}, \mathfrak{so}(6,C)}, 
%\\
%&&  
P, Q \,\,{\text{are same form above, and }}
%\\
%&& 
{\text{so are }}r,s,t.
$$

Finally, from the condition $[R,1_-]=0$, we have that $P=0$ and $s=r=0$. 
Hence we have the required explicit form of the Lie algebra $(({\mathfrak{e}_8}^C)^{\sigma'_{\!\!4},\mathfrak{so}(6,C)})_{1_-}$. 
\vspace{1mm}

(2)\, By an argument similar to (1) above, we have the required result.
 \end{proof}

In Proposition \ref{prop 5.2} below, note that the subspace $(\mathfrak{P}^C)_{\sigma'_{\!\!4}}$ of $\mathfrak{P}^C$ is defined by 
\begin{eqnarray*}
   (\mathfrak{P}^C)_{\sigma'_{\!\!4}}\!\!\!& =&\!\!\!\{P \in \mathfrak{P}^C \, | \, \sigma'_{\!\!4}P=P \}
\\
\!\!\!& =&\!\!\! \{(X, Y, \xi, \eta) \in \mathfrak{P}^C \, | \, X, Y \in (\mathfrak{J}^C)_{\sigma'_{\!\!4}}, 
\xi, \eta \in C \}, 
\end{eqnarray*} 
where $(\mathfrak{J}^C)_{\sigma'_{\!\!4}}=\{X \in \mathfrak{J}^C \,|\,{\sigma'_{\!\!4}} X =X \}$.

\begin{prop}\label{prop 5.2}
The group $(({E_8}^C)^{\sigma'_{\!\!4},\mathfrak{so}(6,C)})_{1_-}$ is a semi-direct product of groups \\$\exp(\varTheta(((\mathfrak{P}^C)_{\sigma'_{\!\!4}})_- \oplus C_- ))$ and $({E_7}^C)^{\sigma'_{\!\!4},\mathfrak{so}(6,C)}${\rm:}
$$
  (({E_8}^C)^{\sigma'_{\!\!4},\mathfrak{so}(6,C)})_{1_-} = \exp(\varTheta(((\mathfrak{P}^C)_{\sigma'_{\!\!4}})_- \oplus C_- ))\rtimes ({E_7}^C)^{\sigma'_{\!\!4},\mathfrak{so}(6,C)}. 
$$

In particular, $(({E_8}^C)^{\sigma'_{\!\!4},\mathfrak{so}(6,C)})_{1_-}$ is connected.
\end{prop}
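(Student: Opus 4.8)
The plan is to mimic the standard structure used throughout this paper (and in Yokota's work) for groups of the shape $(({E_8}^C)^{\cdots})_{1_-}$: exhibit it as a semi-direct product of a vector group (the unipotent radical) with a reductive part which has already been identified. First I would take an arbitrary $\alpha \in (({E_8}^C)^{\sigma'_{\!\!4},\mathfrak{so}(6,C)})_{1_-}$ and study its action on the grading of ${\mathfrak{e}_8}^C$ coming from $\ad(1_-)$ (equivalently, on the ${\mathfrak{e}_7}^C \oplus \mathfrak{P}^C \oplus \cdots$ decomposition). Since $\alpha 1_- = 1_-$, one deduces in the usual way (cf. the argument for $(E_8)_{1_-}$ in \cite{Yokota}) that $\alpha$ preserves the filtration, and that the induced map on the top piece ${\mathfrak{e}_7}^C$ gives a homomorphism $(({E_8}^C)^{\sigma'_{\!\!4},\mathfrak{so}(6,C)})_{1_-} \to ({E_7}^C)^{\sigma'_{\!\!4},\mathfrak{so}(6,C)}$. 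Because ${E_7}^C \subset {E_8}^C$ via $\ti{\alpha}$ and these embeddings intertwine $\sigma'_{\!\!4}$ and $\varTheta(R_D)$, this homomorphism admits an obvious section, so it is surjective with a splitting; its kernel is $\{\alpha \mid \alpha|_{{\mathfrak{e}_7}^C} = 1\}$.

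Next I would identify that kernel with $\exp(\varTheta(((\mathfrak{P}^C)_{\sigma'_{\!\!4}})_- \oplus C_-))$. On the Lie algebra level this is immediate from Lemma \ref{lem 5.1} (1): the Lie algebra $(({\mathfrak{e}_8}^C)^{\sigma'_{\!\!4},\mathfrak{so}(6,C)})_{1_-}$ decomposes as $({\mathfrak{e}_7}^C)^{\sigma'_{\!\!4},\mathfrak{so}(6,C)} \oplus (((\mathfrak{P}^C)_{\sigma'_{\!\!4}})_- \oplus C_-)$, where the $Q$-component is exactly $(\mathfrak{P}^C)_{\sigma'_{\!\!4}}$ (the shape of $Z,W$ listed there is precisely that of the $\sigma'_{\!\!4}$-fixed part of $\mathfrak{J}^C$, namely $\C^C$ in the $x_1$-slot and the three diagonal entries) and the $t$-component is $C_-$. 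One checks that $((\mathfrak{P}^C)_{\sigma'_{\!\!4}})_- \oplus C_-$ is an abelian ideal on which $\ad$ is nilpotent (brackets of $Q_-$'s land in the $s$-slot, which vanishes here after imposing $[R,1_-]=0$; brackets with $t_-$ vanish), so $\exp$ is a polynomial diffeomorphism onto a closed vector subgroup, and that subgroup lies in the kernel. Conversely, any $\alpha$ in the kernel fixes ${\mathfrak{e}_7}^C$ pointwise, hence is determined by finitely many parameters and is exponential of an element of that abelian ideal; this is the same computation as in \cite{Yokota} and I would reference it rather than redo it. Then $(({E_8}^C)^{\sigma'_{\!\!4},\mathfrak{so}(6,C)})_{1_-} = \exp(\varTheta(((\mathfrak{P}^C)_{\sigma'_{\!\!4}})_- \oplus C_-)) \rtimes ({E_7}^C)^{\sigma'_{\!\!4},\mathfrak{so}(6,C)}$.

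Finally, connectedness is a formal consequence: a vector group $\exp(\varTheta(V))$ is connected, $({E_7}^C)^{\sigma'_{\!\!4},\mathfrak{so}(6,C)} \cong S\!L(2,C) \times S\!pin(6,C)$ is connected by Theorem \ref{thm 4.22}, and a semi-direct product of connected groups is connected. I would close the proof with exactly this one-line observation.

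The main obstacle I anticipate is not the algebra of the decomposition — that is routine and parallels Lemma \ref{lem 5.1} — but making rigorous the claim that an element fixing $1_-$ and fixed by $\sigma'_{\!\!4}$ and by all $\varTheta(R_D)$ necessarily preserves the $\ad(1_-)$-grading and acts on ${\mathfrak{e}_7}^C$ as an element of $({E_7}^C)^{\sigma'_{\!\!4},\mathfrak{so}(6,C)}$, together with identifying the kernel of the resulting projection as an exponential vector group. This step uses the detailed structure of the bracket on ${\mathfrak{e}_8}^C$ (the $r,s,t$-relations) and is exactly the argument carried out in \cite{Yokota} for the full group $(E_8)_{1_-}$; the work here is to check that imposing the extra $\sigma'_{\!\!4}$- and $\mathfrak{so}(6,C)$-invariance is compatible with it, which it is, since $\sigma'_{\!\!4}$ and the $R_D$ act diagonally on the grading. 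So I would organize the proof as: (i) projection to ${\mathfrak{e}_7}^C$ and its section; (ii) kernel $=\exp(\varTheta(((\mathfrak{P}^C)_{\sigma'_{\!\!4}})_- \oplus C_-))$ via Lemma \ref{lem 5.1}(1); (iii) semi-direct product and connectedness.
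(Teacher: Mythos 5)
Your route is essentially the paper's: both follow Yokota's method for the stabilizer of $1_-$, splitting it as a unipotent factor $\exp(\varTheta(((\mathfrak{P}^C)_{\sigma'_{\!\!4}})_- \oplus C_-))$ times the already-connected group $({E_7}^C)^{\sigma'_{\!\!4},\mathfrak{so}(6,C)}$ of Theorem \ref{thm 4.22}. The one organizational difference is that the paper does not pass through the filtration/associated-graded language: it computes $\alpha\wti{1}$ and $\alpha 1^-$ directly from the relations $[\alpha\wti{1},1_-]=-2\cdot 1_-$, $[\alpha 1^-,1_-]=\alpha\wti{1}$, $[\alpha\wti{1},\alpha 1^-]=2\alpha 1^-$, then exhibits an explicit $\delta=\exp(\varTheta((t/2)_-))\exp(\varTheta(Q_-))$ with $\delta\wti{1}=\alpha\wti{1}$, $\delta 1^-=\alpha 1^-$, $\delta 1_-=\alpha 1_-$, so that $\delta^{-1}\alpha$ fixes the whole triple and lies in $({E_8}^C)_{\wti{1},1^-,1_-}\cap(\cdots)=({E_7}^C)^{\sigma'_{\!\!4},\mathfrak{so}(6,C)}$; normality then follows from $\beta\exp(\varTheta(Q_-))\beta^{-1}=\exp(\varTheta(\beta Q_-))$. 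This explicit computation is precisely the step you propose to outsource to \cite{Yokota}; since it is the entire content of the proposition, a finished write-up should include it (as the paper itself chooses to do, despite citing \cite{Yokota} for the method).

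One concrete error: $((\mathfrak{P}^C)_{\sigma'_{\!\!4}})_- \oplus C_-$ is \emph{not} abelian. From the bracket on ${\mathfrak{e}_8}^C$, $[Q_-,Q'_-]=\bigl(-\tfrac{1}{4}\{Q,Q'\}\bigr)_-$, which lies in $C_-$ and is generically nonzero, since the skew form $\{\;,\;\}$ restricts nondegenerately to $(\mathfrak{P}^C)_{\sigma'_{\!\!4}}$ (e.g.\ $\{(0,0,1,0),(0,0,0,1)\}=1$). Your parenthetical claim that "brackets of $Q_-$'s land in the $s$-slot, which vanishes" misreads the bracket formula: the $s$-component is $\tfrac{1}{4}\{P_1,P_2\}$ and does vanish because the $P$-components are zero, but the two $Q$-components pair into the $t$-slot. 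The subalgebra is therefore two-step nilpotent (Heisenberg type), which is still enough for what you need --- $\exp$ is onto a closed connected unipotent subgroup, and the paper only ever uses $[Q_-,t_-]=0$ to factor $\exp(\varTheta(Q_-+t_-))=\exp(\varTheta(Q_-))\exp(\varTheta(t_-))$ --- but the justification as written is false and should be corrected.
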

\begin{proof}Let $((\mathfrak{P}^C)_{\sigma'_{\!\!4}})_- \oplus C_- = \{(0, 0, Q, 0, 0, t) \, | \, Q \in (\mathfrak{P}^C)_{\sigma'_{\!\!4}}, t \in C\}$ be a Lie subalgebra of the Lie algebra
$(({\mathfrak{e}_8}^C)^{{\sigma'_{\!\!4}},\mathfrak{so}(6,C)})_{1_-}$ (Lemma \ref{lem 5.1} (1)). Since it follows from $[Q_-, t_-] = 0$ that $\varTheta(Q_-)$ commutes with $\varTheta(t_-)$, we have $\exp(\varTheta(Q_- + t_-)) = \exp(\varTheta(Q_-))\exp(\varTheta(t_-))$, and so we also see that $\exp(\varTheta(((\mathfrak{P}^C)_{\sigma'_{\!\!4}})_- \oplus C_-))$ is the connected subgroup of the group $(({E_8}^C)^{{\sigma'_{\!\!4}},\mathfrak{so}(6,C)})_{1-}$. 

Now, let $\alpha \in (({E_8}^C)^{{\sigma'_{\!\!4}},\mathfrak{so}(6,C)})_{1-}$ and set
$$
    \alpha\wti{1} = (\varPhi, P, Q, r, s, t), \quad \alpha1^- = (\varPhi_1, P_1, Q_1, r_1, s_1, t_1). 
$$
Then, from the relation formulas $[\alpha\wti{1}, 1_-] = \alpha[\wti{1}, 1_-] = -2\alpha1_- = -21_-, [\alpha1^-, 1_-] = \alpha[1^-, 1_-] $ $= \alpha\wti{1}$, we have that
$$
    P = 0, \; s = 0, \; r = 1, \; \varPhi = 0, \; P_1 = - Q, \; s_1 = 1, \; r_1 = -\dfrac{t}{2}. 
$$
Moreover, from $[\alpha\wti{1}, \alpha1^-] = \alpha[\wti{1}, 1^-] = 2\alpha1^-$, we have that 
$$
    \varPhi_1 = \dfrac{1}{2}Q \times Q, \; Q_1 = - \dfrac{t}{2}Q - \dfrac{1}{3}\varPhi_1Q, \; t_1 = -\dfrac{t^2}{4} - \dfrac{1}{16}\{Q, Q_1\}. 
$$
Hence we see that $\alpha$ is of the form
$$
     \alpha = \begin{pmatrix} * & * & * & 0 & \dfrac{1}{2}Q \times Q & 0 \\
                              * & * & * & 0 & -Q & 0 \\
                   * & * & * & Q & -\dfrac{t}{2}Q -\dfrac{1}{6}(Q \times Q)Q & 0 \\
                              * & * & * & 1 & -\dfrac{t}{2} & 0 \\  
                              * & * & * & 0 & 1 & 0 \\
          * & * & * & t & -\dfrac{t^2}{4} + \dfrac{1}{96}\{Q, (Q \times Q)Q\} & 1
                              \end{pmatrix}. 
$$
On the other hand, we have that

\begin{eqnarray*}
     \delta1^-\!\!\! &=& \!\!\!\exp\Big(\varTheta(\Big(\dfrac{t}{2}\Big)_-)\Big)\exp(\varTheta(Q_-))1^-
\end{eqnarray*}
\begin{eqnarray*}
        \!\!\!&=&\!\!\! \begin{pmatrix} \dfrac{1}{2}Q \times Q
\vspace{0mm}\\
                 - Q 
\vspace{0mm}\\
            - \dfrac{t}{2}Q - \dfrac{1}{6}(Q \times Q)Q 
\vspace{0mm}\\
           -\dfrac{t}{2} 
\vspace{0mm}\\
           1
\vspace{0mm}\\
       -\dfrac{t^2}{4} + \dfrac{1}{96}\{Q, (Q \times Q)Q\} \end{pmatrix}
%\\[2mm] 
% \!\!\!&=&\!\!\! 
=\alpha1^-,
\end{eqnarray*} 
and also that
$$
        \delta\wti{1} = \alpha\wti{1}, \;\; \delta1_- = \alpha1_-. 
$$
Hence we see that $\delta^{-1}\alpha \in (({E_8}^C)^{{\sigma'_{\!\!4}}, \mathfrak{so}(6,C)})_{\wti{1},1^-,1_-} = ({E_7}^C)^{{\sigma'_{\!\!4}},\mathfrak{so}(6,C)}$. 

\noindent Thus we have that
$$
     (({E_8}^C)^{{\sigma'_{\!\!4}},\mathfrak{so}(6,C)})_{1_-} = \exp(\varTheta(((\mathfrak{P}^C)_{\sigma'_{\!\!4}})_- \oplus C_-))({E_7}^C)^{{\sigma'_{\!\!4}},\mathfrak{so}(6,C)}.  
$$
Furthermore, for $\beta \in ({E_7}^C)^{{\sigma'_{\!\!4}},\mathfrak{so}(6,C)}$, it is easy to verify that
$$
   \beta(\exp(\varTheta(Q_-)))\beta^{-1} = \exp(\varTheta(\beta Q_-)),\quad \beta((\exp(\varTheta(t_-)))\beta^{-1} = \exp(\varTheta(t_-). 
$$
Indeed, for $(\varPhi', P', Q', r', s', t') \in {\mathfrak{e}_8}^C$, by doing simple computation, we have that
\begin{eqnarray*}
\beta \varTheta (Q_-) \beta^{-1}(\varPhi', P', Q', r', s', t')\!\!\!&=&\!\!\! \beta [Q_-, \beta^{-1}(\varPhi', P', Q', r', s', t')]
\\
\!\!\!&=&\!\!\! [\beta Q_-,\beta \beta^{-1}(\varPhi', P', Q', r', s', t')]\,(\beta \in {E_7}^C \subset {E_8}^C )
\\
\!\!\!&=&\!\!\! [\beta Q_-,(\varPhi', P', Q', r', s', t')]
\\
\!\!\!&=&\!\!\! \varTheta (\beta Q_-) (\varPhi', P', Q', r', s', t'),
\end{eqnarray*}
that is, $\beta \varTheta (Q_-) \beta^{-1}= \varTheta (\beta Q_-)$.
Hence we obtain that
\begin{eqnarray*}
\beta(\exp(\varTheta(Q_-)))\beta^{-1}\!\!\!&=&\!\!\!\beta \, \Bigl(\displaystyle{\sum_{n=0}^{\infty}\dfrac{1}{n!}\varTheta (Q_-)^n} \Bigr)\,\beta^{-1}
\\
\!\!\!&=&\!\!\!\displaystyle{\sum_{n=0}^{\infty}\dfrac{1}{n!}(\beta\varTheta (Q_-)\beta^{-1})^n}\,\,\,(\, \beta \varTheta (Q_-) \beta^{-1}= \varTheta (\beta Q_-))
\\
\!\!\!&=&\!\!\!\displaystyle{\sum_{n=0}^{\infty}\dfrac{1}{n!}(\varTheta (\beta Q_-))^n}
\\
\!\!\!&=&\!\!\! \exp(\varTheta(\beta Q_-)).
\end{eqnarray*}

By the argument similar to above, we have that $\beta((\exp(\varTheta(t_-)))\beta^{-1} = \exp(\varTheta(t_-)$.
\vspace{1mm}

\noindent This shows that $\exp(\varTheta(((\mathfrak{P}^C)_{\sigma'_{\!\!4}})_- \oplus C_-)) = \exp(\varTheta(((\mathfrak{P}^C)_{\sigma'_{\!\!4}})_-)\exp(\varTheta(C_-))$ is a normal subgroup of the group$(({E_8}^C)^{{\sigma'_{\!\!4}},\mathfrak{so}(6,C)})_{1_-}$. 

\noindent Moreover, we have a split exact sequence
$$
   1 \to \exp(\varTheta(((\mathfrak{P}^C)_{\sigma'_{\!\!4}})_- \oplus C_-)) \to (({E_8}^C)^{\sigma'_{\!\!4},\mathfrak{so}(6,C)})_{1_-} \to ({E_7}^C)^{\sigma'_{\!\!4},\mathfrak{so}(6,C)} \to 1.
$$

Hence the group  $(({E_8}^C)^{\sigma'_{\!\!4},\mathfrak{so}(6,C)})_{1_-}$ is a semi-direct product of $\exp(\varTheta(((\mathfrak{P}^C)_{\sigma'_{\!\!4}})_- \oplus C_-))$ and
$({E_7}^C)^{\sigma'_{\!\!4},\mathfrak{so}(6,C)}$:
$$
    (({E_8}^C)^{\sigma'_{\!\!4},\mathfrak{so}(6,C)})_{1_-} = 
\exp(\varTheta(((\mathfrak{P}^C)_{\sigma'_{\!\!4}})_- \oplus C_-))\rtimes({E_7}^C)^{\sigma'_{\!\!4},\mathfrak{so}(6,C)}. 
$$

Therefore since      
$\exp(\varTheta(((\mathfrak{P}^C)_{\sigma'_{\!\!4}})_- \oplus C_-))$ is connected and $({E_7}^C)^{\sigma'_{\!\!4},\mathfrak{so}(6,C)}$ is connected (Theorem \ref{thm 4.22}), we have that the group$(({E_8}^C)^{\sigma'_{\!\!4},\mathfrak{so}(6,C)})_{1_-}$ is connected. 
\end{proof}
\vspace{1mm}

For $R \in {\mathfrak{e}_8}^C$, we define a $C$-linear mapping $R \times R : {\mathfrak{e}_8}^C \to {\mathfrak{e}_8}^C$ by
$$
     (R \times R)R_1 = [R, [R, R_1]\,] + \dfrac{1}{30}B_8(R, R_1)R,\,\, R_1 \in {\mathfrak{e}_8}^C,
$$
where $B_8$ is the Killing form of the Lie algebra ${\mathfrak{e}_8}^C $ (As for the Killing form $B_8$, see \cite[Theorem 5.3.2]{Yokotaichiro}), and using this mapping we define a space $\mathfrak{W}^C$ by
$$
     \mathfrak{W}^C = \{R \in {\mathfrak{e}_8}^C \, | \, R \times R = 0, R \not= 0\}, 
$$
moreover, define a subspace $(\mathfrak{W}^C)_{\sigma'_{\!\!4},\mathfrak{so}(6,C)}$ of $\mathfrak{W}^C$ by
$$
      (\mathfrak{W}^C)_{\sigma'_{\!\!4},\mathfrak{so}(6,C)} = \{R \in \mathfrak{W}^C \, | \, \sigma'_{\!\!4}R = R, [R_D, R] = 0 \; \mbox{for all}\; D \in 
\vspace{3mm}
\mathfrak{so}(6,C)\}. 
$$

\begin{lem}\label{lem 5.3}
 For $R = (\varPhi, P, Q, r, s, t) \in {\mathfrak{e}_8}^C$ satisfying $\sigma'_{\!\!4}R = R$ and $[R_D, R] = 0$ for all $D \in \mathfrak{so}(6,C), R \not= 0$, $R$ belongs to $(\mathfrak{W}^C)_{\sigma'_{\!\!4},\mathfrak{so}(6,C)}$ if and only if $R$ satisfies the following conditions:
\vspace{1mm}

{\rm (1)} $2s\varPhi - P \times P = 0$ \quad {\rm (2)} $2t\varPhi + Q \times Q = 0$ 
%\quad {\rm (3)} $2r\varPhi + P \times Q = 0$  
\vspace{1.5mm}

{\rm (3)} $2r\varPhi + P \times Q = 0$ \quad {\rm (4)} $\varPhi P - 3rP - 3sQ = 0 $ 
%\quad {\rm (5)} $\varPhi Q + 3rQ - 3tP = 0 $ \quad {\rm (6)} $\{P, Q\} - 16(st + r^2) = 0$
\vspace{1.5mm}

{\rm (5)} $\varPhi Q + 3rQ - 3tP = 0 $ \quad {\rm (6)} $\{P, Q\} - 16(st + r^2) = 0$
\vspace{1.5mm}

%{\rm (6)} $\{P, Q\} - 16(st + r^2) = 0$
%\vspace{1mm}
{\rm (7)} $2(\varPhi P \times Q_1 + 2P \times \varPhi Q_1 - rP \times Q_1 - sQ \times Q_1) - \{P, Q_1\}\varPhi = 0$
\vspace{1.5mm}

{\rm (8)} $2(\varPhi Q \times P_1 + 2Q \times \varPhi P_1 + rQ \times P_1 - tP \times P_1) \!- \{Q, P_1\}\varPhi = 0$
\vspace{1.5mm}

{\rm (9)} $8((P \times Q_1)Q - stQ_1 - r^2Q_1 - \varPhi^2Q_1 + 2r\varPhi Q_1) + 5\{P, Q_1\}Q 
-2\{Q, Q_1\}P = 0$
\vspace{1.5mm}

\hspace*{-1.7mm}{\rm (10)} $8((Q \times P_1)P + stP_1 + r^2P_1 + \varPhi^2P_1 + 2r\varPhi P_1) \,+ \,5\{Q, P_1\}P 
-2\{P, Q_1\}Q= 0$
\vspace{1.5mm}

\hspace*{-1.7mm}{\rm (11)} $18(\ad\,\varPhi)^2\varPhi_1 + Q \times \varPhi_1P - P \times \varPhi_1Q) +   B_7(\varPhi, \varPhi_1)\varPhi = 0$
\vspace{1.5mm}

\hspace*{-1.7mm}{\rm (12)} $18(\varPhi_1\varPhi P -2\varPhi\varPhi_1P - r\varPhi_1P - s\varPhi_1Q) + B_7(\varPhi, \varPhi_1)P = 0$
\vspace{1.5mm}

\hspace*{-1.7mm}{\rm (13)} $18(\varPhi_1\varPhi Q -2\varPhi\varPhi_1Q + r\varPhi_1Q - t\varPhi_1P) + B_7(\varPhi, \varPhi_1)Q = 0,$

\vspace{1.5mm}
\noindent {\rm(}where $B_7$ is the Killing form of the Lie algebra ${\mathfrak{e}_7}^C${\rm)} for all $\varPhi_1 \in {\mathfrak{e}_7}^C, P_1, Q_1 \in \mathfrak{P}^C$.
\end{lem}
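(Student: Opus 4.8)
The plan is to characterize $(\mathfrak{W}^C)_{\sigma'_{\!\!4},\mathfrak{so}(6,C)}$ by unpacking the single equation $R\times R=0$ into components. First I would compute the $C$-linear mapping $R\times R$ explicitly for a general $R=(\varPhi,P,Q,r,s,t)\in{\mathfrak{e}_8}^C$ using the definition $(R\times R)R_1=[R,[R,R_1]]+\tfrac{1}{30}B_8(R,R_1)R$, together with the Lie bracket formulas on ${\mathfrak{e}_8}^C$ given in Section 2 and the relation between $B_8$ and $B_7$ (and the trace forms on ${\mathfrak{e}_7}^C$, $\mathfrak{P}^C$). Applying $(R\times R)$ to each of the six ``slots'' $(\varPhi_1,P_1,Q_1,\wti r,s^-,t_-)$ of a test element $R_1\in{\mathfrak{e}_8}^C$ and setting the result to zero produces the list of identities: evaluating against $\wti r$, $s^-$, $t_-$ gives the ``scalar-type'' conditions (1)--(6), evaluating against $P_1$, $Q_1$ gives the ``$\mathfrak{P}^C$-valued'' conditions (7)--(10), and evaluating against $\varPhi_1$ gives the ``${\mathfrak{e}_7}^C$-valued'' conditions (11)--(13). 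This is bookkeeping of the Lie product composed with itself, so it is routine but lengthy.

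The main organizational point is that once $R\times R$ is written as a block operator with entries that are the left-hand sides of (1)--(13), the statement $R\times R=0$ is literally the vanishing of all these blocks, which yields the ``only if'' direction immediately, and the ``if'' direction is the observation that the listed expressions exhaust all blocks (so their vanishing forces $R\times R=0$ on all of ${\mathfrak{e}_8}^C$). I would therefore present the proof as: (i) recall $R\times R$ and expand $[R,[R,R_1]]$ slot by slot; (ii) collect the $B_8$-term using $B_8(R,R_1)=$ (a known combination of $B_7(\varPhi,\varPhi_1)$, $\{P,Q_1\}-\{Q,P_1\}$, and $rr',ss'+tt'$-type terms); (iii) read off that the coefficient of $R_1$'s $\wti r,s^-,t_-,P_1,Q_1,\varPhi_1$ components are exactly (1)+(6), (2), ..., (11), up to the normalizations $18$, $8$, $2$ that appear. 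The hypotheses $\sigma'_{\!\!4}R=R$ and $[R_D,R]=0$ are already assumed, so they do not enter the derivation of (1)--(13) except to guarantee that all the quantities appearing ($\varPhi\in({\mathfrak{e}_7}^C)^{\sigma'_{\!\!4},\mathfrak{so}(6,C)}$, $P,Q$ in the restricted form, $r,s,t\in C$) live in the correct subspaces.

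The hard part will be getting the numerical coefficients and signs exactly right: the factors $\tfrac{1}{30}$ in the definition of $R\times R$, the $\tfrac{1}{8}$'s and $\tfrac14$'s in the ${\mathfrak{e}_8}^C$-bracket, and the normalization of $B_8$ relative to $B_7$ all conspire to produce the clean coefficients $18,8,5,2,16,3$ in conditions (1)--(13). I would cross-check a couple of the simplest components — for instance the $\wti r$-slot, which should reproduce $\{P,Q\}-16(st+r^2)=0$ after combining the bracket term $-\tfrac18\{P,Q\}+\cdots$ with the Killing-form term — to calibrate the overall scaling, and then trust the pattern for the remaining slots. This computation is essentially the same one carried out in the analogous setting of \cite{Yokota} (cited just above for the method), so I would follow that template; the verification is ``simple computation'' in the sense used throughout the paper, and I would state it as such rather than reproduce every line.
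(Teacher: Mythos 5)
Your proposal matches the paper's proof, which consists of the single sentence that the conditions follow ``by doing simple computation of $(R\times R)R_1=0$ for all $R_1=(\varPhi_1,P_1,Q_1,r_1,s_1,t_1)\in{\mathfrak{e}_8}^C$''; you are simply spelling out the same slot-by-slot expansion of $[R,[R,R_1]]+\tfrac{1}{30}B_8(R,R_1)R$ that the paper leaves implicit. The only caveat is organizational: some of the conditions (e.g.\ (7)--(10)) arise as the ${\mathfrak{e}_7}^C$- and $\mathfrak{P}^C$-components of a single slot rather than one condition per slot, but that does not affect the correctness of the argument.
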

\begin{proof}
For $R = (\varPhi, P, Q, r, s, t) \in {\mathfrak{e}_8}^C$ satisfying $\sigma'_{\!\!4}R = R$ and $[R_D, R] = 0$ for all $D \in \mathfrak{so}(6,C), R \not= 0$, by doing simple computation of $(R\, \times\, R)R_1=0$ for all $ R_1=(\varPhi_1, P_1, Q_1, r_1, $ $s_1, t_1) \in {\mathfrak{e}_8}^C$, we have the required result above.
\end{proof}

\begin{prop}\label{prop 5.4} 
The group $(({E_8}^C)^{\sigma'_{\!\!4},\mathfrak{so}(6,C)})_0$ acts on $(\mathfrak{W}^C)_{\sigma'_{\!\!4},\mathfrak{so}(6,C)}$ transitively.
\end{prop}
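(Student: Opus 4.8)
The plan is to show that every $R\in(\mathfrak{W}^C)_{\sigma'_{\!\!4},\mathfrak{so}(6,C)}$ is carried to the single element $1_-=(0,0,0,0,0,1)$ by an element of $G_0:=(({E_8}^C)^{\sigma'_{\!\!4},\mathfrak{so}(6,C)})_0$; since $1_-$ satisfies $\sigma'_{\!\!4}1_-=1_-$ and $[R_D,1_-]=0$ and makes every one of the conditions $(1)$--$(13)$ of Lemma \ref{lem 5.3} collapse to $0=0$, it lies in $(\mathfrak{W}^C)_{\sigma'_{\!\!4},\mathfrak{so}(6,C)}$, so this gives the transitivity. The only transformations used are the one-parameter subgroups $\exp(\varTheta(R'))$, $R'\in({\mathfrak{e}_8}^C)^{\sigma'_{\!\!4},\mathfrak{so}(6,C)}$ (Lemma \ref{lem 5.1}\,(2)); these lie in $G_0$, and they map $(\mathfrak{W}^C)_{\sigma'_{\!\!4},\mathfrak{so}(6,C)}$ into itself, because the Lie bracket and the Killing form $B_8$ are ${E_8}^C$-equivariant and every element of $G_0$ commutes with $\sigma'_{\!\!4}$ and with each $\varTheta(R_D)$. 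Concretely I would use the $\mathfrak{sl}(2,C)$ spanned by $\wti{1},1^-,1_-$ (in particular its ``Weyl element'' $w\in G_0$, which sends the $s$-slot to the $t$-slot and the $P$-slot to the $Q$-slot), the translations $\exp(\varTheta(P^-))$ and $\exp(\varTheta(Q_-))$ for $P,Q\in(\mathfrak{P}^C)_{\sigma'_{\!\!4}}$, and $\exp(\varTheta(\varPhi))$ for $\varPhi\in({\mathfrak{e}_7}^C)^{\sigma'_{\!\!4},\mathfrak{so}(6,C)}$. This is the apparatus of \cite{Yokota}, transplanted to the present fixed-point data.

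The first step is to arrange that the $t$-component of $R=(\varPhi,P,Q,r,s,t)$ is nonzero. If $s\neq0$, then $wR$ has $t$-component equal (up to sign) to the old $s$-component, hence nonzero. If $t=s=0$ but $Q\neq0$, then for $Q'\in(\mathfrak{P}^C)_{\sigma'_{\!\!4}}$ the $t$-component of $\exp(\varTheta({Q'}_-))R$ equals exactly $-(1/4)\{Q',Q\}$ (the higher-order terms of the exponential contribute nothing to the $t$-slot, by the grading), and since $\{\,,\,\}$ is nondegenerate on $(\mathfrak{P}^C)_{\sigma'_{\!\!4}}$ one chooses $Q'$ making this nonzero. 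If $t=s=0$, $Q=0$ but $P\neq0$, one first applies $w$ to push $P$ into the $Q$-slot and is reduced to the previous case; and if $t=s=0$, $P=Q=0$ but $\varPhi\neq0$, condition $(11)$ forces $\varPhi\times\varPhi=0$ in ${\mathfrak{e}_7}^C$, while applying $\exp(\varTheta({P'}^{-}))$ creates a $P$-slot equal to $-\varPhi P'$ (to first order), nonzero for a suitable $P'$ since $\varPhi$ acts nontrivially on $(\mathfrak{P}^C)_{\sigma'_{\!\!4}}$, and one is again reduced to an earlier case. The remaining possibilities are excluded by Lemma \ref{lem 5.3} itself; for instance $t=s=0$, $P=Q=0$, $r\neq0$ contradicts condition $(6)$, which then forces $r^2=0$.

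Once $t\neq0$, I would put $R$ into normal form. Applying $\exp(\varTheta((Q/t)^{-}))$, with $Q/t\in(\mathfrak{P}^C)_{\sigma'_{\!\!4}}$, replaces the $Q$-slot by $Q-t(Q/t)=0$ and leaves the $t$-slot unchanged; by Lemma \ref{lem 5.3}\,$(2)$,$(5)$ the new $\varPhi$ and $P$ then vanish, and by $(6)$ the new $s$ equals $-r^2/t$. Thus $R$ has become $(0,0,0,r,-r^2/t,t)$, which is $\exp(\varTheta((r/t)1^-))$ applied to $t\,1_-$; hence $\exp(\varTheta(-(r/t)1^-))$ brings it to $t\,1_-$, and finally $\exp(\varTheta(c\,\wti{1}))$ with $e^{-2c}=1/t$ brings $t\,1_-$ to $1_-$. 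The composite of these maps with the one used in the previous paragraph is the required element of $G_0$.

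The main obstacle I anticipate is precisely the first step: the configurations with $t=0$ must be enumerated and, in each, a transformation producing a nonzero $t$-slot must be exhibited, the delicate ones being $t=s=0,\ Q\neq0$ (where nondegeneracy of $\{\,,\,\}$ on $(\mathfrak{P}^C)_{\sigma'_{\!\!4}}$ is used) and $t=s=0,\ P=Q=0,\ \varPhi\neq0$ (where one exploits $\varPhi\times\varPhi=0$ from condition $(11)$). This bookkeeping, together with the observation that the constraints of Lemma \ref{lem 5.3} are automatically maintained because $G_0$ preserves $(\mathfrak{W}^C)_{\sigma'_{\!\!4},\mathfrak{so}(6,C)}$, is the bulk of the work and runs parallel to \cite{Yokota}; the normal-form step afterwards is the familiar $\mathfrak{sl}(2,C)$-plus-translation reduction and presents no difficulty.
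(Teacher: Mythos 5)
Your overall strategy is the paper's: show $1_-\in(\mathfrak{W}^C)_{\sigma'_{\!\!4},\mathfrak{so}(6,C)}$, check the group preserves this set, and then run a case analysis that uses exponentials of nilpotent elements of $({\mathfrak{e}_8}^C)^{\sigma'_{\!\!4},\mathfrak{so}(6,C)}$ to force a nonzero $t$-component, after which Lemma \ref{lem 5.3} pins down $\varPhi,P,s$ in terms of $Q,r,t$. Your endgame is actually cleaner than the paper's: instead of computing $\exp(\varTheta(0,P_1,0,r_1,s_1,0))1_-$ in closed form and matching parameters, you kill the $Q$-slot with $\exp(\varTheta((Q/t)^-))$ (which is exact, since the $Q$- and $t$-slots have $\ad(\wti{1})$-degrees $-1$ and $-2$ while $P^-$ has degree $+1$) and then finish inside the $\mathfrak{sl}(2,C)$ spanned by $\wti{1},1^-,1_-$. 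That part is correct.

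The gap is in your first step, in the case $t=s=0$, $Q\neq0$. You assert that the $t$-component of $\exp(\varTheta(Q'_-))R$ equals exactly $-\tfrac{1}{4}\{Q',Q\}$ because "the higher-order terms contribute nothing to the $t$-slot, by the grading." The grading says the opposite: $\varTheta(Q'_-)$ lowers the $\ad(\wti{1})$-degree by $1$, and the $t$-slot is the degree $-2$ eigenspace, so it receives contributions from $\varTheta(Q'_-)^2$ applied to the degree-$0$ part $(\varPhi,r)$ and from $\varTheta(Q'_-)^3$ applied to the degree-$1$ part $P$; concretely the second-order term is $-\tfrac{1}{8}\{Q',\,-\varPhi Q'+rQ'\}$, which need not vanish. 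The conclusion survives — the $t$-component is a polynomial in $Q'$ whose linear part $-\tfrac{1}{4}\{Q',Q\}$ is a nonzero functional, so it is not identically zero and a suitable $Q'$ exists — but that genericity argument must replace your "exactly." Alternatively, follow the paper's ordering: when $t=s=0$ and $Q\ne 0$ first produce $r\neq0$ via $\exp(\varTheta({P_1}^-))$, whose effect on the $r$-slot \emph{is} exactly $r-\tfrac{1}{8}\{P_1,Q\}$ (the degree count closes there because $\{P_1,P_1\}=0$), then use Lemma \ref{lem 5.3} in the case $r\ne0$, $s=t=0$ to produce $s\neq0$ and finally $t\neq0$; this is precisely why the paper climbs $r\to s\to t$ rather than jumping straight to $t$. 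The remaining cases ($P\neq0$ via the Weyl element, $\varPhi\neq0$ via $\varPhi^2=0$ and a $P'$ with $\varPhi P'\neq0$, and the exclusion of $r\ne0$ with $P=Q=0$ by condition (6)) agree with the paper's Cases (iii)--(vi) and are fine.
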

\begin{proof}
Since $\alpha \in ({E_8}^C)^{\sigma'_{\!\!4},\mathfrak{so}(6,C)}$ leaves invariant the Killing form $B_8$ of ${\mathfrak{e}_8}^C\!: \!B_8(\alpha R,$ $\alpha R') = B_8(R, R')$, $R, R' \in {\mathfrak{e}_8}^C$, we have $\alpha R \in (\mathfrak{W}^C)_{\sigma'_{\!\!4},\mathfrak{so}(6,C)}$ for $R \in (\mathfrak{W}^C)_{\sigma'_{\!\!4},\mathfrak{so}(6,C)}$. 
%\vspace{-0.5mm}
Indeed, since we see that 
\begin{eqnarray*}
   (\alpha R \times \alpha R)R_1 \!\!\!&=&\!\!\! [\alpha R, [\alpha R, \alpha R_1]\,] + 
\dfrac{1}{30}
%(1/30)
B_8(\alpha R, R_1)\alpha R
\\[0.5mm]
      \!\!\!&=&\!\!\! \alpha[\,[R, [R, \alpha^{-1}R_1]\,] + 
\dfrac{1}{30}
%(1/30)
B_8(R, \alpha^{-1}R_1)\alpha R
\\[0.5mm]
      \!\!\!&=&\!\!\! \alpha((R \times R)\alpha^{-1}R_1      
\\[0.5mm]
      &=& 0,
\\[0.5mm]
[R_D, \alpha R] \!\!\!&=&\!\!\! \alpha[\alpha^{-1}R_D, R] = \alpha[R_D, R]
\\[0.5mm]
\!\!\!&=&\!\!\! 0, 
\end{eqnarray*} 
this shows that the group $({E_8}^C)^{\sigma'_{\!\!4},\mathfrak{so}(6,C)}$ acts on $(\mathfrak{W}^C)_{\sigma'_{\!\!4},\mathfrak{so}(6,C)}$. We shall show that this action is transitive. First, for    
$R_1 \in {\mathfrak{e}_8}^C$, it follows from 
\begin{eqnarray*}
     (1_- \times 1_-)R_1 \!\!\!&=&\!\!\! [1_-,[1_-, (\varPhi_1, P_1, Q_1, r_1, s_1, t_1)] \, ] + 
\dfrac{1}{30}
%(1/30)
B_8(1_-, R_1)1_-
\\[0.5mm]
 \!\!\!&=&\!\!\! [1_-, (0, 0, P_1, -s_1, 0, 2r_1)] + 2s_11_- 
\\[0.5mm]
 \!\!\!&=&\!\!\! (0, 0, 0, 0, -2s_1) + 2s_11_- 
\\[0.5mm]
 \!\!\!&=&\!\!\! 0,   
\\[0.5mm]
 [R_D, 1_-] \!\!\!&=&\!\!\! 0,
\end{eqnarray*}
and $\sigma'_{\!\!4} 1_- =1_-$ that we confirm $1_- \in (\mathfrak{W}^C)_{\sigma'_{\!\!4},\mathfrak{so}(6,C)}$. Then, in order to prove the transitivity of this action, it is sufficient to show that any element $R \in (\mathfrak{W}^C)_{\sigma'_{\!\!4},\mathfrak{so}(6,C)}$ can be transformed to $1_- \in (\mathfrak{W}^C)_{\sigma'_{\!\!4},\mathfrak{so}(6,C)}$ by some $\alpha \in ({E_8}^C)^{\sigma'_{\!\!4},\mathfrak{so}(6,C)}$. Indeed, we have the following.
\vspace{1mm}

{Case} (i) where $R = (\varPhi, P, Q, r, s,t), t \not= 0$. 

From Lemma \ref{lem 5.3} (2),(5) and (6), we have that
$$
   \varPhi = -\dfrac{1}{2t}Q \times Q, \; P = \dfrac{r}{t}Q - \dfrac{1}{6t^2}(Q \times Q)Q, \; s = -\dfrac{r^2}{t} + \dfrac{1}{96t^3}\{Q, (Q \times Q)Q\}. 
$$
Now, for $\varTheta = \varTheta(0, P_1, 0, r_1, s_1, 0) \in \varTheta(({\mathfrak{e}_8}^C)^{\sigma'_{\!\!4},\mathfrak{so}(6,C)})$ (Lemma \ref{lem 5.1} (2)), we compute $\varTheta^n1_-$:
%\vspace{1mm}

$$
\begin{array}{l}
%\varTheta^n1_-
%\vspace{1mm}\\
\varTheta^n1_-  = \begin{pmatrix} ((-2)^{n-1} + (-1)^n){r_1}^{n-2}P_1 \times P_1 
\vspace{1mm}\\
 \Big((-2)^{n-1} - \dfrac{1 + (-1)^{n-1}}{2}\Big){r_1}^{n-2}s_1P_1 + \Big(\dfrac{1 - (-2)^n}{6} + \dfrac{(-1)^n}{2}\Big){r_1}^{n-3}(P_1 \times P_1)P_1
\vspace{1mm}\\
      ((-2)^n + (-1)^{n-1}){r_1}^{n-1}P_1
\vspace{1mm}\\
      (-2)^{n-1}{r_1}^{n-1}s_1
\vspace{1mm}\\
 -((-2)^{n-2} + 2^{n-2}){r_1}^{n-2}{s_1}^2 + \dfrac{2^{n-2} + (-2)^{n-2} - (-1)^{n-1}}{24}{r_1}^{n-4}\{P_1,(P_1 \times P_1)P_1\}
\vspace{1mm}\\
     (-2)^n{r_1}^n \end{pmatrix}. 
\end{array} 
$$
%}  
Then, by doing simple computation, we have that 
%{\small 
$$
\begin{array}{l}
    \exp(\varPhi(0, P_1, 0, r_1, s_1, 0))1_- = (\exp\varTheta)1_- = \Big(\dsum_{n= 0}^\infty\dfrac{1}{n!}\varTheta^n \Big)1_-
\vspace{0.5mm}\\
= \begin{pmatrix}   -\dfrac{1}{2{r_1}^2}(e^{-2r_1} -2e^{-r_1} + 1)P_1 \times P_1
\vspace{0.5mm}\\
  \dfrac{s_1}{2{r_1}^2}(-e^{-2r_1} - e^{r_1} + e^{-r_1} + 1)P_1 + \dfrac{1}{6{r_1}^3}(-e^{-2r_1} + e^{r_1} + 3e^{-r_1} - 3)(P_1 \times P_1)P_1
\vspace{0.5mm}\\
   \dfrac{1}{r_1}(e^{-2r_1} - e^{-r_1})P_1
\vspace{0.5mm}\\  
   \dfrac{s_1}{2r_1}(1 - e^{-2r_1})
\vspace{0.5mm}\\
   -\dfrac{{s_1}^2}{4{r_1}^2}(e^{-2r_1} + e^{2r_1} -2) + \dfrac{1}{96{r_1}^4}(e^{2r_1} + e^{-2r_1} - 4e^{r_1} - 4e^{-r_1} + 6)\{P_1, (P_1 \times P_1)P_1\}
\vspace{0.5mm}\\
e^{-2r_1} \end{pmatrix}. 
\end{array} 
$$
%}   
Note that if $r_1 = 0$,
$$
 \dfrac{f(r_1)}{{r_1}^k} \,\,\,{\text{means}} \,\,\,\dlim_{r_1 \to 0}\dfrac{f(r_1)}{{r_1}^k}.
$$
Here we set
$$
   Q = \dfrac{1}{r_1}(e^{-2r_1} - e^{-r_1})P_1, \;\; r = \dfrac{s_1}{2r_1}(1 - e^{-2r_1}), \;\; t = e^{-2r_1}. 
$$
Then we have that 
$$
    (\exp\,\varTheta)1_- = \begin{pmatrix} -\dfrac{1}{2t}Q \times Q
\vspace{1mm}\\
                                \dfrac{r}{t}Q - \dfrac{1}{6t^2}(Q \times Q)Q 
\vspace{0.5mm}\\
                              Q 
\vspace{0.5mm}\\
                              r 
\vspace{0.5mm}\\
         -\dfrac{r^2}{t} + \dfrac{1}{96 t^3}\{Q, (Q \times Q)Q\}
\vspace{0.5mm}\\
                              t 
\end{pmatrix} 
= \begin{pmatrix} \varPhi \vspace{1mm}\\
                P \vspace{0.5mm}\\
                Q \vspace{0.5mm}\\
                r \vspace{0.5mm}\\
                s \vspace{0.5mm}\\
                t \end{pmatrix} =: R. 
$$
Thus $R$ is transformed to $1_-$ by $(\exp\,\varTheta)^{-1} \in (({E_8}^C)^{\sigma'_{\!\!4},\mathfrak{so}(6,C)})_0$. 
\vspace{1mm}

{Case} (ii) where  $R = (\varPhi, P, Q, r, s, 0), s \not= 0$.

 First, we denote $
%\lambda' \underset{\rm put}{=} 
\exp(\varTheta(0, 0, 0, 0, {\pi}/{2}, -{\pi}/{2})) \in (({E_8}^C)^{\sigma'_{\!\!4},\mathfrak{so}(6,C)})_0$  by $\lambda'$
 (Lemma \ref{lem 5.1} (2)). Here, operate $\lambda'$ on $R$, then we have that 
$$
    \lambda'R = \lambda'(\varPhi, P, Q, r, s, 0) = (\varPhi, Q,-P, -r, 0, -s), \;\;\; -s \not= 0. 
$$
Hence this case can be reduced to Case (i).
\vspace{0.5mm}

{Case} (iii) where  $R = (\varPhi, P, Q, r, 0, 0), r \not= 0$. 

From Lemma \ref{lem 5.3} (2),(5) and (6), we have that
$$
      Q \times Q = 0, \;\; \varPhi Q = -3rQ, \;\; \{P, Q\} = 16r^2. 
$$   
Then, for $\varTheta = \varTheta(0, Q, 0, 0, 0, 0) \in \varTheta(({\mathfrak{e}_8}^C)^{\sigma'_{\!\!4},\mathfrak{so}(6,C)})$ (Lemma \ref{lem 5.1} (2)), we see that
$$
      (\exp\, \varTheta)R = (\varPhi, P + 2rQ, Q, r, -4r^2, 0), \;\; -4r^2 \not= 0. 
$$
Hence this case can be reduced to Case (ii).
\vspace{0.5mm}

{Case} (iv) where $R = (\varPhi, P, Q, 0, 0, 0), Q \not= 0$. 

 We choose $P_1 \in (\mathfrak{P}^C)_{\sigma'_{\!\!4}}$ such that $\{P_1, Q\} \not= 0$. Then, for $\varTheta = \varTheta(0, P_1, 0, 0, 0, 0) \in \varTheta(({\mathfrak{e}_8}^C)^{\sigma'_{\!\!4},\mathfrak{so}(6,C)})$ (Lemma \ref{lem 5.1} (2)), we have that
\begin{eqnarray*}
       (\exp \, \varTheta)R \!\!\!&=&\!\!\! (\varPhi+P_1 \times Q, \,P-\varPhi P_1+\dfrac{1}{2}(P_1 \times Q)P_1, \,Q,\, -\dfrac{1}{8}\{P_1, Q\},
\\[0mm]
&&\quad \dfrac{1}{4}\{P_1, P\}+\dfrac{1}{8}\{P_1, -\varPhi P_1\}+\dfrac{1}{24}\{P_1,( P_1 \times Q)P_1\} ,\, 0), \quad  -\dfrac{1}{8}\{P_1, Q\} \not =0.
\end{eqnarray*}
Hence this case can be reduced to Case (iii).       
\vspace{1mm}

{Case} (v) where $R = (\varPhi, P, 0, 0, 0, 0), P \not= 0$.

 We choose $Q_1 \in (\mathfrak{P}^C)_{\sigma'_{\!\!4}}$ such that $\{P, Q_1\} \not= 0$. Then, for $\varTheta = \varTheta(0, 0, Q_1, 0, 0, 0) \in \varTheta(({\mathfrak{e}_8}^C)^{\sigma'_{\!\!4},\mathfrak{so}(6,C)})$ (Lemma \ref{lem 5.1} (2)), we have
\begin{eqnarray*}
       (\exp \, \varTheta)R \!\!\!&=&\!\!\! (\varPhi-P \times Q_1,P ,-\varPhi Q_1-\dfrac{1}{2}(P \times Q_1)Q_1 , \dfrac{1}{8}\{P, Q_1\},
\\[0mm]
&&\quad 0 ,-\dfrac{1}{8}\{Q_1, -\varPhi Q_1\} -\dfrac{1}{24}\{Q_1, -(P \times Q_1)Q_1  \} ), \quad \dfrac{1}{8}\{P, Q_1\} \not=0. 
\end{eqnarray*}
Hence this case can be reduced to Case (iii). 
\vspace{1mm}

{Case} (vi) where $R = (\varPhi, 0, 0, 0, 0, 0), \varPhi \not= 0.$ 
From  Lemma \ref{lem 5.3} (10), we have $\varPhi^2=0$. We choose $P_1 \in (\mathfrak{P}^C)_{\sigma'_{\!\!4}}$ such that $\varPhi P_1 \not = 0$.  Then, for $\varTheta = \varTheta(0, P_1, 0, 0, 0, 0) \in \varTheta(({\mathfrak{e}_8}^C)^{\sigma'_{\!\!4},\mathfrak{so}(6,C)})$ (Lemma \ref{lem 5.1} (2)), we have that
$$
    (\exp\,\varTheta)R = \Big(\varPhi, -\varPhi P_1, 0, 0, \dfrac{1}{8}\{\varPhi P_1, P_1\}, 0 \Big). 
$$
Hence this case is also reduced to Case (v).

Thus the proof of this proposition is completed. 
\end{proof}

Now, we shall prove the theorem as the aim of this section.
\begin{thm}\label{thm 5.5}
The homogeneous space $({E_8}^C)^{\sigma'_{\!\!4},\mathfrak{so}(6,C)}/(({E_8}^C)^{\sigma'_{\!\!4},\mathfrak{so}(6,C)})_{1_-}$ is diffeomorphic to the space $(\mathfrak{W}^C)_{\sigma'_{\!\!4},\mathfrak{so}(6,C)}${\rm : } $({E_8}^C)^{\sigma'_{\!\!4},\mathfrak{so}(6,C)}/(({E_8}^C)^{\sigma'_{\!\!4},\mathfrak{so}(6,C)})_{1_-} \simeq (\mathfrak{W}^C)_{\sigma'_{\!\!4},\mathfrak{so}(6,C)}$.

In particular, the group $({E_8}^C)^{\sigma'_{\!\!4},\mathfrak{so}(6,C)}$ is connected.
\end{thm}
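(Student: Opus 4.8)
The plan is to exhibit $(\mathfrak{W}^C)_{\sigma'_{\!\!4},\mathfrak{so}(6,C)}$ as a single orbit of $G := ({E_8}^C)^{\sigma'_{\!\!4},\mathfrak{so}(6,C)}$, namely the orbit of $1_-$, and then to read off the diffeomorphism and the connectedness from the structure of the isotropy subgroup. Write $G_0$ for the identity component of $G$, $W := (\mathfrak{W}^C)_{\sigma'_{\!\!4},\mathfrak{so}(6,C)}$, and $H := (({E_8}^C)^{\sigma'_{\!\!4},\mathfrak{so}(6,C)})_{1_-}$. By Proposition \ref{prop 5.4} the group $G_0$ — hence a fortiori $G$ — acts transitively on $W$, and by the very definition of $H$ the stabiliser of the point $1_- \in W$ in $G$ is exactly $H$.

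First I would treat the orbit map $\pi : G \to W$, $\pi(\alpha) = \alpha\, 1_-$. It is smooth, and by the transitivity just recalled it is surjective; its fibres are precisely the left cosets of $H$. Thus $\pi$ descends to a continuous bijection $G/H \to W$. To upgrade this to a diffeomorphism one invokes the standard homogeneous-space theorem: $W$ is a locally closed submanifold of ${\mathfrak{e}_8}^C$ (it is the set of nonzero solutions of the polynomial system $R \times R = 0$ together with the linear conditions $\sigma'_{\!\!4}R = R$ and $[R_D, R] = 0$ for all $D \in \mathfrak{so}(6,C)$), $H$ is a closed subgroup of the Lie group $G$, and the smooth transitive action of $G$ on $W$ makes $\pi$ a submersion; hence the induced map $G/H \to W$ is a $G$-equivariant diffeomorphism, which is exactly
$$
   ({E_8}^C)^{\sigma'_{\!\!4},\mathfrak{so}(6,C)}/(({E_8}^C)^{\sigma'_{\!\!4},\mathfrak{so}(6,C)})_{1_-} \simeq (\mathfrak{W}^C)_{\sigma'_{\!\!4},\mathfrak{so}(6,C)}.
$$

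For the connectedness assertion I would argue as follows. Since $G_0$ already acts transitively on $W$, we have $W = \pi(G_0)$, so $W$ is connected, being the continuous image of the connected group $G_0$. On the other hand $H$ is connected by Proposition \ref{prop 5.2}, and a connected subgroup containing the identity lies in the identity component, so $H \subseteq G_0$. Now $\pi : G \to W$ is a (principal $H$-) fibre bundle with connected fibre $H$, so the exact sequence of pointed sets $\pi_0(H) \to \pi_0(G) \to \pi_0(W)$ together with the connectedness of $W$ forces $\pi_0(G)$ to be a single point; that is, $G = G_0$, and $({E_8}^C)^{\sigma'_{\!\!4},\mathfrak{so}(6,C)}$ is connected.

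All the genuine content — the transitivity of the action (Proposition \ref{prop 5.4}, resting on the case analysis behind Lemma \ref{lem 5.3}) and the connectedness of the isotropy group $H$ (Proposition \ref{prop 5.2}) — is already in place, so the theorem is essentially a formal packaging of those two results. The only point requiring a little care, and the place I would expect to be the main obstacle, is the passage from the continuous bijection $G/H \to W$ to a genuine diffeomorphism: one must pin down the manifold structure on $W$ for which $\pi$ is a submersion, and for this one uses that $W$ is the complement of the origin in an affine algebraic subset of ${\mathfrak{e}_8}^C$ on which $G_0$ acts transitively.
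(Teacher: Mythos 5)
Your proposal is correct and follows essentially the same route as the paper: the diffeomorphism is read off from the transitivity of the action in Proposition \ref{prop 5.4}, and the connectedness of $({E_8}^C)^{\sigma'_{\!\!4},\mathfrak{so}(6,C)}$ follows from the connectedness of the isotropy subgroup (Proposition \ref{prop 5.2}) together with the connectedness of $(\mathfrak{W}^C)_{\sigma'_{\!\!4},\mathfrak{so}(6,C)}$ as the continuous image of the identity component. You simply spell out the standard homogeneous-space and fibration arguments that the paper leaves implicit.
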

\begin{proof}
Since the group $({E_8}^C)^{\sigma'_{\!\!4},\mathfrak{so}(6,C)}$ acts on the space $(\mathfrak{W}^C)_{\sigma'_{\!\!4},\mathfrak{so}(6,C)}$ transitively (Proposition \ref{prop 5.4}), the former half of this theorem is proved.

The latter half can be shown as follows. Since $(({E_8}^C)^{\sigma'_{\!\!4},\mathfrak{so}(6,C)})_{1_-}$ and $(\mathfrak{W}^C)_{\sigma'_{\!\!4}}$  are connected (Propositions \ref{prop 5.2}, \ref{prop 5.4}), we have that $({E_8}^C)^{\sigma'_{\!\!4},\mathfrak{so}(6,C)}$ is also connected. 
\end{proof}
%%%%%%%%%%%%%%%%%%%%%%%%%%%%%%%%%%%%%%%%%%%%%%%%%%%%
\vspace{-1mm}

\section{ Construction of $S\!pin(10, C)$ in ${E_8}^C$}
\vspace{1mm}

We define a subgroup  $({E_8})^{\sigma'_{\!\!4},\mathfrak{so}(6)} %({E_8})^{\sigma'_{\!\!4}}
$ of $E_8$ by
%\begin{eqnarray*}
$$
%({E_8})^{\sigma'_{\!\!4}} \!\!\!&=&\!\!\!\{\alpha \in E_8\,|\, %\sigma'_{\!\!4} \alpha=\alpha \sigma'_{\!\!4} \}\\
 ({E_8})^{\sigma'_{\!\!4},\mathfrak{so}(6)} = \left\{\alpha
\in E_8 \, \left| \, 
\begin{array}{l}\sigma'_{\!\!4}\alpha = \alpha\sigma'_{\!\!4}, \\
 \varTheta(R_D)\alpha = \alpha\varTheta(R_D) \; \mbox{for all}\; D \in \mathfrak{so}(6) 
\end{array} 
\right. \right\}.
$$
%\end{eqnarray*}
%respectively.
Then we have the following lemma.

\begin{lem}\label{lem 6.1}
The Lie algebra $({\mathfrak{e}_8})^{\sigma'_{\!\!4},\mathfrak{so}(6)}$ of the group $({E_8})^{\sigma'_{\!\!4},\mathfrak{so}(6)}$ is given by
%\vspace{1mm}
\begin{eqnarray*}
%\begin{array}{lll}
({\mathfrak{e}_8})^{\sigma'_{\!\!4},\mathfrak{so}(6)}
%\vspace{2mm}\\
\!\!\!&=&\!\!\! \left\{ R \in {\mathfrak{e}_8} \, \Bigg| \, 
\begin{array}{l}
    \sigma'_{\!\!4}R=R, 
\\[1mm]
       [R, R_D] =0 \,\, \text{for all} \,\, D \in \mathfrak{so}(6)
%\\[1mm]
\end{array}   \right \} 
%\end{eqnarray*}
%\begin{eqnarray*}
\\[1mm]
\!\!\!&=&\!\!\! \left\{(\varPhi, P, -\tau\lambda P, r, s, -\tau s) \in {\mathfrak{e}_8} \, \left| \,
\begin{array}{l}
     \varPhi \in ({\mathfrak{e}_7})^{\sigma'_{\!\!4},\mathfrak{so}(6)}, \\
     P = (X, Y, \xi, \eta), \\
 \quad X=\begin{pmatrix} \xi_1 &    0      & 0 \\
                                              0   &\xi_2 & x \\
                                              0   &   \ov{x}& \xi_3
                \end{pmatrix},
       Y=\begin{pmatrix} \eta_1 &   0      &  0 \\
                                              0   & \eta_2 &  y \\
                                              0   &  \ov{y}& \eta_3
                \end{pmatrix}, 
\vspace{0.5mm}\\        
%\zeta_1E_1+\zeta_2E_2+\zeta_3E_3+F_1(z),
\quad \xi_k, \eta_k, \xi,\eta, \ \in C, x, y \in \C^C,  \\
%%\quad Y=\begin{pmatrix} \eta_1 &   0      &  0 \\
%%                                            0   & \eta_2 &  y \\
%%                                          0   &  \ov{y}& \eta_3
%%               \end{pmatrix},
%\omega_1E_1+\omega_2E_2+\omega_3E_3+F_1(w),
%%\eta_k \in C, y \in \C^C,  \\
%%\quad \xi, \eta \in C ,\\
%Q=-\tau\lambda P, \\
     r \in i\R,s \in C, 
%t=-\tau s
\end{array} \right. \right\}, 
%\end{array}
\end{eqnarray*}
%where as for the explicit form of the Lie algebra $({\mathfrak{e}_7}^C)^{\sigma'_{\!\!4},\mathfrak{so}(6,C)}$, see Lemma 4.1 {\rm(}2{\rm )}.
where as mentioned in Lemma 4.11, as for the Lie algebra $\mathfrak{e}_7$ of the compact Lie group $E_7$, 
%is given by $\mathfrak{e}_7=\{\varPhi(\phi, A, -\tau A, \nu) \,|\,\phi \in \mathfrak{e}_6, A \in \mathfrak{J}^C, \nu \in i\R \}$ 
see \cite[theorem 4.3.4]{Yokotaichiro} in detail, and so the Lie algebra $({\mathfrak{e}_7})^{\sigma'_{\!\!4},\mathfrak{so}(6)}$ above is defined as follows{\rm:}
\begin{eqnarray*}
({\mathfrak{e}_7})^{\sigma'_{\!\!4},\mathfrak{so}(6)}\!\!\!&=&\!\!\!\left\{ \varPhi(\phi, A, -\tau A, \nu)\,\Biggm|\,
\begin{array}{l}
    \sigma'_{\!\!4}\varPhi=\varPhi\sigma'_{\!\!4}, 
\\[1mm]
       [\varPhi, \varPhi_D] =0 \,\, \text{for all} \,\, D \in \mathfrak{so}(6) \end{array} \right \}
\\[1mm]
 \!\!\! &=&\!\!\!\!\left\{\varPhi(\phi, A, -\tau A, \nu) \in {\mathfrak{e}_7} \,\left| \, 
\begin{array}{l}
     \phi = \left(
\begin{array}{@{\,}cc|cc@{\,\,\,}}
\multicolumn{2}{c|}{\raisebox{-2.0ex}[-5pt][0pt]{\large $D_2$}}&& \\
&&\multicolumn{2}{c}{\raisebox{0.9ex}[0pt]{ \large $0$}} \\
\hline
%&&&&& \\
%&&&&& \\
&&\multicolumn{2}{c}{\raisebox{-10pt}[0pt][0pt]{\large $0$}}\\
\multicolumn{2}{c|}{\raisebox{3pt}[0pt]{\large $0$}}&& \\
%&&&&& \\
%&&&&& \\
\end{array}
\right) \begin{array}{l}
\!+ \tilde{A}_1(a)
\\ \vspace{1mm}
\!+i(\tau_1 E_1\!+\!\tau_2 E_2\!+\!\tau E_3\!+\!F_1(t_1))^\sim
\end{array}
\vspace{1mm}\\
\quad D_2 \in \mathfrak{so}(2), a \in \C, \tau_k \in \R,
\\ \quad \tau_1+\tau_2+\tau_3=0, t_1 \in \C,
\\
A=\begin{pmatrix}\xi_1 & 0       & 0   \\
                    0   & \xi_2  & x_1 \\
                    0   & \ov{x}_1& \xi_3
   \end{pmatrix},\,\,\xi_k \in C, x_1 \in \C^C,
%B=\begin{pmatrix}\eta_1 & 0       & 0   \\
%                    0   & \eta_2  & y_1 \\
%                    0   & \ov{y}_1& \eta_3
%   \end{pmatrix},
\vspace{1mm}\\
%\quad \xi_k \in C, x_1 \in \C^C, \eta_k \in C, y_1 \in \C^C, 
%\\
\nu \in i\R 
\end{array}
\right.\!\!\!\!\! \right \}.      
\end{eqnarray*}

In particular, 
$$
  \dim_C(({\mathfrak{e}_8}^C)^{\sigma'_{\!\!4},\mathfrak{so}(6,C)}) = 
18 + ((3+2) \times 2+1 \times 2)\times 2 + 3 = 45. 
$$
\end{lem}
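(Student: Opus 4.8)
The plan is to realize $({\mathfrak{e}_8})^{\sigma'_{\!\!4},\mathfrak{so}(6)}$ as the intersection of the compact form $\mathfrak{e}_8$ with the complex Lie algebra $({\mathfrak{e}_8}^C)^{\sigma'_{\!\!4},\mathfrak{so}(6,C)}$ already determined in Lemma \ref{lem 5.1} (2), and then to read off the answer by imposing the compact reality condition. Two small remarks justify this reduction. First, for a fixed $R \in {\mathfrak{e}_8}^C$ the map $D \mapsto [R, R_D]$ is $C$-linear in $D$, since $D \mapsto \varPhi_D$ is $C$-linear and $\ad(R)$ is $C$-linear; hence its kernel is a $C$-subspace of $\mathfrak{so}(6,C)$, and because $\mathfrak{so}(6,C) = \mathfrak{so}(6) \otimes_\R C$ the requirement ``$[R, R_D] = 0$ for all $D \in \mathfrak{so}(6)$'' is equivalent to ``$[R, R_D] = 0$ for all $D \in \mathfrak{so}(6,C)$''. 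Second, $\sigma'_{\!\!4}$ preserves $\mathfrak{e}_8$ (Section 2), so $\sigma'_{\!\!4}R = R$ is literally the restriction to $\mathfrak{e}_8$ of the same condition on ${\mathfrak{e}_8}^C$. Consequently
$$
 ({\mathfrak{e}_8})^{\sigma'_{\!\!4},\mathfrak{so}(6)} = \mathfrak{e}_8 \cap ({\mathfrak{e}_8}^C)^{\sigma'_{\!\!4},\mathfrak{so}(6,C)}.
$$

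Next I would take a general $R = (\varPhi, P, Q, r, s, t)$ in the explicit form provided by Lemma \ref{lem 5.1} (2) and impose the defining relations of $\mathfrak{e}_8$, namely $\varPhi \in \mathfrak{e}_7$, $Q = -\tau\lambda P$, $t = -\tau s$ and $r \in i\R$ (with $s \in C$ unconstrained). Since complex conjugation $\tau$ and the map $\lambda$ both preserve the subspace of $\mathfrak{P}^C$ consisting of quadruples $(X, Y, \xi, \eta)$ with $X, Y$ of the displayed shape and off-diagonal entries in $\C^C$, the relation $Q = -\tau\lambda P$ automatically puts $Q$ into the same shape as $P$: it adds no new structural constraint, it only expresses $Q$ through $P$ (and $t$ through $s$). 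It then remains to identify $\mathfrak{e}_7 \cap ({\mathfrak{e}_7}^C)^{\sigma'_{\!\!4},\mathfrak{so}(6,C)}$, that is, $({\mathfrak{e}_7})^{\sigma'_{\!\!4},\mathfrak{so}(6)}$; intersecting the form of $({\mathfrak{e}_7}^C)^{\sigma'_{\!\!4},\mathfrak{so}(6,C)}$ from Lemma \ref{lem 4.1} (2) with the compact form $\mathfrak{e}_7 = \{\varPhi(\phi, A, -\tau A, \nu) \mid \phi \in \mathfrak{e}_6,\ A \in \mathfrak{J}^C,\ \nu \in i\R\}$ forces $D_2 \in \mathfrak{so}(2)$ and $a \in \C$, forces the summand $(\tau_1 E_1 + \tau_2 E_2 + \tau_3 E_3 + F_1(t_1))^{\sim}$ of $\phi$ to be $i$ times a real traceless element (so $\tau_k \in \R$, $\tau_1 + \tau_2 + \tau_3 = 0$, $t_1 \in \C$), forces $B = -\tau A$, and forces $\nu \in i\R$ --- exactly the description of $({\mathfrak{e}_7})^{\sigma'_{\!\!4},\mathfrak{so}(6)}$ in the statement. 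Assembling the pieces then gives the claimed explicit form of $({\mathfrak{e}_8})^{\sigma'_{\!\!4},\mathfrak{so}(6)}$.

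For the dimension I would run the same bookkeeping as in Lemma \ref{lem 5.1} (2), now over $\R$: $\dim_\R(({\mathfrak{e}_7})^{\sigma'_{\!\!4},\mathfrak{so}(6)}) = 18$; the pair $(P, Q)$, with $Q$ determined by $P$, contributes the $(3 + 2)\times 2 + 1\times 2 = 12$ complex parameters of $P$, i.e. $24$ real dimensions; and $(r, s, t)$, with $r \in i\R$, $s \in C$ and $t = -\tau s$, contributes $3$; total $18 + 24 + 3 = 45$. Equivalently, $({\mathfrak{e}_8})^{\sigma'_{\!\!4},\mathfrak{so}(6)}$ is a real form of the $45$-complex-dimensional space $({\mathfrak{e}_8}^C)^{\sigma'_{\!\!4},\mathfrak{so}(6,C)}$. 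The step I expect to need the most care is the reality bookkeeping for the $\mathfrak{e}_6$-component $\phi$: one must verify that, once the $\sigma'_{\!\!4}$- and $\mathfrak{so}(6)$-conditions are in force, the $\mathfrak{so}(8,C)$-summand of $\phi$ is necessarily genuinely real (landing in $\mathfrak{so}(2)$) whereas the diagonal-plus-$F_1$ summand is necessarily purely imaginary, so that membership in the compact $\mathfrak{e}_6$ is compatible with the centralizer conditions precisely for the listed parameters; the rest is the routine computation already carried out for Lemma \ref{lem 5.1}.
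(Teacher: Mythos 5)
Your proposal is correct and is essentially the paper's own argument: the paper proves Lemma \ref{lem 6.1} simply ``by the argument similar to Lemma \ref{lem 5.1} (2),'' i.e.\ by rerunning that computation with the compact reality conditions $\varPhi\in\mathfrak{e}_7$, $Q=-\tau\lambda P$, $t=-\tau s$, $r\in i\R$ in force, which is exactly what your intersection $\mathfrak{e}_8\cap({\mathfrak{e}_8}^C)^{\sigma'_{\!\!4},\mathfrak{so}(6,C)}$ packages. Your explicit justification that centralizing $\mathfrak{so}(6)$ is equivalent to centralizing $\mathfrak{so}(6,C)$ (by $C$-linearity of $D\mapsto[R,R_D]$) is a detail the paper leaves implicit, but it is correct and does not change the route.
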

\begin{proof}
By the argument similar to Lemma 5.1 (2), we have the required result.
\end{proof}
\begin{prop} \label{prop 6.2}
The Lie algebra 
$({\mathfrak{e}_8})^{\sigma'_{\!\!4}, \mathfrak{so}(6)}$ 
%of the group $(E_8)^{\sigma'_{\!\!4}, \mathfrak{so}(6)}$ 
is isomorphic to the Lie algebra $\mathfrak{so}(10)$ and the Lie algebra 
$({\mathfrak{e}_8}^C)^{\sigma'_{\!\!4}, \mathfrak{so}(6,C)}$ 
%of the group $({E_8}^C)^{\sigma'_{\!\!4}, \mathfrak{so}(6,C)}$ 
is isomorphic to the Lie algebra $\mathfrak{so}(10,C)${\rm : }$({\mathfrak{e}_8})^{\sigma'_{\!\!4}, \mathfrak{so}(6)} \cong \mathfrak{so}(10)$ and $({\mathfrak{e}_8}^C)^{\sigma'_{\!\!4}, \mathfrak{so}(6,C)} \cong \mathfrak{so}(10,C)$.
\end{prop}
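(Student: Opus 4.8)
The plan is to identify the $45$-dimensional Lie algebra $({\mathfrak{e}_8})^{\sigma'_{\!\!4},\mathfrak{so}(6)}$ with $\mathfrak{so}(10)$ by exhibiting a faithful $10$-dimensional orthogonal representation and comparing dimensions, then to deduce the complex case by complexification. First I would recall from Lemma \ref{lem 6.1} that $\dim_C(({\mathfrak{e}_8}^C)^{\sigma'_{\!\!4},\mathfrak{so}(6,C)}) = 45 = \dim_C(\mathfrak{so}(10,C))$, and that $({\mathfrak{e}_8})^{\sigma'_{\!\!4},\mathfrak{so}(6)}$ is a compact real form of it (being a real subalgebra of the compact $\mathfrak{e}_8$ cut out by the commuting conditions). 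So it suffices to produce a Lie algebra monomorphism $({\mathfrak{e}_8})^{\sigma'_{\!\!4},\mathfrak{so}(6)} \to \mathfrak{so}(10)$; injectivity plus the dimension count forces it to be an isomorphism, and since $\mathfrak{so}(10)$ is the unique compact real form up to conjugacy, the same map complexifies to the desired isomorphism $({\mathfrak{e}_8}^C)^{\sigma'_{\!\!4},\mathfrak{so}(6,C)} \cong \mathfrak{so}(10,C)$.

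The natural candidate for the $10$-dimensional representation space is the $(+1)$-eigenspace data of $\sigma'_{\!\!4}$ that survives the $\mathfrak{so}(6)$-commutation, organized as a real vector space carrying a nondegenerate symmetric bilinear form. Concretely I would look at the space $(\mathfrak{W}^C)_{\sigma'_{\!\!4},\mathfrak{so}(6,C)}$ from Section 5, or more precisely a suitable real slice of the $P$-component $(\mathfrak{P}^C)_{\sigma'_{\!\!4}}$ appearing in Lemma \ref{lem 6.1}: the relevant piece is $\{(X,Y,\xi,\eta)\}$ with $X=\xi_1E_1+\xi_2E_2+\xi_3E_3+F_1(x)$, $Y$ analogous, $x,y\in\C^C$ — this has the right size to produce, after imposing the reality condition $-\tau\lambda P$ and pairing with $s\in C$, a $10$-dimensional real space. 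The adjoint action of $({\mathfrak{e}_8})^{\sigma'_{\!\!4},\mathfrak{so}(6)}$ on this space preserves the Killing form $B_8$ restricted to it, giving a homomorphism to the orthogonal Lie algebra of that form; one then checks the form has signature making the orthogonal algebra $\mathfrak{so}(10)$ (definite, in the compact case). Injectivity follows because $\mathfrak{e}_8$ is simple, hence centerless, so no nonzero element of the subalgebra acts trivially on a generating subspace — or more carefully, because the kernel would be an ideal of $({\mathfrak{e}_8})^{\sigma'_{\!\!4},\mathfrak{so}(6)}$, and one verifies directly that this algebra is simple (its complexification is $45$-dimensional of rank matching $\mathfrak{so}(10)$ once the representation is nonzero).

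Alternatively, and perhaps more in the spirit of the paper's earlier sections, I would build the isomorphism by exhibiting an explicit $\mathfrak{so}(10)$ inside $({\mathfrak{e}_8})^{\sigma'_{\!\!4},\mathfrak{so}(6)}$ using the generators $G_{ij}$ and the $\tilde A_k$, $F_k$ operators, together with the $\mathfrak{sl}(2)$-type elements $\varPhi(\phi(\nu),aE_1,bE_1,\nu)$ from Lemma \ref{lem 4.20} and the $\varPhi(0,E_i,\ldots)$ from Lemmas \ref{lem 4.11} and \ref{lem 4.15}: these were already used to construct the $S\!pin(k,C)$ towers, so assembling a $\mathfrak{spin}(10,C)$-worth of them and checking the bracket relations is a finite (if tedious) verification. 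Then dimension count closes the argument. I expect the main obstacle to be the bookkeeping: writing down a correct basis of $({\mathfrak{e}_8})^{\sigma'_{\!\!4},\mathfrak{so}(6)}$ in the $(\varPhi,P,Q,r,s,t)$ coordinates and matching its bracket table to that of $\mathfrak{so}(10)$ — equivalently, verifying that the $10$-dimensional module is genuinely irreducible and that the invariant form is nondegenerate of the correct type. Once the module and its form are pinned down, everything else (injectivity via simplicity of $\mathfrak{e}_8$, the dimension equality $45=45$, passage between the compact and complex forms) is formal.
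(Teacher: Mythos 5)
Your second, ``alternative'' route is in fact the paper's actual proof: the paper writes down an explicit linear map $\varphi_*:\mathfrak{so}(10)\to({\mathfrak{e}_8})^{\sigma'_{\!\!4},\mathfrak{so}(6)}$ on the standard basis $G_{ij}$, $0\le i<j\le 9$, by listing all $45$ images $R_{ij}$ in the $(\varPhi,P,Q,r,s,t)$ coordinates, verifies $\varphi_*([G_{ij},G_{kl}])=[\varphi_*(G_{ij}),\varphi_*(G_{kl})]$ by direct computation (exhibiting several representative cases), and concludes by the dimension count $45=45$ from Lemma \ref{lem 6.1}; the complex statement is then the complexification. So your outline points in the right direction, but the entire mathematical content of the proposition is the explicit basis and the bracket table, which you defer as ``a finite (if tedious) verification'' without producing it.

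Your primary route has a concrete gap: the $10$-dimensional orthogonal module is never actually identified, and the candidates you name do not work. $(\mathfrak{W}^C)_{\sigma'_{\!\!4},\mathfrak{so}(6,C)}$ is a cone cut out by the quadratic condition $R\times R=0$, not a linear subspace, so it cannot carry a linear representation; and the $P$-component appearing in Lemma \ref{lem 6.1} is $12$-dimensional over $C$ (count $(3+2)\times 2+1\times 2$) and is moreover part of the algebra $({\mathfrak{e}_8})^{\sigma'_{\!\!4},\mathfrak{so}(6)}$ itself rather than a module on which it acts. In the branching $\mathfrak{e}_8\supset\mathfrak{so}(6)\oplus\mathfrak{so}(10)$ the vector representation of $\mathfrak{so}(10)$ occurs only tensored with the $6$ of $\mathfrak{so}(6)$, so extracting a genuine $10$-dimensional invariant subspace with a nondegenerate invariant form requires a specific construction that you have not supplied. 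The injectivity argument is also not closed: the kernel of your putative map is an ideal of $({\mathfrak{e}_8})^{\sigma'_{\!\!4},\mathfrak{so}(6)}$, not of $\mathfrak{e}_8$, so simplicity of $\mathfrak{e}_8$ does not apply, and asserting that the $45$-dimensional subalgebra is simple is essentially equivalent to what is being proved. As it stands, only the deferred explicit-basis route would yield a complete proof.
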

\begin{proof}
We provide the correspondence $\varphi_*$ between the Lie algebra  $\mathfrak{so}(10) \!=\! \{D \in M(10,\R) \, | $ $\, {}^tD = - D \}$ and the Lie algebra $({\mathfrak{e}_8})^{\sigma'_{\!\!4}, \mathfrak{so}(6)}$ explicitly as follows: 
$$
\begin{array}{c}                                                    
   \varphi_* : \mathfrak{so}(10) \quad \longrightarrow \quad ({\mathfrak{e}_8})^{\sigma'_{\!\!4}, \mathfrak{so}(6)},
\vspace{2mm}\\
\qquad \qquad \qquad
   G_{ij} \qquad \;\; \longmapsto \qquad R_{ij},\quad 0 \le i < j \le 9,
\end{array} 
$$
where the elements $G_{ij}$ are $\R$-basis in $\mathfrak{so}(10)$. As its example, the explicit form of $G_{26}$ as matrix is of form with $(3,7)$-component $=1$, $(7, 3)$-component $=-1$, other components $=0$, moreover the explicit forms of $C$-basis $R_{ij}$ in $({\mathfrak{e}_8})^{\sigma'_{\!\!4}, \mathfrak{so}(6)}$ are as follows.
\begin{eqnarray*}
 && R_{01} = ({\varPhi}({-i(E_2 - E_3)}^{\sim}, 0, 0, 0), 0, 0, 0, 0, 0)
\\[1mm]
&& R_{02} = \Big({\varPhi}\Big(0, -\displaystyle{\frac{i}{2}}(E_2 - E_3), -\displaystyle{\frac {i}{2}}(E_2 - E_3), 0 \Big), 0, 0, 0, 0, 0\Big)
\\[0mm]
%\end{eqnarray*}
%\begin{eqnarray*}
&& R_{12} = \Big({\varPhi}\Big(0, \displaystyle{\frac{1}{2}}(E_2 + E_3), -\displaystyle{\frac{1}{2}}(E_2 + E_3), 0 \Big), 0, 0, 0, 0, 0\Big)
%\\[1mm]
\end{eqnarray*}
\begin{eqnarray*}
&& R_{03} = \Big({\varPhi}\Big(0, -\displaystyle{\frac{1}{2}}(E_2 - E_3), \displaystyle{\frac{1}{2}}(E_2 - E_3), 0 \Big), 0, 0, 0, 0, 0\Big)
\\[1mm]
%\end{eqnarray*}
%\begin{eqnarray*}
&& R_{13} = \Big({\varPhi}\Big(0, -\displaystyle{\frac{i}{2}}(E_2 + E_3), -\displaystyle{\frac{i}{2}}(E_2 + E_3), 0 \Big), 0, 0, 0, 0, 0\Big)
\\[1mm]
%\end{eqnarray*}
%\begin{eqnarray*}
&& R_{23} = ({\varPhi}(-i(E_1 \vee E_1), 0, 0, i), 0, 0, 0, 0, 0)
\\[1mm]
%\end{eqnarray*}
%\begin{eqnarray*}
&& R_{04} = (0, (-(E_2 - E_3), 0, 0, 0), (0, -(E_2 - E_3), 0, 0), 0, 0, 0)
\\[1mm]
%\end{eqnarray*}
%\begin{eqnarray*}
&& R_{14} = (0, (-i(E_2 + E_3), 0, 0, 0), (0, i(E_2 + E_3), 0, 0), 0, 0, 0)
\\[1mm]
%\end{eqnarray*}
%\begin{eqnarray*}
&&  R_{24} = (0, (0, iE_1, 0, -i), (iE_1, 0, -i, 0), 0, 0, 0)
\\[1mm]
%\end{eqnarray*}
%\begin{eqnarray*}
&&  R_{34} = (0, (0, E_1, 0, 1), (-E_1, 0, -1, 0), 0, 0, 0)
\\[1mm]
%\end{eqnarray*}
%\begin{eqnarray*}
&& R_{05} = (0, (-i(E_2 - E_3), 0, 0, 0), (0, i(E_2 - E_3), 0, 0), 0, 0, 0)
\\[1mm]
&& R_{15} = (0, (E_2 + E_3, 0, 0, 0), (0, E_2 + E_3, 0, 0), 0, 0, 0)
\\[1mm]
&&  R_{25} = (0, (0, -E_1, 0, 1), (E_1, 0, -1, 0), 0, 0, 0)
\\[1mm]
&&  R_{35} = (0, (0, iE_1, 0, i), (iE_1, 0, i, 0), 0, 0, 0)
\\[1mm]
&& R_{45} = \Big({\varPhi}\Big(i(E_1 \vee E_1), 0, 0, \displaystyle{\frac{i}{2}} \Big), 0, 0 ,-\displaystyle{\frac{i}{2}}, 0, 0 \Big)
\\[1mm]
%\end{eqnarray*}
%\begin{eqnarray*}
&&  R_{06} = (0, (0, -(E_2 - E_3), 0, 0), (E_2 - E_3, 0, 0, 0), 0, 0, 0)
\\[1mm]
&&  R_{16} = (0, (0, i(E_2 + E_3), 0, 0), (i(E_2 + E_3), 0, 0, 0), 0, 0, 0)
\\[1mm]
&&  R_{26} = (0, (iE_1, 0, -i, 0), (0, -iE_1, 0, i), 0, 0, 0)
\\[1mm]
&&  R_{36} = (0, (-E_1, 0, -1, 0), (0, -E_1, 0, -1), 0, 0, 0)
\\[1mm]
%\end{eqnarray*}
%\begin{eqnarray*}
&&  R_{46} = \Big({\varPhi}\Big(0, \displaystyle{\frac{1}{2}}E_1, -\displaystyle{\frac{1}{2}}E_1, 0 \Big), 0, 0, 0, -\displaystyle{\frac{1}{2}}, \displaystyle{\frac{1}{2}} \Big)
\\[0.3mm]
&& R_{56} = \Big({\varPhi}\Big(0, -\displaystyle{\frac{i}{2}}E_1, -\displaystyle{\frac{i}{2}}E_1, 0 \Big), 0, 0, 0, -\displaystyle{\frac{i}{2}}, -\displaystyle{\frac{i}{2}} \Big)
\\[1mm]
&&  R_{07} = (0, (0, -i(E_2 - E_3), 0, 0), (-i(E_2 - E_3), 0, 0, 0), 0, 0, 0)
\\[1mm]
&&  R_{17} = ( 0,(0, -(E_2 + E_3),0 ,0 ),( E_2 + E_3,0 ,0, 0 ),0 ,0 ,0 )
\\[1mm]
&&  R_{27} = (0, (-E_1, 0, 1, 0), (0, -E_1, 0, 1), 0, 0, 0)
\\[1mm]
&&  R_{37} = (0, (-iE_1, 0, -i, 0), (0, iE_1, 0, i), 0, 0, 0)
\\[1mm]
&&  R_{47} = \Big({\varPhi}\Big(0, \displaystyle{\frac{i}{2}}E_1, \displaystyle{\frac{i}{2}}E_1, 0 \Big), 0, 0, 0, -\displaystyle{\frac{i}{2}}, -\displaystyle{\frac{i}{2}} \Big)
\\[1mm]
&&  R_{57} = \Big({\varPhi}\Big(0, \displaystyle{\frac{1}{2}}E_1, -\displaystyle{\frac{1}{2}}E_1, 0 \Big), 0, 0, 0, \displaystyle{\frac{1}{2}}, -\displaystyle{\frac{1}{2}} \Big)
\\[1mm]
&&  R_{67} = \Big({\varPhi}\Big(-i(E_1 \vee E_1), 0, 0, -\displaystyle{\frac{i}{2}} \Big), 0, 0, -\displaystyle{\frac{i}{2}}, 0, 0 \Big)
\\[1mm]
%\end{eqnarray*}
%\begin{eqnarray*}
&& R_{08}=(\varPhi(\tilde{A}_1(i),0,0,0),0,0,0,0,0)
\\[1mm]
%\end{eqnarray*}
%\begin{eqnarray*}
&& R_{18}=(\varPhi(-F_1(1)^\sim,0,0,0),0,0,0,0,0)
\\[1mm]
%\end{eqnarray*}
%\begin{eqnarray*}
&& R_{28}=\Big( \varPhi\Big(0,-\dfrac{1}{2}F_1(1), -\dfrac{1}{2}F_1(1),0\Big),0,0,0,0,0\Big)
\\[1mm]
&& R_{38}=\Big( \varPhi\Big(0,\dfrac{i}{2}F_1(1), -\dfrac{i}{2}F_1(1),0\Big),0,0,0,0,0\Big)
\\[1mm]
&& R_{48}=(0, (i F_1(1), 0,0,0),(0, i F_1(1),0,0), 0,0,0 )
\\[1mm]
&& R_{58}=(0, (- F_1(1), 0,0,0),(0,  F_1(1),0,0), 0,0,0 )
%\\[1mm]
\end{eqnarray*}
\begin{eqnarray*}
&& R_{68}=(0, (0,i F_1(1), 0,0),( -i F_1(1),0,0,0), 0,0,0 )
\\[1mm]
&& R_{78}=(0, (0,-F_1(1), 0,0),( -F_1(1),0,0,0), 0,0,0 )
\\[1mm]
%\end{eqnarray*}
%\begin{eqnarray*}
&& R_{09}=(\varPhi(i\tilde{A}_1(e_1),0,0,0),0,0,0,0,0)
\\[1mm]
%\end{eqnarray*}
%\begin{eqnarray*}
&& R_{19}=(\varPhi(-F_1(e_1)^\sim,0,0,0),0,0,0,0,0)
\\[1mm]
%\end{eqnarray*}
%\begin{eqnarray*}
&& R_{29}=\Big( \varPhi\Big(0,-\dfrac{1}{2}F_1(e_1), -\dfrac{1}{2}F_1(e_1),0\Big),0,0,0,0,0\Big)
\\[1mm]
%\end{eqnarray*}
%\begin{eqnarray*}
&& R_{39}=\Big( \varPhi\Big(0,\dfrac{i}{2}F_1(e_1), -\dfrac{i}{2}F_1(e_1),0\Big),0,0,0,0,0\Big)
\\[1mm]
%\end{eqnarray*}
%\begin{eqnarray*}
&& R_{49}=(0, (i F_1(e_1), 0,0,0),(0, i F_1(e_1),0,0), 0,0,0 )
\\[1mm]
%\end{eqnarray*}
%\begin{eqnarray*}
&& R_{59}=(0, (- F_1(e_1), 0,0,0),(0,  F_1(e_1),0,0), 0,0,0 )
\\[1mm]
&& R_{69}=(0, (0,-i F_1(e_1), 0,0),( i F_1(e_1),0,0,0), 0,0,0 )
\\[1mm]
&& R_{79}=(0, (0,-F_1(e_1), 0,0),( -F_1(e_1),0,0,0), 0,0,0 )
\\[1mm]
&& R_{89}=(\varPhi(-[\tilde{A}_1(1), \tilde{A}_1(e_1)], 0,0,0),0,0,0,0,0)\,(-[\tilde{A}_1(1), \tilde{A}_1(e_1)] \in \mathfrak{D}_4)
\end{eqnarray*}
%\vspace{1mm}

\noindent Then we can confirm that $\varphi_*$ is a Lie-homomorphism, that is:
$$
   \varphi_*([G_{ij}, G_{kl}]) = [\varphi_*(G_{ij}), \varphi_*(G_{kl})]. 
$$ 
In order to prove these, we need to check ${}_{45}C_2 = 990$ times if honestly doing. Then, we give only five examples. As the first example, we shall show that 
$\varphi_*([G_{01}, G_{02}]) = [\varphi_*(G_{01}), \varphi_*(G_{02})]$.
Indeed,
   $\varphi_*([G_{01}, G_{02}]) = \varphi_*(-G_{12}) = -R_{12}$. 
\vspace{1mm}

\noindent On the other hand, 
\begin{small}
\begin{eqnarray*}
&& [\varphi_*(G_{01}), \varphi_*(G_{02})] = [R_{01}, R_{02}]
\\[0.5mm]
\!\!\!&=&\!\!\! \Big[({\varPhi}(-i(E_2 - E_3)^\sim, 0, 0, 0), 0, 0, 0, 0, 0), \Big({\varPhi}\Big(0, -\frac{i}{2}(E_2 - E_3), -\frac{i}{2}(E_2 
 -E_3), 0\Big),0, 0, 0, 0, 0\Big)\Big]
\\[0.5mm] 
\!\!\!&=&\!\!\! \Big(\Big[{\varPhi}(-i(E_2 - E_3)^\sim, 0, 0, 0), \Big({\varPhi}\Big(0, -\frac{i}{2}(E_2 - E_3), -\frac{i}{2}(E_2 - E_3), 0\Big)\Big], 0,
 0, 0, 0, 0\Big)
\\[0.5mm]
\!\!\!&=&\!\!\! \Big({\varPhi}\Big(0, -\frac{1}{2}(E_2 + E_3), \frac{1}{2}(E_2 + E_3), 0\Big), 0, 0, 0, 0, 0\Big)
\\[0.5mm]
 \!\!\!&=&\!\!\! - R_{12}.
\end{eqnarray*}
\end{small}
\indent As the second example, we shall show that 
$\varphi_*([G_{04}, G_{45}]) = [\varphi_*(G_{04}),\varphi_*(G_{45})]$. 
Indeed,
   $\varphi_*([G_{04}, G_{45}]) = \varphi_*(G_{05}) = R_{05}$.
\vspace{1mm} 

\noindent On the other hand, 
\begin{small}
\begin{eqnarray*}
&& [\varphi_*(G_{04}), \varphi_*(G_{45})] = [R_{04}, R_{45}]
\\[0.5mm]
\!\!\!&=&\!\!\!  \Big[(0, (-(E_2 - E_3), 0, 0, 0), (0, -(E_2 - E_3), 0, 0), 0, 0, 0), \Big({\varPhi}\Big(i(E_1 \vee E_1),0, 0, \dfrac{i}{2}\Big), 0, 0, 
-\dfrac{i}{2}, 0, 0\Big)\Big]
\\[0.5mm]
%\end{eqnarray*}
%\begin{eqnarray*}
\!\!\!&=&\!\!\! \Big(0, -\Big({\varPhi}(i(E_1 \vee E_1), 0, 0, \dfrac{i}{2}\Big)(-(E_2 - E_3), 0, 0, 0) + \dfrac{i}{2}(-(E_2 - E_3),
 0, 0, 0),
\\[0.5mm]
&& \qquad-\Big({\varPhi}(i(E_1 \vee E_1), 0, 0, \dfrac{i}{2}\Big)(0, -(E_2 - E_3), 0, 0) - \dfrac{i}{2}(0, -(E_2 -
 E_3), 0, 0), 0, 0, 0\Big) 
\\[0.5mm]
%\end{eqnarray*} 
%\begin{eqnarray*}
\!\!\!&=&\!\!\!(0, (-i(E_2 - E_3), 0, 0, 0), (0, i(E_2 - E_3), 0, 0), 0, 0, 0)
\\[0.5mm]
\!\!\!&=&\!\!\! R_{05}.
\end{eqnarray*}
\end{small}
\indent As the third example, we shall show that 
$\varphi_*([G_{57}, G_{67}]) = [\varphi_*(G_{57}), \varphi_*(G_{67})]$.
Indeed,
$\varphi_*([G_{57}, G_{67}]) = \varphi_*(-G_{56}) = -R_{56}$. 
\vspace{1mm}

\noindent On the other hand ,
\begin{small}
\begin{eqnarray*}
&& [\varphi_*(G_{57}), \varphi_*(G_{67})] = [R_{57}, R_{67}]
\\[0.5mm]
%\end{eqnarray*}
%\begin{eqnarray*}
\!\!\!&=&\!\!\! \Big[\Big({\varPhi}\Big(0, \dfrac{1}{2}E_1, - \dfrac{1}{2}E_1, 0\Big), 0, 0, 0, \dfrac{1}{2}, -\dfrac{1}{2}\Big), \Big({\varPhi}\Big(-i(E_1 \vee E_1), 0, 0, -\dfrac{i}{2}\Big), 0,0, -\dfrac{i}{2}, 0, 0\Big)\Big] 
%\\[0.5mm]
\end{eqnarray*}
\begin{eqnarray*}
\!\!\!&=&\!\!\!  \Big(\Big[{\varPhi}\Big(0, \dfrac{1}{2}E_1, -\dfrac{1}{2}E_1, 0\Big), {\varPhi}\Big(-i(E_1 \vee E_1), 0, 0, -\dfrac{i}{2}\Big)\Big], 0, 0, 0, -2\Big(\!-\dfrac{i}{2}\Big) 
\Big(\dfrac{1}{2}\Big), 2\Big(\!-\dfrac{i}{2}\Big)\Big(\!-\dfrac{1}{2}\Big)\Big)
\\[0.5mm]
\!\!\!&=&\!\!\! \Big({\varPhi}\Big(0, \dfrac{i}{2}E_1, \dfrac{i}{2}E_1, 0\Big), 0, 0, 0, \dfrac{i}{2}, \dfrac{i}{2}\Big) 
\\[0.5mm]
\!\!\!&=&\!\!\! -R_{56}.
\end{eqnarray*}
\end{small}
\indent As the forth example, we shall show that 
$\varphi_*([G_{36}, G_{68}]) = [\varphi_*(G_{36}), \varphi_*(G_{68})]$.
Indeed,
$\varphi_*([G_{36}, G_{68}]) = \varphi_*(G_{38}) = R_{38}$. 
\vspace{1mm}

\noindent On the other hand ,
\begin{small}
\begin{eqnarray*}
&& [\varphi_*(G_{36}), \varphi_*(G_{68})] = [R_{36}, R_{68}]
\\[0.5mm]
\!\!\!&=&\!\!\! [(0,(-E_1,0, -1, 0),(0,-E_1, 0,-1),0, 0, 0),(0,i F_1(1),0, 0),(-i F_1(1),0, 0,0),0, 0, 0)] 
\\[0.5mm]
%\end{eqnarray*}
%\begin{eqnarray*}
\!\!\!&=&\!\!\! ((-E_1, 0, -1, 0) \times (-i F_1(1),0, 0,0))-(0,i F_1(1),0, 0) \times (0,-E_1, 0,-1), 0,0,0,0,0)
\\[0.5mm]
%\end{eqnarray*}
%\begin{eqnarray*}
\!\!\!&=&\!\!\! \Big(\varPhi \Big(0,\dfrac{i}{4}F_1(1), -\dfrac{i}{4}F_1(1),0\Big)-\varPhi\Big(0,-\dfrac{i}{4}F_1(1), -\dfrac{i}{4}F_1(1),0 \Big), 0,0,0,0,0\Big)
\\[0.5mm]
%\end{eqnarray*}
%\begin{eqnarray*}
\!\!\!&=&\!\!\! \Big (\varPhi\Big(0,\dfrac{i}{2}F_1(1), -\dfrac{i}{2}F_1(1),0\Big), 0,0,0,0,0\Big)
\\[1mm]
%\end{eqnarray*}
%\begin{eqnarray*}
\!\!\!&=&\!\!\! R_{38}.
\end{eqnarray*}
\end{small}
\indent Finally, as the fifth example, we shall show that 
$\varphi_*([G_{28}, G_{89}]) = [\varphi_*(G_{28}), \varphi_*(G_{89})]$.
Indeed,
$\varphi_*([G_{28}, G_{89}]) = \varphi_*(G_{29}) = R_{29}$. 
\vspace{1mm}

\noindent On the other hand ,
\begin{small}
\begin{eqnarray*}
&& [\varphi_*(G_{28}), \varphi_*(G_{89})] = [R_{28}, R_{89}]
\\[0.5mm]
\!\!\!&=&\!\!\! \Big[\Big({\varPhi}\Big(0, -\dfrac{i}{2}F_1(1), -\dfrac{i}{2}F_1(1), 0\Big),0, 0, 0, 0, 0\Big), \Big({\varPhi}\Big(-[\tilde{A}_1(1), \tilde{A}_1(e_1)], 0, 0, 0\Big), 0, 0, 0, 0, 0\Big)\Big]
\\[0.5mm] 
\!\!\!&=&\!\!\! \Big(\Big[{\varPhi}\Big(0, -\dfrac{i}{2}F_1(1), -\dfrac{i}{2}F_1(1), 0\Big),{\varPhi}\Big(-[\tilde{A}_1(1), \tilde{A}_1(e_1)], 0, 0, 0\Big)\Big], 0, 0, 0, 0, 0\Big)
\\[0.5mm]
\!\!\!&=&\!\!\! \Big({\varPhi}\Big(0, [\tilde{A}_1(1), \tilde{A}_1(e_1)]\Big(-\dfrac{1}{2}F_1(1)\Big),  [\tilde{A}_1(1), \tilde{A}_1(e_1)]\Big(-\dfrac{1}{2}F_1(1)\Big), 0 \Big), 0, 0, 0, 0, 0\Big)
\\[0.5mm]
\!\!\!&=&\!\!\! \Big({\varPhi}\Big(0, -\dfrac{1}{2}F_1(e_1), -\dfrac{1}{2}F_1(e_1),0 \Big ), 0,0,0,0,0 \Big)
\\[0.5mm]
 \!\!\!&=&\!\!\!  R_{29}.
\end{eqnarray*}
\end{small}
%\vspace{1mm}

Since we have $\dim(\mathfrak{so}(10))= 45 = \dim(({\mathfrak{e}_8})^{\sigma'_{\!\!4}, \mathfrak{so}(6)})$ from Lemma \ref{lem 6.1}, we see that $\varphi_*$ is an isomorphism. Thus we have the required isomorphism $({\mathfrak{e}_8})^{\sigma'_{\!\!4}, \mathfrak{so}(6)} \cong \mathfrak{so}(10)$ as a Lie algebra.
The latter case is the complexification of the former case.
\end{proof}
%\vspace{1mm}
Now, we shall prove the theorem as the aim of this section.

\begin{thm}\label{thm 6.3}
We have that $({E_8}^C)^{\sigma'_{\!\!4}, \mathfrak{so}(6,C)} \cong S\!pin(10,C)$.
\end{thm}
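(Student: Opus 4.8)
The plan is to build an explicit covering homomorphism from $S\!pin(10,C)$ onto $({E_8}^C)^{\sigma'_{\!\!4},\mathfrak{so}(6,C)}$ by first identifying a subgroup of $({E_8}^C)^{\sigma'_{\!\!4},\mathfrak{so}(6,C)}$ isomorphic to $S\!O(10,C)$ and then passing to the universal covering. Concretely, I would use Proposition \ref{prop 6.2}, which already furnishes an explicit Lie algebra isomorphism $\varphi_*:\mathfrak{so}(10,C)\to({\mathfrak{e}_8}^C)^{\sigma'_{\!\!4},\mathfrak{so}(6,C)}$ sending the basis $G_{ij}$ to the explicit elements $R_{ij}$. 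Exponentiating, $\varphi_*$ integrates to a homomorphism $\varphi:S\!pin(10,C)\to({E_8}^C)^{\sigma'_{\!\!4},\mathfrak{so}(6,C)}$, since $S\!pin(10,C)$ is simply connected and $({E_8}^C)^{\sigma'_{\!\!4},\mathfrak{so}(6,C)}$ is a (connected, by Theorem \ref{thm 5.5}) Lie group whose Lie algebra is the target of $\varphi_*$.

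The three things to check are that $\varphi$ is surjective, that $\ker\varphi$ is discrete, and that $\ker\varphi$ is in fact trivial. Surjectivity and discreteness of the kernel are immediate from Lemma \ref{lemma 2.1}: we have $\dim_C(({\mathfrak{e}_8}^C)^{\sigma'_{\!\!4},\mathfrak{so}(6,C)})=45=\dim_C(\mathfrak{so}(10,C))$ (Lemma \ref{lem 6.1}, Proposition \ref{prop 6.2}), the target is connected (Theorem \ref{thm 5.5}), and $\varphi_*$ being an isomorphism of Lie algebras forces $\ker\varphi$ to be discrete; hence $\varphi$ is surjective. It then remains to pin down $\ker\varphi$. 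Since $S\!pin(10,C)$ is a double cover of $S\!O(10,C)$ with center $Z(S\!pin(10,C))\cong\Z_4$, the kernel is one of $\{1\}$, a $\Z_2$, or all of $\Z_4$; because $\varphi_*$ is injective, $\varphi$ is a covering onto its image, so I must show that no nontrivial central element of $S\!pin(10,C)$ maps to the identity of $E_8$, equivalently that $\varphi$ is injective.

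The main obstacle is exactly this injectivity statement. The cleanest route is to identify the kernel directly in $E_8$: the center $Z(S\!pin(10,C))$ is generated by a single element $z$ of order $4$ with $z^2$ the nontrivial element of the center of $S\!O(10,C)$'s double cover, and one needs to compute $\varphi(z)$ and $\varphi(z^2)$ explicitly in terms of the generators $R_{ij}$, or better, exhibit an element $R\in(\mathfrak{W}^C)_{\sigma'_{\!\!4},\mathfrak{so}(6,C)}$ (e.g. $1_-$, as in Section 5) whose isotropy group pins down the relevant element. Alternatively — and this is probably the most economical — one argues that the homomorphism $\varphi$ must be injective on the center by noting that $({E_8}^C)^{\sigma'_{\!\!4},\mathfrak{so}(6,C)}$ acts on the complex variety $(\mathfrak{W}^C)_{\sigma'_{\!\!4},\mathfrak{so}(6,C)}$ with $(({E_8}^C)^{\sigma'_{\!\!4},\mathfrak{so}(6,C)})_{1_-}$ as isotropy (Theorem \ref{thm 5.5}), and that the corresponding representation of $\mathfrak{so}(10,C)$ is faithful with the appropriate (half-spin-type) weights so that a lift to $S\!pin(10,C)$ cannot have nontrivial kernel; comparing with the known fact that the minimal faithful representations distinguishing $S\!pin(10,C)$ from all its quotients are the half-spin representations, one concludes $\ker\varphi=\{1\}$. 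Whichever argument one adopts, the conclusion is $({E_8}^C)^{\sigma'_{\!\!4},\mathfrak{so}(6,C)}\cong S\!pin(10,C)$.

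Once injectivity and surjectivity are established, $\varphi$ is a bijective Lie group homomorphism, hence an isomorphism, and the theorem follows. I would present the argument by first invoking Proposition \ref{prop 6.2} to get $\varphi_*$, then exponentiating to $\varphi$, then applying Lemma \ref{lemma 2.1} for surjectivity with discrete kernel using the dimension count of Lemma \ref{lem 6.1} and the connectedness from Theorem \ref{thm 5.5}, and finally disposing of the kernel by the center computation sketched above.
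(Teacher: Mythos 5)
Your overall strategy is legitimate but it is not the paper's, and the one step you yourself flag as ``the main obstacle'' --- triviality of $\ker\varphi$ --- is left as a genuine gap. Integrating $\varphi_*$ to $\varphi:S\!pin(10,C)\to({E_8}^C)^{\sigma'_{\!\!4},\mathfrak{so}(6,C)}$ and getting surjectivity with discrete central kernel from the dimension count and Theorem \ref{thm 5.5} is fine. But your two proposed ways of killing the kernel are only sketches: you never actually compute $\varphi(z)$ for a generator $z$ of the center, and the claim that the action on $(\mathfrak{W}^C)_{\sigma'_{\!\!4},\mathfrak{so}(6,C)}$ has ``half-spin-type weights'' is an assertion that would itself require identifying the $\mathfrak{so}(10,C)$-module structure of that space (or of the $\mathfrak{P}^C$-components of $({\mathfrak{e}_8}^C)^{\sigma'_{\!\!4},\mathfrak{so}(6,C)}$), which you do not do. As written, the argument does not distinguish $S\!pin(10,C)$ from $S\!O(10,C)$ or $S\!pin(10,C)/\Z_4$.

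The paper avoids the covering map entirely: since the group is connected with Lie algebra $\mathfrak{so}(10,C)$, it must be one of $S\!pin(10,C)$, $S\!O(10,C)$, $S\!pin(10,C)/\Z_4$, whose centers are $\Z_4$, $\Z_2$, $\{1\}$ respectively; it then observes that $1,\sigma,\sigma'_{\!\!4},\sigma\sigma'_{\!\!4}$ are four distinct central elements of $({E_8}^C)^{\sigma'_{\!\!4},\mathfrak{so}(6,C)}$, so the center is $\Z_4$ and the group is the simply connected form. Note that this same observation is exactly what would finish your argument: for a surjective covering $\varphi$ with $\ker\varphi\subset Z(S\!pin(10,C))\cong\Z_4$, the center of the image is $\Z_4/\ker\varphi$, so exhibiting four distinct central elements in the image forces $\ker\varphi=\{1\}$. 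You should either adopt that center computation (checking that $\sigma'_{\!\!4}$ and $\sigma$ do lie in $({E_8}^C)^{\sigma'_{\!\!4},\mathfrak{so}(6,C)}$ and are central there, which follows from Lemma \ref{lem 3.17} and the definitions) or carry out the weight computation you allude to; without one of these the proof is incomplete.
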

\begin{proof}
The group $({E_8}^C)^{\sigma'_{\!\!4}, \mathfrak{so}(6,C)}$ is connected \!(Theorem \ref{thm 5.5}) and its type is $\mathfrak{so}(10, C)$ (Proposition \ref{prop 6.2}). Hence the group $({E_8}^C)^{\sigma'_{\!\!4}, \mathfrak{so}(6,C)}$ is isomorphic to either one of the following groups:
$$
      S\!pin(10, C), \quad S\!O(10, C), \quad S\!pin(10, C)/\Z_4. 
$$
Their centers of groups above are $\Z_4, \Z_2, \{1\}$, respectively. However, we see that the center of $({E_8}^C)^{\sigma'_{\!\!4}, \mathfrak{so}(6,C)}$ has the elements $1, \sigma, \sigma'_{\!\!4}, \sigma\sigma'_{\!\!4}$, and so its center is $\Z_4$. Hence the group $({E_8}^C)^{\sigma'_{\!\!4}, \mathfrak{so}(6,C)}$ have to be isomorphic to $S\!pin(10, C)$.   
\end{proof}
%\vspace{2mm}
%%%%%%%%%%%%%%%%%%%%%%%%%%%%%%%%%%%%%%%%%%%%%
\section{The structure of the group $({E_8}^C)^{\sigma'_{\!\!4}}$}
%\vspace{2mm}

By using the results of previous section,  the aim of this section is to 
determine the structure of the group $({E_8}^C)^{\sigma'_{\!\!4}}$. 

\begin{lem}\label{lem 7.1}
The Lie algebra $({\mathfrak{e}_8}^C)^{\sigma'_{\!\!4}}$ of the group $({E_8}^C)^{\sigma'_{\!\!4}}$ is given by 
\begin{eqnarray*}
%\begin{array}{lll}
({\mathfrak{e}_8}^C)^{\sigma'_{\!\!4}}
%\vspace{2mm}\\
\!\!\!&=&\!\!\! \{ R \in {\mathfrak{e}_8}^C \,| \, 
%\begin{array}{l}
    \sigma'_{\!\!4}R=R 
%\\[1mm]
%       [R, R_D] =0 \,\, \text{for all} \,\, D \in \mathfrak{so}(6,C),
%\\[1mm]
%\end{array}   
 \} 
%\end{eqnarray*}
%\begin{eqnarray*}
\vspace{2mm}\\
\!\!\!&=&\!\!\! \left\{(\varPhi, P, Q, r, s, t) \in {\mathfrak{e}_8}^C \, \left| \,
\begin{array}{l}
     \varPhi \in ({\mathfrak{e}_7}^C)^{\sigma'_{\!\!4}}, \\
     P = (X, Y, \xi, \eta), \\
 \quad X=\begin{pmatrix} \xi_1 &    0      & 0 \\
                                              0   &\xi_2 & x \\
                                              0   &   \ov{x}& \xi_3
                \end{pmatrix},\,Y\!=\begin{pmatrix} \eta_1 &   0      &  0 \\
                                              0   & \eta_2 &  y \\
                                              0   &  \ov{y}& \eta_3
                \end{pmatrix}
\vspace{0.5mm}\\
%\zeta_1E_1+\zeta_2E_2+\zeta_3E_3+F_1(z),
\quad \xi_k, \eta_k, \xi, \eta \in C, x, y \in \C^C,  
\vspace{0.5mm}\\
%%\quad Y\!=\begin{pmatrix} \eta_1 &   0      &  0 \\
%%                                              0   & \eta_2 &  y \\
%%                                              0   &  \ov{y}& \eta_3
%%                \end{pmatrix},
%\omega_1E_1+\omega_2E_2+\omega_3E_3+F_1(w),
%\eta_k \in C, y \in \C^C,  \\
%%\quad \xi, \eta \in C ,\\
Q=(Z, W, \zeta, \omega)\,\,{\text{is same form as }}P, \\
     r,s,t \in C
\end{array} \right. \right\}, 
%\end{array}
\end{eqnarray*}
as for the explicit form of the Lie algebra $({\mathfrak{e}_7}^C)^{\sigma'_{\!\!4}}$, see Lemma 4.1 {\rm(}1{\rm )}.

In particular, 
$$
  \dim_C(({\mathfrak{e}_8}^C)^{\sigma'_{\!\!4}}) = 
33 + ((3+2) \times 2+1 \times 2)\times 2 + 3 = 60. 
$$
\end{lem}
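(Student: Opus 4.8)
The plan is to compute the fixed–point set of $\sigma'_{\!\!4}$ acting on ${\mathfrak{e}_8}^C$ by using the $C$-linear transformation's action on the six components $(\varPhi, P, Q, r, s, t)$ of ${\mathfrak{e}_8}^C = {\mathfrak{e}_7}^C \oplus \mathfrak{P}^C \oplus \mathfrak{P}^C \oplus C \oplus C \oplus C$, exactly as $\sigma'_{\!\!4}$ was extended in Section 2. Recall that for $R=(\varPhi,P,Q,r,s,t)$ one has $\sigma'_{\!\!4}R = ({\sigma'_{\!\!4}}^{-1}\varPhi\,{\sigma'_{\!\!4}}, \sigma'_{\!\!4}P, \sigma'_{\!\!4}Q, r,s,t)$, so the scalars $r,s,t$ are automatically fixed, and the condition $\sigma'_{\!\!4}R=R$ decouples into $\sigma'_{\!\!4}\varPhi = \varPhi\sigma'_{\!\!4}$ (i.e. $\varPhi \in ({\mathfrak{e}_7}^C)^{\sigma'_{\!\!4}}$) together with $\sigma'_{\!\!4}P = P$ and $\sigma'_{\!\!4}Q = Q$, i.e. $P, Q \in (\mathfrak{P}^C)_{\sigma'_{\!\!4}}$.

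First I would invoke Lemma \ref{lem 4.1} (1) for the explicit description of $({\mathfrak{e}_7}^C)^{\sigma'_{\!\!4}}$, which supplies the $\varPhi$-component and its dimension $33$. Next I would compute $(\mathfrak{P}^C)_{\sigma'_{\!\!4}}$: since $\sigma'_{\!\!4}$ acts on $\mathfrak{P}^C = \mathfrak{J}^C \oplus \mathfrak{J}^C \oplus C \oplus C$ componentwise by $\sigma'_{\!\!4}(X,Y,\xi,\eta) = (\sigma'_{\!\!4}X, \sigma'_{\!\!4}Y, \xi,\eta)$, the fixed subspace is $(\mathfrak{J}^C)_{\sigma'_{\!\!4}} \oplus (\mathfrak{J}^C)_{\sigma'_{\!\!4}} \oplus C \oplus C$, and the eigenspace $(\mathfrak{J}^C)_{\sigma'_{\!\!4}} = \{\xi_1 E_1 + \xi_2 E_2 + \xi_3 E_3 + F_1(x_1) \mid \xi_k \in C, x_1 \in \C^C\}$ was already recorded in the proof of Lemma \ref{lem 3.17}; it has complex dimension $3 + 2 = 5$. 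Thus each of $P$ and $Q$ ranges over a space of dimension $(3+2)+1+1 = 7$, matching the $X,Y$ having the displayed block-diagonal shape with $\xi_k,\eta_k \in C$, $x,y \in \C^C$, plus free $\xi,\eta \in C$.

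The dimension count then assembles as $\dim_C(({\mathfrak{e}_8}^C)^{\sigma'_{\!\!4}}) = 33 + 7\times 2 + 3 = 60$, agreeing with the $((3+2)\times 2 + 1\times 2)\times 2 + 3$ bookkeeping in the statement (here $(3+2)\times 2$ counts $X$ and $Y$ inside one $\mathfrak{P}^C$, $1\times 2$ counts the scalars $\xi,\eta$ of that $\mathfrak{P}^C$, and the outermost $\times 2$ accounts for both $P$ and $Q$). Since none of this requires more than reading off the componentwise action and citing Lemma \ref{lem 4.1} (1), the proof is genuinely a "simple computation"; there is no real obstacle, the only care needed being to keep the indices straight between the two $\mathfrak{J}^C$-slots and to note that $r,s,t$ survive because $\sigma'_{\!\!4}$ fixes the last three coordinates of ${\mathfrak{e}_8}^C$ by construction. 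Accordingly I would present the proof as: "By doing simple computation using Lemma \ref{lem 4.1} (1) and $(\mathfrak{J}^C)_{\sigma'_{\!\!4}}$ as in Lemma \ref{lem 3.17}, we obtain the result above."
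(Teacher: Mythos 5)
Your proposal is correct in substance and follows exactly the route the paper takes: its proof of Lemma \ref{lem 7.1} simply says ``by the argument similar to Lemma \ref{lem 5.1}'', and that argument is precisely your componentwise decomposition of $\sigma'_{\!\!4}R=R$ into $\varPhi\in({\mathfrak{e}_7}^C)^{\sigma'_{\!\!4}}$ (Lemma \ref{lem 4.1}~(1)), $P,Q\in(\mathfrak{P}^C)_{\sigma'_{\!\!4}}$ (via $(\mathfrak{J}^C)_{\sigma'_{\!\!4}}$ as in Lemma \ref{lem 3.17}), and $r,s,t$ untouched. One arithmetic slip to fix before submitting: $\dim_C((\mathfrak{P}^C)_{\sigma'_{\!\!4}})=(3+2)\times 2+1+1=12$, not $(3+2)+1+1=7$ (each of $X$ and $Y$ lies in the $5$-dimensional space $(\mathfrak{J}^C)_{\sigma'_{\!\!4}}$), so the assembled count is $33+12\times 2+3=60$; your written ``$33+7\times 2+3$'' equals $50$, even though the parenthetical reading of the paper's bookkeeping that follows it is correct.
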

\begin{proof}
By the argument similar to Lemma 5.1, we have the required result.
\end{proof}

Now, we shall prove the following theorem as the aim of this section.

\begin{thm}\label{thm 7.2}
We have that $({E_8}^C)^{\sigma'_{\!\!4}} \cong (S\!pin(6, C) \times S\!pin(10, C))/\Z_4, \,\,\,\Z_4=\{ (1,1), $ $(\sigma'_{\!\!4}, \sigma\sigma'_{\!\!4}), (\sigma, \sigma), (\sigma\sigma'_{\!\!4},\sigma'_{\!\!4}) \}$.
\end{thm}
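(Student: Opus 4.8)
The plan is to construct an explicit homomorphism
$\varphi_{{E_8}^C,\sigma'_{\!\!4}} : S\!pin(6,C) \times S\!pin(10,C) \to ({E_8}^C)^{\sigma'_{\!\!4}}$
by $\varphi_{{E_8}^C,\sigma'_{\!\!4}}(\beta_1,\delta)=\beta_1\delta$, where the first factor is the group
$S\!pin(6,C)\cong ({F_4}^C)_{E_1,E_2,E_3,F_1(e_k),k=0,1}$ from Theorem \ref{thm 3.16} (which sits inside $({E_8}^C)^{\sigma'_{\!\!4}}$ via the inclusions ${F_4}^C\subset{E_7}^C\subset{E_8}^C$ together with Lemma \ref{lem 3.17} and Proposition \ref{prop 4.2}), and the second factor is $S\!pin(10,C)\cong ({E_8}^C)^{\sigma'_{\!\!4},\mathfrak{so}(6,C)}$ from Theorem \ref{thm 6.3}. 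First I would check that $\varphi_{{E_8}^C,\sigma'_{\!\!4}}$ is well-defined, i.e.\ that the product $\beta_1\delta$ really lies in $({E_8}^C)^{\sigma'_{\!\!4}}$: the second factor does by definition, and the first does by Lemma \ref{lem 3.17}. Next I would verify it is a homomorphism, for which it suffices to show $\beta_1$ commutes with $\delta$; since $S\!pin(6,C)$ in the first factor is generated by $\exp\varPhi_1$ with $\varPhi_1\in\mathfrak{spin}(6,C)\cong({\mathfrak{f}_4}^C)_{E_1,E_2,E_3,F_1(e_k),k=0,1}$, and $\delta$ commutes with every $\varTheta(R_D)$, $D\in\mathfrak{so}(6,C)$, by the very definition of $({E_8}^C)^{\sigma'_{\!\!4},\mathfrak{so}(6,C)}$, each such $\varPhi_1$ is of the form $R_D$ and hence commutes with $\delta$, giving the claim.

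Then I would determine $\Ker\,\varphi_{{E_8}^C,\sigma'_{\!\!4}}=\{(\beta_1,\delta)\,|\,\beta_1\delta=1\}=\{(\beta_1,{\beta_1}^{-1})\,|\,\beta_1\in S\!pin(6,C)\cap S\!pin(10,C)\}$. If $\beta_1={\delta}^{-1}$ with $\delta\in({E_8}^C)^{\sigma'_{\!\!4},\mathfrak{so}(6,C)}$, then $\beta_1$ commutes with all $\varTheta(R_D)$, $D\in\mathfrak{so}(6,C)$, so $\beta_1$ lies in the centralizer of $\mathfrak{spin}(6,C)$ inside $S\!pin(6,C)\cong({F_4}^C)_{E_1,E_2,E_3,F_1(e_k),k=0,1}$, hence in its center $z(S\!pin(6,C))=\{1,\sigma,\sigma'_{\!\!4},\sigma\sigma'_{\!\!4}\}\cong\Z_4$. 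Each of these four elements indeed lies in $S\!pin(10,C)=({E_8}^C)^{\sigma'_{\!\!4},\mathfrak{so}(6,C)}$ (this is exactly how the center $\Z_4$ of $S\!pin(10,C)$ was identified in the proof of Theorem \ref{thm 6.3}), so one reads off
$\Ker\,\varphi_{{E_8}^C,\sigma'_{\!\!4}}=\{(1,1),(\sigma,\sigma),(\sigma'_{\!\!4},{\sigma'_{\!\!4}}^{-1}),(\sigma\sigma'_{\!\!4},(\sigma\sigma'_{\!\!4})^{-1})\}$; since $({\sigma'_{\!\!4}})^2=\sigma$ one has ${\sigma'_{\!\!4}}^{-1}=\sigma\sigma'_{\!\!4}$ and $(\sigma\sigma'_{\!\!4})^{-1}=\sigma'_{\!\!4}$, which is precisely the group $\Z_4=\{(1,1),(\sigma'_{\!\!4},\sigma\sigma'_{\!\!4}),(\sigma,\sigma),(\sigma\sigma'_{\!\!4},\sigma'_{\!\!4})\}$ in the statement.

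Finally I would prove surjectivity via Lemma \ref{lemma 2.1}. The kernel is discrete; the target group $({E_8}^C)^{\sigma'_{\!\!4}}$ is connected because it is the fixed point subgroup of the finite order automorphism $\tilde{\sigma'}_{\!\!4}$ of the simply connected group ${E_8}^C$ (Lemma \ref{lemma 2.2}, E.\ Cartan--Ra\v{s}evskii); and the dimensions match, since $\dim_C(({\mathfrak{e}_8}^C)^{\sigma'_{\!\!4}})=60$ by Lemma \ref{lem 7.1} while $\dim_C(\mathfrak{spin}(6,C)\oplus\mathfrak{spin}(10,C))=15+45=60$. Hence $\varphi_{{E_8}^C,\sigma'_{\!\!4}}$ is surjective, and therefore
$$
  ({E_8}^C)^{\sigma'_{\!\!4}} \cong (S\!pin(6,C) \times S\!pin(10,C))/\Z_4.
$$
The main obstacle I anticipate is not any single step in isolation but making the commutativity argument airtight: one must be sure that the copy of $S\!pin(6,C)$ coming from ${F_4}^C$ and the copy of $S\!pin(10,C)=({E_8}^C)^{\sigma'_{\!\!4},\mathfrak{so}(6,C)}$ genuinely centralize each other inside ${E_8}^C$, which hinges on correctly identifying $\mathfrak{spin}(6,C)$ with $\{R_D\mid D\in\mathfrak{so}(6,C)\}$ and on the fact, already used implicitly throughout Section 5, that $\delta\in({E_8}^C)^{\sigma'_{\!\!4},\mathfrak{so}(6,C)}$ commutes with $\exp(\varTheta(R_D))$ for all such $D$; the center computation for $\Ker\,\varphi_{{E_8}^C,\sigma'_{\!\!4}}$ then falls out cleanly.
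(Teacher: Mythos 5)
Your proposal is correct and follows essentially the same route as the paper: the same map $(\alpha,\beta)\mapsto\alpha\beta$ on $S\!pin(6,C)\times S\!pin(10,C)$ with the same two realizations of the factors, commutativity via the centralizing condition on $\mathfrak{so}(6,C)$, the kernel pinned down inside the centers using $(\sigma'_{\!\!4})^2=\sigma$, and surjectivity from connectedness plus the dimension count $15+45=60$. The only (immaterial) differences are that the paper deduces the kernel is central from discreteness of $\Ker\,{\varphi}_{*}$ rather than from your direct centralizer argument, and verifies commutativity via $[R_D,R_{10}]=0$ at the Lie algebra level.
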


\begin{proof} Let $S\!pin(6, C)\cong ({F_4}^C)_{E_1, E_2, E_3, F_1(e_k), k=0,1} \cong \vspace{0.5mm}(({E_7}^C)^{\kappa, \mu})_{\ti{E}_1,\ti{E}_{-1},E_2\dot{+}E_3, E_2\dot{-}E_3, \dot{F}_1(e_k), k=0,1} \subset ({E_7}^C)^{\sigma'_{\!\!4}} \subset ({E_8}^C)^{\sigma'_{\!\!4}}$ (Theorems \ref{thm 3.16}, \ref{thm 4.21}) and $S\!pin(10,C) \cong ({E_8}^C)^{\sigma'_{\!\!4}, \mathfrak{so}(6,C)} \subset ({E_8}^C)^{\sigma'_{\!\!4}}$ (Theorem \ref{thm 6.3}). Then we define  a mapping $\varphi_{{E_8}^C,\sigma'_{\!\!4}}: S\!pin(6, C) \times S\!pin(10, C) \to ({E_8}^C)^{\sigma'_{\!\!4}}$ by 
$$
     \varphi_{{E_8}^C,\sigma'_{\!\!4} }(\alpha, \beta) = \alpha\beta. 
$$
It is clear that $\varphi_{{E_8}^C,\sigma'_{\!\!4} }$ is well-defined. Since $[R_D, R_{10}] = 0$ for $R_D \in \mathfrak{spin}(6, C) = \mathfrak{so}(6, C) \cong  ({\mathfrak{f}_4}^C)_{E_1, E_2, E_3, F_1(e_k), k=0,1}, R_{10} \in \mathfrak{spin}(10, C) = \mathfrak{so}(10,C) \cong ({\mathfrak{e}_8}^C)^{\sigma'_{\!\!4}, \mathfrak{so}(6,C)}$ (Lemmas \ref{lem 3.14}, \ref{lem 6.1}) and $S\!pin(6, C), S\!pin(10, C)$ are connected, we see that $\alpha\beta = \beta\alpha$. Hence $\varphi_{{E_8}^C,\sigma'_{\!\!4} }$ is a homomorphism. Moreover, we obtain that $\Ker \, \varphi_{{E_8}^C,\sigma'_{\!\!4} } \cong \Z_4$. Indeed, since
we see that $\dim_C(\mathfrak{spin}(6, C) \oplus \mathfrak{spin}(10, C)) = 15 + 45 = 60 = \dim_C(({\mathfrak{e}_8})^{\sigma'_{\!\!4}})$ (Lemma \ref{lem 7.1}) and from $z({\varphi_{{E_8}^C,\sigma'_{\!\!4}}}_*)=\{0\}$(the mapping ${\varphi_{{E_8}^C,\sigma'_{\!\!4}}}_*$ is the differential mapping of $\varphi_{{E_8}^C,\sigma'_{\!\!4}}$) we have that $\Ker \, \varphi_{{E_8}^C,\sigma'_{\!\!4}}$ is discrete. Hence $\Ker \, \varphi_{{E_8}^C,\sigma'_{\!\!4}}$ is contained in the center $z(S\!pin(6, C) \times S\!pin(10, C)) = z(S\!pin(6, C)) \times z(S\!pin(10, C)) = \{1, \sigma, \sigma'_{\!\!4}, \sigma\sigma'_{\!\!4}\} \times \{1, \sigma, \sigma'_{\!\!4}, \sigma\sigma'_{\!\!4} \}$. Then, among them, the mapping $\varphi_{{E_8}^C,\sigma'_{\!\!4}}$ maps only $(1,1), (\sigma'_{\!\!4}, \sigma\sigma'_{\!\!4}), (\sigma, \sigma),$ $ (\sigma\sigma'_{\!\!4},\sigma'_{\!\!4})$ to the identity $1$. Hence we have that $\Ker \, \varphi_{{E_8}^C,\sigma'_{\!\!4} } \!\subset \{ (1,1), ((\sigma'_{\!\!4}, \sigma\sigma'_{\!\!4}), (\sigma, \sigma), (\sigma\sigma'_{\!\!4},$ $\sigma'_{\!\!4}) \}$, and vice versa. Thus we see that 
$$
\Ker \, \varphi_{{E_8}^C,\sigma'_{\!\!4} }=\{ (1,1), ((\sigma'_{\!\!4}, \sigma\sigma'_{\!\!4}), (\sigma, \sigma), (\sigma\sigma'_{\!\!4},\sigma'_{\!\!4}) \} \cong \Z_4.
$$
Since $({E_8}^C)^{\sigma'_{\!\!4}}$ is connected and $\Ker \, \varphi_{{E_8}^C,\sigma'_{\!\!4} }$ is discrete, again together with $\dim_C(\mathfrak{so}(6, C) \oplus \mathfrak{so}(10, C)) = 15 + 45 = 60 = \dim_C(({\mathfrak{e}_8})^{\sigma'_{\!\!4}})$, $\varphi_{{E_8}^C,\sigma'_{\!\!4} }$ is surjection.

Therefore we have the required isomorphism 
$$
({E_8}^C)^{\sigma'_{\!\!4}} \cong (S\!pin(6, C) \times S\!pin(10, C))/\Z_4.
$$
\end{proof}
%\vspace{-5mm}

\section{ Main theorem}
%\vspace{3mm}

By using results above, we shall determine the structure of the group $(E_8)^{\sigma'_{\!\!4}}$, which is the main theorem.
\vspace{1mm}

\begin{thm}\label{thm 8.1}
We have that $({E_8})^{\sigma'_{\!\!4}} \cong (S\!pin(6) \times S\!pin(10))/\Z_4, \,\Z_4=\{ (1,1),(\sigma'_{\!\!4}, \sigma\sigma'_{\!\!4}), $ $ (\sigma, \sigma), (\sigma\sigma'_{\!\!4},\sigma'_{\!\!4}) \}$.
\end{thm}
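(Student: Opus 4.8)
The plan is to obtain the compact real form statement from the complex case proved in Theorem \ref{thm 7.2} by intersecting with the fixed point set of the relevant conjugation, exactly as the compact groups were defined: $E_8 = ({E_8}^C)^{\tau\lambda_\omega}$. First I would define a mapping
$$
\varphi_{E_8,\sigma'_{\!\!4}} : S\!pin(6) \times S\!pin(10) \to (E_8)^{\sigma'_{\!\!4}}, \qquad \varphi_{E_8,\sigma'_{\!\!4}}(\alpha,\beta) = \alpha\beta,
$$
where $S\!pin(6)$ is realized as the compact subgroup $({F_4})_{E_1,E_2,E_3,F_1(e_k),k=0,1}$ of $({E_8}^C)^{\sigma'_{\!\!4}}$ (which lies in $E_8$ since $F_4 \subset E_8$), and $S\!pin(10)$ is realized as $(E_8)^{\sigma'_{\!\!4},\mathfrak{so}(6)}$. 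For the latter I would first record that $(E_8)^{\sigma'_{\!\!4},\mathfrak{so}(6)} = ({E_8}^C)^{\sigma'_{\!\!4},\mathfrak{so}(6,C)} \cap E_8$ is a compact real form of $S\!pin(10,C)$: it is connected (one can argue as in Theorem \ref{thm 5.5} working inside the compact $E_8$, or invoke that the Lie algebra is $\mathfrak{so}(10)$ by Proposition \ref{prop 6.2} and that the corresponding subgroup is closed in the compact group $E_8$, hence compact; being a compact connected group of type $D_5$ with center containing $\{1,\sigma,\sigma'_{\!\!4},\sigma\sigma'_{\!\!4}\}\cong\Z_4$, it must be $S\!pin(10)$). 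These two facts — connectedness and the center computation — are what let me identify the second factor.

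Next I would check $\varphi_{E_8,\sigma'_{\!\!4}}$ is a well-defined homomorphism. Well-definedness is immediate since both factors sit inside $(E_8)^{\sigma'_{\!\!4}}$. For the homomorphism property I need $\alpha\beta=\beta\alpha$ for $\alpha \in S\!pin(6)$, $\beta \in S\!pin(10)$; since both groups are connected it suffices to check $[R_D, R_{10}]=0$ for $R_D \in \mathfrak{spin}(6) = \mathfrak{so}(6) \cong ({\mathfrak{f}_4})_{E_1,E_2,E_3,F_1(e_k),k=0,1}$ and $R_{10} \in \mathfrak{spin}(10) = ({\mathfrak{e}_8})^{\sigma'_{\!\!4},\mathfrak{so}(6)}$, and this is exactly the defining commutation relation of $\mathfrak{so}(6)$ with $({\mathfrak{e}_8})^{\sigma'_{\!\!4},\mathfrak{so}(6)}$ (Lemmas \ref{lem 3.14}, \ref{lem 6.1}), just as in the complex case (Theorem \ref{thm 7.2}).

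Then I would compute the kernel and prove surjectivity. For the kernel: $\dim(\mathfrak{spin}(6) \oplus \mathfrak{spin}(10)) = 15 + 45 = 60 = \dim(({\mathfrak{e}_8})^{\sigma'_{\!\!4}})$ (Lemma \ref{lem 7.1}, same dimension count as over $C$), and the differential has trivial center-intersection, so $\Ker\,\varphi_{E_8,\sigma'_{\!\!4}}$ is discrete, hence contained in $z(S\!pin(6)) \times z(S\!pin(10)) = \{1,\sigma,\sigma'_{\!\!4},\sigma\sigma'_{\!\!4}\}^{\times 2}$; among these $16$ elements exactly $(1,1)$, $(\sigma'_{\!\!4},\sigma\sigma'_{\!\!4})$, $(\sigma,\sigma)$, $(\sigma\sigma'_{\!\!4},\sigma'_{\!\!4})$ map to $1$ (this is inherited verbatim from Theorem \ref{thm 7.2}), so $\Ker\,\varphi_{E_8,\sigma'_{\!\!4}} = \{(1,1),(\sigma'_{\!\!4},\sigma\sigma'_{\!\!4}),(\sigma,\sigma),(\sigma\sigma'_{\!\!4},\sigma'_{\!\!4})\} \cong \Z_4$. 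For surjectivity I would apply Lemma \ref{lemma 2.1}: it remains only to know that $(E_8)^{\sigma'_{\!\!4}}$ is connected, which follows from Lemma \ref{lemma 2.2} (E. Cartan--Ra\v{s}evskii) since $E_8$ is simply connected and $\tilde{\sigma'}_{\!\!4}$ has finite order. Hence $\varphi_{E_8,\sigma'_{\!\!4}}$ is surjective and induces $(E_8)^{\sigma'_{\!\!4}} \cong (S\!pin(6) \times S\!pin(10))/\Z_4$. The main obstacle is the identification of the second factor $S\!pin(10) \cong (E_8)^{\sigma'_{\!\!4},\mathfrak{so}(6)}$: one must confirm both its connectedness in the compact setting and pin down its center to rule out $S\!O(10)$ and $S\!pin(10)/\Z_4$ — this is the analogue of Theorems \ref{thm 5.5} and \ref{thm 6.3} and is where the real work lies; everything else is a transcription of the complex-case argument.
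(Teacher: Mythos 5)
Your proposal is correct, and it reaches surjectivity by a genuinely different route from the paper. The paper never introduces a compact map $\varphi_{E_8,\sigma'_{\!\!4}}$ at all: it takes an arbitrary $\delta\in(E_8)^{\sigma'_{\!\!4}}$, writes $\delta=\alpha\beta$ with $\alpha\in S\!pin(6,C)$, $\beta\in S\!pin(10,C)$ via Theorem 7.2, and observes that the reality condition $\tau\lambda_\omega\delta\lambda_\omega\tau=\delta$ only determines $(\tau\alpha\tau,\,\tau\lambda_\omega\beta\lambda_\omega\tau)$ up to an element of the kernel $\Z_4$; this yields four cases, and the three ``twisted'' cases are eliminated by explicit computations with the triality components $\sigma'_1,\sigma'_2,\sigma'_3$ of $\sigma'_{\!\!4}$. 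Your combination of the dimension count $15+45=60=\dim(({\mathfrak{e}_8})^{\sigma'_{\!\!4}})$, the connectedness of $(E_8)^{\sigma'_{\!\!4}}$ (Lemma 2.2, $E_8$ being simply connected) and Lemma 2.1 replaces that entire case analysis, which is a genuine economy. Two points in your sketch should be firmed up, both of which the paper handles inside its Case (i). For the connectedness of $(E_8)^{\sigma'_{\!\!4},\mathfrak{so}(6)}$, ``closed in the compact $E_8$'' gives compactness but not connectedness, and transplanting the orbit argument of Theorem 5.5 to the compact group is not routine; the clean argument is that $(E_8)^{\sigma'_{\!\!4},\mathfrak{so}(6)}=(({E_8}^C)^{\sigma'_{\!\!4},\mathfrak{so}(6,C)})^{\tau\lambda_\omega}$ is the fixed-point group of an involutive automorphism of the simply connected $S\!pin(10,C)$, hence connected by Lemma 2.2, after which the center $\{1,\sigma,\sigma'_{\!\!4},\sigma\sigma'_{\!\!4}\}\cong\Z_4$ forces $S\!pin(10)$ rather than $S\!O(10)$ or $S\!pin(10)/\Z_4$, as you say. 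Likewise the identification $(F_4)_{E_1,E_2,E_3,F_1(e_k),k=0,1}\cong S\!pin(6)$ is not free: it needs the same connectedness argument applied to $S\!pin(6,C)$ together with the two-fold covering of $S\!O(V^6)\cong S\!O(6)$ with kernel $\{1,\sigma\}$. With these two details supplied, your argument is complete and somewhat shorter than the paper's.
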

\begin{proof}
For $\delta \in (E_8)^{\sigma'_{\!\!4}}\!=(({E_8}^C)^{\tau{\lambda_\omega}})^{\sigma'_{\!\!4}}\! =(({E_8}^C)^{\sigma'_{\!\!4}})^{\tau{\lambda_\omega}} \subset ({E_8}^C)^{\sigma'_{\!\!4}}$, there exist $\alpha \in S\!pin(6, C) \cong ({F_4}^C)_{E_1, E_2, E_3, F_1(e_k), k=0,1} \cong \vspace{1mm}(({E_7}^C)^{\kappa, \mu})_{\ti{E}_1,\ti{E}_{-1},E_2\dot{+}E_3, E_2\dot{-}E_3, \dot{F}_1(e_k), k=0,1} \subset ({E_7}^C)^{\sigma'_{\!\!4}} \subset ({E_8}^C)^{\sigma'_{\!\!4}}$ and $\beta \in S\!pin(10, C) \cong ({E_8}^C)^{\sigma'_{\!\!4},\mathfrak{so}(6,C)}\subset ({E_8}^C)^{\sigma'_{\!\!4}} $ such that $\delta = \varphi(\alpha, \beta) = \alpha\beta$ (Theorem \ref{thm 7.2}). From the condition 
$\tau{\lambda_\omega}\delta{\lambda_\omega}\tau = \delta$, that is, $\tau{\lambda_\omega}\varphi(\alpha, \beta){\lambda_\omega}\tau = \varphi(\alpha,\beta)$, we have $\varphi(\tau{\lambda_\omega}\alpha{\lambda_\omega}\tau, \tau\beta\tau) = \varphi(\alpha, \beta)$. Hence, we have that 
$$
\begin{array}{l}
     \mbox{(i)} \;\; \left\{\begin{array}{l}
 \tau\alpha\tau = \alpha
             \vspace{1mm}\\
 \tau{\lambda_\omega}\beta{\lambda_\omega}\tau = \beta,
             \end{array} \right.         
\qquad \qquad
\;      \mbox{(ii)} \;\; \left\{\begin{array}{l}
  \tau\alpha\tau = \sigma\sigma'_{\!\!4}\alpha
\vspace{1mm}\\
\tau{\lambda_\omega}\beta{\lambda_\omega}\tau = \sigma'_{\!\!4}\beta,
             \end{array} \right.       
\vspace{2mm}\\
\hspace*{-1mm}      \mbox{(iii)} \;\, \left\{\begin{array}{l}
\tau\alpha\tau = \sigma\alpha
               \vspace{1mm}\\
\tau{\lambda_\omega}\beta{\lambda_\omega}\tau = \sigma\beta,
\end{array} \right.
\qquad \quad \,\,
      \mbox{(iv)} \;\; \left\{\begin{array}{l}
\tau\alpha\tau = \sigma'_{\!\!4}\alpha
               \vspace{1mm}\\
\tau{\lambda_\omega}\beta{\lambda_\omega}\tau = \sigma\sigma'_{\!\!4}\beta.
\end{array} \right.        
\end{array} 
$$

Case (i). From the condition $\tau\alpha\tau = \alpha$, we have $\alpha \in S\!pin(6) \cong (F_4)_{ E_1, E_2, E_3, F_1(e_k), k=0,1}$. 
%$\alpha \in (S\!pin(6, C))^\tau \cong (({F_4}^C)_{E_1, E_2, E_3, F_1(e_k), k=0,1})^\tau \cong(({F_4}^C)^\tau)_{E_1, E_2, E_3, F_1(e_k), k=0,1} \cong  \cong S\!pin(6)$. 
Indeed, first since $S\!pin(6, C) \cong ({F_4}^C)_{  E_1, E_2, E_3, F_1(e_k), k=0,1}$ is simply connected, the group  $(F_4)_{  E_1, E_2, E_3, F_1(e_k), k=0,1}=(({F_4}^C)_{ E_1, E_2, E_3, F_1(e_k), k=0,1})^\tau$ is connected.
%the group $(F_4)^{ \sigma'_{\!\!4}}=(({F_4}^C)^\tau)^{ \sigma'_{\!\!4}}=(({F_4}^C)^{ \sigma'_{\!\!4}})^\tau$ is connected because $({F_4}^C)^{ \sigma'_{\!\!4}} \cong S\!pin(6, C)$ is simply connected. 
Since $({F_4}^C)_{ E_1, E_2, E_3, F_1(e_k), k=0,1}$ acts on $(V^C)^6$, the group $(F_4)_{  E_1, E_2, E_3, F_1(e_k), k=0,1}=(({F_4}^C)_{ E_1, E_2, E_3, F_1(e_k), k=0,1})^\tau$ acts on 
\begin{eqnarray*}
  V^6\!\!\!&=&\!\!\!\{X \in (V^C)^6\,|\,
\tau X=X \}
\\[1mm]
\!\!\!&=&\!\!\!\{X =F_1(t)\,|\,t=t_2 e_2+t_3 e_3+t_4 e_4+t_5 e_5+t_6 e_6+t_7 e_7, t_k \in \R \}
\end{eqnarray*}
with the norm $(X, X)=2t\,\ov{t}$. We can define a homomorphism $\pi: (F_4)_{ E_1, E_2, E_3, F_1(e_k), k=0,1} \to S\!O(6)=S\!O(V^6)$ by $\pi(\alpha)=\alpha|_{{V^6}}$. Then it is easy to obtain that $\Ker\, \pi=\{1, \sigma \} \cong \Z_2$. Since, by doing simple computation as in Lemma \ref{lem 3.14} we see $\dim ((\mathfrak{f}_4)_{ E_1, E_2, E_3, F_1(e_k), k=0,1})=15$, we have that $\dim ((\mathfrak{f}_4)_{ E_1, E_2, E_3, F_1(e_k), k=0,1})$ $=15=\dim (\mathfrak{so}(6))$, moreover $S\!O(6)$ is connected. Hence the mapping $\pi$ is surjection. Thus we have that $(F_4)_{ E_1, E_2, E_3, F_1(e_k), k=0,1}/\Z_2 \cong S\!O(6)$. Therefore the group $(F_4)_{ E_1, E_2, E_3, F_1(e_k), k=0,1}$ is isomorphic to $S\!pin(6)$ as the universal covering group $S\!O(6)$, that is, $(F_4)_{ E_1, E_2, E_3, F_1(e_k), k=0,1} \cong S\!pin(6)$. 

Next, from the condition $\tau{\lambda_\omega}\beta{\lambda_\omega}\tau = \beta$, we see that the group $\{\alpha \in S\!pin(10, C) \, |\, \tau{\lambda_\omega}\beta{\lambda_\omega}\tau $ $= \beta\} = (S\!pin(10, C))^{\tau{\lambda_\omega}}$ (which is connected) $= (({E_8}^C)^{\sigma'_{\!\!4},\mathfrak{so}(6,C)})^{\tau{\lambda_\omega}}= ({E_8})^{\sigma'_{\!\!4},\mathfrak{so}(6)}$ (Theorem \ref{thm 6.3}), and so its type is $\mathfrak{so}(10)$ (Proposition \ref{prop 6.2}) (Note that $({E_8}^C)^{\tau\lambda_\omega}=E_8$ and the $C$-linear transformation $\tau\lambda_\omega$ induces the involutive automorphism of the group $({E_8}^C)^{\sigma'_{\!\!4},\mathfrak{so}(6,C)}$. Hence we see that the group $({E_8})^{\sigma'_{\!\!4},\mathfrak{so}(6)}$ is isomorphic to either one of 
$$
S\!pin(10), \quad S\!O(10), \quad S\!pin(10)/\Z_4 .
$$
Their center are $\Z_4, \Z_2$, \{1\}, respectively. However, since the center of $({E_8})^{\sigma'_{\!\!4},\mathfrak{so}(6)}$ has the elements
$1, \sigma\sigma'_{\!\!4}, \sigma, \sigma'_{\!\!4}$, the group $({E_8})^{\sigma'_{\!\!4},\mathfrak{so}(6)}$ has to be isomorphic to  $S\!pin(10)$ and its center is $\{1, \sigma\sigma'_{\!\!4}, \sigma, \sigma'_{\!\!4} \} $ $ \cong \Z_4$. Hence the group of Case (i) is isomorphic to the group $(S\!pin(6) \times S\!pin(10))/\Z_4$.

Case (ii). This case is impossible. Indeed, for $\alpha \in S\!pin(6,C) \subset S\!pin(8,C)$ we can set $\alpha=(\alpha_1, \alpha_2, \alpha_3), \alpha_1 \in S\!O(6,C) \subset S\!O(8,C), \alpha_2, \alpha_3 \in S\!O(8,C)$ satisfying $(\alpha_1 x)(\alpha_2 y)=\ov{\alpha_3(\ov{xy})}, x, y, \in \mathfrak{C}^C$, and similarly for $\sigma'_{\!\!4} \in S\!pin(8)$, set $\sigma'_{\!\!4}=(\sigma'_1, \sigma'_2, \sigma'_3), \sigma_k \in S\!O(8) \subset S\!O(8, C)$ satisfying $(\sigma'_1 x)(\sigma'_2 y)=\ov{\sigma'_3(\ov{xy})}, x, y, \in \mathfrak{C}$. 
(Remark. As a matrix, $\sigma'_1, \sigma'_2$ and $\sigma'_3$ are expressed as follows:  
\begin{eqnarray*}
\sigma'_1 \!\!\!&=&\!\!\! \diag (1,1,-1,-1,-1,-1,-1,-1),
\\
\sigma'_2 \!\!\!&=&\!\!\! \diag (-J, -J, -J, -J), 
                                          J=\begin{pmatrix}0  & 1 \\
                                                           -1 & 0
                                            \end{pmatrix},
\\
\sigma'_3 \!\!\!&=&\!\!\! \diag (J, -J, -J, -J).)
\end{eqnarray*}
Then, from the condition $\tau\alpha\tau=\sigma\sigma'_{\!\!4}\alpha$, we have $(\tau\alpha_1, \tau\alpha_2, \tau\alpha_3)=(\sigma'_1\alpha_1, -\sigma'_2\alpha_2, -\sigma'_3\alpha_3)$, 
%, where 
%a linear transformation $L_1$ and $R_1$ are defined as $L_1:\mathfrak{C}^C \to \mathfrak{C}^C, L_1(x)=e_1 x$ and $R_1:\mathfrak{C}^C \to \mathfrak{C}^C, R_1(x)=x e_1$, respectively.   
that is, 
$$
\tau\alpha_1=\sigma'_1\alpha_1, \quad \tau\alpha_2=-\sigma'_2\alpha_2, \quad \tau\alpha_3=-\sigma'_3\alpha_3.
$$
Here, as matrix, $\alpha_1$ is expressed as follows:
$$
    \alpha_1=\arraycolsep5pt
\left(
\begin{array}{@{\,}cc|cccccc@{\,}}
1 & 0 &  & & && \\
0 & 1 & \multicolumn{5}{c}{\raisebox{1pt}[0pt][0pt]{\huge $0$}}& \\
\hline
  &&&&&&&\\
\multicolumn{2}{c|}{\raisebox{-10pt}[0pt][0pt]{\huge $0$}}
 & \multicolumn{5}{c}{\raisebox{-10pt}[0pt][0pt]{\Huge $A$}}&\\
  &&&&&&&\\
  &&&&&&&\\
\end{array}
\right), \,\,A \in S\!O(6,C),
$$
where all blanks mean 0. Then, from $\tau\alpha_1=\sigma'_1\alpha_1$, we have 
$$
  \alpha_1=\arraycolsep5pt
\left(
\begin{array}{@{\,}cc|cccccc@{\,}}
1 & 0 &  & & && &\\
0 & 1 &\multicolumn{5}{c}{\raisebox{1pt}[0pt][0pt]{\huge $0$}}& \\
\hline
  &&&&&&&\\
\multicolumn{2}{c|}{\raisebox{-10pt}[0pt][0pt]{\huge $0$}}&\multicolumn{5}{c}{\raisebox{-10pt}[0pt][0pt]{\Huge $A'$}}&\\
  &&&&&&&\\
  &&&&&&&\\
\end{array}
\right), \,\,A'=iB \in S\!O(6,C), \,i^2=-1. 
$$
As for $\alpha_2$, from $\tau\alpha_2=-\sigma'_2\alpha_2$, we have $\alpha_2=0$. Indeed, from the explicit form of $\sigma_2$, it is sufficient to confirm this in the case $2 \times 2$-matrix. From 
$$
\begin{pmatrix} a & b \\
                c & d 
\end{pmatrix} \begin{pmatrix}0  & -1 \\
                             1 & 0
              \end{pmatrix}=\begin{pmatrix} b & -a \\
                                            d & -c
                            \end{pmatrix}=\begin{pmatrix} \tau a & \tau b \\
                       \tau c & \tau d
       \end{pmatrix},
$$
we have that $\tau a=b, \tau b=-a, \tau c=d, \tau d=-c$, that is, $a=b=c=d=0$. Hence we see $\alpha_2=0$.
This is contrary to $\alpha_2 \in S\!O(8, C)$.
\vspace{2mm}

Case (iii). This case is also impossible. Indeed, from the condition $\tau\alpha\tau=\sigma\alpha$, we have $(\tau\alpha_1, \tau\alpha_2, \tau\alpha_3)=(\alpha_1, -\alpha_2, -\alpha_3)$, that is, 
$$
\tau\alpha_1=\alpha_1, \quad \tau\alpha_2=-\alpha_2, \quad \tau\alpha_3=-\alpha_3.
$$
From $\tau\alpha_1=\alpha_1$, we have $\alpha_1 \in S\!O(6) \subset S\!O(8)$. Hence, by the Principal of triality on $S\!O(8)$ we see that $\alpha_k \in S\!O(8), k=2,3$, that is, $\tau\alpha_k=\alpha_k, k=2,3$. However, from $\tau\alpha_k=-\alpha_k, k=2,3$, we have $\alpha_k=\tau\alpha_k=-\alpha_k$, that is, $\alpha_k=0$. This is contrary to $\alpha_k \in S\!O(8)$. 
\vspace{2mm}

Case (iv). This case is also impossible. Indeed, from the condition $\tau\alpha\tau=\sigma'_{\!\!4}\alpha$, we have $(\tau\alpha_1, \tau\alpha_2, \tau\alpha_3)=(\sigma'_1\alpha_1, \sigma'_2\alpha_2, \sigma'_3\alpha_3)$,  
that is, 
$$
\tau\alpha_1=\sigma'_1\alpha_1, \quad \tau\alpha_2=\sigma'_2\alpha_2, \quad \tau\alpha_3=\sigma'_3\alpha_3.
$$
As in the Case (ii), we have $\alpha_2 =0$. This is contrary to 
$\alpha_2 \in S\!O(8, C)$.
\vspace{2mm}

Therefore we have the required isomorphism 
$$
({E_8})^{\sigma'_{\!\!4}} \cong (S\!pin(6) \times S\!pin(10))/\Z_4, 
\,\Z_4=\{ (1,1),(\sigma'_{\!\!4}, \sigma\sigma'_{\!\!4}), (\sigma, \sigma), (\sigma\sigma'_{\!\!4},\sigma'_{\!\!4}) \}.
$$
\end{proof}
\vspace{2mm}

{\bf \Large Acknowledgments.}\,\,
\vspace{2mm}

The author would like to thank 
the members of {\it YSTM}: Professor Ichiro Yokota, Osamu Shukuzawa, Kentaro Takeuchi, Takashi Miyasaka, Tetsuo Ishihara and Tomohiko Tsukada for their valuable comments,  suggestions and encouragement.
%\newpage

\vspace{5mm}

\begin{flushright}

\begin{tabular}{c}
{\it Toshikazu Miyashita} \\
{\it Ueda-Higashi High School} \\
{\it Ueda City}\\
{\it  Nagano\,\, 386-8683}\\
{\it Japan} \\
{\it E-mail}: {\it anarchybin@gmail.com} 
\end{tabular}

\end{flushright}
\end{document}